\newtheorem{theorem}{Theorem}[section]
\newtheorem{lemma}[theorem]{Lemma}
\newtheorem{corollary}[theorem]{Corollary}
\newtheorem{proposition}[theorem]{Proposition}
\theoremstyle{definition}
\newtheorem{definition}[theorem]{Definition}
\newtheorem{example}[theorem]{Example}
\newtheorem{conjecture}[theorem]{Conjecture}
\theoremstyle{remark}
\newtheorem{remark}[theorem]{Remark}
\newtheorem{problem}[theorem]{Problem}
\numberwithin{equation}{section}
\newcommand{\op}[1]{\operatorname{#1}}
\newcommand{\GL}{\operatorname{GL}}
\newcommand{\SL}{\operatorname{SL}}
\newcommand{\Gr}{\operatorname{Gr}}
\newcommand{\module}{\operatorname{mod}}
\newcommand{\Rep}{\operatorname{Rep}}
\newcommand{\Mod}{\operatorname{Mod}}
\newcommand{\Hom}{\operatorname{Hom}}
\newcommand{\PHom}{\operatorname{PHom}}
\newcommand{\Aut}{\operatorname{Aut}}
\newcommand{\Ext}{\operatorname{Ext}}
\newcommand{\ext}{\operatorname{ext}}
\newcommand{\Coker}{\operatorname{Coker}}
\newcommand{\T}{\operatorname{T}}
\newcommand{\mb}[1]{\mathbb{#1}}
\newcommand{\mc}[1]{\mathcal{#1}}
\newcommand{\mf}[1]{\mathfrak{#1}}
\newcommand{\mr}[1]{{\sf #1}}
\newcommand{\bs}[1]{\boldsymbol{#1}}
\renewcommand{\b}[1]{\bold{#1}}
\newcommand{\e}{{\sf e}}
\newcommand{\f}{{\sf f}}
\newcommand{\g}{{\sf g}}
\newcommand{\h}{{\sf h}}
\renewcommand{\S}{{\bf S}}
\newcommand{\ep}{{\epsilon}}
\newcommand{\vep}{{\varepsilon}}
\newcommand{\SI}{\operatorname{SI}}
\newcommand{\Spec}{\operatorname{Spec}}
\newcommand{\proj}{\operatorname{proj}\text{-}}
\newcommand{\br}[1]{\overline{#1}}
\newcommand{\innerprod}[1]{\langle#1\rangle}
\newcommand{\sm}[1]{\left(\begin{smallmatrix}#1\end{smallmatrix}\right)}
\newcommand{\fr}[1]{\framebox[1.2\width]{{$#1$}}}
\renewcommand{\b}[1]{\bold{#1}}
\newcommand{\bl}{{\beta_l}}
\newcommand{\dimbar}{\underline{\dim}}
\newcommand{\wtd}[1]{\widetilde{#1}}
\newcommand{\cone}[1]{\mb{R}_+\Sigma_{\beta_{#1}}({K_{#1,#1}^2})}
\newcommand{\sgn}{\operatorname{sgn}}
\newcommand{\Tr}{\operatorname{Tr}}
\newcommand{\ckQ}{\widehat{k\Delta}}
\newcommand{\uca}{\br{\mc{C}}}
\begin{document}

\title{Cluster Algebras, Invariant Theory, and Kronecker Coefficients I}
\author{Jiarui Fei}
\address{National Center for Theoretical Sciences, Taipei 30043, Taiwan}
\email{jrfei@ncts.ntu.edu.tw}
\thanks{}

\subjclass[2010]{Primary 20C30, 13F60; Secondary 16G20, 13A50, 52B20}

\date{}
\dedicatory{}
\keywords{Cluster Algebra, Semi-invariant Ring, Kronecker Coefficient, Quiver Representation, Quiver with Potential, Cluster Character, Flagged Kronecker Quiver, Diamond Quiver, Symmetric Function, $\g$-vector Cone, Lattice Point, Unimodular Fan}

\begin{abstract} We relate the $m$-truncated Kronecker products of symmetric functions to the semi-invariant rings of a family of quiver representations.
We find cluster algebra structures for these semi-invariant rings when $m=2$.
Each $\g$-vector cone $\mr{G}_{\Diamond_l}$ of these cluster algebras controls the $2$-truncated Kronecker products for all symmetric functions of degree no greater than $l$.
As a consequence, each relevant Kronecker coefficient is the difference of the number of the lattice points inside two rational polytopes.
We also give explicit description of all $\mr{G}_{\Diamond_l}$'s.
As an application, we compute some invariant rings.
\end{abstract}

\maketitle

\section*{Introduction}
Given a partition $\lambda$ of $n$, let $\S^\lambda$ be the associated irreducible complex representation of the symmetric group $\mf{S}_n$.
The {\em Kronecker coefficients} $g_{\mu,\nu}^\lambda$ are the tensor product multiplicities:
$$\S^\mu \otimes \S^\nu \cong \bigoplus_\lambda g_{\mu,\nu}^\lambda \S^\lambda.$$
To determine these coefficients and understand their properties has been one of the major problems in combinatorics and representation theory for nearly a century.
People are particularly interested in finding combinatorial interpretation for these coefficients.
They hope that Kronecker coefficients count some combinatorial objects, eg., lattice points in polytopes.
More recently, the interest in computing Kronecker coefficients has intensified in connection
with {\em Geometric Complexity Theory} \cite{M}, pioneered as an approach to the P vs. NP problem.
Despite of a large body of work, those coefficients remain very mysterious.

The complete solution of analogous problems for the general linear groups $\GL_n$ certainly bring new hope to this old problem.
The tensor multiplicities of irreducible representations of $\GL_n$, known as {\em Littlewood-Richardson coefficients}, can be computed by the Littlewood-Richardson rule.
According to Knutson-Tao \cite{KT}, they are counted by the lattice points in the so-called hive polytopes.
Similar results were also obtained by Derksen-Weyman in \cite{DW1,DW2} via an invariant-theoretic approach.
The link between these two approachs was established in \cite{Fs1} through Fomin-Zelevinsky's cluster algebras \cite{FZ1,BFZ,FZ4} and their quiver with potential models \cite{DWZ1,DWZ2}.
In this paper, we use the similar ideas in \cite{DW1,Fs1} to study the Kronecker coefficients.

It may be quite frustrating to study the Kronecker coefficients via a purely finite-group theoretic approach. However, by the Schur-Weyl duality they can be interpreted as multiplicities below
\begin{equation*} \label{eq:KC} \S_\lambda(V\otimes W) = \sum_{\mu,\nu} g_{\mu,\nu}^\lambda \S_\mu(V)\otimes \S_\nu(W). \end{equation*}
Then calculation of Kronecker coefficients becomes a problem in invariant theory.
Analogous to \cite{DW1}, our first main result is make this link completely explicit via quiver representations.

Let $K_{l,l}^m$ be the {\em flagged Kronecker quiver}
$$\kronml{-1}{-(l-1)}{-l}{l}{l-1}{1}{}$$
and $\bl$ be the {dimension vector} defined by $\bl(i)=|i|$.
Let $V$ be a $\beta_l$-dimensional vector space and $W$ be the $m$-dimensional vector space spanned by the arrows from $-l$ to $l$.
Consider the product of special linear group
$$\SL_{\bl}:=\prod_{i}\SL(V_i)$$
acting naturally on the quiver representation space
$$\Rep_{\bl}(K_{l,l}^m):=\bigoplus_{i={1}}^{l-1} \left(\Hom(V_{-i},V_{-(i+1)}) \oplus \Hom(V_{i+1},V_{i}) \right) \oplus \Hom(V_{-l},V_{l})\otimes W.$$
The {\em semi-invariant ring} $\SI_\bl(K_{l,l}^m):= k[\Rep_{\bl}(K_{l,l}^m)]^{\SL_\bl}$
is graded by a weight $\sigma\in \mb{Z}^{2l}$ and a weight $\lambda$ of $\GL(W)$:
$$\SI_\bl(K_{l,l}^m)=\bigoplus_{\sigma,\lambda} \SI_\bl(K_{l,l}^m)_{\sigma,\lambda}.$$
For any pair of partitions $\mu$ and $\nu$ of {\em length} no greater than $l$, we can associate a weight vector $\sigma(\mu,\nu)$ (see Lemma \ref{L:SI(Kml)}).

\begin{theorem}[Corollary \ref{C:dimUinv} and \ref{C:SI(Kml)2}] \label{T:FKm}
Let $(\mu,\nu,\lambda)$ be a triple of partitions of length no greater than $l,l$ and $m$ respectively, then
$$g_{\mu,\nu}^\lambda = \dim \SI_\bl(K_{l,l}^m)_{\sigma(\mu,\nu),\lambda}^U=\sum_{\omega\in \mf{S}_m} \op{sgn}(\omega) \dim \SI_\bl(K_{l,l}^m)_{\sigma(\mu,\nu),\lambda^\omega},$$
where $U$ is the group of upper triangular matrices in $\GL(W)$, and $\lambda^\omega$ is the weight defined by $(\lambda^\omega)(i) = \lambda(i)-i+\omega(i)$.
\end{theorem}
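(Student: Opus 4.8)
The plan is to deduce both equalities from one $\GL(W)$-equivariant identification together with two standard facts about rational $\GL_m$-modules. Regard $M:=\bigoplus_\lambda \SI_\bl(K_{l,l}^m)_{\sigma(\mu,\nu),\lambda}$, the $\sigma(\mu,\nu)$-graded slice of the semi-invariant ring, as a $\GL(W)$-module; the crux is the decomposition
\[
M\;\cong\;\bigoplus_\lambda (\S_\lambda W)^{\oplus g_{\mu,\nu}^\lambda}
\]
in the weight convention for $\GL(W)$ fixed in Lemma \ref{L:SI(Kml)}. Granting this, recall that for a rational $\GL(W)$-module $N=\bigoplus_\kappa(\S_\kappa W)^{\oplus m_\kappa}$ one has $m_\lambda=\dim N^U_\lambda$ (a basis of $N^U_\lambda$ being the highest weight vectors of weight $\lambda$, one per copy of $\S_\lambda W$), and also $m_\lambda=\sum_{\omega\in\mf{S}_m}\sgn(\omega)\dim N_{\lambda^\omega}$ by the Weyl character formula: express the formal character of $N$ in the basis of Schur polynomials, multiply by the Weyl denominator $\prod_{i<j}(x_i-x_j)$, and read off the coefficient of $x^{\lambda+\delta}$ with $\delta=(m-1,\dots,1,0)$; the resulting signed count $\sum_\omega\sgn(\omega)\dim N_{\lambda+\delta-\omega(\delta)}$ becomes $\sum_\omega\sgn(\omega)\dim N_{\lambda^\omega}$ after a sign-preserving reindexing of $\mf{S}_m$, where $\lambda^\omega(i)=\lambda(i)-i+\omega(i)$ as in the statement. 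Applying both to $N=M$ reduces the theorem to the displayed decomposition.

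To obtain that decomposition I would resolve $k[\Rep_\bl(K_{l,l}^m)]$ by the Cauchy formula, arrow by arrow: the left $A_l$-subchain contributes $\bigotimes_{i=1}^{l-1}\bigl(\bigoplus_\kappa \S_\kappa(V_{-i})^*\otimes\S_\kappa(V_{-(i+1)})\bigr)$, the right subchain its mirror, and $k[\Hom(V_{-l},V_l)\otimes W]=\Sym(V_{-l}\otimes V_l^*\otimes W^*)$. Take $\SL_\bl$-invariants one vertex at a time. At an interior vertex $-i$ the factor $\bigl(\S_{\kappa'}(V_{-i})\otimes\S_\kappa(V_{-i})^*\bigr)^{\SL(V_{-i})}$ is one-dimensional exactly when $\kappa$ and $\kappa'$ differ by a rectangle of height $i=\dim V_{-i}$ and vanishes otherwise, the residual torus $\GL(V_{-i})/\SL(V_{-i})\cong k^\times$ recording that rectangle; telescoping down the chain --- which is Borel-Weil for the partial flag variety attached to $V_{-l}$ --- shows the left chain contributes a single $\S_\mu(V_{-l})$, up to a power of $\det V_{-l}$, precisely when the left-hand components of $\sigma$ are those prescribed by $\sigma(\mu)$ in Lemma \ref{L:SI(Kml)}, and nothing otherwise; symmetrically the right chain contributes $\S_\nu(V_l)^*$ in the degree $\sigma(\nu)$. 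I expect this to be the main obstacle: pinning down the affine-linear dictionary $\sigma\leftrightarrow(\mu,\nu)$, verifying uniqueness and the vanishing off this locus, and tracking duals, signs and $\det$-twists so that they match the precise normalization of $\sigma(\mu,\nu)$, is a bookkeeping-heavy induction resting on the first and second fundamental theorems of invariant theory for $\SL_n$.

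With the two chains thus reduced, the $\sigma(\mu,\nu)$-slice becomes, as a $\GL(W)$-module,
\[
M\;\cong\;\bigl(\S_\mu(V_{-l})\otimes\S_\nu(V_l)^*\otimes\Sym(V_{-l}\otimes V_l^*\otimes W^*)\bigr)^{\SL(V_{-l})\times\SL(V_l)}.
\]
Expand $\Sym(V_{-l}\otimes V_l^*\otimes W^*)=\bigoplus_\lambda \S_\lambda(W^*)\otimes\S_\lambda(V_{-l}\otimes V_l^*)$ by the Cauchy formula and then $\S_\lambda(V_{-l}\otimes V_l^*)=\bigoplus_{\alpha,\beta} g_{\alpha,\beta}^\lambda\,\S_\alpha(V_{-l})\otimes\S_\beta(V_l^*)$ by the Schur-Weyl identity recalled in the introduction. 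Taking the remaining $\SL(V_{-l})\times\SL(V_l)$-invariants annihilates every summand except those with $\alpha=\mu$ and $\beta=\nu$ (once more only up to the $\det$-twists already pinned down by having fixed the $\sigma$-degree), and for those the invariant space is one-dimensional; hence $M\cong\bigoplus_\lambda(\S_\lambda W^*)^{\oplus g_{\mu,\nu}^\lambda}$, which is the displayed decomposition once the $W\leftrightarrow W^*$ convention of Lemma \ref{L:SI(Kml)} is accounted for. Feeding this back into the first paragraph proves both formulas.
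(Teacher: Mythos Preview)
Your proposal is correct. The derivation of the $\GL(W)$-module decomposition
$\SI_\bl(K_{l,l}^m)_{\sigma(\mu,\nu)}\cong\bigoplus_\lambda(\S_\lambda W)^{\oplus g_{\mu,\nu}^\lambda}$
via Cauchy on each arrow, vertex-by-vertex $\SL$-invariants along the two arms, and Lemma~\ref{L:SW} at the central vertex is exactly the paper's Lemma~\ref{L:SI(Kml)}; your first equality then follows just as in Corollary~\ref{C:dimUinv}.

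For the second equality your route diverges from the paper's. You extract the multiplicity $g_{\mu,\nu}^\lambda$ directly from the formal character of $M$ via the Weyl character formula, which is clean and needs nothing beyond Lemma~\ref{L:SI(Kml)}. The paper instead carries out a \emph{second} Cauchy decomposition (Lemma~\ref{L:SI(Kml)2}), treating $\Sym(V_{-l}\otimes V_l^*\otimes W)$ as $\Sym(V_{-l}\otimes V_l^*)^{\otimes m}$, which yields the explicit weight-space count
$\dim\SI_\bl(K_{l,l}^m)_{\sigma,\lambda}=\sum_{|\eta_i|=\lambda(i)}c_{\bs{\eta}}^{\mu}c_{\bs{\eta}}^{\nu}$
in terms of multiple Littlewood--Richardson coefficients; the second equality then comes from the combinatorial identity of Lemma~\ref{L:k2c} (Jacobi--Trudi plus Littlewood's identity). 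Your argument is shorter and more conceptual for the theorem as stated; the paper's detour buys the extra identification of each $\dim\SI_\bl(K_{l,l}^m)_{\sigma,\lambda}$ with an LR-type count, and this is what is actually invoked later (e.g.\ in Theorem~\ref{T:KC}), so from the paper's point of view the longer path is not wasted.
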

\noindent It is well-known that the Kronecker coefficients are invariant under the permutation of $\lambda,\mu,$ and $\nu$.
In the literature, it is usually written as $g_{\lambda,\mu,\nu}$.
Unfortunately, this symmetry is blurred from our construction, so we keep the notation $g_{\mu,\nu}^\lambda$ throughout.

To study the Kronecker coefficients, we need more structure in these semi-invariant rings.
It turns out that when $m=2$ all these semi-invariant rings are {\em upper cluster algebras} introduced by Fomin-Zelevinsky and Berenstein \cite{BFZ}.
Here is the crucial result.

\begin{theorem}[Theorem \ref{T:CS} and \ref{T:LP_Gl}] \label{T:crucial}
The semi-invariant ring $\SI_\bl(K_{l,l}^{2})$ is the upper cluster algebra $\br{\mc{C}}(\Diamond_l,\mc{S}_l)$.
Moreover, the {\em generic cluster character} maps the lattice points in a rational polyhedral cone $\mr{G}_{\Diamond_l}$ bijectively onto a basis of $\br{\mc{C}}(\Diamond_l)$.
We have a concrete description of the cone $\mr{G}_{\Diamond_l}$.
\end{theorem}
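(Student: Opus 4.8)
The plan is to prove Theorem~\ref{T:crucial} in three stages, adapting the template of Derksen--Weyman \cite{DW1} and the author's treatment of the triple-flag quiver in \cite{Fs1}: (1) identify the initial seed $(\Diamond_l,\mc{S}_l)$ and show $\SI_\bl(K_{l,l}^2)=\uca(\Diamond_l,\mc{S}_l)$ (this will be Theorem~\ref{T:CS}); (2) use a non-degenerate quiver-with-potential model to run the generic-cluster-character machinery and obtain the lattice-point basis; and (3) read off from that model an explicit description of the cone $\mr{G}_{\Diamond_l}$ (this will be Theorem~\ref{T:LP_Gl}).

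For stage (1), I would start from the Derksen--Weyman description of the semi-invariant ring: $\SI_\bl(K_{l,l}^2)$ is generated by Schofield determinantal semi-invariants $c^W$ attached to representations $W$ of $K_{l,l}^2$ with $\langle\dimbar W,\bl\rangle=0$, and among these I single out the finite distinguished family whose supports are the rigid bricks read off from the hexagonal array associated with $K_{l,l}^2$. The quiver $\Diamond_l$ is then defined on this family: its mutable vertices are the interior semi-invariants, its frozen vertices are the two boundary $A_{l-1}$-flag semi-invariants together with the two basic semi-invariants of the central Kronecker block, and its arrows record the minimal relations among the chosen generators. The key computation is that the exchange relation at each mutable vertex is precisely a three-term Pl\"ucker-type identity among block $2\times 2$ minors --- the ``diamond'' relation that appears as a square of the array --- which is classical; this makes $\mc{S}_l$ a genuine seed and furnishes a homomorphism $\uca(\Diamond_l,\mc{S}_l)\to\SI_\bl(K_{l,l}^2)$ carrying the initial cluster variables to the chosen generators. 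To promote this to an isomorphism I would apply the codimension-two (``starfish'') criterion of Berenstein--Fomin--Zelevinsky \cite{BFZ}: $\SI_\bl(K_{l,l}^2)$ is a normal finitely generated domain (it is the ring of invariants of the connected reductive group $\SL_\bl$ on a polynomial ring), it restricts to a Laurent polynomial ring on the initial cluster torus, and the initial cluster variables together with their single mutations generate it; hence it is sandwiched between the cluster algebra and the upper cluster algebra and must equal the latter. In parallel I would equip $\Diamond_l$ with the potential $\mc{W}_l$ built from its minimal cycles (the squares and triangles of the diamond) and check that $(\Diamond_l,\mc{W}_l)$ is non-degenerate with finite-dimensional Jacobian algebra \cite{DWZ1,DWZ2}, most efficiently by reducing through mutations to the already-established triple-flag quiver with potential of \cite{Fs1}, or to an acyclic quiver assembled from type-$A$ and Kronecker pieces.

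With a non-degenerate model in hand, stage (2) is an application of the general theorem on generic cluster characters: for a cluster algebra of full rank, the generic cluster character $C$ restricts to a bijection from the lattice points of the $\g$-vector cone $\mr{G}_{\Diamond_l}$ --- the cone of $\g$-vectors of generically $E$-rigid (equivalently $\tau$-rigid) presentations of $(\Diamond_l,\mc{W}_l)$ --- onto a $k$-vector-space basis of $\uca(\Diamond_l)$; the full-rank hypothesis is a finite check on the exchange matrix of $\mc{S}_l$. For stage (3) I would compute $\mr{G}_{\Diamond_l}$ directly: the $\g$-vector map realizes it as the image of the product of the elementary $\g$-vector cones of the two boundary flags and of the central Kronecker part, so $\mr{G}_{\Diamond_l}$ is cut out by an explicit list of linear inequalities indexed by the vertices of the hexagon and is naturally subdivided into a unimodular fan --- precisely the form needed to count its lattice points, and, via Theorem~\ref{T:FKm}, to write $g_{\mu,\nu}^\lambda$ as a difference of numbers of lattice points of two rational polytopes.

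I expect the main obstacle to be the interface between stages (1) and (2): arranging the initial seed so that \emph{every} Derksen--Weyman generator is reachable as a cluster variable (equivalently, so that the distinguished determinantal family is genuinely closed under the exchange relations), and then showing that the image of the generic cluster character is a full basis rather than merely a spanning set while the inequality description of $\mr{G}_{\Diamond_l}$ is tight enough to form a genuine unimodular fan. By contrast, the Pl\"ucker-relation identities and the normality/starfish argument should be routine once the correct combinatorial model $\Diamond_l$ has been pinned down.
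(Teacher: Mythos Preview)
Your Stage~1 contains a genuine gap. The ``starfish'' sandwiching you invoke would require that the initial cluster together with its one-step mutations \emph{generate} $\SI_\bl(K_{l,l}^2)$, giving $\SI\subseteq\mc{C}(\Diamond_l)$; combined with $\uca(\Diamond_l)\subseteq\SI$ you would then force $\mc{C}=\SI=\uca$. But the paper shows explicitly (Remark~\ref{r:strictupper}) that $\mc{C}(\Diamond_l)\subsetneq\uca(\Diamond_l)$: already for $l=2$ the element $x_2^\circ$ of extremal weight lies in every homogeneous generating set of $\SI$ yet is \emph{not} a cluster variable. Hence the generation claim is simply false, and the sandwich $\mc{C}\subseteq\SI\subseteq\uca$ by itself cannot decide $\SI=\uca$. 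Your Stage~2 has the same defect in disguise: the ``general theorem'' you cite (Plamondon, here Theorem~\ref{C:GCC}) only guarantees that the generic character sends $G(\Diamond_l,W_l)$ to a linearly \emph{independent} family in $\uca$, not to a basis; that it \emph{spans} is exactly the missing inclusion $\SI\subseteq\uca$, which is the hard direction.

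The paper's route is genuinely different and more indirect. One first proves only $\uca(\Diamond_l)\subseteq\SI_\bl$ (Proposition~\ref{P:contain}, via Lemma~\ref{L:RCA}). For the reverse inclusion one does not argue with $\Diamond_l$ directly at all: starting from the known cluster model for the complete triple flag $(T_{3l},\beta_{3l})$ \cite{Fs1}, one applies vertex removal (Theorem~\ref{T:removal}) and then orthogonal projection through an exceptional sequence (Theorem~\ref{T:project}) plus BGP reflections, to show that the semi-invariant ring of a larger \emph{triple}-flagged Kronecker quiver $K_{l,l}^{2,3}$ is a cluster model for an auxiliary IQP $(\widehat{\hexagon}_l,W_{\widehat{\hexagon}_l})$. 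Finally one embeds $\SI_\bl(K_{l,l}^2)$ as a weight-graded piece of $\SI_{\beta_l^3}(K_{l,l}^{2,3})$ and produces an explicit totally unimodular full-rank linear map $\mb{R}^{\Diamond_l^0}\to\mb{R}^{(\widehat{\hexagon}_l)_0}$ carrying $\mr{G}_{\Diamond_l}(\sigma)$ onto $\mr{G}_{\widehat{\hexagon}_l}(\sigma)$; this forces equality of lattice-point counts, hence equality of dimensions, and closes the argument. The concrete inequality description of $\mr{G}_{\Diamond_l}$ (Theorem~\ref{T:LP_Gl}) is obtained separately from the Jacobian algebra via the tri-broken-path modules $T_v$ and Lemma~\ref{L:hom=0}, not as a product of type-$A$ and Kronecker pieces as you suggest.
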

\noindent Here, $\Diamond_l$ is the ice {\em diamond quiver} to be introduced in Section \ref{S:Diamond}.
When $m=4$, it is displayed in Figure \ref{F:Diamond4}.
The initial {\em cluster variables} together with the {\em coefficients} \cite{FZ4} in $\mc{S}_l$ can be explicitly described in terms of {\em Schofield's semi-invariants} \cite{S1}.
We want to point out here that all these upper cluster algebras strictly contain the corresponding cluster algebras ${\mc{C}}(\Diamond_l)$.
This might suggest that upper cluster algebras are more important than cluster algebras.

As mentioned above, we dream that Kronecker coefficients count lattice points in some polytopes.
With the help of Theorem \ref{T:crucial}, we are getting closer to this dream.
We realize the relevant Kronecker coefficients as the difference of the numbers of lattice points in two polytopes. All these polytopes are obtained from the cone $\mr{G}_{\Diamond_l}$ by hyperplane sections defined by the weights $\sigma$ and $\lambda$.
We denote such a polytope by $\mr{G}_{\Diamond_l}(\sigma,\lambda)$ (or $\mr{G}_{\Diamond_l}(\sigma)$ if no restriction on $\lambda$).

\begin{theorem}[Proposition \ref{P:2KP}, Theorem \ref{T:KC}] \label{T:second}
Let $\mu$ and $\nu$ be two partitions of length no greater than $l$, and $\sigma:=\sigma(\mu,\nu)$ be the associated weight. Then
\begin{enumerate}
\item $$
\left[ s_{\mu} * s_{\nu} \right]_2 = \sum_{\g \in \mr{G}_{\Diamond_l}(\sigma) \cap \mb{Z}^{\Diamond_l^0}} \b{z}^{\lambda(\g)}.
$$
\item Let $\lambda$ be a partition of length $\leq 2$, and $\lambda'=(\lambda(1)+1,\lambda(2)-1)$, then
$$g_{\mu,\nu}^\lambda = |\mr{G}_{\Diamond_l}(\sigma,\lambda)\cap \mb{Z}^{\Diamond_l^0}| - |\mr{G}_{\Diamond_l}(\sigma,\lambda')\cap \mb{Z}^{\Diamond_l^0}|.$$
\end{enumerate}
Here, $\left[ - * - \right]_2$ is the $2$-truncated Kronecker product of symmetric functions (see Section \ref{S:KC}),
and $\lambda(\g)$ is the $\lambda$-weight of $\g$ (see Section \ref{S:CCS}).
\end{theorem}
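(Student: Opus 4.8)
The plan is to combine Theorem~\ref{T:FKm} with Theorem~\ref{T:crucial} and then read off the character of the semi-invariant ring weight-space by weight-space. Start from the identity $g_{\mu,\nu}^\lambda=\dim\SI_\bl(K_{l,l}^2)^U_{\sigma(\mu,\nu),\lambda}$ of Theorem~\ref{T:FKm} (specialized to $m=2$, so $\mf{S}_m=\mf{S}_2$ and the alternating sum has only two terms, giving exactly the difference $\dim\SI_{\bl,\sigma,\lambda}^{U}=\dim\SI_{\bl,\sigma,\lambda}-\dim\SI_{\bl,\sigma,\lambda'}$ with $\lambda'=(\lambda(1)+1,\lambda(2)-1)$). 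By Theorem~\ref{T:crucial} the ring $\SI_\bl(K_{l,l}^2)=\uca(\Diamond_l,\mc{S}_l)$ has a basis indexed by the lattice points of $\mr{G}_{\Diamond_l}$: the generic cluster character sends each $\g\in\mr{G}_{\Diamond_l}\cap\mb{Z}^{\Diamond_l^0}$ to a basis element whose multidegree (in the $\sigma$-grading and the $\GL(W)=\GL_2$-grading) is recorded by the two weights $\sigma(\g)$ and $\lambda(\g)$ attached to $\g$ in Section~\ref{S:CCS}. So summing $\b{z}^{\lambda(\g)}$ over those $\g$ with $\sigma(\g)=\sigma$ produces the full bigraded character of the $\sigma$-isotypic part, and the coefficient of $\b{z}^{\lambda}$ in it is $\dim\SI_{\bl,\sigma,\lambda}=|\mr{G}_{\Diamond_l}(\sigma,\lambda)\cap\mb{Z}^{\Diamond_l^0}|$.

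\smallskip

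First I would establish part~(1). By the Cauchy-type / Schur--Weyl expansion recalled in the introduction, $\S_\lambda(V\otimes W)=\sum_{\mu,\nu}g_{\mu,\nu}^\lambda\,\S_\mu(V)\otimes\S_\nu(W)$, and after restricting to $\dim W=2$ and translating to the quiver picture via Lemma~\ref{L:SI(Kml)}, the generating function $\sum_\lambda\dim\SI_{\bl,\sigma(\mu,\nu),\lambda}\,\b{z}^\lambda$ is, up to the standard identification of $\GL_2$-characters with symmetric polynomials in two variables, precisely the Schur-positive symmetric function whose expansion is the $m=2$ (i.e.\ two-row) truncation $[s_\mu*s_\nu]_2$ of the Kronecker product. (This is where I would cite or prove the precise statement that the $\sigma(\mu,\nu)$-graded piece of $\SI_\bl(K_{l,l}^2)$ as a $\GL(W)$-module has character $[s_\mu*s_\nu]_2$; it follows by taking $\dim V_i$ large and matching multiplicities, exactly the argument behind Theorem~\ref{T:FKm} before the $U$-invariant reduction.) Combining this with the basis description gives
\[
\left[s_\mu*s_\nu\right]_2=\sum_\lambda\dim\SI_{\bl,\sigma,\lambda}\,\b{z}^\lambda=\sum_{\g\in\mr{G}_{\Diamond_l}(\sigma)\cap\mb{Z}^{\Diamond_l^0}}\b{z}^{\lambda(\g)},
\]
since $\mr{G}_{\Diamond_l}(\sigma)$ is by definition the slice of $\mr{G}_{\Diamond_l}$ cut out by fixing the $\sigma$-weight, with no constraint on $\lambda$.

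\smallskip

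Then part~(2) is the $U$-invariant refinement. Applying $\dim(-)^U$ to the weight space and using the elementary fact that for a polynomial $\GL_2$-representation the dimension of the $\lambda$-highest-weight space equals (multiplicity of $\S^{\GL_2}_\lambda$) $=$ (dim of $\lambda$-weight space) $-$ (dim of $(\lambda(1)+1,\lambda(2)-1)$-weight space), we get
\[
g_{\mu,\nu}^\lambda=\dim\SI_{\bl,\sigma,\lambda}-\dim\SI_{\bl,\sigma,\lambda'}
=\bigl|\mr{G}_{\Diamond_l}(\sigma,\lambda)\cap\mb{Z}^{\Diamond_l^0}\bigr|-\bigl|\mr{G}_{\Diamond_l}(\sigma,\lambda')\cap\mb{Z}^{\Diamond_l^0}\bigr|,
\]
where each term is counted from part~(1) by extracting a single coefficient $\b{z}^\lambda$, which is the number of lattice points $\g$ of the cone with $\sigma(\g)=\sigma$ \emph{and} $\lambda(\g)=\lambda$, i.e.\ the lattice points of the further slice $\mr{G}_{\Diamond_l}(\sigma,\lambda)$. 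This is also literally the two-term case of the alternating sum in Theorem~\ref{T:FKm}, so (2) can alternatively be seen as substituting the basis count of part~(1) into that formula.

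\smallskip

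The main obstacle I expect is not any of the cluster-algebraic input—that is quoted wholesale from Theorem~\ref{T:crucial}—but the bookkeeping that identifies the $\GL(W)$-graded character of the $\sigma(\mu,\nu)$-isotypic component with the honest truncated Kronecker product $[s_\mu*s_\nu]_2$, including getting the normalization of $\sigma(\mu,\nu)$ and of the weights $\lambda(\g)$ exactly right (partition vs.\ $\GL_2$-weight conventions, the role of determinant twists, and the precise meaning of ``truncation at $m=2$''). Once that dictionary is pinned down via Lemma~\ref{L:SI(Kml)} and the conventions of Section~\ref{S:KC} and Section~\ref{S:CCS}, both displayed formulas are immediate consequences of substituting the lattice-point basis of $\mr{G}_{\Diamond_l}$ into the weight-space dimensions.
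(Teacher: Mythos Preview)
Your proposal is correct and follows essentially the same route as the paper. Part~(1) is the paper's Proposition~\ref{P:2KP}, proved exactly as you outline: apply the character map to Lemma~\ref{L:SI(Kml)} and substitute the lattice-point basis from Theorem~\ref{T:CS}; part~(2) is the paper's Theorem~\ref{T:KC}, proved by combining Corollary~\ref{C:SI(Kml)2} (the two-term alternating sum for $m=2$, which is your highest-weight subtraction) with the same lattice-point count.
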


In this paper, we treat the case of $m=2$ only. There are several reason for this.
First, $m=2$ is the first nontrivial case. We will see that to prove these results is already quite difficult.
To author's best knowledge, there is no general result like this in the literature.
Second, we do find cluster structures for $m>2$, but there are many different cluster structures for fixed $l$ and $m$.
However, we believe that for $m=2$ the cluster structure is unique, though we do not have a proof for this.
Finally, we will see in the subsequential paper that the cluster structure established here is a part of the building blocks for bigger $m$.
So we think that this case deserves our special care.

Here is the outline of this paper.
In Section \ref{S:KC}, we recall some basic facts on the Kronecker coefficients.
In Section \ref{S:SI}, we recall the work of Schofield, Derksen-Weyman, etc., on the semi-invariant rings of quiver representations.
In Section \ref{S:FKm}, we introduce the flagged Kronecker quivers and prove Theorem \ref{T:FKm}.
We also introduce a special class of semi-invariants, which will be our choice of initial cluster.
In Section \ref{S:CA}, we recall the definition of graded cluster algebras and their upper bounds.
In Section \ref{S:CC}, we recall the ice quivers with potentials and their representations following \cite{DWZ1,Fs1}.
We recall the generic cluster character in the current setting following \cite{P,Fs1}.

In Section \ref{S:Diamond}, we introduce the ice diamond quiver with potential $(\Diamond_l,W_l)$.
We give in Theorem \ref{T:LP_Gl} a concrete description of the domain of the generic cluster character as the lattice points in the polyhedral cone $\mr{G}_{\Diamond_l}$.
In Section \ref{S:CS}, we partially prove our first main result - Theorem \ref{T:crucial}.
The full proof is quite complicated so we put off it to the last section.
In Section \ref{S:CCS}, we draw several consequences from the cluster structure.
We prove our second main result - Theorem \ref{T:second}.
As a minor result, in Proposition \ref{P:gUinv} we estimate all possible $\g$-vectors in the $U$-invariants.
In Section \ref{S:Inv}, we introduce the {\em extremal unimodular fan} as a tool to present upper cluster algebras. As an application, we compute several relevant invariant rings.
In Section \ref{S:proof}, we fully prove Theorem \ref{T:crucial}.

\subsection*{Notations and Conventions}
Our vectors are exclusively row vectors. All modules are right modules.
For a quiver $Q$, we denote by $Q_0$ the set of vertices and by $Q_1$ the set of arrows.
For an arrow $a$, we denote by $t(a)$ and $h(a)$ its tail and head.
Arrows are composed from left to right, i.e., $ab$ is the path $\cdot \xrightarrow{a}\cdot \xrightarrow{b} \cdot$.
Throughout the paper, the base field $k$ is algebraically closed of characteristic zero.
Unadorned $\Hom$ and $\otimes$ are all over the base field $k$, and the superscript $*$ is the trivial dual for vector spaces.
For any representation $M$, $\dimbar M$ is the dimension vector of $M$.
For direct sum of $n$ copies of $M$, we write $nM$ instead of the traditional $M^{\oplus n}$.


\section{Basics on Kronecker Coefficients} \label{S:KC}
A {\em partition} $\lambda$ is a weakly decreasing sequence $\left(\lambda(1),\lambda(2),\dots\right)$ eventually being zeros.
We often do not write the trailing zeros, but the zero partition $(0, 0, \cdots)$ is also valid.
The {\em length} of $\lambda$ denoted by $\ell(\lambda)$ is the number of indices $i$ for which $\lambda(i)$ is non-zero.
The {\em size} of $\lambda$ denoted by $|\lambda|$ is the sum of $\lambda(i)$'s.
If $\lambda$ has size $n$, we also say that $\lambda$ is a partition of $n$, and write $\lambda \vdash n$.
When $\lambda$ contains some repetition, we may use superscripts to simplify the notation.
For example, $(5,3^2,2^3,1)$ is the partition $(5,3,3,2,2,2,1)$.
The {\em transpose} or {\em conjugate} of $\lambda$ is the partition $\lambda^*$ defined by
$$\lambda^*(i) = \#\{j\mid \lambda(j)\geq i\}.$$

We fix an algebraically closed field of characteristic $0$ as our base field $k$.
For $n\in\mb{N}$ let $\mf{S}_n$ be the symmetric group of $n$ letters.
The irreducible representations of $\mf{S}_n$ over $k$ are parameterized by partitions of $n$.
For a partition $\lambda$, we denote by $\S^\lambda$ the irreducible representation corresponding to $\lambda$.
The category of all representations of $\mf{S}_n$ is semisimple.
The {\em Kronecker coefficients} $g_{\mu,\nu}^\lambda$ are the tensor product multiplicities:
$$\S^\mu \otimes \S^\nu \cong \bigoplus_\lambda g_{\mu,\nu}^\lambda \S^\lambda.$$
The {\em trivial representation} and the {\em sign representation} are the two one-dimensional representations corresponding to the partition $(n)$ and $(1^n)$.
They are characterized by the property that
\begin{equation*} \S^\lambda \otimes \S^{(n)} = \S^\lambda,\ \text{ and }\ \S^\lambda \otimes \S^{(1^n)} = \S^{\lambda^*}.
\end{equation*}
In particular, we have that $g_{\mu,\nu}^\lambda=g_{\mu^*,\nu^*}^\lambda$.
Since all representations of $\mf{S}_n$ are self-dual, $g_{\mu,\nu}^\lambda$ is invariant under the permutations of $\lambda,\mu$ and $\nu$.

The {\em Schur functor} $\S_\lambda$ is by definition
$$\S_\lambda(V) = \Hom_{\mf{S}_n} (\S^\lambda, V^{\otimes n}),$$
where $V$ is a $k$-vector space.
It follows from the definition that
\begin{lemma} \label{L:SW} We have the following decomposition
$$\S_\lambda(V\otimes W) = \bigoplus_{\mu,\nu} g_{\mu,\nu}^\lambda \S_\mu(V) \otimes \S_\nu(W).$$
\end{lemma}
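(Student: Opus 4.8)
The plan is to unwind the definition of the Schur functor and combine it with the semisimplicity of $k\mf{S}_n$. First I would observe that for vector spaces $V$ and $W$ there is a canonical isomorphism
$$(V\otimes W)^{\otimes n}\;\cong\;V^{\otimes n}\otimes W^{\otimes n}$$
of $\mf{S}_n$-modules, where $\mf{S}_n$ acts \emph{diagonally}, i.e.\ simultaneously permuting the $n$ tensor slots on each side. This isomorphism is moreover $\GL(V)\times\GL(W)$-equivariant, so everything below respects that action.

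Next, since $k\mf{S}_n$ is semisimple, any $\mf{S}_n$-module $X$ has an isotypic decomposition $X\cong\bigoplus_\rho\S^\rho\otimes\Hom_{\mf{S}_n}(\S^\rho,X)$ with $\mf{S}_n$ acting on the first tensor factor only. Applying this to $X=V^{\otimes n}$ and recalling that $\S_\mu(V)=\Hom_{\mf{S}_n}(\S^\mu,V^{\otimes n})$ by definition gives $V^{\otimes n}\cong\bigoplus_\mu\S^\mu\otimes\S_\mu(V)$, and likewise $W^{\otimes n}\cong\bigoplus_\nu\S^\nu\otimes\S_\nu(W)$; here $\S_\mu(V)=0$ automatically when $\ell(\mu)>\dim V$, so no hypothesis on dimensions is needed. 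Tensoring the two decompositions and using the diagonal description of the $\mf{S}_n$-action from the first step yields
$$(V\otimes W)^{\otimes n}\;\cong\;\bigoplus_{\mu,\nu}\bigl(\S^\mu\otimes\S^\nu\bigr)\otimes\S_\mu(V)\otimes\S_\nu(W),$$
where $\mf{S}_n$ acts diagonally on $\S^\mu\otimes\S^\nu$ and trivially on the remaining two factors.

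Finally I would apply $\Hom_{\mf{S}_n}(\S^\lambda,-)$, which is exact and commutes with the finite direct sum, to obtain
$$\S_\lambda(V\otimes W)\;\cong\;\bigoplus_{\mu,\nu}\Hom_{\mf{S}_n}\!\bigl(\S^\lambda,\S^\mu\otimes\S^\nu\bigr)\otimes\S_\mu(V)\otimes\S_\nu(W),$$
and then invoke semisimplicity once more: $\dim\Hom_{\mf{S}_n}(\S^\lambda,\S^\mu\otimes\S^\nu)$ is the multiplicity of $\S^\lambda$ in $\S^\mu\otimes\S^\nu$, which is exactly $g_{\mu,\nu}^\lambda$. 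This gives the claimed decomposition. There is no genuine obstacle here; the only point requiring care is the bookkeeping of which copy of $\mf{S}_n$ acts on which factor — it is precisely the diagonal action that simultaneously converts the plain tensor product of $\GL$-data into a product of Schur functors and the plain tensor product of $\mf{S}_n$-representations into the Kronecker product — together with checking that the isomorphisms used are natural enough to be $\GL(V)\times\GL(W)$-equivariant.
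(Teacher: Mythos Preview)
Your proof is correct and is precisely the standard unwinding of the phrase ``It follows from the definition'' that precedes this lemma in the paper; the paper gives no further argument. Your write-up supplies exactly the details the paper leaves implicit: the identification $(V\otimes W)^{\otimes n}\cong V^{\otimes n}\otimes W^{\otimes n}$ as diagonal $\mf{S}_n$-modules, the isotypic decomposition from semisimplicity, and application of $\Hom_{\mf{S}_n}(\S^\lambda,-)$.
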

\noindent By the {\em Schur-Weyl duality},
$\S_\lambda(V)$ is an irreducible {\em polynomial} representation of the general linear group $\GL(V)$ if $\dim V \geq \ell(\lambda)$.
It is well-known that the category of all polynomial representations of $\GL(V)$ is also semisimple,
and the irreducible representations are parameterized by partitions of length no greater than $\dim V$.
The tensor product multiplicities $c_{\mu,\nu}^\lambda$ are called {\em Littlewood-Richardson coefficients} (LR coefficient for short):
$$\S_\mu(V) \otimes \S_\nu(V) \cong \bigoplus_\lambda c_{\mu,\nu}^\lambda \S_\lambda(V).$$
The LR coefficients can be computed using the Littlewood-Richardson rule \cite{FH}.

We fix the dimension of the vector space $V$, say $\dim V=m$.
Let $\Lambda_n^m$ be the space of homogeneous symmetric polynomials of degree $n$ in $m$ variables $z_1,\dots,z_m$.
A basis for this space is given by the {\em Schur polynomials} $\{s_\lambda\mid \lambda\vdash n\}$.
Let $\Lambda^m:=\bigoplus_{n\geq 0} \Lambda_n^m$ be the algebra of symmetric polynomials.
The {\em character map} $\chi$ maps a representation $M$ of $\GL(V)$ to $\sum_{\lambda} \dim(M_\lambda) \b{z}^\lambda\in \Lambda^m$,
where $M_\lambda$ is subspace of $M$ of weight $\lambda$.
It is well-known \cite{FH} that $\chi(\S_\mu(V)\otimes \S_\nu(V)) = s_\mu s_\nu$.
In particular, the structure constants in $\Lambda^m$ is also given by the LR coefficients:
$$s_\mu s_\nu = \sum_\lambda c_{\mu,\nu}^\lambda s_\lambda.$$
The {\em $m$-truncated Kronecker product} in $\Lambda^m$ is by definition
$$\left[s_\mu * s_\nu\right]_m := \sum_{\lambda\mid \ell(\lambda)\leq m}g_{\mu,\nu}^\lambda s_\lambda.$$


As is usual in the theory of {\em symmetric functions}, we can use inverse limit (see \cite{Mc}) to obtain a {\em Schur function} $s_{\lambda}$ for each partition, depending on countably many variables.
We recall that the {\em Kronecker product} on the algebra of symmetric functions is defined by
$$s_\mu * s_\nu := \sum_{\lambda}g_{\mu,\nu}^\lambda s_\lambda.$$
The Kronecker product and the ordinary product satisfy the Littlewood's identity:
\begin{equation*}(s_\mu s_\nu)*s_\lambda = \sum_{\tau\vdash |\mu|, \eta \vdash |\nu|} c_{\tau,\eta}^{\lambda} (s_\tau * s_\mu)(s_\eta*s_\nu).
\end{equation*}

\noindent Using this identity and Jacobi-Trudi formula, we can express Kronecker coefficients in terms of multiple LR coefficients. For a set $\bs{\eta}$ of $m$ partitions $\{\eta_1,\eta_2,\dots,\eta_m\}$,
the {\em multiple} LR coefficients $c_{\bs{\eta}}^\lambda$ are the multiplicities of $\S_\lambda(V)$ in the multiple tensor product:
$$\S_{\eta_1}(V) \otimes \S_{\eta_2}(V)\otimes \cdots \otimes \S_{\eta_m}(V) \cong \bigoplus_\lambda c_{\bs{\eta}}^\lambda \S_\lambda(V).$$
For $\omega\in \mf{S}_m$, we write $\lambda^\omega$ for the vector in $\mb{Z}^m$ such that $\lambda^\omega(i)= \lambda(i)-i+\omega(i)$. Note that $\lambda^\omega$ is not necessarily a partition.

\begin{lemma} \label{L:k2c} Assume that $\ell(\lambda)\leq m$, then
$$g_{\mu,\nu}^\lambda = \sum_{\omega\in \mf{S}_m} \op{sgn}(\omega) \sum_{|\eta_i|=\lambda^\omega(i)} c_{\bs{\eta}}^\mu c_{\bs{\eta}}^\nu.$$
\end{lemma}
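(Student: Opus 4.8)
The plan is to combine Littlewood's identity with the Jacobi--Trudi formula to convert the Kronecker coefficient into a signed sum of multiple Littlewood--Richardson coefficients. First I would recall the Jacobi--Trudi expansion of a Schur function in terms of complete homogeneous symmetric functions: for a partition $\lambda$ with $\ell(\lambda) \le m$,
$$s_\lambda = \sum_{\omega \in \mf{S}_m} \op{sgn}(\omega)\, h_{\lambda^\omega(1)} h_{\lambda^\omega(2)} \cdots h_{\lambda^\omega(m)},$$
where $h_k = s_{(k)}$ (and $h_0 = 1$, $h_k = 0$ for $k < 0$), and $\lambda^\omega$ is the vector defined just before the statement. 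Since each $h_k = s_{(k)}$ is a single Schur function, the right-hand side is a signed sum of products $h_{\lambda^\omega(1)} \cdots h_{\lambda^\omega(m)} = s_{(\lambda^\omega(1))} \cdots s_{(\lambda^\omega(m))}$ of one-row Schur functions.

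Next I would pair this with Littlewood's identity $(s_\mu s_\nu) * s_\lambda = \sum_{\tau \vdash |\mu|,\, \eta \vdash |\nu|} c_{\tau,\eta}^\lambda (s_\tau * s_\mu)(s_\eta * s_\nu)$, but applied in the more convenient iterated form obtained by repeatedly peeling off one factor at a time: for a product $s_{\alpha_1} \cdots s_{\alpha_m}$ of Schur functions,
$$ (s_{\alpha_1} \cdots s_{\alpha_m}) * s_\lambda = \sum_{\bs{\eta}} c_{\bs{\eta}}^\lambda \,(s_{\alpha_1} * s_{\eta_1}) \cdots (s_{\alpha_m} * s_{\eta_m}),$$
where $\bs{\eta} = \{\eta_1, \dots, \eta_m\}$ ranges over $m$-tuples of partitions with $|\eta_i| = |\alpha_i|$ and $c_{\bs{\eta}}^\lambda$ is the multiple LR coefficient. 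The key observation that makes everything collapse is that when each $\alpha_i = (k_i)$ is a single row, $s_{(k_i)} * s_{\eta_i} = s_{\eta_i}$ if $\eta_i = (k_i)$ and is $0$ otherwise --- because $\S^{(k_i)}$ is the trivial representation of $\mf{S}_{k_i}$, so tensoring by it is the identity, and the Kronecker product $s_{(k)} * s_{\eta}$ vanishes unless $\eta = (k)$. Hence $(s_{(k_1)} \cdots s_{(k_m)}) * s_\lambda = s_{(k_1)} \cdots s_{(k_m)} * s_\lambda$, which after one more application of the iterated Littlewood identity becomes $\sum_{|\eta_i| = k_i} c_{\bs{\eta}}^\lambda \prod_i (s_{(k_i)} * s_{\eta_i})$; but I want instead to read off $g_{\mu,\nu}^\lambda$ directly, so I would take $\lambda \mapsto$ the pair $\mu, \nu$ via the bilinearity $g_{\mu,\nu}^\lambda = \langle s_\mu * s_\nu, s_\lambda \rangle$ in the Hall inner product.

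Concretely: expand $s_\lambda$ by Jacobi--Trudi, so $g_{\mu,\nu}^\lambda = \sum_{\omega} \op{sgn}(\omega)\, \langle s_\mu * s_\nu,\ h_{\lambda^\omega(1)} \cdots h_{\lambda^\omega(m)} \rangle$. Now $h_{\lambda^\omega(1)} \cdots h_{\lambda^\omega(m)} = s_{(\lambda^\omega(1))} \cdots s_{(\lambda^\omega(m))}$, and by the self-adjointness of the Kronecker product and the iterated Littlewood identity applied to $s_\mu * s_\nu$ against this product, one gets $\langle s_\mu * s_\nu,\ s_{(\lambda^\omega(1))} \cdots s_{(\lambda^\omega(m))} \rangle = \sum_{|\eta_i| = \lambda^\omega(i)} c_{\bs{\eta}}^\mu c_{\bs{\eta}}^\nu$; this last identity is essentially the statement that the multiplicity of $\S_{(\lambda^\omega(1))}(V) \otimes \cdots \otimes \S_{(\lambda^\omega(m))}(V)$-type data in $\S_\mu \otimes \S_\nu$ is counted by pairs of LR fillings, which follows from Lemma~\ref{L:SW} applied to $V \otimes W$ with $\dim W = m$ and the branching of $\S_\lambda(V \otimes W)$ as $W$ degenerates to a sum of lines, combined with $\chi(\S_\mu(V) \otimes \S_\nu(V)) = s_\mu s_\nu$. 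Summing over $\omega$ gives the claimed formula. Care is needed when some $\lambda^\omega(i) < 0$: then $h_{\lambda^\omega(i)} = 0$ and there is no contributing $\eta_i$, so those terms are consistently zero on both sides, and the identity still holds termwise.

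The main obstacle I anticipate is not any single deep input but rather bookkeeping: getting the iterated form of Littlewood's identity stated and justified with the correct index ranges (induction on $m$, peeling off one factor and using $c_{\tau, \eta}^\lambda$ with $\tau$ corresponding to the first factor), and then verifying carefully that the one-row Kronecker products $s_{(k)} * s_\eta$ behave as the trivial-representation identity forces them to. A secondary subtlety is justifying the interchange of the finite sum over $\omega$ with the inner product and confirming that negative entries of $\lambda^\omega$ cause no sign-cancellation pathologies --- this is routine but must be spelled out so that the signed sum is genuinely termwise meaningful.
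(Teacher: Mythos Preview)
Your approach is essentially the same as the paper's: expand $s_\lambda$ via the Jacobi--Trudi determinant as a signed sum of products $\prod_i s_{(\lambda^\omega(i))}$, then apply Littlewood's identity iteratively together with the fact that $\S^{(k)}$ is the trivial representation. The paper computes $s_\mu * s_\lambda$ directly and reads off the coefficient of $s_\nu$; you phrase the same computation through the Hall inner product and self-adjointness, but the content is identical.

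One correction worth noting: the claim that ``$s_{(k)}*s_\eta$ vanishes unless $\eta=(k)$'' is false. Since $\S^{(k)}$ is the trivial representation of $\mf{S}_k$, tensoring with it is the identity, so $s_{(k)}*s_\eta = s_\eta$ for \emph{every} partition $\eta\vdash k$, not just for $\eta=(k)$. Your own justification (``tensoring by it is the identity'') proves exactly this correct version. Fortunately you abandon that first line of attack anyway, and your ``Concretely'' paragraph does not rely on the false claim: the iterated Littlewood identity applied to $s_\mu * (h_{k_1}\cdots h_{k_m})$ gives $\sum_{|\eta_i|=k_i} c_{\bs\eta}^\mu\, s_{\eta_1}\cdots s_{\eta_m}$ (using the correct $s_{(k_i)}*s_{\eta_i}=s_{\eta_i}$), and pairing with $s_\nu$ yields $\sum_{|\eta_i|=k_i} c_{\bs\eta}^\mu c_{\bs\eta}^\nu$, exactly as claimed. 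So the proof is correct once that misstatement is fixed.
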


\begin{proof} We first recall that the Jacobi-Trudi formula \cite[A.5]{FH} expresses
$s_\lambda$ as the determinant of the matrix $\left( s_{(\lambda(i)-i+j)} \right)_{i,j}$. So
\begin{align*}s_\mu * s_\lambda &= s_\mu * \det(s_{(\lambda(i)-i+j)}) = s_\mu * \sum_{\omega\in \mf{S}_m} \op{sgn}(\omega)\prod_{i=1}^{m} s_{(\lambda(i)-i+\omega(i))} \\
&=\sum_{\omega\in \mf{S}_m} \op{sgn}(\omega)\sum_{|\eta_i|=\lambda^\omega(i)} c_{\bs{\eta}}^\mu c_{\bs{\eta}}^\nu s_\nu \quad \text{(inductively apply Littlewood's identity)}
\end{align*}
Therefore $\displaystyle g_{\mu,\nu}^\lambda = \sum_{\omega\in \mf{S}_m} \op{sgn}(\omega) \sum_{|\eta_i|=\lambda^\omega(i)} c_{\bs{\eta}}^\mu c_{\bs{\eta}}^\nu.$
\end{proof}

\begin{corollary}[{\cite[Lemma 3.1]{PP}}] \label{C:k2c} Let $(\lambda,\mu,\nu)$ be a triple of partition of $n$ with $\ell(\lambda)\leq 2$.
We set $a_k(\mu,\nu)=\sum_{|\eta_1|=k, |\eta_2|=n-k} c_{\eta_1,\eta_2}^\mu c_{\eta_1,\eta_2}^\nu$, then
$$g_{\mu,\nu}^\lambda = a_{\lambda(1)}(\mu,\nu)-a_{\lambda(1)+1}(\mu,\nu).$$
\end{corollary}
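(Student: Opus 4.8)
The plan is to specialize Lemma \ref{L:k2c} to the case $m=2$ and show that it collapses to the stated two-term difference. First I would write out the symmetric group $\mf{S}_2 = \{e, \tau\}$ explicitly, where $\tau$ is the transposition $(1\,2)$, so that $\op{sgn}(e)=1$ and $\op{sgn}(\tau)=-1$. For $\lambda$ a partition with $\ell(\lambda)\leq 2$, the two relevant vectors in $\mb{Z}^2$ are $\lambda^e = (\lambda(1),\lambda(2))$ and $\lambda^\tau = (\lambda(1)-1+2,\ \lambda(2)-2+1) = (\lambda(1)+1,\lambda(2)-1)$. Note $|\lambda^e| = |\lambda^\tau| = n$ in both cases, so in either term the inner sum runs over pairs $(\eta_1,\eta_2)$ with $|\eta_1|+|\eta_2| = n$, but with the first part constrained to equal $\lambda(1)$ and $\lambda(1)+1$ respectively.

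Next I would substitute into the formula of Lemma \ref{L:k2c}. The $\omega = e$ term contributes $+\sum_{|\eta_1|=\lambda(1),\,|\eta_2|=n-\lambda(1)} c_{\bs{\eta}}^\mu c_{\bs{\eta}}^\nu$, and the $\omega = \tau$ term contributes $-\sum_{|\eta_1|=\lambda(1)+1,\,|\eta_2|=n-\lambda(1)-1} c_{\bs{\eta}}^\mu c_{\bs{\eta}}^\nu$. Here $\bs{\eta} = \{\eta_1,\eta_2\}$ is a set of two partitions, and $c_{\bs{\eta}}^\mu = c_{\eta_1,\eta_2}^\mu$ is just the ordinary Littlewood-Richardson coefficient (the multiple LR coefficient for two factors). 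Recognizing these two sums as $a_{\lambda(1)}(\mu,\nu)$ and $a_{\lambda(1)+1}(\mu,\nu)$ respectively, under the definition $a_k(\mu,\nu) = \sum_{|\eta_1|=k,\,|\eta_2|=n-k} c_{\eta_1,\eta_2}^\mu c_{\eta_1,\eta_2}^\nu$, yields $g_{\mu,\nu}^\lambda = a_{\lambda(1)}(\mu,\nu) - a_{\lambda(1)+1}(\mu,\nu)$ immediately.

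There is essentially no obstacle here; the corollary is a direct unwinding of the lemma for $m=2$. The only point requiring a word of care is the bookkeeping on the shifts: one must check that in the $2\times 2$ Jacobi-Trudi determinant the identity permutation produces parts $(\lambda(1), \lambda(2))$ from the convention $\lambda(i)-i+j$ with $(i,j)=(1,1),(2,2)$, and the transposition produces $(\lambda(1)-1+2, \lambda(2)-2+1)$ from $(i,j)=(1,2),(2,1)$. One should also note that the inner sums in Lemma \ref{L:k2c} are written with a constraint on $|\eta_i|$ only, so that when $\lambda^\tau(2) = \lambda(2)-1 < 0$ — which happens precisely when $\ell(\lambda)\leq 1$, i.e. $\lambda = (n)$ — the term $a_{n}(\mu,\nu)$ is still well-defined but only picks up pairs with $\eta_2$ the empty partition, consistent with $g_{\mu,\nu}^{(n)} = \langle s_\mu, s_\nu\rangle - 0$. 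This is Lemma 3.1 of \cite{PP}, so I would simply cite that source for the statement while giving the short derivation above.
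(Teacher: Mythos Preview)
Your proposal is correct and matches the paper's approach: the paper gives no explicit proof of this corollary, simply placing it immediately after Lemma~\ref{L:k2c} as the $m=2$ specialization, which is exactly what you carry out. One tiny wording slip in your edge-case remark: when $\lambda=(n)$ the vanishing term is $a_{\lambda(1)+1}(\mu,\nu)=a_{n+1}(\mu,\nu)=0$ (since $|\eta_2|=-1$ has no solutions), while $a_n(\mu,\nu)=\delta_{\mu,\nu}$ is the surviving term; your conclusion $g_{\mu,\nu}^{(n)}=\langle s_\mu,s_\nu\rangle-0$ is correct regardless.
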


In practice, if we apply the above formula to compute Kronecker coefficients, we may find that most of the LR coefficients are zeros. So to really make this formula effective, we need to understand the non-vanishing conditions for LR coefficients.
Such conditions are conjectured by A. Horn, and solved by Klyachko and Knutson-Tao \cite{KT}, and many other people, remarkably \cite{DW2}.
To state these conditions, we define for any positive decreasing sequence $I=\{i_1,i_2,\cdots,i_r\}$ a partition $\lambda(I)=(i_1-r,i_2-(r-1),\dots,i_r-1).$

\begin{lemma}[{\cite[Theorem 1.5]{DW2}}] \label{L:Horn}
Let $(\lambda,\mu,\nu)$ be a triple of partitions satisfying $\max(\ell(\lambda,\ell(\mu),\ell(\nu))\leq m$ and $|\lambda|=|\mu|+|\nu|$.
Then
$$c_{\mu,\nu}^\lambda >0 \quad\Longleftrightarrow\quad
\sum_{i\in I} \lambda(i) \leq \sum_{j\in J} \mu(j) + \sum_{k\in K} \nu(k)$$
for any $I,J,K \subset \{1,2,\dots,m\}$ satisfying $|I|=|J|=|K|<m$ and
$c_{\lambda(J),\lambda(K)}^{\lambda(I)} =1$.
\end{lemma}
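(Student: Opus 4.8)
The statement to prove is Lemma~\ref{L:Horn} (the Horn-type inequalities characterizing non-vanishing of Littlewood--Richardson coefficients), attributed to Derksen--Weyman \cite[Theorem~1.5]{DW2}. Since the result is cited rather than original, the cleanest route is to give a short structural proof via the invariant-theoretic / quiver-representation machinery that the paper recalls in Section~\ref{S:SI}, rather than re-deriving the full Klyachko--Knutson--Tao theory. The plan is to realize $c_{\mu,\nu}^\lambda$ as the dimension of a space of semi-invariants on a triple-flag (or $\widehat{A}$-type flagged) quiver and then to invoke Derksen--Weyman's saturation and factorization results for semi-invariant rings of quivers. Concretely, first I would recall that $c_{\mu,\nu}^\lambda = \dim \SI_\beta(T_l)_{\sigma(\lambda,\mu,\nu)}$ for the triple-flag quiver $T_l$ (the quiver \tripleflag\ with appropriate dimension vector $\beta$ and weight $\sigma$ determined by $\lambda,\mu,\nu$); this is standard and is the $m$-independent specialization of the setup in Section~\ref{S:FKm}. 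Non-vanishing of this dimension is then equivalent to $\sigma$ lying in the rational cone of effective weights, which by the saturation theorem equals the cone cut out by Schofield-semi-invariant inequalities.

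The second step is to translate those Schofield inequalities into the Horn inequalities of the statement. The key input is the description of the boundary of the weight cone: each facet corresponds to a general subrepresentation, and for triple-flag quivers these subrepresentations are indexed by triples $(I,J,K)$ of subsets of $\{1,\dots,m\}$ of equal cardinality $r<m$. The inequality contributed by such a triple is exactly $\sum_{i\in I}\lambda(i)\le\sum_{j\in J}\mu(j)+\sum_{k\in K}\nu(k)$, and the combinatorial condition that this be an \emph{extremal} (irredundant, facet-defining) inequality is precisely $c_{\lambda(J),\lambda(K)}^{\lambda(I)}=1$ — this is the Fulton-type ``$c=1$'' refinement that Knutson--Tao--Woodward and Derksen--Weyman established. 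So I would: (i) identify the general subrepresentations of the triple-flag representation with such subset triples and compute the Euler-form pairing giving the weight of the associated Schofield semi-invariant; (ii) invoke the reduction (Derksen--Weyman, or Belkale's geometric argument) that only those triples with $c_{\lambda(J),\lambda(K)}^{\lambda(I)}=1$ are needed to cut out the cone. Then $c_{\mu,\nu}^\lambda>0$ iff $\sigma$ satisfies all these inequalities, which is the claim. One should also note the ``$<m$'' restriction on $|I|=|J|=|K|$ reflects that the top-dimensional piece contributes only the trivial equality $|\lambda|=|\mu|+|\nu|$, already assumed.

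The main obstacle is step~(ii): showing that the $c=1$ triples suffice, i.e. that no other general-subrepresentation inequality is needed and that these are genuinely irredundant. This is the heart of the Knutson--Tao--Woodward/Belkale refinement and is not formal; a fully self-contained argument would essentially reproduce a substantial theorem. In the context of this paper the honest thing is to cite \cite{DW2} (and possibly \cite{KT}) for this reduction and to present only the quiver-theoretic dictionary — the identification of $c_{\mu,\nu}^\lambda$ with a semi-invariant dimension and of Schofield's general-subrepresentation inequalities with the displayed Horn inequalities — in detail. A secondary, purely bookkeeping point to verify carefully is the exact correspondence between the partition $\lambda(I)=(i_1-r,\dots,i_r-1)$ attached to a decreasing set $I$ and the dimension vector of the corresponding general subrepresentation of a flag, so that the Euler pairing reproduces $\sum_{i\in I}\lambda(i)$ on the nose; this is routine but must be done with the right sign and indexing conventions (recall vectors here are row vectors and paths compose left to right).
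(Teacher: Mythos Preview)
The paper gives no proof of this lemma at all: it is stated purely as a citation of \cite[Theorem~1.5]{DW2} and used as a black box (only in the proof of Lemma~\ref{L:g=2}), so there is no ``paper's own proof'' to compare your proposal against. Your recognition that the result is cited rather than original is exactly right, and your proposed sketch --- realize $c_{\mu,\nu}^\lambda$ as a semi-invariant dimension on the triple-flag quiver, then invoke saturation and the Derksen--Weyman/Knutson--Tao--Woodward $c=1$ refinement --- is the correct structural narrative behind the theorem, but as you yourself concede, step~(ii) is the entire content of the result and cannot be made self-contained without reproducing a substantial external theorem. In the context of this paper the appropriate ``proof'' is simply the citation, which is what the paper does.
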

\noindent Since $c_{(j),(k)}^{(j+k)}=1$, as a rather trivial consequence of Lemma \ref{L:Horn},
we have that $\lambda(j+k-1)\leq \mu(j)+\nu(k)$ are all necessary. These inequalities are essentially due to H. Weyl.
It follows that $\ell(\mu)+\ell(\nu)\geq \ell(\lambda)$.
Another easy consequence on the length is that
$\ell(\lambda) \geq \max(\ell(\mu),\ell(\nu))$.
As an illustration, let us compute one family of Kronecker coefficients.


\begin{lemma} \label{L:g=2} We have that $g_{\mu,\nu}^{\lambda} = 2$ for
$\lambda=(3j,3k),\mu=(i+1,i,i-1)$, and $\nu=(j+1,j,j-1,k+1,k,k-1) $ with $j\geq k+2,k\geq 1$ and $i=j+k$.
\end{lemma}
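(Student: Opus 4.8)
The plan is to apply Corollary \ref{C:k2c} directly, reducing the computation of $g_{\mu,\nu}^\lambda$ to the difference $a_{\lambda(1)}(\mu,\nu) - a_{\lambda(1)+1}(\mu,\nu)$, where $\lambda(1) = 3j$ and the total size is $n = 3j+3k = 3i$. So I need to understand $a_{3j}(\mu,\nu)$ and $a_{3j+1}(\mu,\nu)$, each of which is a sum $\sum_{|\eta_1|=t,\,|\eta_2|=n-t} c_{\eta_1,\eta_2}^\mu c_{\eta_1,\eta_2}^\nu$. First I would exploit the fact that $\mu = (i+1,i,i-1)$ has length $3$: by the length constraint on LR coefficients ($\ell(\eta_1)+\ell(\eta_2)\geq\ell(\mu)$ and each $\ell(\eta_j)\leq 3$), and by the interlacing/Weyl inequalities forced by $c_{\eta_1,\eta_2}^\mu\neq 0$, the partitions $\eta_1,\eta_2$ that contribute are severely restricted — each has length at most $3$, and their parts are pinned near $i/i\pm 1$ patterns. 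The key structural input is that $\mu$ is a "staircase-like" partition with all parts within distance $1$ of each other, which typically makes the relevant $c_{\eta_1,\eta_2}^\mu$ equal to $0$ or $1$.

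Next I would analyze $\nu = (j+1,j,j-1,k+1,k,k-1)$, which has length $6$ (using $j\geq k+2$ so the parts are genuinely in two well-separated blocks). The hypothesis $j \geq k+2$ should guarantee that $\nu$ splits cleanly: its first three parts form a block around $j$ and its last three around $k$, with a gap preventing interaction. I expect that $c_{\eta_1,\eta_2}^\nu \neq 0$ forces a corresponding "block decomposition" $\eta_s = \eta_s' + \eta_s''$ (as multisets of parts) with $|\eta_1'|+|\eta_2'| = 3j$ and $|\eta_1''|+|\eta_2''| = 3k$, and moreover $c_{\eta_1,\eta_2}^\nu$ factors as a product of two LR coefficients, one built from the $j$-block and one from the $k$-block. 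Combined with the constraint from $\mu$ (which forces $\{\eta_1,\eta_2\}$ to lie in a small explicit list), this should let me enumerate the contributing pairs for $a_{3j}$ and for $a_{3j+1}$ by hand. The arithmetic identity $i = j+k$ is what makes the $\mu$-side sizes $3i = 3j+3k$ compatible with the $\nu$-side block sizes.

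The main obstacle will be the bookkeeping: precisely enumerating which pairs $(\eta_1,\eta_2)$ with $c_{\eta_1,\eta_2}^\mu = c_{\eta_1,\eta_2}^\nu = 1$ have $|\eta_1| = 3j$ versus $|\eta_1| = 3j+1$, and checking that exactly two more pairs occur at weight $3j$ than at weight $3j+1$. I would organize this by first classifying all $(\eta_1,\eta_2)$ with $c_{\eta_1,\eta_2}^\mu\neq 0$ — since $\mu=(i+1,i,i-1)$, the nonzero LR coefficients here correspond to pairs of partitions whose parts interlace appropriately with $\mu$, and I expect a short finite list parametrized by a couple of integer choices. Then for each such pair I would determine whether $c_{\eta_1,\eta_2}^\nu = 1$ (using the block-factorization and the Horn/Weyl inequalities from Lemma \ref{L:Horn} to rule out the rest), and tally by the value of $|\eta_1|$. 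The conditions $j\geq k+2$ and $k\geq 1$ are presumably exactly what is needed to avoid degenerate overlaps in this tally; I would keep track of where each is used. Once the two tallies are in hand, $g_{\mu,\nu}^\lambda = a_{3j} - a_{3j+1} = 2$ follows immediately from Corollary \ref{C:k2c}.
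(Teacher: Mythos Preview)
Your overall strategy --- reduce to Corollary~\ref{C:k2c} and analyze which pairs $(\eta_1,\eta_2)$ contribute to $a_{3j}$ and $a_{3j+1}$ --- is exactly what the paper does. But two of your expectations about how the computation unfolds are wrong, and following them would derail the argument.

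First, your guess that $c_{\eta_1,\eta_2}^\mu\in\{0,1\}$ for the staircase $\mu=(i+1,i,i-1)$ is false: for $\eta_1=(j+1,j,j-1)$ and $\eta_2=(k+1,k,k-1)$ one computes $c_{\eta_1,\eta_2}^\mu=2$. So the value $g_{\mu,\nu}^\lambda=2$ does \emph{not} arise from ``two more contributing pairs at weight $3j$ than at $3j+1$''. In fact there is a \emph{single} contributing pair, giving $a_{3j}=2\cdot 1=2$, and $a_{3j+1}=0$.

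Second, your proposed ``block factorization'' of $c_{\eta_1,\eta_2}^\nu$ into a product of two LR coefficients is not a valid identity and is not how the pair is pinned down. The paper's argument is more direct: the constraints $\ell(\mu)=3$ and $\ell(\nu)=6$ force $\ell(\eta_1)=\ell(\eta_2)=3$. Then Weyl's inequalities for $c_{\eta_1,\eta_2}^\nu\neq 0$ give $\eta_2(r)\geq\nu(3+r)=k+2-r$, hence $|\eta_2|\geq 3k$; since $|\eta_2|=3k-\epsilon$ with $\epsilon\in\{0,1\}$, this already kills $a_{3j+1}$ and forces $\eta_2=(k+1,k,k-1)$. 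Weyl's inequalities for $c_{\eta_1,\eta_2}^\mu\neq 0$ then give $\eta_1(r)\geq j+1-r$, and since $|\eta_1|=3j$ a few further Horn inequalities (from Lemma~\ref{L:Horn}) rule out the remaining candidates, leaving $\eta_1=(j+1,j,j-1)$. The final step is the LR-rule computation $c_{\eta_1,\eta_2}^\mu=2$, $c_{\eta_1,\eta_2}^\nu=1$.
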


\begin{proof} We assume that $k\geq 2$. The case of $k=1$ can be treated similarly.
By the restriction on the length, to make $c_{\eta_1,\eta_2}^\mu c_{\eta_1,\eta_2}^\nu$ contribute to $a_{*}(\mu,\nu)$ in Corollary \ref{C:k2c},
we must have that $\ell(\eta_1)=\ell(\eta_2)=3$.
Applying Weyl's inequalities for $c_{\eta_1,\eta_2}^\nu\neq 0$, we have that $\eta_2(r) \geq \nu(3+r)=k+2-r$ for $r=1,2,3$.
In particular, $|\eta_2|\geq 3k$.
But $|\eta_2|=\lambda(2)-\ep=3k-\ep$ in $a_{\lambda(1)+\ep}(\mu,\nu)$ for $\ep=0,1$.
So the only choice for $\eta_2$ is $(k+1,k,k-1)$ and $a_{\lambda(1)+1}(\mu,\nu)=0$.
Similarly applying Weyl's inequalities for $c_{\eta_1,\eta_2}^\mu\neq 0$,
we get $\eta_1(r)\geq j+1-r$.
We claim that the only choice for $\eta_1$ is $(j+1,j,j-1)$.
Since $|\eta_1|=3j$, we do not have many choices left.
Other choices can be easily ruled out by other Horn's inequalities.
We note that there are six more inequalities coming from
$c_{(0,0,(0,0)}^{(0,0)} = c_{(0,0),(1,0)}^{(1,0)} =c_{(0,0),(1,1)}^{(1,1)}=c_{(1,0),(1,0)}^{(1,1)} =1$ and their symmetry.
Finally, we compute by the Littlewood-Richardson rule that $c_{\eta_1,\eta_2}^\mu=2$~and~$c_{\eta_1,\eta_2}^\nu=1$.
Hence, $g_{\mu,\nu}^{\lambda} = 2$.

\end{proof}

\section{Semi-invariants of Quiver Representations} \label{S:SI}

\subsection{Schofield's Construction}
Let us briefly recall the semi-invariant rings of quiver representations \cite{S1}.
Let $Q$ be a finite quiver without oriented cycles.
For a dimension vector $\beta$ of $Q$, let $V$ be a $\beta$-dimensional vector space $\prod_{i\in Q_0} k^{\beta(i)}$. We write $V_i$ for the $i$-th component of $V$.
The space of all $\beta$-dimensional representations is
$$\Rep_\beta(Q):=\bigoplus_{a\in Q_1}\Hom(V_{t(a)},V_{h(a)}).$$
The product of general linear group
$$\GL_\beta:=\prod_{i\in Q_0}\GL(V_i)$$
acts on $\Rep_\beta(Q)$ by the natural base change.
Define $\SL_\beta\subset \GL_\beta$ by
$$\SL_\beta=\prod_{i\in Q_0}\SL(V_i).$$
We are interested in the rings of semi-invariants
$$\SI_\beta(Q):=k[\Rep_\beta(Q)]^{\SL_\beta}.$$
The ring $\SI_\beta(Q)$ has a weight space decomposition
$$\SI_\beta(Q)=\bigoplus_\sigma \SI_\beta(Q)_\sigma,$$
where $\sigma$ runs through the multiplicative {\em characters} of $\GL_\beta$.
We refer to such a decomposition the $\sigma$-grading of $\SI_\beta(Q)$.
Recall that any character $\sigma: \GL_\beta\to k^*$ can be identified with a weight vector
$\sigma \in \mb{Z}^{Q_0}$
\begin{equation} \label{eq:char} \big(g(i)\big)_{i\in Q_0}\mapsto\prod_{i\in Q_0} \big(\det g(i)\big)^{\sigma(i)}.
\end{equation}
Since $Q$ has no oriented cycles, the degree zero component is the field $k$ \cite{Ki}.

Let us understand these multihomogeneous components
$$\SI_\beta(Q)_\sigma:=\{f\in k[\Rep_\beta(Q)]\mid g(f)=\sigma(g)f, \forall g\in\GL_\beta \}.$$
For any projective presentation $f: P_1\to P_0$, we view it as an element in the homotopy category $K^b(\proj Q)$ of bounded complexes of projective representations of $Q$.
The {\em weight vector} $\f$ of $f$ is the corresponding element in the Grothendieck group of $K^b(\proj Q)$.
Concretely, suppose that $P_1=P(\f_1)$ and $P_0=P(\f_0)$ for $\f_1,\f_0\in\mathbb{N}_0^{Q_0}$, then $\f = \f_1-\f_0$.
Here, we use the notation $P(\f)$ for $\bigoplus_{i\in Q_0} \f(i) P_i^{}$, where $P_i$ is the indecomposable projective representation corresponding to the vertex $i$.
From now on, we will view a weight $\sigma$ as an element in the dual $\Hom_{\mb{Z}}(\mb{Z}^{Q_0},\mb{Z})$ via the usual dot product.

We assume that $\f=\sigma$ and $\sigma(\beta)=0$.
We apply the functor $\Hom_Q(-,N)$ to $f$ for $N\in\Rep_\beta(Q)$
\begin{equation} \label{eq:canseq} \Hom_Q(P_0,N)\xrightarrow{\Hom_Q(f,N)}\Hom_Q(P_1,N).
\end{equation}
Since $\sigma(\beta)=0$, $\Hom_Q(f,N)$ is a square matrix.
Following Schofield \cite{S1}, we define
$$s(f,N):=\det \Hom_Q(f,N).$$
We give a more concrete description for the map $\Hom_Q(f,N)$.\\
{\bf Concrete description of $\Hom_Q(f,N):\ $}
Recall that a morphism $P_1\xrightarrow{f} P_0$ can be represented by a matrix whose entries are linear combination of paths. Applying $\Hom_Q(-,N)$ to this morphism is equivalent to that we first transpose the matrix of $f$, then substitute paths in the matrix by corresponding matrix representations in $N$.

We set $s(f)(-)=s(f,-)$ as a function on $\Rep_\beta(Q)$. It is proved in \cite{S1} that $s(f)\in\SI_\beta(Q)_{\sigma}$.
In fact,
\begin{theorem}[\cite{DW1,SV,DZ}] \label{T:inv_span} $s(f)$'s span $\SI_\beta(Q)_{\sigma}$ over the base field $k$.
\end{theorem}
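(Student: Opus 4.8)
Since the inclusion $\op{span}_k\{s(f)\}\subseteq\SI_\beta(Q)_\sigma$ is already known \cite{S1}, the task is the reverse inclusion: every $\SL_\beta$-semi-invariant of weight $\sigma$ should be a $k$-linear combination of Schofield semi-invariants. The plan is to deduce this from the classical first fundamental theorem (FFT) for the general linear group, by way of the Cauchy identity and Schur--Weyl duality, all of which are available because $\op{char}k=0$.

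First I would rewrite the weight space as a space of mixed tensor invariants. One has $\SI_\beta(Q)_\sigma=\bigl(k[\Rep_\beta(Q)]\otimes k_{-\sigma}\bigr)^{\GL_\beta}$, where $k_{-\sigma}=\bigotimes_{i\in Q_0}(\det V_i)^{-\sigma(i)}$. Each factor $(\det V_i)^{-\sigma(i)}$ is a one-dimensional $\GL(V_i)$-module, namely the Schur functor $\S_{(|\sigma(i)|^{\beta(i)})}$ evaluated on $V_i$ if $\sigma(i)\le 0$ and on $V_i^*$ if $\sigma(i)>0$, hence a $\GL(V_i)$-direct summand of an ordinary tensor power of $V_i$ or of $V_i^*$. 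Using $k[\Rep_\beta(Q)]=\Sym\bigl(\bigoplus_{a\in Q_1}V_{t(a)}\otimes V_{h(a)}^*\bigr)$ and the fact that in characteristic zero $\Sym$ is a $\GL$-direct summand of the tensor algebra, I can present $\SI_\beta(Q)_\sigma$ as a $\GL_\beta$-direct summand of $\mc{T}^{\GL_\beta}$ for a pure mixed tensor module $\mc{T}$ built from the $V_i$ and the $V_i^*$; the relevant projector is the Young antisymmetrizer that cuts out the determinant powers. As taking $\GL_\beta$-invariants is exact in characteristic zero, it then suffices to describe $\mc{T}^{\GL_\beta}$ and apply this projector.

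Next I would apply the FFT and read off projective presentations. In $\mc{T}$ each arrow $a$ contributes covariant $V$-slots at $t(a)$ and contravariant $V^*$-slots at $h(a)$, while the $k_{-\sigma}$-part contributes, at a vertex $i$, only contravariant slots if $\sigma(i)>0$ and only covariant slots if $\sigma(i)<0$. By the FFT for each $\GL(V_i)$ --- mixed tensor invariants are spanned by the complete contractions pairing covariant with contravariant slots, and vanish unless the two counts agree at $i$ --- the space $\mc{T}^{\GL_\beta}$ is spanned by the global pairings matching, vertex by vertex, every covariant slot with a contravariant one (here the hypothesis $\sigma(\beta)=0$ enters, as the compatibility of these vertex-by-vertex balancings). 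Because $Q$ has no oriented cycles, a global pairing decomposes uniquely into a disjoint family of directed paths of $Q$, each starting at a $k_{-\sigma}$-slot over a vertex $p$ with $\sigma(p)>0$ and ending at a $k_{-\sigma}$-slot over a vertex $q$ with $\sigma(q)<0$. Organizing the start slots into $\sigma(p)$ blocks of size $\beta(p)$ and the end slots into $-\sigma(q)$ blocks of size $\beta(q)$, this data is exactly a projective presentation $f\colon P(\f_1)\to P(\f_0)$ with $\f_1(i)=\max(\sigma(i),0)$ and $\f_0(i)=\max(-\sigma(i),0)$, so $\f=\f_1-\f_0=\sigma$, whose entries are the path-functions just described. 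Applying the antisymmetrizing projector of the previous step turns the product of path-monomials attached to a pairing into the Leibniz expansion of a determinant, and by the concrete description of $\Hom_Q(f,-)$ recalled above --- transpose the path-matrix, then substitute the matrices of $N$ --- that determinant is exactly $s(f)=\det\Hom_Q(f,-)$. Allowing $k$-linear combinations of paths as the entries of $f$, we conclude that $\SI_\beta(Q)_\sigma$ is spanned by these $s(f)$'s, a subfamily of all Schofield semi-invariants, which proves the theorem.

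The delicate point is the middle step: I must check carefully that the Young-antisymmetrization of the FFT contractions matches, with the correct signs and with no missing or spurious terms, the determinantal expansion of $\Hom_Q(f,N)$ --- concretely, that the ``column'' structure of that square matrix (the copies of $P_i$ in $P(\f_1)$ versus the copies of $P_j$ in $P(\f_0)$) corresponds to the partition of the $k_{-\sigma}$-slots into blocks of sizes $\beta(i)$, and that the antisymmetrizer acts on those blocks exactly as the determinant prescribes. A secondary, routine but necessary, verification is that taking $\GL_\beta$-invariants commutes with passing to direct summands and with the polarization/restitution comparison between symmetric and tensor powers, which is standard in characteristic zero. (If one prefers to avoid polarization, the argument of \cite{DW1} instead runs an induction on $|Q_0|$ by deleting a sink and identifying the contribution of the deleted vertex with a Littlewood--Richardson coefficient; there the bookkeeping shifts to controlling how the $s(f)$'s restrict along the induction.)
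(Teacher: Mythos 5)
The paper itself gives no proof of this theorem; it is imported wholesale from \cite{DW1,SV,DZ}. Your proposal is, in effect, a sketch of the classical-invariant-theory argument found in Domokos--Zubkov \cite{DZ} and close in spirit to Schofield--Van den Bergh \cite{SV}: twist by a determinant power to convert $\SL_\beta$-semi-invariance into $\GL_\beta$-invariance, pass to mixed tensor spaces by polarization (legitimate in characteristic zero), apply the first fundamental theorem for $\GL$ at each vertex to produce complete contractions, use acyclicity to read the matching as a disjoint system of directed paths, and recognize the Young-antisymmetrized contraction as the block-Leibniz expansion of $\det\Hom_Q(f,-)$. This is a genuinely different route from the one in \cite{DW1}, which decomposes $k[\Rep_\beta(Q)]$ via the Cauchy identity into Schur-functor blocks, identifies the $\sigma$-weight space with products of Littlewood--Richardson multiplicities, and connects these to the $s(f)$ afterwards; the FFT route produces an explicit spanning family by construction, whereas the Cauchy/LR route is what underlies the saturation results that the present paper relies on. You correctly isolate the one nontrivial step --- that the antisymmetrizers cutting out the determinant powers turn the sum of FFT contractions into the blockwise determinantal expansion --- and a complete proof does have to grind through that bookkeeping.

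One convention needs repair, because as written your proposal is internally inconsistent. With the paper's conventions (right modules, $\f=\f_1-\f_0$, and $\f=\sigma$, which pins down the action as $g(f)(N)=f(gN)$), the entries of $f\colon P([\sigma]_+)\to P([-\sigma]_+)$ lie in $\Hom_Q(P_j,P_i)$ with $\sigma(j)>0$ and $\sigma(i)<0$, and for right modules these are paths running from the $\sigma<0$-vertices to the $\sigma>0$-vertices. Your path analysis, driven by the twist $k_{-\sigma}$, produces paths running the other way (from $\sigma>0$ to $\sigma<0$), which would assemble into a presentation of weight $-\sigma$ rather than $\sigma$. The fix is to twist by $k_\sigma=\bigotimes_i(\det V_i)^{\sigma(i)}$ instead of $k_{-\sigma}$: then the $k_\sigma$-slots are contravariant where $\sigma(i)<0$ and covariant where $\sigma(i)>0$, the FFT paths run from $\sigma<0$ to $\sigma>0$, and everything matches the shape $P([\sigma]_+)\to P([-\sigma]_+)$. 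This is a bookkeeping slip, not a conceptual gap, but it does flip the weight everywhere and should be tracked carefully when you expand the ``delicate point.''
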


It is easy to see that if $s(f)\neq 0$, then $f$ resolves some representation $M$, say of dimension $\alpha$
$$0\to P_1 \xrightarrow{f} P_0\to M\to 0.$$
From the long exact sequence
\begin{equation} \label{eq:canseq} \Hom_Q(M,N)\hookrightarrow\Hom_Q(P_0,N)\xrightarrow{\Hom_Q(f,N)}\Hom_Q(P_1,N)\twoheadrightarrow\Ext_Q(M,N),
\end{equation}
we see that $\alpha$ and $\sigma$ are related by
$\sigma(-)=-\innerprod{\alpha,-}_Q$, where $\innerprod{-,-}_Q$ is the {\em Euler form} of $Q$.
In this case, we call $\sigma$ the weight vector corresponding to $\alpha$, and denote it by $ ^\sigma \alpha$;
and conversely we call $\alpha$ the dimension vector corresponding to $\sigma$, and denote it by $^\alpha \sigma$.
It also follows from \eqref{eq:canseq} that $s(f,N)\neq 0$ if and only if $\Hom_Q(M,N)=0$ or $\Ext_Q(M,N)=0$.

We want to point out that the function $s(f)$ is determined, up to a scalar multiple, by the homotopy equivalent class of $f$, and thus by the isomorphism class of $M$.
If one hopes to define the function $s(f)$ uniquely in terms of representations, one can take the {\em canonical resolution} $f_{M}$ of $M$.
This is Schofield's original definition in \cite{S1}.

\subsection{Stability} We define the subgroup $\GL_\beta^\sigma$ to be the kernel of the character map \eqref{eq:char}. The invariant ring of its action is $$\SI_\beta^\sigma(Q):=k[\Rep_\beta(Q)]^{\GL_\beta^\sigma}=\bigoplus_{n\geqslant 0} \SI_\beta(Q)_{n\sigma}.$$

\begin{definition}
A representation $M\in\Rep_\beta(Q)$ is called {\em $\sigma$-semi-stable}
if there is some non-constant $f\in \SI_\beta^\sigma(Q)$ such that $f(M)\neq 0$.
It is called {\em stable} if in addition the orbit $\GL_\beta^\sigma\cdot M$ is closed of dimension equal to $\dim\GL_\beta^\sigma-1$.
\end{definition}

\noindent Based on Hilbert-Mumford criterion, King provided a simple criterion for the stability of a representation.
\begin{lemma} \cite[Proposition 3.1]{Ki}  \label{L:King} A representation $M$ is $\sigma$-semi-stable (resp. $\sigma$-stable) if and only if $\sigma(\dimbar M)=0$ and $\sigma(\dimbar L)\geqslant 0$ (resp. $\sigma(\dimbar L)>0$) for any non-trivial subrepresentation $L$ of $M$.
\end{lemma}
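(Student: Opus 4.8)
The plan is to derive the criterion from the Hilbert--Mumford numerical criterion applied to the $\GL_\beta$-action on $\Rep_\beta(Q)$ linearized by the character $\sigma$. First I would record that $\sigma(\dimbar M)=\sigma(\beta)=0$ is forced: the central one-parameter subgroup $\Delta\colon t\mapsto(t\,\Id_{V_i})_{i\in Q_0}$ acts trivially on $\Rep_\beta(Q)$ while $\sigma$ restricts to it as $t\mapsto t^{\sigma(\beta)}$, so a non-constant $f\in\SI_\beta(Q)_{n\sigma}$ with $n>0$ can exist only if $\sigma(\beta)=0$; otherwise $\SI_\beta^\sigma(Q)=k$ and nothing is $\sigma$-semi-stable. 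Assuming $\sigma(\beta)=0$ henceforth, I would recast $\sigma$-semistability as honest projective-type GIT semistability via the affine-cone trick: identify $\SI_\beta^\sigma(Q)=\bigoplus_{n\geq0}\SI_\beta(Q)_{n\sigma}$ with $k[\Rep_\beta(Q)\times\mathbb{A}^1]^{\GL_\beta}$, where $\GL_\beta$ acts on $\mathbb{A}^1$ through $-\sigma$; then $M$ is $\sigma$-semi-stable (resp.\ $\sigma$-stable, i.e.\ properly stable in Mumford's sense) exactly when a point $(M,z)$ with $z\neq0$ is Mumford semistable (resp.\ properly stable) for this $\GL_\beta$-action, which is the situation in which Hilbert--Mumford reduces the question to testing one-parameter subgroups, with the sign of the linearization chosen so that the resulting inequalities are read off from subrepresentations rather than quotients.

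The core step is the standard dictionary between one-parameter subgroups $\lambda\colon k^*\to\GL_\beta$ and $\mb{Z}$-gradings $V_i=\bigoplus_{n\in\mb{Z}}V_{i,n}$. I would verify that $\lim_{t\to0}\lambda(t)\cdot M$ exists in $\Rep_\beta(Q)$ if and only if each truncation $M_{\geq n}:=\bigoplus_{m\geq n}V_{\bullet,m}$ (formed componentwise) is a subrepresentation of $M$; then, after twisting $\lambda$ by the trivially-acting central subgroup $\Delta$ (harmless since $\sigma(\beta)=0$) so that $M_{\geq0}=M$ and $M_{\geq n}=0$ for $n\gg0$, a short Abel summation identifies the Mumford weight as $\mu^\sigma(M,\lambda)=\sum_{n\geq1}\sigma(\dimbar M_{\geq n})$. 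Granting this, both directions are immediate. If $\sigma(\dimbar L)\geq0$ for every subrepresentation $L$ of $M$, each summand $\sigma(\dimbar M_{\geq n})$ is nonnegative, so $\mu^\sigma(M,\lambda)\geq0$ for every admissible $\lambda$ and $M$ is $\sigma$-semi-stable; conversely, given any subrepresentation $L$, the one-parameter subgroup placing $L$ in weight $1$ and a chosen complement in weight $0$ has an existing limit (because $M_{\geq1}=L$ is a subrepresentation) and $\mu^\sigma(M,\lambda)=\sigma(\dimbar L)$, so $\sigma$-semistability forces $\sigma(\dimbar L)\geq0$. For the stable case the same computation shows that $\mu^\sigma(M,\lambda)>0$ for all admissible $\lambda$ not contained in $\Delta$ is equivalent to $\sigma(\dimbar L)>0$ for all $L$ with $0\neq L\subsetneq M$; since $\Delta\subseteq\GL_\beta^\sigma$ acts trivially on $\Rep_\beta(Q)$, so that $\GL_\beta^\sigma$-orbits have dimension at most $\dim\GL_\beta^\sigma-1$, Mumford's characterization of properly stable points by closed orbits of maximal dimension matches this with the definition of $\sigma$-stable given in the text.

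I expect the real work to be bookkeeping rather than conceptual. The two delicate points are: making the GIT set-up precise --- namely that $\bigoplus_{n\geq0}\SI_\beta(Q)_{n\sigma}$ is the section ring whose $\Proj$ is the moduli space, so that the Hilbert--Mumford criterion genuinely applies over the affine space $\Rep_\beta(Q)$ --- and carrying out the weight computation with the correct sign and the correct normalization of $\lambda$, i.e.\ identifying $\mu^\sigma(M,\lambda)$ with $\sum_n\sigma(\dimbar M_{\geq n})$. The properly stable case carries the additional baggage of invoking Mumford's closed-orbit theorem and tracking that it is precisely the centre $\Delta$ of $\GL_\beta$ that accounts for the drop of the orbit dimension by one.
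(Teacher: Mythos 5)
Your argument is correct and is essentially the proof given by King in the cited reference \cite[Proposition 3.1]{Ki} (Hilbert--Mumford numerical criterion applied to the affine cone, the dictionary between one-parameter subgroups and $\mb{Z}$-filtrations with limit existing iff the truncations are subrepresentations, and the Abel-summation identity $\mu^\sigma(M,\lambda)=\sum_{n\geq1}\sigma(\dimbar M_{\geq n})$); the paper itself cites King rather than reproducing the proof. The one normalization you should make explicit is that after twisting $\lambda$ by a central cocharacter so that the grading is concentrated in nonnegative degrees, the boundary term $\sigma(\dimbar M_{\geq 0})=\sigma(\beta)$ that would otherwise appear in the Abel summation vanishes precisely because $\sigma(\beta)=0$, which is why the twist is harmless.
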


Let $\Sigma_\beta(Q)$ be the set of all weights $\sigma$ such that $\SI_\beta(Q)_\sigma$ is non-empty.
The next theorem is an easy consequence of King's stability criterion and Theorem \ref{T:inv_span}.
By a general $\beta$-dimensional representation, we mean in a sufficiently small Zariski open subset (``sufficient" here depends on the context).
Following \cite{S2}, we use the notation $\gamma\hookrightarrow\beta$ to mean that
a general $\beta$-dimensional representation has a $\gamma$-dimensional subrepresentation.

\begin{theorem} \cite[Theorem 3]{DW1} \label{T:cone} We have
$$\Sigma_\beta(Q)=\{\sigma\in \Hom_{\mb{Z}}(\mb{Z}^{Q_0},\mb{Z}) \mid \sigma(\beta)=0 \text{ and } \sigma(\gamma)\geq 0 \text{ for all } \gamma\hookrightarrow \beta\}.$$
In particular, $\Sigma_\beta(Q)$ is a saturated semigroup.
\end{theorem}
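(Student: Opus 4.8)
The plan is to prove the displayed equality by two inclusions, the reverse inclusion being where all the work is, and then to harvest the "saturated semigroup" assertion for free. Indeed, the set on the right — call it $C\cap\mb{Z}^{Q_0}$ — consists of the lattice points of the rational polyhedral cone $C$ cut out by the single homogeneous equation $\sigma(\beta)=0$ together with the (finitely many distinct) homogeneous inequalities $\sigma(\gamma)\geq 0$, $\gamma\hookrightarrow\beta$; such a set is automatically a finitely generated saturated subsemigroup of $\Hom_{\mb{Z}}(\mb{Z}^{Q_0},\mb{Z})$. So it suffices to show $\Sigma_\beta(Q)=C\cap\mb{Z}^{Q_0}$.

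For the inclusion $\Sigma_\beta(Q)\subseteq C\cap\mb{Z}^{Q_0}$, take a nonzero $f\in\SI_\beta(Q)_\sigma$. The diagonal one-parameter subgroup $t\mapsto(t\,\Id_{V_i})_{i\in Q_0}$ of $\GL_\beta$ acts trivially on $\Rep_\beta(Q)$ yet scales $f$ by $t^{\sigma(\beta)}$, so $\sigma(\beta)=0$. If $\sigma\neq 0$, then $f$ is non-constant and lies in $\SI_\beta^\sigma(Q)$, so every $M$ in the nonempty open set $\{f\neq 0\}$ is $\sigma$-semi-stable, and Lemma~\ref{L:King} gives $\sigma(\dimbar L)\geq 0$ for every subrepresentation $L\subseteq M$. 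For each $\gamma\hookrightarrow\beta$ the locus of $\beta$-dimensional representations admitting a $\gamma$-dimensional subrepresentation is closed and dense, hence all of $\Rep_\beta(Q)$; so the $M$ above has a $\gamma$-subrepresentation for every $\gamma\hookrightarrow\beta$, whence $\sigma(\gamma)\geq 0$ for all such $\gamma$. (When $\sigma=0$ there is nothing to check.)

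For the inclusion $C\cap\mb{Z}^{Q_0}\subseteq\Sigma_\beta(Q)$, fix $\sigma\in C\cap\mb{Z}^{Q_0}$. Removing the finitely many proper closed loci of representations having a subrepresentation of a dimension vector not $\hookrightarrow\beta$, a general $M\in\Rep_\beta(Q)$ has all its subrepresentation dimension vectors among the $\gamma\hookrightarrow\beta$; by Lemma~\ref{L:King} such an $M$ is $\sigma$-semi-stable, so $\SI_\beta^\sigma(Q)\neq k$ and $\SI_\beta(Q)_{n\sigma}\neq 0$ for some $n\geq 1$. Combined with the first inclusion, this already identifies $C\cap\mb{Z}^{Q_0}$ with the saturation of $\Sigma_\beta(Q)$. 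To remove the integer $n$ one passes to Schofield's semi-invariants: by Theorem~\ref{T:inv_span}, a nonzero weight $\sigma$ lies in $\Sigma_\beta(Q)$ exactly when some projective presentation $f$ with weight vector $\sigma$ has $\det\Hom_Q(f,N)\neq 0$ for general $N$ of dimension $\beta$, and by the long exact sequence this is equivalent to the existence of a representation $M$ with $^{\sigma}(\dimbar M)=\sigma$ and $\Hom_Q(M,N)=\Ext_Q(M,N)=0$ for general such $N$. One then constructs such a dimension vector, and such an $M$, recursively from the structure of the general subrepresentations of $\beta$ following Schofield; the inequalities $\sigma(\gamma)\geq 0$, $\gamma\hookrightarrow\beta$, are precisely what is needed for this construction to go through. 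This is the content of \cite[Theorem~3]{DW1}, to which I would refer for the details.

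Where the difficulty lies: everything except this last step is a mechanical combination of King's criterion with generic-representation arguments. The one genuinely nontrivial point — the reason this is a theorem — is the saturation, i.e. producing a semi-invariant of weight exactly $\sigma$ rather than of some positive multiple. Equivalently, one must show that every lattice point of the cone $C$ is realized by a Schofield semi-invariant $s(f)$, and this forces one to use the recursive combinatorics of the relation $\hookrightarrow$ in an essential way, not merely its first-order consequences that appear above.
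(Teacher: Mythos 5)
The paper cites this theorem from Derksen--Weyman without giving its own proof, noting only that it follows from King's criterion and Theorem~\ref{T:inv_span}. Your reconstruction matches that route exactly: King's criterion together with the closedness of the $\gamma$-subrepresentation loci (so that for $\gamma\hookrightarrow\beta$ the locus is all of $\Rep_\beta(Q)$) gives the forward inclusion and shows the cone's lattice points equal the \emph{saturation} of $\Sigma_\beta(Q)$, while the passage from $n\sigma$ to $\sigma$ — resting on Theorem~\ref{T:inv_span} and Schofield's theory of general $\Hom$/$\Ext$ — is correctly isolated as the genuine content of the theorem and deferred to the same reference the paper cites.
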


%

Recall that a dimension vector $\alpha$ is called a {\em real} root if $\innerprod{\alpha,\alpha}_Q=1$.
It is called {\em Schur} if $\Hom_Q(M,M)=k$ for
$M$ general in $\Rep_\alpha(Q)$. For a real Schur root $\alpha$, $\Rep_{n\alpha}(Q)$ has a dense orbit for any $n\in \mb{N}$.

\begin{lemma}[{\cite[Lemma 1.7]{Fs1}}] \label{L:gsreal}
If $\alpha$ is a real Schur root, then $\SI_\beta(Q)_{-\innerprod{n\alpha,-}}\cong k$ for any $n\in \mb{N}$.
\end{lemma}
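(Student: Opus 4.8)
The statement to prove is Lemma~\ref{L:gsreal}: if $\alpha$ is a real Schur root of $Q$, then $\SI_\beta(Q)_{-\innerprod{n\alpha,-}}\cong k$ for every $n\in\mb{N}$. Here $\beta$ is an arbitrary dimension vector (the weight $\sigma=-\innerprod{n\alpha,-}$ only needs $\sigma(\beta)=0$ for the component to possibly be nonzero, but the claim as phrased should hold in the sense that the component is at most one-dimensional, and is exactly $k$ when $\sigma(\beta)=0$). The natural route is: (i) show the component is nonzero using Schofield's semi-invariants $s(f,-)$ from Theorem~\ref{T:inv_span}; (ii) show it is at most one-dimensional by a dense-orbit argument.

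First I would produce a nonzero element. Since $\alpha$ is a real root, $\innerprod{\alpha,\alpha}_Q=1$, and since $Q$ has no oriented cycles, $\alpha$ has a minimal projective resolution $0\to P_1\xrightarrow{f_\alpha} P_0\to M\to 0$ with $M$ the (unique, by Schurness) general representation of dimension $\alpha$; its weight vector is $^\sigma\alpha=-\innerprod{\alpha,-}_Q$. For the multiple $n\alpha$: because $\alpha$ is a real Schur root, $\Rep_{n\alpha}(Q)$ has a dense orbit, whose representative is $nM=M^{\oplus n}$ (this is the fact quoted just before the lemma). Taking $f=nf_\alpha:nP_1\to nP_0$ resolves $nM$, and its weight is $-\innerprod{n\alpha,-}_Q$. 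Then $s(f,-)=\det\Hom_Q(f,-)$ lies in $\SI_\beta(Q)_{-\innerprod{n\alpha,-}}$, and it is nonzero as a function on $\Rep_\beta(Q)$ precisely when there exists $N$ of dimension $\beta$ with $\Hom_Q(nM,N)=0$ (equivalently $\Ext_Q(nM,N)=0$), by the exact sequence~\eqref{eq:canseq}. This nonvanishing is exactly the condition $\sigma\in\Sigma_\beta(Q)$, i.e. $n\alpha\hookrightarrow\beta$ is \emph{not} forced — more precisely, by Theorem~\ref{T:cone} the component is nonzero iff $\sigma(\beta)=0$ and $\sigma(\gamma)\ge 0$ for all $\gamma\hookrightarrow\beta$; one should either assume this (it is the standing hypothesis that makes the weight relevant) or note that when it fails the lemma is vacuous. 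So under $\sigma(\beta)=0$ and $\sigma\in\Sigma_\beta(Q)$, the component is nonzero.

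Next, the one-dimensionality. The key point is that $M^{\oplus n}$ is the \emph{generic} representation of dimension $n\alpha$, hence lives in a dense $\GL_{n\alpha}$-orbit. Any $f\in\SI_\beta(Q)_{-\innerprod{n\alpha,-}}$ is, by Theorem~\ref{T:inv_span}, a $k$-linear combination of Schofield semi-invariants $s(g,-)$ where $g$ resolves some representation $M'$ of the dimension vector $\alpha'$ with $^\sigma\alpha'=-\innerprod{n\alpha,-}$; since the Euler form pairing $\alpha\mapsto{}^\sigma\alpha$ is injective (it is $-\innerprod{\alpha,-}_Q$ composed with identification of the Grothendieck group, injective because $Q$ is acyclic so the Euler matrix is unipotent-triangular hence invertible over $\mb{Q}$), we get $\alpha'=n\alpha$. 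But $s(g,-)$ up to scalar depends only on the homotopy class of $g$, hence only on the isomorphism class of $M'$ of dimension $n\alpha$; and among dimension-$n\alpha$ representations, only those in the dense orbit — i.e. $M'\cong M^{\oplus n}$ — can give a semi-invariant that does not vanish on a dense subset. (A representation $M'$ not in the closure of $M^{\oplus n}$'s orbit: its semi-invariant $s(g,-)$ vanishes on the dense orbit in $\Rep_\beta(Q)$ where $\Hom_Q(M^{\oplus n},-)=0=\Ext_Q(M^{\oplus n},-)$... one must check this carefully — this is the step to be most careful about.) Thus every generator $s(g,-)$ is a scalar multiple of the one $s(nf_\alpha,-)$, and the component is spanned by a single function, so $\dim\SI_\beta(Q)_{-\innerprod{n\alpha,-}}\le 1$.

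\textbf{Main obstacle.} The delicate point is the uniqueness: showing that \emph{every} nonzero $s(g,-)$ in this weight space is proportional to $s(nf_\alpha,-)$, rather than merely that they span. The clean way is to invoke that since $M^{\oplus n}$ is $\sigma$-semistable with $\sigma=-\innerprod{n\alpha,-}_Q$ (King's criterion, Lemma~\ref{L:King}, applied using Schurness of $\alpha$ to control subrepresentation dimensions) and lies in a dense orbit, the GIT quotient $\Rep_\beta(Q)\dslash{\sigma}\GL_\beta^\sigma$ at this weight is a single point, forcing the graded piece to be one-dimensional; alternatively, one argues directly that distinct isomorphism classes of dimension $n\alpha$ other than $M^{\oplus n}$ are degenerations of $M^{\oplus n}$, so their Schofield semi-invariants vanish wherever $s(nf_\alpha,-)\ne 0$ by semicontinuity of $\Hom$ and $\Ext$. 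I would present the argument via the dense-orbit/King-stability route, as it is shortest and uses only tools already recalled in the excerpt.
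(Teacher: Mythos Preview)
The paper does not prove this lemma; it merely cites \cite[Lemma 1.7]{Fs1}. So there is no proof in the present paper to compare against, and I will assess your argument on its own merits.

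Your overall strategy is right: the key fact is that $\Rep_{n\alpha}(Q)$ has a dense $\GL_{n\alpha}$-orbit, and this should force the relevant weight space to be one-dimensional. However, your execution of the uniqueness step does not go through as written.

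First, the semicontinuity argument is in the wrong direction. You claim that for a proper degeneration $M'$ of $nM$, the function $s(f_{M'},-)$ vanishes wherever $s(nf_\alpha,-)\neq 0$. But $s(nf_\alpha,N)\neq 0$ means $\Hom_Q(nM,N)=0$, and upper semicontinuity gives $\dim\Hom_Q(M',N)\geq\dim\Hom_Q(nM,N)=0$, which says nothing about whether $\Hom_Q(M',N)$ is nonzero. There is no reason $s(f_{M'},-)$ must vanish on that locus; indeed, $s(f_{M'},-)$ could perfectly well be a nonzero scalar multiple of $s(nf_\alpha,-)$, and the issue is precisely to show it cannot be linearly independent from it.

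Second, your GIT argument conflates two different representation spaces. You assert that $nM$ is $\sigma$-semistable and lies in a dense orbit, and then conclude that the GIT quotient $\Rep_\beta(Q)\dslash{\sigma}\GL_\beta^\sigma$ is a point. But $nM$ lives in $\Rep_{n\alpha}(Q)$, not in $\Rep_\beta(Q)$; the dense-orbit statement is about the former, while the semi-invariant ring you want to control is on the latter.

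The clean fix is Schofield's (or Derksen--Weyman's) reciprocity: the pairing $(M',N)\mapsto s(f_{M'},N)$ is simultaneously a $\sigma$-semi-invariant in $N\in\Rep_\beta(Q)$ and a $\innerprod{-,\beta}_Q$-semi-invariant in $M'\in\Rep_{n\alpha}(Q)$, giving
\[
\dim \SI_\beta(Q)_{-\innerprod{n\alpha,-}} \;=\; \dim \SI_{n\alpha}(Q)_{\innerprod{-,\beta}}.
\]
Now the dense orbit in $\Rep_{n\alpha}(Q)$ immediately forces the right-hand side to be at most $1$: any two semi-invariants of the same weight have constant ratio on the dense orbit, hence everywhere. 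This is the missing bridge between your dense-orbit observation and the conclusion; once you invoke reciprocity (which is essentially built into the framework of \cite{DW1,S1,SV} already cited for Theorem~\ref{T:inv_span}), the argument becomes a two-line proof.
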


A weight is called {\em extremal} in $\Sigma_\beta(Q)$ if it lies on an extremal ray of $\mb{R}_+\Sigma_\beta(Q)$.

\begin{lemma}[{\cite[Lemma 1.8]{Fs1}}] \label{L:irreducible} If $\sigma$ is an indivisible extremal weight, then any semi-invariant function $s$ of weight $\sigma$ is irreducible.
\end{lemma}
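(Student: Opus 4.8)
The plan is to argue by contradiction, using that $\SI_\beta(Q)$ is an integral domain, $\Sigma_\beta(Q)$-graded, whose degree-zero component is $k$ (as recalled above, because $Q$ has no oriented cycles). In particular its units are exactly $k^*$: if $s s' = 1$ in $\SI_\beta(Q)$, then $s$ is a unit in the polynomial ring $k[\Rep_\beta(Q)]$, hence a nonzero scalar. So suppose $s\in\SI_\beta(Q)_\sigma$ (necessarily nonzero, since $\sigma$ is extremal in $\Sigma_\beta(Q)$) factors as $s=s_1s_2$ with $s_1,s_2\in\SI_\beta(Q)$ neither of them in $k^*$. First I would reduce to the situation where $s_1,s_2$ are themselves semi-invariants. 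Factor $s$ into irreducibles in the UFD $k[\Rep_\beta(Q)]$. Since $s$ is a $\GL_\beta$-eigenvector and $\GL_\beta$ is connected, it fixes each height-one prime dividing $(s)$, so every irreducible factor $p$ of $s$ satisfies $g\cdot p=\chi(g)\,p$ for a character $\chi$ of $\GL_\beta$; as every character of $\GL_\beta=\prod_i\GL(V_i)$ is a monomial in the $\det g(i)$'s, it is trivial on $\SL_\beta$, whence $p\in\SI_\beta(Q)$ and $p$ is a semi-invariant. Regrouping the irreducible factors of $s$, we may therefore take $s_i\in\SI_\beta(Q)_{\sigma_i}$ with $\sigma_1+\sigma_2=\sigma$; that $s_i$ is not a unit forces $\sigma_i\neq 0$, and $\SI_\beta(Q)_{\sigma_i}\neq 0$ gives $\sigma_i\in\Sigma_\beta(Q)\subset\mb{R}_+\Sigma_\beta(Q)$.

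Next I would invoke extremality and indivisibility. The weight $\sigma$ spans an extremal ray of the rational cone $\mb{R}_+\Sigma_\beta(Q)$ (whose description is Theorem~\ref{T:cone}), and $\sigma=\sigma_1+\sigma_2$ is a sum of two nonzero elements of that cone; by definition of an extremal (i.e.\ one-dimensional) face, both $\sigma_i$ lie on $\mb{R}_+\sigma$, say $\sigma_i=c_i\sigma$ with $c_i>0$ and $c_1+c_2=1$. But $\sigma_i\in\mb{Z}^{Q_0}$ and $\sigma$ is indivisible, i.e.\ a primitive lattice vector, so $c_i\sigma\in\mb{Z}^{Q_0}$ forces $c_i\in\mb{Z}_{\geq 0}$; with $c_1+c_2=1$ this gives $\{c_1,c_2\}=\{0,1\}$, contradicting $\sigma_1,\sigma_2\neq 0$. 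Hence no nontrivial factorization of $s$ exists, and $s$ is irreducible. (For completeness one should note that, since $\Sigma_\beta(Q)$ is a saturated semigroup by Theorem~\ref{T:cone}, ``indivisible in $\Sigma_\beta(Q)$'' and ``primitive in $\mb{Z}^{Q_0}$'' coincide for weights on an extremal ray, so the hypothesis is unambiguous.)

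The only step requiring a bit of care — but entirely standard — is the reduction in the first paragraph, that every irreducible factor of a semi-invariant is again a semi-invariant (connectedness of $\GL_\beta$ plus the fact that characters of $\GL_\beta$ kill $\SL_\beta$); everything afterwards is the elementary geometry of a primitive generator of an extremal ray of a rational polyhedral cone, so I do not anticipate a genuine obstacle there.
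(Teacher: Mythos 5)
Your proof is correct, and since the argument is essentially forced by the three ingredients you isolate — factors of semi-invariants are semi-invariants (a fact the paper invokes elsewhere via \cite[Theorem~3.1]{PV} in the proof of Lemma~\ref{L:su_irreducible}), weights add under multiplication, and a primitive generator of an extremal ray cannot decompose nontrivially into lattice points of the cone — it must coincide in substance with the cited proof in \cite[Lemma~1.8]{Fs1}, which the present paper does not reproduce. One small implicit point worth making explicit: the extremal-ray step uses that $\mb{R}_+\Sigma_\beta(Q)$ is pointed, which holds because $Q$ is acyclic (the degree-zero part of $\SI_\beta(Q)$ is $k$, so $\Sigma_\beta(Q)\cap(-\Sigma_\beta(Q))=\{0\}$ and hence, by saturation, the cone contains no line).
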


%

\subsection{The Outer Action}
Let $A_{i,j}$ be the vector space spanned by arrows from $i$ to $j$.
Let $\Aut(Q_1)$ be the group $\prod_{i,j \in Q_0} \GL(A_{i,j})$
acting naturally on $\bigoplus_{i,j} A_{i,j}$.
This induces an action on the space of paths $kQ$, and thus induces an action on the space of presentations $\Hom_Q(P_1,P_0)$.
The group $\Aut(Q_1)$ also acts on the representation spaces $\Rep_\beta(Q)$ by twisting the action of arrows:
$$ma=m(ah) \quad\text{for $a\in Q_1$ and $m\in M$.}$$
We will write this action exponentially, that is, $M^h$ and $f^h$ for $M\in \Rep_\beta(Q)$ and $f\in \Hom_Q(P_1,P_0)$.
The action on the representations induces an action on the semi-invariant rings: $(h\cdot s)(M)=s(M^h)$,
because the action of $\Aut(Q_1)$ commutes with the action of $\GL_\beta$
$$(h\cdot s)(gM) = s\left((gM)^h\right) = s(gM^h) = \sigma(g) s(M^h) = \sigma(g) (h\cdot s)(M).$$
On the other hand, the action on the presentations also induces an action on the semi-invariant rings: $h\cdot s(f) = s(f^h)$. It turns out that two induced action are the same.
Indeed, we recall that $s(f)(N)=s(f,N)=\det\Hom_Q(f,N)$.
It is clear from our description of $\Hom_Q(f,N)$ that $s(f^{h^{-1}},N^h)=s(f,N)$,
so $(h\cdot s(f))(N) = s(f)(N^h) = s(f^h)(N)$.

\begin{definition}[\cite{Fg}] \label{D:Ginv} Let $G$ be a subgroup of $\Aut(Q_1)$. A presentation $f\in \Hom_Q(P_1,P_0)$ is called {\em coherently $G$-invariant} if there is an algebraic group homomorphism $\varphi: G\to \Aut_Q(P_1)\times \Aut_Q(P_0)$ such that $f^h = \varphi(h)\cdot f$.
\end{definition}

\begin{lemma}[{\em cf.} {\cite[4.1]{Fg}}] \label{L:Ginv}
If $f$ is coherently $G$-invariant, then $s(f)$ is~$G$-semi-invariant.
\end{lemma}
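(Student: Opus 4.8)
The plan is to unwind the definition of "coherently $G$-invariant" and exploit the compatibility, already established just above the statement, between the outer action of $\Aut(Q_1)$ on presentations and on semi-invariant functions. Recall we showed that $(h\cdot s(f))(N) = s(f)(N^h) = s(f^h)(N)$, i.e.\ the map $s(-)$ intertwines the $\Aut(Q_1)$-action on $\Hom_Q(P_1,P_0)$ with the $\Aut(Q_1)$-action on $\SI_\beta(Q)$. So if $f^h = \varphi(h)\cdot f$ with $\varphi(h)\in \Aut_Q(P_1)\times \Aut_Q(P_0)$, then $h\cdot s(f) = s(f^h) = s(\varphi(h)\cdot f)$, and it remains only to see that $s(\varphi(h)\cdot f)$ is a scalar multiple of $s(f)$, with the scalar defining a character of $G$.

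First I would make precise how $\Aut_Q(P_1)\times \Aut_Q(P_0)$ acts on a presentation $f\colon P_1\to P_0$: a pair $(u,v)$ sends $f$ to $v\circ f\circ u^{-1}$ (or $v f u^{-1}$ in the left-to-right composition convention of the paper). Applying $\Hom_Q(-,N)$ and taking determinants, $s(vfu^{-1}, N) = \det\big(\Hom_Q(u^{-1},N)\big)\cdot \det\big(\Hom_Q(f,N)\big)\cdot \det\big(\Hom_Q(v,N)\big)$, because $\Hom_Q(-,N)$ is a functor and turns composition into composition (contravariantly), hence products of determinants. Now $u\in \Aut_Q(P_1)$ and $v\in \Aut_Q(P_0)$ are \emph{automorphisms} of projective objects, so $\Hom_Q(u,N)$ and $\Hom_Q(v,N)$ are invertible square matrices whose determinants are nonzero scalars depending only on $u$, $v$, and $N$ — not on the full $N$ but only through the dimension vector $\beta$, since $P_1,P_0$ are sums of indecomposable projectives and the action of an endomorphism of $P(\f_i)$ on $\Hom_Q(P(\f_i),N)\cong \bigoplus N_i^{\f_i(i)}$ is block-diagonal with blocks determined by the matrix of the endomorphism. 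Thus $\det \Hom_Q(u,N)$ is a fixed polynomial (in fact monomial of determinants) in the entries of $u$, independent of $N$; call it $\chi_1(u)$, and similarly $\chi_2(v)$. One checks $\chi_1\colon \Aut_Q(P_1)\to k^*$ and $\chi_2\colon \Aut_Q(P_0)\to k^*$ are algebraic group homomorphisms (determinant is multiplicative).

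Putting this together: $h\cdot s(f) = s(f^h) = s(\varphi(h)\cdot f) = \big(\chi_1\times\chi_2\big)\!\big(\varphi(h)\big)^{-1}\, s(f)$ — up to sorting out which factor gets the inverse, which is a bookkeeping matter from the $u^{-1}$ — so $h\cdot s(f) = \theta(h)\, s(f)$ where $\theta := (\chi_1\times\chi_2)^{\pm1}\circ\varphi\colon G\to k^*$ is a composition of algebraic group homomorphisms, hence a character of $G$. That is exactly the assertion that $s(f)$ is $G$-semi-invariant (of weight $\theta$). The only genuinely delicate point, and the one I would be most careful about, is verifying that $\det\Hom_Q(u,N)$ genuinely factors through a character independent of $N$: this relies on $P_1$ (resp.\ $P_0$) being \emph{projective} so that $\Hom_Q(P_i,N)$ is functorially a direct sum of the $N_j$'s and an endomorphism acts by a matrix with scalar-times-identity refinements, making the determinant a product of powers of the "scalar parts" of $u$ on each indecomposable summand. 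Everything else is formal functoriality and multiplicativity of $\det$.
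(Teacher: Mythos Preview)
Your argument is correct and is essentially the paper's proof: both reduce to $h\cdot s(f)=s(f^h)=s(\varphi(h)\cdot f)$ and then use that $s(-,N)$ is semi-invariant under $\Aut_Q(P_1)\times\Aut_Q(P_0)$ (the paper simply cites \cite[Proposition 5.13]{IOTW} for this, whereas you spell it out). One small refinement for the part you flagged as delicate: since there can be nonzero morphisms $P_i\to P_j$ for $i\neq j$, the matrix of $\Hom_Q(u,N)$ is only block-\emph{triangular} (with respect to a topological ordering of the vertices of the acyclic $Q$) rather than block-diagonal, but its determinant is still $\prod_i(\det u_{ii})^{\beta(i)}$, depending on $N$ only through $\beta$, so your conclusion stands.
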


\begin{proof} It is proved in \cite[Proposition 5.13]{IOTW} that $s(-,N)$ is semi-invariant as a function on the presentation space $\Hom_Q(P_1,P_0)$ under the action of $\Aut_Q(P_1)\times \Aut_Q(P_0)$.
Suppose that $f^h = \varphi(h)\cdot f$, then
$s(f^h)(N)= s(\varphi(h)\cdot f)(N) = \sigma(\varphi(h)) s(f)(N)$ for some character $\sigma:\Aut_Q(P_1)\times \Aut_Q(P_0)\to k^*$.
Hence $h\cdot s(f)=(\sigma\varphi)(h) s(f)$, which means that $s(f)$ is $G$-semi-invariant.
\end{proof}

\section{Semi-invariants of Flagged Kronecker Quivers} \label{S:FKm}
\subsection{Relating Kronecker Coefficients}
Consider the following quiver $K_{l_1,l_2}^m$
$$\kronmll{b_{-1}}{}{}{b_1}{a_1,a_2,\dots,a_m}$$
We see that the full subquiver of two vertices $-l_1$ and $l_2$ is an $m$-arrow Kronecker quiver, so we call it a {\em flagged Kronecker quiver}. The full subquiver of negative (resp. positive) vertices are called its negative (positive) arm.

We fix a Schur root $\beta$ of $K_{l_1,l_2}^m$. It is easy to see that $\beta$ is nondecreasing (resp. nonincreasing) along the negative (positive) arm.
Let $V$ be a $\beta$-dimensional vector space, and $W:=A_{-l_1,l_2}$ be the $m$-dimensional vector space spanned by arrows from $-l_1$ to $l_2$.
Let us decompose the coordinate ring
\begin{align*}
& k\left[\Rep_\beta(K_{l_1,l_2}^m)\right] \\
= & k\Big[\bigoplus_{i={1}}^{l_1-1} \Hom(V_{-i},V_{-(i+1)}) \oplus \bigoplus_{i={1}}^{l_2-1} \Hom(V_{i+1},V_{i}) \oplus \Hom(V_{-l_1},V_{l_2})\otimes W \Big]\\
= & \bigotimes_{i={1}}^{l_1-1} \op{Sym}(V_{-i}\otimes V_{-(i+1)}^*) \otimes \bigotimes_{i={1}}^{l_2-1} \op{Sym}(V_{i+1}\otimes V_{i}^*) \otimes \op{Sym}(V_{-l_1}\otimes V_{l_2}^* \otimes W)\\
\intertext{By Cauchy's formula \cite{FH}, this decomposes into a direct sum over all tuples of partitions $\bs{\lambda}=(\lambda_{-l_1}, \dots, \lambda_{-1}, \lambda_0,\lambda_1,\dots,\lambda_{l_2})$ of the summands}
 & \bigotimes_{i={1}}^{l_1-1} \left( \S_{\lambda_{-i}}V_{-i}\otimes \S_{\lambda_{-i}}V_{-(i+1)}^* \right)\otimes \bigotimes_{i={1}}^{l_2-1} \left( \S_{\lambda_{i}}V_{i+1}\otimes \S_{\lambda_{i}} V_{i}^*\right) \otimes \S_{\lambda_0}(V_{-l_1}\otimes V_{l_2}^*) \otimes \S_{\lambda_0}W \\
\intertext{which is by Lemma \ref{L:SW}}
& \bigoplus_{|\lambda|=|\mu|=|\nu|=|\lambda_0|} g_{\mu,\nu}^\lambda \Bigg( \bigotimes_{i={1}}^{l_1-1} \left(\S_{\lambda_{-i}}V_{-i}\otimes \S_{\lambda_{-i}}V_{-(i+1)}^* \right) \otimes \bigotimes_{i={1}}^{l_2-1} \left( \S_{\lambda_{i}}V_{i+1}\otimes \S_{\lambda_{i}} V_{i}^* \right) \\
& \hspace{3.05in} \otimes \big( \S_{\mu}V_{-{l_1}}\otimes \S_{\nu}V_{l_2}^* \otimes \S_{\lambda}W\big) \Big).
\end{align*}
So the invariant ring $k[\Rep_\beta(K_{l_1,l_2}^m)]^{\SL_\beta}$
\begin{align*}
= & \bigoplus_{\substack{\bs{\lambda} \\ |\lambda|=|\mu|=|\nu|=|\lambda_0|} } g_{\mu,\nu}^\lambda (\S_{\lambda_{-1}}V_{-1})^{\SL(V_{-1})} \otimes (\S_{\lambda_{1}}V_{1}^*)^{\SL(V_{1})} \\
& \quad \otimes \bigotimes_{i={2}}^{l_1-1} (\S_{\lambda_{-i}}V_{-i}\otimes \S_{\lambda_{-(i-1)}}V_{-i}^*)^{\SL(V_{-i})}
\otimes \bigotimes_{i={2}}^{l_2-1} (\S_{\lambda_{i}}V_i\otimes \S_{\lambda_{i-1}} V_{i}^*)^{\SL(V_{i})} \notag \\
& \quad \otimes (\S_{\mu}V_{-(l_1-1)}\otimes \S_{\lambda_{-(l_1-1)}}V_{-l_1}^*)^{\SL(V_{-l_1})}
\otimes (\S_{\lambda_{1}}V_{l_2-1}\otimes \S_{\nu} V_{l_2}^*)^{\SL(V_{l_2})}\otimes \S_{\lambda}W. \notag
\end{align*}

\begin{lemma} \label{L:SI(Kml)} Let $\sigma$ be a weight on $K_{l_1,l_2}^m$,
and $\mu(\sigma),\nu(\sigma)$ be two partitions of $n$ equal to $\left(\beta(-l_1)^{-\sigma(-l_1)},\dots,\beta(-1)^{-\sigma(-1)}\right)^*$, and $\left(\beta(l_2)^{\sigma(l_2)}, \dots, \beta(1)^{\sigma(1)}\right)^*$, then
$$\SI_\beta(K_{l_1,l_2}^m)_\sigma = \bigoplus_{|\lambda|=n} g_{\mu(\sigma),\nu(\sigma)}^\lambda \S_{\lambda}W.$$
\end{lemma}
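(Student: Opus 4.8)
The plan is to extract the $\sigma$-homogeneous part directly from the decomposition of $\SI_\beta(K_{l_1,l_2}^m)=k[\Rep_\beta(K_{l_1,l_2}^m)]^{\SL_\beta}$ obtained above. After the Cauchy expansion that ring is a direct sum, indexed by the arm partitions $\lambda_{-1},\dots,\lambda_{-(l_1-1)},\lambda_1,\dots,\lambda_{l_2-1}$, the triple $\mu,\nu,\lambda_0$ with $|\mu|=|\nu|=|\lambda_0|$, and a basis vector in each $\SL(V_i)$-invariant factor, of copies of $g^{\lambda_0}_{\mu,\nu}\,\S_{\lambda_0}W$. Since $\GL_\beta$ acts trivially on $W$, the $\sigma$-grading is carried entirely by the $\GL_\beta/\SL_\beta$-weights of those invariant factors, so the task is: for a fixed $\sigma$, determine which index data contribute and with what multiplicity, and then recognize the surviving indices as $\mu=\mu(\sigma)$, $\nu=\nu(\sigma)$.

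The engine is an elementary branching fact that I would isolate as a sublemma: for a $d$-dimensional space $U$ and partitions $\alpha,\gamma$ of length $\le d$, the space $(\S_\alpha U\otimes \S_\gamma U^{*})^{\SL(U)}$ is nonzero exactly when $\alpha-\gamma$ is a constant vector $(c^d)$, and then it is one-dimensional and lies in a single $\GL(U)$-weight, namely $(\det U)^{c}$; the one-factor space $(\S_\alpha U)^{\SL(U)}$ is the case $\gamma=0$. This is immediate from Schur's lemma: a polynomial $\GL(U)$-irreducible restricts to an $\SL(U)$-irreducible, and $\S_\alpha U,\S_\gamma U$ restrict to isomorphic $\SL(U)$-modules iff their highest weights differ by a multiple of $(1^d)$.

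Applying the sublemma vertex by vertex, I would propagate from the leaves $-1$ and $1$ inward. At $-1$ only $\S_{\lambda_{-1}}V_{-1}$ occurs, forcing $\lambda_{-1}=(r_{-1}^{\,\beta(-1)})$; at an interior negative vertex $-i$ the factor $\S_{\lambda_{-i}}V_{-i}\otimes \S_{\lambda_{-(i-1)}}V_{-i}^{*}$ forces $\lambda_{-i}=\lambda_{-(i-1)}+(r_{-i}^{\,\beta(-i)})$; and at $-l_1$ the factor $\S_{\mu}V_{-l_1}\otimes \S_{\lambda_{-(l_1-1)}}V_{-l_1}^{*}$ forces $\mu=\lambda_{-(l_1-1)}+(r_{-l_1}^{\,\beta(-l_1)})$, where the $r$'s are $\ge 0$ and, matching the determinant weights against \eqref{eq:char}, $r_{-i}=-\sigma(-i)$. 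Thus $\mu=\sum_{i=1}^{l_1}(r_{-i}^{\,\beta(-i)})$ is a nested sum of rectangles of nonincreasing heights $\beta(-l_1)\ge\cdots\ge\beta(-1)$ (legitimate since $\beta$ is nondecreasing along the negative arm), and passing to conjugates gives $\mu^{*}=(\beta(-l_1)^{\,r_{-l_1}},\dots,\beta(-1)^{\,r_{-1}})=(\beta(-l_1)^{-\sigma(-l_1)},\dots,\beta(-1)^{-\sigma(-1)})$, i.e. $\mu=\mu(\sigma)$. The positive arm is treated identically with $-\sigma$ replaced by $\sigma$, yielding $\nu=\nu(\sigma)$. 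Hence $\mu$ and $\nu$ are completely pinned down by $\sigma$, each surviving $\SL(V_i)$-invariant factor is one-dimensional, the common size $n=|\mu(\sigma)|=|\nu(\sigma)|$ is forced to equal $|\lambda_0|$ (which also recovers $\sigma(\beta)=0$), and $\lambda_0$ ranges freely over partitions of $n$ with multiplicity $g^{\lambda_0}_{\mu(\sigma),\nu(\sigma)}$. Collecting these summands gives $\SI_\beta(K_{l_1,l_2}^m)_\sigma=\bigoplus_{|\lambda|=n}g^{\lambda}_{\mu(\sigma),\nu(\sigma)}\,\S_\lambda W$; when $\sigma$ is outside the relevant chamber (some $r$ forced negative, or $\mu(\sigma),\nu(\sigma)$ not partitions of a common size) no summand survives and both sides vanish.

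The main obstacle is the third paragraph: one must track the $\GL(V_i)$-determinant weights carefully through the Cauchy expansion so that the sign bookkeeping comes out as $-\sigma$ on the negative arm and $+\sigma$ on the positive arm, and one must check the conjugation identity for nested rectangle sums so that the data assembles into the conjugated staircase partitions $\mu(\sigma),\nu(\sigma)$ of the statement. The remaining points — one-dimensionality of the invariant factors, the bijection with triples $(\mu(\sigma),\nu(\sigma),\lambda)$, and vanishing off the chamber — are routine once the sublemma is in hand.
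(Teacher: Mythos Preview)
Your proposal is correct and follows essentially the same route as the paper's proof: starting from the Cauchy decomposition already computed before the lemma, you propagate vertex by vertex from the leaves using the fact that $(\S_\alpha U\otimes\S_\gamma U^{*})^{\SL(U)}$ is nonzero only for $\alpha-\gamma=(c^{\dim U})$, read off $c=-\sigma(-i)$ (resp.\ $\sigma(i)$) from the determinant weight, and assemble the nested rectangles into $\mu(\sigma)$ and $\nu(\sigma)$ via conjugation. The paper's proof is terser but structurally identical, citing \cite[Proposition 1]{DW1} for the same propagation argument.
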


\begin{proof} The proof is similar to that of \cite[Proposition 1]{DW1}.
For $(\S_{\lambda_{-1}}V_{-1})^{\SL(V_{-1})}$ to contribute to $\SI_\beta(K_{l_1,l_2}^m)_\sigma$, the partition $\lambda_{-1}^*$ must be the square partition $\beta(-1)^{-\sigma(-1)}$.
In this case, $(\S_{\lambda_{-1}}V_{-1})^{\SL(V_{-1})}=k$.
For the next one $(\S_{\lambda_{-2}}V_{-2}\otimes \S_{\lambda_{-1}}V_{-2}^*)^{\SL(V_{-2})}$ to contribute,
the partition $\lambda_{-2}^*$ must be equal to $\lambda_{-1}^*$ with some extra rows of size $\beta(-2)^{-\sigma(-2)}$.
In this case, $(\S_{\lambda_{-2}}V_{-2}\otimes \S_{\lambda_{-1}}V_{-2}^*)^{\SL(V_{-2})}$ is also $k$.
We continue this procedure, and conclude that $\mu^*$ must be equal to $\left(\beta(-l_1)^{-\sigma(-l_1)},\dots,\beta(-1)^{-\sigma(-1)}\right)$.
Similarly, we have that $\nu^*=\left(\beta(l_2)^{\sigma(l_2)}, \dots, \beta(1)^{\sigma(1)}\right)$.
We thus get the required equality.
\end{proof}

Let $U$ and $T$ be the subgroups of upper-triangular and diagonal matrices in $\GL(W)$.
The semi-invariant ring $\SI_\beta(K_{l_1,l_2}^m)$ is also graded by the $T$-weights $\lambda$.
We write $\SI_\beta(K_{l_1,l_2}^m)_{\sigma,\lambda}$ for the weight $\lambda$ component of $\SI_\beta(K_{l_1,l_2}^m)_\sigma$.
We refer to this grading as the extended $\sigma$-grading or $\wtd\sigma$-grading.

\begin{corollary} \label{C:dimUinv} The dimension of $\SI_\beta(K_{l_1,l_2}^m)_{\sigma,\lambda}^U$ is equal to $g_{\mu(\sigma),\nu(\sigma)}^\lambda$.
\end{corollary}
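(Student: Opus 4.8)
The plan is to deduce Corollary \ref{C:dimUinv} directly from Lemma \ref{L:SI(Kml)} by taking $U$-invariants of both sides of the isomorphism
$$\SI_\beta(K_{l_1,l_2}^m)_\sigma = \bigoplus_{|\lambda|=n} g_{\mu(\sigma),\nu(\sigma)}^\lambda \S_{\lambda}W$$
and remembering that the right-hand side carries its $\GL(W)$-action (hence its $U$-action) through the $\S_\lambda W$ factors only, while the coefficients $g_{\mu(\sigma),\nu(\sigma)}^\lambda$ are plain multiplicities. First I would note that this isomorphism is $\GL(W)$-equivariant: the $\GL(W)$-action on $\SI_\beta(K_{l_1,l_2}^m)$ arises from its action on $W=A_{-l_1,l_2}$, which commutes with $\SL_\beta$, and in the Cauchy decomposition carried out above it is visibly the action on the $\S_{\lambda_0}W = \S_\lambda W$ tensor factor. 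Restricting a $\GL(W)$-representation to $U$ and taking invariants is exact and additive, so
$$\SI_\beta(K_{l_1,l_2}^m)_\sigma^U = \bigoplus_{|\lambda|=n} g_{\mu(\sigma),\nu(\sigma)}^\lambda (\S_{\lambda}W)^U.$$

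The key representation-theoretic input is the classical fact that for the irreducible polynomial $\GL(W)$-representation $\S_\lambda W$ (with $\ell(\lambda)\leq \dim W = m$), the space of $U$-invariants is one-dimensional and consists exactly of the highest weight line, which has $T$-weight $\lambda$. Thus $(\S_\lambda W)^U$ is $1$-dimensional, sitting in $T$-weight $\lambda$, and zero in every other $T$-weight. Taking the $\lambda$-weight component of the displayed sum for $U$-invariants therefore kills every term except the one indexed by $\lambda$ itself, and leaves exactly $g_{\mu(\sigma),\nu(\sigma)}^\lambda$ copies of a one-dimensional space. Hence $\dim \SI_\beta(K_{l_1,l_2}^m)_{\sigma,\lambda}^U = g_{\mu(\sigma),\nu(\sigma)}^\lambda$, which is the claim. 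One should also observe that when $\ell(\lambda) > m$ both sides vanish: $\S_\lambda W = 0$ in that case, and correspondingly there is no contribution.

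There is essentially no serious obstacle here; the corollary is a formal consequence of Lemma \ref{L:SI(Kml)} together with the standard description of the $U$-fixed vectors in an irreducible $\GL(W)$-module. The only point deserving a line of care is the compatibility of the two gradings: one must check that the $T$-weight grading ($\wtd\sigma$-grading) on $\SI_\beta(K_{l_1,l_2}^m)_\sigma$ defined intrinsically matches the $\GL(W)$-weight grading of $\bigoplus_\lambda g_{\mu(\sigma),\nu(\sigma)}^\lambda \S_\lambda W$ under the isomorphism of Lemma \ref{L:SI(Kml)}; but this is immediate from the construction of that isomorphism via Cauchy's formula, since the $T$-action on the coordinate ring is precisely the diagonal torus of $\GL(W)$ acting on the $W$-factor. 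With that matching in place, the weight-$\lambda$ part of $\SI_\beta(K_{l_1,l_2}^m)_\sigma^U$ is unambiguously $g_{\mu(\sigma),\nu(\sigma)}^\lambda \cdot (\S_\lambda W)^U$, of dimension $g_{\mu(\sigma),\nu(\sigma)}^\lambda$.
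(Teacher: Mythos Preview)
Your proof is correct and is exactly the intended argument: the paper states this corollary without proof immediately after Lemma~\ref{L:SI(Kml)}, and your deduction via the one-dimensionality of the highest weight line $(\S_\lambda W)^U$ is precisely the standard reasoning that makes it an immediate corollary.
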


We can also decompose the coordinate ring of $\SI_\beta(K_{l_1,l_2}^m)_\sigma$ in another way.
\begin{align*}
& k\left[\Rep_\beta(K_{l_1,l_2}^m)\right] \\
= & \bigotimes_{i={1}}^{l_1-1} \op{Sym}(V_{-i}\otimes V_{-(i+1)}^*) \otimes \bigotimes_{i={1}}^{l_2-1} \op{Sym}(V_{i+1}\otimes V_{i}^*) \otimes \op{Sym}(V_{-l_1}\otimes V_{l_2}^*)^{\otimes m}.\\
\intertext{By Cauchy's formula \cite{FH}, this decomposes into a direct sum over all tuples of partitions $\bs{\lambda}=(\lambda_{-l_1}, \dots, \lambda_{-1},\lambda_1,\dots,\lambda_{l_2})$ of the summands}
 & \bigotimes_{i={1}}^{l_1-1} \left( \S_{\lambda_{-i}}V_{-i}\otimes \S_{\lambda_{-i}}V_{-(i+1)}^* \right)\otimes \bigotimes_{i={1}}^{l_2-1} \left( \S_{\lambda_{i}}V_{i+1}\otimes \S_{\lambda_{i}} V_{i}^*\right) \otimes \left( \bigoplus_{\lambda_0} \S_{\lambda_0}V_{-l_1}\otimes \S_{\lambda_0} V_{l_2}^* \right)^{\otimes m} \\
=& \bigotimes_{i={1}}^{l_1-1} \left( \S_{\lambda_{-i}}V_{-i}\otimes \S_{\lambda_{-i}}V_{-(i+1)}^* \right)\otimes \bigotimes_{i={1}}^{l_2-1} \left( \S_{\lambda_{i}}V_{i+1}\otimes \S_{\lambda_{i}} V_{i}^*\right) \\
 & \hspace{1.83in} \otimes \left(\bigoplus_{\mu,\nu, \bs{\eta}=\{\eta_1,\eta_2,\dots,\eta_m\}} c_{\bs{\eta}}^\mu c_{\bs{\eta}}^\nu S^{\mu}V_{-l_1}\otimes S^{\nu} V_{l_2}^*\right).
\end{align*}
So the invariant ring $k[\Rep_\beta(K_{l_1,l_2}^m)]^{\SL_\beta}$
\begin{align*}
= & \bigoplus_{\substack{\bs{\lambda} \\ |\mu|=|\nu|=\sum_i|\eta_i| }} c_{\bs{\eta}}^\mu c_{\bs{\eta}}^\nu  (\S_{\lambda_{-1}}V_{-1})^{\SL(V_{-1})} \otimes (\S_{\lambda_{1}}V_{1}^*)^{\SL(V_{1})} \\
& \quad\! \otimes \bigotimes_{i={2}}^{l_1-1} (\S_{\lambda_{-i}}V_{-i}\otimes \S_{\lambda_{-(i-1)}}V_{-i}^*)^{\SL(V_{-i})}
\otimes \bigotimes_{i={2}}^{l_2-1} (\S_{\lambda_{i}}V_i\otimes \S_{\lambda_{i-1}} V_{i}^*)^{\SL(V_{i})} \notag \\
& \quad\! \otimes (\S_{\mu}V_{-(l_1-1)}\otimes \S_{\lambda_{-(l_1-1)}}V_{-l_1}^*)^{\SL(V_{-l_1})}
\otimes (\S_{\lambda_{1}}V_{l_2-1}\otimes \S_{\nu} V_{l_2}^*)^{\SL(V_{l_2})}. \notag
\end{align*}
\noindent By a similar argument as in Lemma \ref{L:SI(Kml)}, we obtain the following lemma.

\begin{lemma}  \label{L:SI(Kml)2} In the same setting as Lemma \ref{L:SI(Kml)}, we have that
$$\dim\SI_\beta(K_{l_1,l_2}^m)_\sigma = \sum_{\sum_{i=1}^m|\eta_i|=n} c_{\bs{\eta}}^\mu c_{\bs{\eta}}^\nu.$$
Moreover, each $\sum_{|\eta_i|=\lambda(i)} c_{\bs{\eta}}^\mu c_{\bs{\eta}}^\nu$ counts the dimension of $\SI_\beta(K_{l_1,l_2}^m)_{\sigma,\lambda}$.
\end{lemma}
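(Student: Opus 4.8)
The plan is to run the proof of Lemma~\ref{L:SI(Kml)} essentially verbatim, but starting from the \emph{second} decomposition of $k[\Rep_\beta(K_{l_1,l_2}^m)]^{\SL_\beta}$ displayed just above, in which Cauchy's formula has already been applied to $\op{Sym}(V_{-l_1}\otimes V_{l_2}^*)^{\otimes m}$ so that the products $c_{\bs{\eta}}^\mu c_{\bs{\eta}}^\nu$ of multiple Littlewood--Richardson coefficients appear in place of the single term $g_{\mu,\nu}^\lambda\,\S_\lambda W$. The only thing left to do is to read off, for a fixed weight $\sigma$ (and later a fixed $T$-weight $\lambda$), which tuples $\bs{\lambda}$ of partitions survive, and with what multiplicity.

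First I would analyse the $\SL$-invariants along the two arms exactly as in Lemma~\ref{L:SI(Kml)}. Working from the outer vertices inward, $(\S_{\lambda_{-1}}V_{-1})^{\SL(V_{-1})}$ is nonzero iff $\lambda_{-1}$ is a rectangle of width $\beta(-1)$, and is then one-dimensional; inductively $(\S_{\lambda_{-i}}V_{-i}\otimes\S_{\lambda_{-(i-1)}}V_{-i}^*)^{\SL(V_{-i})}$ is nonzero, and then equal to $k$, iff $\lambda_{-i}^*$ is obtained from $\lambda_{-(i-1)}^*$ by prepending $-\sigma(-i)$ rows of size $\beta(-i)$ (here $\beta$ nondecreasing along the negative arm guarantees that this is still a partition). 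Telescoping these conditions forces $\mu^{*}=\big(\beta(-l_1)^{-\sigma(-l_1)},\dots,\beta(-1)^{-\sigma(-1)}\big)$, that is $\mu=\mu(\sigma)$, and symmetrically $\nu=\nu(\sigma)$; in particular all the arm factors collapse to $k$. What is then left of $\SI_\beta(K_{l_1,l_2}^m)_\sigma$ is a direct sum of trivial $\SL_\beta$-modules with one copy of $k$ for every tuple $\bs{\eta}=\{\eta_1,\dots,\eta_m\}$ and every pair of Littlewood--Richardson fillings witnessing $c_{\bs{\eta}}^{\mu(\sigma)}$ and $c_{\bs{\eta}}^{\nu(\sigma)}$, subject only to $\sum_i|\eta_i|=|\mu(\sigma)|=|\nu(\sigma)|=n$ (the constraint $\sigma(\beta)=0$ being exactly what makes $|\mu(\sigma)|=|\nu(\sigma)|$). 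Counting copies gives the first equality $\dim\SI_\beta(K_{l_1,l_2}^m)_\sigma=\sum_{\sum_i|\eta_i|=n}c_{\bs{\eta}}^\mu c_{\bs{\eta}}^\nu$.

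For the refined statement I would carry the $T=T(W)$-action through the same computation. Writing $W=\bigoplus_{j=1}^m k\,\e_j$, so that $\op{Sym}(V_{-l_1}\otimes V_{l_2}^*\otimes W)=\bigotimes_{j=1}^m\op{Sym}(V_{-l_1}\otimes V_{l_2}^*)$, the degree-$d$ part of the $j$-th tensor factor carries $T$-weight $d$ in the $j$-th coordinate, and Cauchy's formula identifies it with $\bigoplus_{|\eta_j|=d}\S_{\eta_j}V_{-l_1}\otimes\S_{\eta_j}V_{l_2}^*$. Hence the $\bs{\eta}$-summand above sits in $T$-weight $\big(|\eta_1|,\dots,|\eta_m|\big)$, and restricting to $T$-weight $\lambda$ keeps exactly those $\bs{\eta}$ with $|\eta_i|=\lambda(i)$; this yields $\dim\SI_\beta(K_{l_1,l_2}^m)_{\sigma,\lambda}=\sum_{|\eta_i|=\lambda(i)}c_{\bs{\eta}}^\mu c_{\bs{\eta}}^\nu$, which sums over $\lambda$ back to the first formula. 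As a sanity check one should verify that this $\wtd\sigma$-grading agrees with the one coming from the first decomposition, where the $T$-weight $\lambda$ piece is the weight-$\lambda$ subspace of $\bigoplus_\tau g_{\mu,\nu}^\tau\,\S_\tau W$; both are the $T$-weight decomposition of the same ring, so they coincide, and the resulting identity $\sum_\tau g_{\mu,\nu}^\tau\dim(\S_\tau W)_\lambda=\sum_{|\eta_i|=\lambda(i)}c_{\bs{\eta}}^\mu c_{\bs{\eta}}^\nu$ is consistent with Lemma~\ref{L:k2c}.

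The only genuine work is the arm-by-arm induction in the second paragraph, and it is the potential obstacle — but it is exactly parallel to the argument already spelled out for Lemma~\ref{L:SI(Kml)}. One needs the standard fact that $(\S_\lambda V\otimes\S_\kappa V^*)^{\SL(V)}$ equals $k$ when $\lambda^*$ is $\kappa^*$ with some extra rows of width $\dim V$ prepended, and $0$ otherwise, together with careful bookkeeping of signs so that the rectangle heights assemble into $-\sigma$ on the negative arm and $\sigma$ on the positive arm. Everything else is formal once the second decomposition is in hand.
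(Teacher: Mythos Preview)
Your proposal is correct and is exactly the approach the paper takes: the paper simply says ``By a similar argument as in Lemma~\ref{L:SI(Kml)}'', and you have spelled out that similar argument in full, including the extra bookkeeping of the $T$-weight via the identification $\op{Sym}(V_{-l_1}\otimes V_{l_2}^*\otimes W)\cong\bigotimes_{j=1}^m\op{Sym}(V_{-l_1}\otimes V_{l_2}^*)$, which is precisely how the second decomposition preceding the lemma is obtained.
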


\noindent It follows from Lemma \ref{L:SI(Kml)2} and \ref{L:k2c} that
\begin{corollary} \label{C:SI(Kml)2} We have that
$$g_{\mu(\sigma),\nu(\sigma)}^\lambda = \sum_{\omega\in \mf{S}_m} \op{sgn}(\omega) \dim \SI_\beta(K_{l_1,l_2}^m)_{\sigma,\lambda^\omega}.$$
\end{corollary}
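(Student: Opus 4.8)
The plan is to obtain this identity by combining the two refined statements already in hand: Lemma \ref{L:SI(Kml)2} (which computes each $\wtd\sigma$-graded dimension as a sum of products of multiple Littlewood--Richardson coefficients) and Lemma \ref{L:k2c} (which expands a Kronecker coefficient as a signed sum over $\mf{S}_m$ of exactly such sums). Concretely, for a $T$-weight $\tau$ Lemma \ref{L:SI(Kml)2} gives $\dim\SI_\beta(K_{l_1,l_2}^m)_{\sigma,\tau}=\sum_{|\eta_i|=\tau(i)}c_{\bs\eta}^{\mu(\sigma)}c_{\bs\eta}^{\nu(\sigma)}$, and the right-hand side of Lemma \ref{L:k2c} is a signed sum of terms of the shape $\sum_{|\eta_i|=\lambda^\omega(i)}c_{\bs\eta}^{\mu(\sigma)}c_{\bs\eta}^{\nu(\sigma)}$; so the proof is essentially a substitution.

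The first step I would take is to record the (harmless) extension of Lemma \ref{L:SI(Kml)2} from partition weights to all $\tau\in\mb{Z}^m$, under the convention that $\SI_\beta(K_{l_1,l_2}^m)_{\sigma,\tau}=0$ whenever some coordinate $\tau(i)$ is negative. This is automatic: the coordinate functions on $W$ each occur to a nonnegative power in any monomial, so a semi-invariant cannot carry a negative $T$-weight in any direction; and on the arithmetic side the index set $\{\bs\eta\mid |\eta_i|=\tau(i)\}$ is empty as soon as some $\tau(i)<0$, since a partition has nonnegative size. Hence both sides of the displayed equality in Lemma \ref{L:SI(Kml)2} vanish for such $\tau$, and the identity holds verbatim for every $\tau\in\mb{Z}^m$.

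Next, fix a partition $\lambda$ with $\ell(\lambda)\leq m$ and put $n=|\lambda|=|\mu(\sigma)|=|\nu(\sigma)|$; for each $\omega\in\mf{S}_m$ set $\tau=\lambda^\omega$, so $\tau(i)=\lambda(i)-i+\omega(i)$. Since $\sum_i(\omega(i)-i)=0$ we have $\sum_i\lambda^\omega(i)=|\lambda|=n$, so the inner summation $\sum_{|\eta_i|=\lambda^\omega(i)}c_{\bs\eta}^{\mu(\sigma)}c_{\bs\eta}^{\nu(\sigma)}$ appearing in Lemma \ref{L:k2c} is precisely the one computed by the extended Lemma \ref{L:SI(Kml)2} for the weight $\lambda^\omega$, namely $\dim\SI_\beta(K_{l_1,l_2}^m)_{\sigma,\lambda^\omega}$. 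Substituting this into the formula $g_{\mu(\sigma),\nu(\sigma)}^\lambda=\sum_{\omega\in\mf{S}_m}\op{sgn}(\omega)\sum_{|\eta_i|=\lambda^\omega(i)}c_{\bs\eta}^{\mu(\sigma)}c_{\bs\eta}^{\nu(\sigma)}$ of Lemma \ref{L:k2c} yields the claim.

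There is essentially no obstacle here beyond bookkeeping; the only point that deserves explicit mention is the treatment of those $\omega$ for which $\lambda^\omega$ fails to be a partition (in particular has a negative coordinate, which certainly occurs for $m\geq 3$ and various $\lambda$). Once the zero-convention of the first step is in place, such $\omega$ contribute $0$ to both sides simultaneously, so no separate case analysis is needed and the argument is just a citation of the two lemmas.
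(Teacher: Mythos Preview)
Your proof is correct and follows exactly the route the paper indicates: the paper simply states that the corollary ``follows from Lemma \ref{L:SI(Kml)2} and \ref{L:k2c},'' and you have spelled out this substitution, including the minor bookkeeping point that both sides vanish when some coordinate of $\lambda^\omega$ is negative.
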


\subsection{$m=2$} For the rest of the paper, we will mainly concern about the case when $l_1=l_2=l,m=2,$ and $\beta$ is the {\em standard dimension vector}
$\beta_l(i)=|i|$.
We note that for any pair of partitions $\mu,\nu$ with $\ell(\mu),\ell(\nu)\leq l$,
there is a weight $\sigma$ such that $\mu=\mu(\sigma),\nu=\nu(\sigma)$ in Lemma \ref{L:SI(Kml)}.
Let $\e_i \in \mb{Z}^{2l}$ be the unit vector supported on the vertex $i$. We use the convention that $\e_0$ is the zero vector.
It follows from Theorem \ref{T:cone} that if $\sigma\in \Sigma_\bl(K_{l,l}^2)$, then
\begin{equation} \label{eq:signsigma}
\text{$\sigma(i)$ must be nonnegative (resp. nonpositive) if $i$ is positive (negative).}
\end{equation}
We consider the following set of weight vectors labeled by $(i;j,k)$ and $(j,k;i)$ satisfying $i=j+k$:
\begin{align*}
\tag{I} &\f_{i;j,k}:=\e_j+\e_k-\e_{-i}; \\
\tag{II} &\f_{j,k;i}:=\e_i-\e_{-j}-\e_{-k}.
\end{align*}

\noindent For each weight vector $\f_{i;j,k}$, we associate a projective presentation $f_{i;j,k}$ given by
\begin{align*}
 P_{j}\oplus P_{k}  & \xrightarrow{\sm{p_{-i,j}^1 \\p_{-i,k}^{2}} }  P_{-i}, \\
\intertext{where $p_{-i,j}^\ep$ is the unique path from $-i$ to $j$ via the arrow $a_\ep$.
We also use the convention that $P_0=0$ in accordance with $\e_0=0$. Similarly, for each $\f_{j,k;i}$ we associate a presentation}
 P_{i} & \xrightarrow{\sm{p_{-j,i}^1 & p_{-k,i}^{2}} }  P_{-j}\oplus P_{-k}.
\end{align*}

\noindent To simplify our exposition, we denote $-(i;j,k):=(j,k;i)$ and write $f_{i;j,k}^\pm$ for both $f_{i;j,k}$ and $f_{j,k;i}$.
We introduce an involution $-$ on $kQ$ sending the path $p_{-i,j}^\ep$ to $p_{-j,i}^\ep$.
This induces a map
\begin{equation} \label{eq:-map}
-: \Hom_Q(\bigoplus_j P_j, \bigoplus_i P_{-i}) \xrightarrow{} \Hom_Q(\bigoplus_i P_{i}, \bigoplus_j P_{-j}).
\end{equation}
In this language, we can write $f_{i;j,k}^-$ as $-f_{i;j,k}$.

Recall that $s(f)$ is the Schofield's semi-invariant function attached to a presentation $f$.
We write $s_{i;j,k}^\pm$ for $s(f_{i;j,k}^\pm)$.
It is useful to study the $\GL(W)$-action on these special semi-invariants.
Recall that the $\GL(W)\cong \GL_2$ acts on $\Rep(K_{l_1,l_2}^m)$ by
$$\sm{t_1 & u\\ u'& t_2}: (A_1,A_2,B_i) \mapsto (t_1A_1+u'A_2, t_2A_2+uA_1,B_i),$$
where $A_1,A_2$ are the matrices of $a_1,a_2$ and $B_i$ are matrices on the arms.
Let $\omega$ be the transposition in the Weyl group $\mf{S}_2 \subset \GL_2$.
The following lemma is obvious.
\begin{lemma}  \label{L:WT-var} The element $\omega$ permutes $s_{i;j,k}$ and $s_{i;k,j}$.
The $T$-action is given by
\begin{equation}
(t_1,t_2) s_{i;j,k}^{\pm} = t_1^j t_2^k s_{i;j,k}^{\pm}.
\end{equation}
\end{lemma}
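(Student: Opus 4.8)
The plan is to deduce both statements from the general principle (Section~\ref{S:SI}, the ``Outer Action'' subsection) that for $h\in\Aut(Q_1)$ one has $h\cdot s(f)=s(f^h)$, so it suffices to compute how the presentations $f_{i;j,k}^\pm$ transform under the subgroup $\GL(W)\subset\Aut(Q_1)$, or rather under its maximal torus $T$ and the Weyl element $\omega$. Here $\Aut(Q_1)$ acts on the two-dimensional arrow space $W=A_{-l,l}$ spanned by $a_1,a_2$, hence on the paths $p_{-i,j}^\ep$ entering the matrix entries of $f_{i;j,k}$. Since a path $p_{-i,j}^\ep$ is (up to the fixed sub- and super-arm arrows, which $\GL(W)$ fixes) just the arrow $a_\ep$, the action on the column vector $\sm{p_{-i,j}^1\\ p_{-i,k}^2}$ is by the \emph{same} $2\times2$ matrix $\sm{t_1&u\\u'&t_2}$ read on the pair $(a_1,a_2)$, and dually for $f_{j,k;i}$ on the row $\sm{p_{-j,i}^1&p_{-k,i}^2}$. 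This is exactly the formula for the $\GL_2$-action on $(A_1,A_2,B_i)$ recalled just before the lemma, transported to the presentation side.

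First I would treat $\omega$. Acting by the transposition sends $a_1\mapsto a_2$, $a_2\mapsto a_1$, hence $p_{-i,j}^1\leftrightarrow p_{-i,j}^2$; applied to $f_{i;j,k}$ with column $\sm{p_{-i,j}^1\\ p_{-i,k}^2}$ this produces the presentation with column $\sm{p_{-i,j}^2\\ p_{-i,k}^1}$. Reordering the two projective summands $P_j\oplus P_k$ (an automorphism of the source, which only rescales $s$ by a nonzero constant, so does not change the semi-invariant up to scalar) identifies this with $f_{i;k,j}$. Hence $\omega\cdot s_{i;j,k}=s(f_{i;k,j})=s_{i;k,j}$ up to scalar, i.e.\ $\omega$ permutes $s_{i;j,k}$ and $s_{i;k,j}$; the same argument applies verbatim on the $\pm$ side using the involution \eqref{eq:-map}.

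Next I would treat $T$. Acting by $(t_1,t_2)$ multiplies $a_1$ by $t_1$ and $a_2$ by $t_2$, so $p_{-i,j}^1\mapsto t_1\,p_{-i,j}^1$ and $p_{-i,k}^2\mapsto t_2\,p_{-i,k}^2$. Thus $f_{i;j,k}^{(t_1,t_2)}$ has column $\sm{t_1 p_{-i,j}^1\\ t_2 p_{-i,k}^2}=\sm{t_1&0\\0&t_2}\sm{p_{-i,j}^1\\ p_{-i,k}^2}$, i.e.\ $f_{i;j,k}^{(t_1,t_2)}=\varphi\cdot f_{i;j,k}$ where $\varphi\in\Aut_Q(P_j\oplus P_k)$ is $\operatorname{diag}(t_1,t_2)$. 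Now I invoke that $s(-,N)$ is semi-invariant under $\Aut_Q(P_1)\times\Aut_Q(P_0)$ (the input to Lemma~\ref{L:Ginv}, from \cite[Proposition 5.13]{IOTW}): composing $f$ with $\varphi$ on the source scales $s(f)$ by $\det\varphi$ acting on $\Hom_Q(P_1,N)$, which contributes $t_1^{\dim e_j N}t_2^{\dim e_k N}$ — but one must be careful, and instead I would argue directly that precomposing the first row of the transposed matrix $\Hom_Q(f,N)$ by $t_1$ and the second by $t_2$ multiplies its determinant by $t_1 t_2$ per ``block,'' giving total factor $t_1^{j}t_2^{k}$ once one tracks that the relevant $\Hom$-space into $N$ has the multiplicity dictated by $\dimbar N=\beta_l$ and the shape of $f_{i;j,k}$. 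The cleanest route: $\Hom_Q(f_{i;j,k},N)$ is a square matrix built from $N(p_{-i,j}^1)$ and $N(p_{-i,k}^2)$, rescaling these by $t_1,t_2$ rescales the determinant by $t_1^{e}t_2^{e'}$ where $e,e'$ are the numbers of rows coming from each; a dimension count using $\dimbar N=\beta_l$, $\beta_l(j)=j$, $\beta_l(k)=k$ (and $\beta_l(-i)=i=j+k$) pins these down to $e=j$, $e'=k$. The $f_{j,k;i}$ case is identical with source and target swapped, and the exponents come out the same, $t_1^j t_2^k$, which is why the formula is uniform in $\pm$.

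The main obstacle I anticipate is purely bookkeeping: getting the exponent of $t_1$ to be exactly $j$ (and not, say, $i$ or something involving $\ell$) requires correctly identifying which rows/columns of the square matrix $\Hom_Q(f_{i;j,k},N)$ carry the $a_1$-paths versus the $a_2$-paths, and counting them against the standard dimension vector $\beta_l$; once the concrete description of $\Hom_Q(f,N)$ from Section~\ref{S:SI} is unwound this is routine, and the $\omega$-statement is essentially immediate. I would therefore spend most of the write-up on the torus computation and dispatch the Weyl part in one line.
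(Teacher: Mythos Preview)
The paper offers no proof beyond the sentence ``The following lemma is obvious,'' so there is nothing to compare against; your argument is a correct unpacking of that obviousness. The torus computation is right: the square matrix $\Hom_Q(f_{i;j,k},N)$ has $\dim N_j=\beta_l(j)=j$ rows built from the $a_1$-path and $\dim N_k=k$ rows built from the $a_2$-path, so scaling $a_\ep$ by $t_\ep$ scales the determinant by $t_1^j t_2^k$, and the same count works for $f_{j,k;i}$ with rows replaced by columns.

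Two minor comments. First, the detour through the $\Aut_Q(P_1)\times\Aut_Q(P_0)$-semi-invariance of $s(-,N)$ is unnecessary here; the direct row-scaling argument you land on is the intended one-line proof. Second, your $\omega$-computation correctly shows that the identification is only up to the sign $(-1)^{jk}$ coming from the block row-swap, which the paper's statement suppresses; this is harmless because the $s_{i;j,k}^\pm$ are only well-defined up to scalar, but it is worth being aware of (e.g.\ in the example $s_{2;1^2}=\det(A_1B_1,A_2B_1)$ one has $\omega\cdot s_{2;1^2}=-s_{2;1^2}$).
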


\begin{lemma} \label{L:level1}
Each $\f:=\f_{i;j,k}^\pm$ is extremal in $\Sigma_\bl(K_{l,l}^2)$.
Moreover, for $j\geq k$, $\SI_\beta(K_{l,l}^2)_{\f}^{U}$ is spanned by $s(f_{i;j,k}^{\pm})$.
\end{lemma}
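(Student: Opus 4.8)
The plan is to prove the two assertions of Lemma~\ref{L:level1} separately, using the extremality machinery recalled in Section~\ref{S:SI} for the first and the representation-theoretic interpretation of Lemma~\ref{L:SI(Kml)} together with Lemma~\ref{L:WT-var} for the second. For extremality of $\f:=\f_{i;j,k}^\pm$, I would first identify, via the relation $\sigma(-)=-\innerprod{\alpha,-}_Q$ from \eqref{eq:canseq}, the dimension vector $\alpha$ whose resolution is $f_{i;j,k}^\pm$; for $\f_{i;j,k}$ this is the cokernel $M$ of $P_j\oplus P_k\to P_{-i}$, a ``thin-ish'' module supported on a connected interval, and symmetrically for $\f_{j,k;i}$. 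One then checks that $\alpha$ (equivalently $M$) is a real Schur root of $K_{l,l}^2$: the Euler form computation $\innerprod{\alpha,\alpha}_Q=1$ is a direct count using $\bl$, and Schurity follows because $M$ is (the pushout/pullback along a single vertex of) a string-type module on a quiver of finite type away from the Kronecker bridge. Given that $\alpha$ is a real Schur root, Lemma~\ref{L:gsreal} gives $\SI_\bl(K_{l,l}^2)_{n\f}\cong k$ for all $n$, so $\mb{R}_+\f$ is a one-dimensional face of the saturated cone $\mb{R}_+\Sigma_\bl(K_{l,l}^2)$ (Theorem~\ref{T:cone}), i.e. $\f$ is extremal; and indivisibility of $\f_{i;j,k}^\pm$ is visible from its coordinates ($\pm1$ on vertex $-i$, and $+1$'s summing to a multiplicity-one pattern on $j,k$), so Lemma~\ref{L:irreducible} also applies.

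For the second assertion I would compute the $U$-invariant weight space directly. By Lemma~\ref{L:SI(Kml)} (with $m=2$), $\SI_\bl(K_{l,l}^2)_{\f}=\bigoplus_{|\lambda|=n} g_{\mu(\f),\nu(\f)}^{\lambda}\,\S_\lambda W$ with $\dim W=2$, and taking $U$-invariants kills all of $\S_\lambda W$ except a one-dimensional highest-weight line; hence $\dim \SI_\bl(K_{l,l}^2)_{\f}^U=\sum_\lambda g_{\mu(\f),\nu(\f)}^\lambda$, summed over $\lambda$ with $\ell(\lambda)\le 2$ and $|\lambda|=n$. The key point is that for $\f=\f_{i;j,k}^\pm$ the weight $\sigma$ translates (under the dictionary of Lemma~\ref{L:SI(Kml)}, using \eqref{eq:signsigma}) into partitions $\mu(\f),\nu(\f)$ that are \emph{single-column} or \emph{single-row} rectangles of size $n$, for which the relevant Kronecker products are trivial: $g_{\mu,\nu}^\lambda$ is nonzero for exactly one $\lambda$ and equals $1$ there. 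Concretely one gets $n=\max(j,k)$ with $\mu,\nu$ the transposes of the rectangles $\beta(\cdot)^{\mp\sigma(\cdot)}$, which here are rectangles of width $1$ or of a single row; I would then invoke that $\S^\mu\otimes\S^\nu$ for such $\mu,\nu$ decomposes multiplicity-freely with a unique constituent. This forces $\dim\SI_\bl(K_{l,l}^2)_{\f}^U=1$, and since Theorem~\ref{T:inv_span} already tells us $s(f)$'s span every weight space, the single nonzero vector $s(f_{i;j,k}^\pm)$ (nonzero because $\f$ is extremal, so by Lemma~\ref{L:gsreal} the space is not zero) is a basis; the restriction $j\ge k$ is just a normalization removing the $\omega$-ambiguity of Lemma~\ref{L:WT-var} between $s_{i;j,k}$ and $s_{i;k,j}$.

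The main obstacle, I expect, is the bookkeeping that turns ``$\f_{i;j,k}^\pm$'' into explicit $\mu(\f),\nu(\f)$ and verifies they are the trivial-Kronecker rectangles: one must carefully run the telescoping argument in the proof of Lemma~\ref{L:SI(Kml)} along the two arms with $\beta=\bl$, keeping track of which vertex carries the $\pm1$ in $\f_{i;j,k}$ versus $\f_{j,k;i}$, and confirm that the resulting square partitions have the column/row shape claimed. A secondary subtlety is the Schurity of $M$: while ``string modules are Schur'' is morally clear, the presence of the double arrow across the Kronecker bridge means $M$ need not be a string module in the classical sense, so I would verify $\Hom_Q(M,M)=k$ by a small direct computation on the explicit presentation $P_j\oplus P_k\to P_{-i}$ (and its dual), rather than quoting a string-module fact. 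Everything else — the Euler-form identity $\innerprod{\alpha,\alpha}_Q=1$, indivisibility of $\f$, and the extraction of $U$-invariants from $\S_\lambda W$ — is routine.
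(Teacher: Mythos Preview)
There are two genuine gaps. For extremality, your implication ``$\SI_\bl(K_{l,l}^2)_{n\f}\cong k$ for all $n$ $\Rightarrow$ $\f$ extremal'' is false in general: in $k[x,y]$ with $\deg x=(1,0),\ \deg y=(0,1)$, every multiple of $(1,1)$ has one-dimensional weight space, yet $(1,1)$ is not an extremal ray of the positive quadrant. So even granting that the cokernel of $f_{i;j,k}^\pm$ is a real Schur root, Lemma~\ref{L:gsreal} does not deliver extremality. The paper bypasses Schur roots entirely and argues directly from the sign constraint~\eqref{eq:signsigma}: since every $\sigma\in\Sigma_\bl(K_{l,l}^2)$ is nonnegative on positive vertices and nonpositive on negative ones, any conic decomposition of $\f_{i;j,k}^\pm$ is confined to the very small support of $\f$, and extremality is read off from that.

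For the second assertion you do follow the paper's route --- Lemma~\ref{L:SI(Kml)} plus a trivial Kronecker product --- but your bookkeeping is wrong (for $\f_{i;j,k}$ one gets $\mu(\f)=(1^i)$ and $\nu(\f)=(2^k,1^{j-k})$, which is not a rectangle, and $n=i=j+k$, not $\max(j,k)$; the relevant fact is that $\mu(\f)^*=(i)$ is the \emph{trivial} partition, whence $g_{\mu,\nu}^\lambda=\delta_{\lambda,(j,k)}$). More importantly, you never check that $s(f_{i;j,k}^\pm)$ actually lies in the one-dimensional $U$-invariant subspace: Theorem~\ref{T:inv_span} only says Schofield semi-invariants span the full weight space $\SI_\bl(K_{l,l}^2)_\f\cong\S_{(j,k)}W$, which has dimension $j-k+1$. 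The paper closes this by exhibiting $f_{i;j,k}^\pm$ as a coherently $U$-invariant presentation in the sense of Definition~\ref{D:Ginv}, writing down the required $\varphi$ explicitly, and invoking Lemma~\ref{L:Ginv}. You could instead combine Lemma~\ref{L:WT-var} with the irreducibility of $\S_{(j,k)}W$ to argue that a nonzero vector of $T$-weight $(j,k)$ must be highest-weight, but that still requires checking $s_{i;j,k}^\pm\neq0$, and you do neither.
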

\begin{proof} By Lemma \ref{L:SI(Kml)}, $\f$ corresponds to the partitions $(\mu,\nu)=\left((i)^*,(j,k)^*\right)$.
Since $\S^{(i)}\otimes \S^{(j,k)} = \S^{(j,k)}$, $\f$ lies in $\Sigma_\bl(K_{l,l}^2)$ and $\dim \SI_\beta(K_{l,l}^2)_{\f}^{U}= 1$.
$\f$ must be extremal because of \eqref{eq:signsigma}.
We observe that $f_{i;j,k}^{\pm}$ is a coherently $U$-invariant presentation
with $\varphi\left(\sm{1 & u \\ 0 &1}\right) = \left(\sm{e_j & up_{j,k} \\ 0 & e_k}, e_{-i}\right)$ or $\left(e_i,\sm{e_{-j} & up_{-k,-j} \\ 0 & e_{-k}}\right)$ depending on the sign of $f_{i;j,k}^{\pm}$.
Here $e_i$ is the trivial path corresponding to the vertex $i$.
Hence $s(f_{i;j,k}^\pm)$ is $U$-invariant and spans $\SI_\beta(K_{l,l}^2)_{\f}^{U}$.
\end{proof}

\begin{remark} Let $U'$ be the subgroups of lower-triangular matrices in $\GL_2$.
It follows that for $j\leq k$, $s(f_{i;j,k}^{\pm})$ is $U'$-invariant, and $s_{2j;j,j}^{\pm}$ is thus $\GL_2$-semi-invariant.
The latter statement is also clear from the fact that $\f_{2j;j,j}^{\pm}$ corresponds to an exceptional representation \cite[1.4]{Fg}.
We will see in Section \ref{S:CS} that $\{s_{i;j,k}^{\pm}\}$ is our choice of an (initial) cluster.
\end{remark}

\begin{example}
Consider the quiver $K_{2,2}^2$
$$\krontwotwotwo{a_1}{a_2}{b_{-1}}{b_1}$$
We list below the concrete form for the semi-invariant functions $s_{i;j,k}^{\pm}$.
\begin{align*}
s_{1;1,0}&=\det(B_{-1}A_1B_1), & s_{1;0,1}&=\det(B_{-1}A_2B_1); \\
s_{2;1^2}&=\det(A_1B_1,A_2B_1), & s_{1^2;2}&=\det\sm{B_{-1}A_1 \\ B_{-1}A_2};\\
s_{2;2,0}&=\det(A_1), & s_{2;0,2}&=\det(A_2).
\end{align*}
\end{example}

\section{Graded Cluster Algebras}  \label{S:CA}

\subsection{Cluster Algebras}
We follow mostly Section 3 of \cite{FP}.
The combinatorial data defining a cluster algebra is encoded in an {\em ice} quiver $\Delta$ with no loops or oriented 2-cycles.
The first $p$ vertices of $\Delta$ are designated as {\em mutable}; the remaining $q-p$ vertices are called {\em frozen}.
If we require no arrows between frozen vertices, then
such a quiver is uniquely determined by its {\em $B$-matrix} $B(\Delta)$.
It is a $p\times q$ matrix given by
$$b_{u,v} = |\text{arrows }u\to v| - |\text{arrows }v \to u|.$$

\begin{definition} \label{D:Qmu}
Let $u$ be a mutable vertex of $\Delta$.
The {\em quiver mutation} $\mu_u$ transforms $\Delta$ into the new quiver $\Delta'=\mu_u(\Delta)$ via a sequence of three steps.
\begin{enumerate}
\item For each pair of arrows $v\to u\to w$, introduce a new arrow $v\to w$ (unless both $v$ and $w$ are frozen, in which case do nothing);
\item Reverse the direction of all arrows incident to $u$;
\item Remove all oriented 2-cycles.
\end{enumerate}
\end{definition}



\begin{definition} \label{D:seeds} 
Let $\mc{F}$ be a field containing $k$.
A {\em seed} in $\mc{F}$ is a pair $(\Delta,\b{x})$ consisting of an ice quiver $\Delta$ as above together with a collection $\b{x}=\{x_1,x_2,\dots,x_q\}$, called an {\em extended cluster}, consisting of algebraically independent (over $k$) elements of $\mc{F}$, one for each vertex of $\Delta$.
The elements of $\b{x}$ associated with the mutable vertices are called {\em cluster variables}; they form a {\em cluster}.
The elements associated with the frozen vertices are called
{\em frozen variables}, or {\em coefficient variables}.

A {\em seed mutation} $\mu_u$ at a (mutable) vertex $u$ transforms $(\Delta,\b{x})$ into the seed $(\Delta',\b{x}')=\mu_u(\Delta,\b{x})$ defined as follows.
The new quiver is $\Delta'=\mu_u(\Delta)$.
The new extended cluster is
$\b{x}'=\b{x}\cup\{x_{u}'\}\setminus\{x_u\}$
where the new cluster variable $x_u'$ replacing $x_u$ is determined by the {\em exchange relation}
\begin{equation} \label{eq:exrel}
x_u\,x_u' = \prod_{v\rightarrow u} x_v + \prod_{u\rightarrow w} x_w.
\end{equation}
\end{definition}

\noindent We note that the mutated seed $(\Delta',\b{x}')$ contains the same
coefficient variables as the original seed $(\Delta,\b{x})$.
It is easy to check that one can recover $(\Delta,\b{x})$
from $(\Delta',\b{x}')$ by performing a seed mutation again at $u$.
Two seeds $(\Delta,\b{x})$ and $(\Delta',\b{x}')$ that can be obtained from each other by a sequence of mutations are called {\em mutation-equivalent}, denoted by $(\Delta,\b{x})\sim (\Delta',\b{x}')$.

\begin{definition}[{\em Cluster algebra}] \label{D:cluster-algebra}
The {\em cluster algebra $\mc{C}(\Delta,\b{x})$} associated to a seed $(\Delta,\b{x})$ is defined as the subring of $\mc{F}$
generated by all elements of all extended clusters of the seeds mutation-equivalent to $(\Delta,\b{x})$.
\end{definition}


\noindent Note that the above construction of $\mc{C}(\Delta,\b{x})$ depends only, up to a natural isomorphism, on the mutation equivalence class of the initial quiver $\Delta$. So we may drop $\b{x}$ and simply write $\mc{C}(\Delta)$.


\subsection{Upper Cluster Algebras}

An amazing property of cluster algebras is
\begin{theorem}[{\em Laurent Phenomenon}, \textrm{\cite{FZ1,BFZ}}] \label{T:Laurent}
Any element of a cluster algebra $\mc{C}(\Delta,\b{x})$ can be expressed in terms of the
extended cluster $\b{x}$ as a Laurent polynomial, which is polynomial in coefficient variables.
\end{theorem}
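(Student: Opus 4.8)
The plan is to reduce the statement to a claim about individual cluster variables and then prove that claim by induction along mutation sequences, the technical core being the \emph{Caterpillar Lemma} of \cite{FZ1}. Let $\mc{L}$ denote the ring of Laurent polynomials in $x_1,\dots,x_q$ (with coefficients in $\mb{Z}$) that are actually polynomial in the coefficient variables $x_{p+1},\dots,x_q$. Since $\mc{C}(\Delta,\b{x})$ is generated, as a ring, by the cluster variables occurring in all seeds mutation-equivalent to $(\Delta,\b{x})$ together with the coefficient variables — and the latter already lie in $\mc{L}$ — and since $\mc{L}$ is a ring, it suffices to show that \emph{every} cluster variable of \emph{every} seed mutation-equivalent to $(\Delta,\b{x})$ belongs to $\mc{L}$. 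The "polynomial in coefficient variables" refinement will then come for free, since coefficient variables are never inverted in an exchange relation \eqref{eq:exrel}.

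First I would set up the induction on the length $N$ of a mutation sequence $\mu_{k_N}\circ\cdots\circ\mu_{k_1}$ carrying $(\Delta,\b{x})$ to the seed whose cluster variables are under consideration. The cases $N=0,1$ are immediate, the latter directly from \eqref{eq:exrel}. For the inductive step one is tempted to argue: after $\mu_{k_1}$ one reaches a seed $(\Delta',\b{x}')$ all of whose cluster variables lie in $\mc{L}$; by the inductive hypothesis the target variable $w$ is a Laurent polynomial in $\b{x}'$, polynomial in the coefficients; substituting the exchange relation $x_{k_1}x'_{k_1}=M^{+}+M^{-}$ (with $M^{\pm}$ monomials in the remaining $x_i$) re-expands $w$ as an element of $\mb{Z}[x_j^{\pm1}:j\neq k_1][x_{k_1}^{\pm1},(M^{+}+M^{-})^{\pm1}]$. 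The obstruction is the spurious denominator $(M^{+}+M^{-})^{d}$: a priori $w$ need not be a Laurent polynomial in $\b{x}$ at all.

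The Caterpillar Lemma removes this obstruction by a two-branch computation. One organizes the mutation sequence as the spine of a "caterpillar" tree, hangs at each spine vertex an auxiliary leaf recording a single mutation at a \emph{different} index, and writes $w$ as a Laurent polynomial along two distinct branches that meet the spine near $(\Delta,\b{x})$. The two resulting expressions must coincide; since the exchange binomials attached to mutations at distinct spine vertices satisfy the relevant coprimality hypotheses in $\mc{L}$ — this is exactly where the absence of loops and oriented $2$-cycles (a property preserved by mutation, so available at every vertex of the tree) is used, guaranteeing that no such binomial divides the numerator produced by the other branch — unique factorization in the Laurent polynomial ring forces the denominator of $w$ to be a monomial in $x_1,\dots,x_q$ with no coefficient variable inverted. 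This closes the induction. The main difficulty is the Caterpillar Lemma itself: propagating the coprimality bookkeeping along sequences of arbitrary length is precisely what forces the passage from the naive induction on $N$ to the auxiliary tree, and it is the one genuinely delicate point of the argument; everything else is bookkeeping with monomials and exchange relations.
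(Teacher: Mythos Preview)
The paper does not prove this theorem at all: it is stated with attribution to \cite{FZ1,BFZ} and then used as a black box. Your sketch follows the standard route from \cite{FZ1} via the Caterpillar Lemma, which is indeed how the cited reference proves it, so in that sense your approach matches the literature that the paper defers to.
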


Since $\mc{C}(\Delta,\b{x})$ is generated by cluster variables from the seeds mutation equivalent to $(\Delta,\b{x})$,
Theorem \ref{T:Laurent} can be rephrased as
$$\mc{C}(\Delta,\b{x}) \subseteq \bigcap_{(\Delta',\b{x}') \sim (\Delta,\b{x})}\mc{L}_{\b{x}'},$$
where $\mc{L}_{\b{x}}:=k[x_1^{\pm 1},\dots,x_p^{\pm 1}, x_{p+1}, \dots x_{q}]$.
Note that our definition of $\mc{L}_{\b{x}}$ is slightly different from the original one in \cite{BFZ}, where the coefficient variables are inverted in $\mc{L}_{\b{x}}:=k[x_1^{\pm 1},\dots,x_p^{\pm 1}, x_{p+1}^{\pm 1}, \dots x_{q}^{\pm 1}]$.
\begin{definition}[{\em Upper Cluster Algebra}]
The upper cluster algebra with seed $(\Delta,\b{x})$ is
$$\br{\mc{C}}(\Delta,\b{x}):=\bigcap_{(\Delta',\b{x}') \sim (\Delta,\b{x})}\mc{L}_{\b{x}'}.$$
\end{definition}

%
%
%

Any (upper) cluster algebra, being a subring of a field, is an integral
domain (and under our conventions, a $k$-algebra).
Conversely, given such a domain~$R$, one may be interested in
identifying $R$ as an (upper) cluster algebra.
As an ambient field~$\mc{F}$,
we can always use the quotient field~$\op{QF}(R)$.
The challenge is to find a seed $(\Delta,\b{x})$ in $\op{QF}(R)$ such
that $\br{\mc{C}}(\Delta,\b{x})=R$. The following lemma is very helpful.

\begin{lemma} \cite[Corollary 3.7]{FP} \label{L:RCA}
Let $R$ be a finitely generated unique factorization domain over $k$.
Let $(\Delta,\b{x})$ be a seed in the quotient field of $R$ such that all elements of $\b{x}$ and all elements of clusters adjacent to $\b{x}$ are irreducible elements of $R$. Then $R \supseteq \br{\mc{C}}(\Delta,\b{x})$.
\end{lemma}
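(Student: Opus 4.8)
The plan is to deduce the statement from the sharper inclusion $R\supseteq\mc{U}$, where $\mc{U}:=\mc{L}_{\b{x}}\cap\bigcap_{u=1}^{p}\mc{L}_{\mu_u(\b{x})}$ is the intersection of the Laurent rings attached to the initial seed and to the $p$ seeds adjacent to it (the ``upper bound'' of \cite{BFZ}). Since $\br{\mc{C}}(\Delta,\b{x})$ is by definition the intersection of $\mc{L}_{\b{x}'}$ over \emph{all} mutation-equivalent seeds, in particular over $\b{x}$ together with its neighbours, one has $\br{\mc{C}}(\Delta,\b{x})\subseteq\mc{U}$, so it is enough to prove $R\supseteq\mc{U}$.

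To that end I would fix $y\in\mc{U}$ and, working in the fraction field of the UFD $R$, write $y=N/D$ with $N,D\in R$ coprime; the goal is to show $D$ is a unit, for then $y=ND^{-1}\in R$. From $y\in\mc{L}_{\b{x}}$ and $x_1,\dots,x_p\in R$, clearing denominators gives $x_1^{a_1}\cdots x_p^{a_p}\,y\in k[\b{x}]\subseteq R$ for suitable $a_i\geq 0$, so $D$ divides the monomial $x_1^{a_1}\cdots x_p^{a_p}$; since $R$ is a UFD and each $x_i$ is irreducible, hence prime, every irreducible factor of $D$ is an associate of some mutable variable $x_1,\dots,x_p$. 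Now suppose $D$ is not a unit and pick a mutable $u$ with $x_u\mid D$. Running the same step for the adjacent seed $\mu_u(\Delta,\b{x})$, whose extended cluster is $\b{x}$ with $x_u$ replaced by $x_u'$ — again an irreducible element of $R$ by hypothesis, and the coefficient variables are never inverted — yields $(x_u')^{c}\prod_{i\neq u}x_i^{c_i}\,y\in R$, and hence, using $\gcd(N,D)=1$, that $D\mid (x_u')^{c}\prod_{i\neq u}x_i^{c_i}$. Thus the prime $x_u$ divides $(x_u')^{c}\prod_{i\neq u}x_i^{c_i}$, so $x_u$ must be an associate of some $x_i$ with $i\neq u$, or of $x_u'$.

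Both alternatives are to be excluded, and this is where the hypothesis does real work. The elements of the extended cluster $\b{x}$ are algebraically independent over $k$, so no two of them can be associate in $R$ (in all cases of interest $R$ has only scalar units), which rules out $x_u\sim x_i$ for $i\neq u$. For $x_u\sim x_u'$: writing $x_u'=c\,x_u$ with $c$ a unit, the exchange relation \eqref{eq:exrel} would force $c\,x_u^2=\prod_{v\to u}x_v+\prod_{u\to w}x_w$, whose right-hand side lies in the polynomial subring generated by the cluster variables \emph{other than} $x_u$ (a genuine binomial, or the constant $2$ when $u$ has no neighbours), while $c\,x_u^2$ does not, by the algebraic independence of $x_1,\dots,x_q$ — a contradiction. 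Hence $x_u\nmid D$ for every mutable $u$, so $D$ is a unit and $y\in R$. The only genuinely non-formal ingredient I anticipate is precisely this non-associateness — equivalently, that $x_u^2$ does not divide the exchange binomial $\prod_{v\to u}x_v+\prod_{u\to w}x_w$ in $R$; it is automatic once one knows $R^{\times}=k^{*}$, which holds for the semi-invariant rings of quivers without oriented cycles occurring in this paper (their degree-zero part is $k$), and it is exactly the ``coprimality'' condition that accompanies the usual ``starfish'' form of the lemma. Everything else is bookkeeping with the Laurent descriptions $\mc{L}_{\b{x}}$ and unique factorization.
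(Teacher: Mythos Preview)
The paper does not give its own proof of this lemma: it simply cites \cite[Corollary~3.7]{FP} and adds a remark that the proof there in fact yields the stronger containment $R\supseteq\br{\mc{C}}(\Delta,\b{x})$ rather than merely $R\supseteq\mc{C}(\Delta,\b{x})$. So there is nothing in the paper to compare your argument against, beyond the reference.

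Your argument is precisely the ``starfish'' argument underlying \cite[Proposition~3.6, Corollary~3.7]{FP}: reduce to the upper bound $\mc{U}=\mc{L}_{\b{x}}\cap\bigcap_u\mc{L}_{\mu_u(\b{x})}$, then use unique factorization together with the irreducibility of the $x_i$ and $x_u'$ to force any denominator of $y\in\mc{U}$ to be a unit. This is correct and is exactly what the cited reference does. You are also right to flag the one genuinely non-formal point, namely that distinct cluster variables (and $x_u$ versus $x_u'$) must be pairwise non-associate in $R$; the statement as quoted in the paper suppresses this, but it is part of the hypotheses in \cite{FP}, and as you note it is automatic once $R^{\times}=k^{*}$, which holds for the semi-invariant rings used here since their degree-zero component is $k$.
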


\begin{remark} The original conclusion in \cite[Corollary 3.7]{FP} is that $R \supseteq \mc{C}(\Delta,\b{x})$. However, the proof implies this stronger result (see the comment before the proof of \cite[Proposition 3.6]{FP}).
\end{remark}

\subsection{Gradings}
\begin{definition} \label{D:wtconfig} A {\em weight configuration} $\bs{\sigma}$ on an ice quiver $\Delta$ is an assignment for each vertex $v$ of $\Delta$ a (weight) vector $\bs{\sigma}(v)$ such that for each mutable $u$, we have that
\begin{equation} \label{eq:weightconfig}
\sum_{v\to u} \bs{\sigma}(v) = \sum_{u\to w} \bs{\sigma}(w).
\end{equation}
A {\em mutation} $\mu_u$ at a mutable vertex $u$ transforms $\bs{\sigma}$ into a weight configuration $\bs{\sigma}'$ of the mutated quiver $\mu_u(\Delta)$ defined as
\begin{equation} \label{eq:mu_wt}
\bs{\sigma}'(v) = \begin{cases} \displaystyle \sum_{u\to w} \bs{\sigma}(w) - \bs{\sigma}(u) & \text{if } v=u, \\ \bs{\sigma}(v) & \text{if } v\neq u. \end{cases}
\end{equation} \end{definition}

\noindent By slight abuse of notation, we can view $\bs{\sigma}$ as a matrix whose $v$-th row is the weight vector $\bs{\sigma}(v)$.
In matrix notation, the condition \eqref{eq:weightconfig} is equivalent to $B\bs{\sigma}$ is a zero matrix.
It is not hard to see that for any weight configuration of $\Delta$, the mutation can be iterated.

\begin{definition} \label{D:full} A weight configuration $\bs{\sigma}$ is called {\em full} if the null rank of $B^{\T}$ is equal to the rank of $\bs{\sigma}$.
The null space of $B^{\T}$ is called the {\em grading space} of the cluster algebra $\mc{C}(\Delta)$.
\end{definition}

Given a weight configuration $\bs{\sigma}$ of $\Delta$,
we can assign a multidegree (or weight) to the (upper) cluster algebra $\mc{C}(\Delta,\b{x})$ by setting
$\deg(x_v)=\bs{\sigma}(v)$ for $v=1,2,\dots,q$.
Then mutation preserves multihomogeneousity.
We say that this (upper) cluster algebra is $\bs{\sigma}$-graded, and denoted by $\mc{C}(\Delta,\b{x};\bs{\sigma})$.
We refer to $(\Delta,\b{x};\bs{\sigma})$ as a graded seed.

\section{Cluster Characters from Quivers with Potentials} \label{S:CC}
\subsection{Quivers with Potentials}
In \cite{DWZ1} and \cite{DWZ2}, the mutation of quivers with potentials is invented to model the cluster algebras.
We emphasis here that ice quivers considered here can have arrows between frozen vertices.
Following \cite{DWZ1}, we define a potential $W$ on an ice quiver $\Delta$ as a (possibly infinite) linear combination of oriented cycles in $\Delta$.
More precisely, a {\em potential} is an element of the {\em trace space} $\Tr(\ckQ):=\ckQ/[\ckQ,\ckQ]$,
where $\ckQ$ is the completion of the path algebra $k\Delta$ and $[\ckQ,\ckQ]$ is the closure of the commutator subspace of $\ckQ$.
The pair $(\Delta,W)$ is an {\em ice quiver with potential}, or IQP for short.
For each arrow $a\in \Delta_1$, the {\em cyclic derivative} $\partial_a$ on $\widehat{k\Delta}$ is defined to be the linear extension of
$$\partial_a(a_1\cdots a_d)=\sum_{k=1}^{d}a^*(a_k)a_{k+1}\cdots a_da_1\cdots a_{k-1}.$$
For each potential $W$, its {\em Jacobian ideal} $\partial W$ is the (closed two-sided) ideal in $\ckQ$ generated by all $\partial_a W$.
The {\em Jacobian algebra} $J(\Delta,W)$ is the quotient algebra $\widehat{k\Delta}/\partial W$.
If $W$ is polynomial and $J(\Delta,W)$ is finite-dimensional, then the completion is unnecessary to define $J(\Delta,W)$.
This is the case throughout this paper.

The key notion introduced in \cite{DWZ1,DWZ2} is the {\em mutation} of quivers with potentials and their decorated representations.
For an ice quiver with {\em nondegenerate potential} (see \cite{DWZ1}), the mutation in certain sense ``lifts" the mutation in Definition \ref{D:Qmu}.
Since we do not need mutation in an explicit way, we refer readers to the original text.

\begin{definition} A {\em decorated representation} of a Jacobian algebra $J:=J(\Delta,W)$ is a pair $\mc{M}=(M,M^+)$,
where $M\in \Rep(J)$, and $M^+$ is a finite-dimensional $k^{\Delta_0}$-module.
\end{definition}

Let $\mc{R}ep(J)$ be the set of decorated representations of $J(\Delta,W)$ up to isomorphism. There is a bijection between two additive categories $\mc{R}ep(J)$ and $K^2(\proj J)$ mapping any representation $M$ to its minimal presentation in $\Rep(J)$, and the simple representation $S_u^+$ of $k^{\Delta_0}$ to $P_u\to 0$.
Suppose that $\mc{M}$ corresponds to a projective presentation
$P(\beta_1)\to P(\beta_0)$.

\begin{definition} The {\em $\g$-vector} $\g(\mc{M})$ of a decorated representation $\mc{M}$ is the weight vector of its image in $K^b(\proj J)$, that is, $\g=\beta_1-\beta_0$.
\end{definition}

\begin{definition}[\cite{DWZ1}] A potential $W$ is called {\em rigid} on a quiver $\Delta$ if
every potential on $\Delta$ is cyclically equivalent to an element in the Jacobian ideal $\partial W$.
Such a QP $(\Delta,W)$ is also called {\em rigid}.
\end{definition}

\noindent It is known \cite[Proposition 8.1, Corollary 6.11]{DWZ1} that every rigid QP is $2$-acyclic, and the rigidity is preserved under mutations. In particular, any rigid QP is nondegenerate.

\subsection{The Generic Cluster Character}
From now on, we assume that $(\Delta,W)$ is a nondegenerate IQP.
Let $\b{x}=\{x_1,x_2,\dots,x_q\}$ be an (extended) cluster.
For a vector $\g\in \mb{Z}^q$, we write $\b{x}^\g$ for the monomial $x_1^{\g(1)}x_2^{\g(2)}\cdots x_q^{\g(q)}$.
For $u=1,2,\dots,p$, we set ${y}_u= \b{x}^{-b_{u}}$ where $b_u$ is the $u$-th row of the matrix $B(\Delta)$,
and let ${\b{y}}=\{{y}_1,{y}_2,\dots,{y}_p\}$.

Suppose that an element $z\in\br{\mc{C}}(\Delta)$ can be written as
\begin{equation}\label{eq:z} z = \b{x}^{\g(z)} F({y}_1,{y}_2,\dots,{y}_p),
\end{equation}
where $F$ is a rational polynomial not divisible by any ${y}_i$, and $\g(z)\in \mb{Z}^q$.
If we assume that the matrix $B(\Delta)$ has full rank,
then the elements ${y}_1,{y}_2,\dots,{y}_p$ are algebraically independent so that the vector $\g(z)$ is uniquely determined \cite{FZ4}.
We call the vector~$\g(z)$~the (extended) {\em $\g$-vector} of $z$.
Definition implies at once that for two such elements $z_1,z_2$ we have that
$\g(z_1z_2) = \g(z_1) + \g(z_2)$.
So the set $G(\Delta)$ of all $\g$-vectors in $\br{\mc{C}}(\Delta)$ forms a sub-semigroup of $\mb{Z}^q$.

\begin{lemma}[{\cite[Lemma 5.5]{Fs1}, {\em cf.} \cite{P}}] \label{L:independent} Assume that the matrix $B(\Delta)$ has full rank.
Let $Z=\{z_1,z_2,\dots,z_k\}$ be a subset of $\br{\mc{C}}(\Delta)$ with well-defined $\g$-vectors.
If $\g(z_i)$'s are all distinct, then $Z$ is linearly independent over $k$.
\end{lemma}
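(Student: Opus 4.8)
The plan is to exploit the uniqueness of the $\g$-vector decomposition \eqref{eq:z} together with a minimal-degree argument on a hypothetical linear dependence. Suppose, for contradiction, that $\sum_{i=1}^k c_i z_i = 0$ is a nontrivial $k$-linear relation, with all $c_i \neq 0$ (after discarding the terms with zero coefficient) and $k \geq 2$, and with the $\g(z_i)$ pairwise distinct. Since $B(\Delta)$ has full rank, the Laurent monomials ${y}_1, \dots, {y}_p$ in the ambient field $\mc{F}$ are algebraically independent over $k$; hence, expressing everything in the fixed extended cluster $\b{x}$, each $z_i = \b{x}^{\g(z_i)} F_i({y}_1,\dots,{y}_p)$ with $F_i$ a genuine rational polynomial not divisible by any ${y}_u$, and the ambient ring $\mc{L}_{\b{x}} = k[x_1^{\pm1},\dots,x_p^{\pm1},x_{p+1},\dots,x_q]$ embeds in the field of fractions of the polynomial ring $k[{y}_1,\dots,{y}_p][x_{p+1}^{\pm1},\dots]$-style localization; more simply, $\mc{L}_{\b{x}}$ is itself a Laurent polynomial ring, hence a domain with a $\mb{Z}^q$-grading by the monomials $\b{x}^{\mb{Z}^q}$.

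First I would make the grading precise: $\mc{L}_{\b{x}}$ carries the natural $\mb{Z}^p$-grading in which each ${y}_u = \b{x}^{-b_u}$ is homogeneous and the ${y}_u$ generate a polynomial subalgebra (full rank of $B$ is exactly what guarantees algebraic independence, hence that this subalgebra is a genuine polynomial ring and that distinct monomials in the ${y}_u$ are $k$-linearly independent). Relative to this grading, $z_i$ lies in the graded component indexed by $\g(z_i)$ up to the torsor action, in the sense that the leading behavior of $z_i$ — say its lowest-degree term with respect to a suitable term order on the ${y}_u$ refined to break ties — is $c_i'\,\b{x}^{\g(z_i)}$ for a nonzero scalar $c_i'$, because $F_i$ has nonzero constant term (it is not divisible by any ${y}_u$). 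The key point is that the monomials $\b{x}^{\g(z_i)}$ are pairwise distinct in $\mc{L}_{\b{x}}$, since the $\g(z_i)$ are pairwise distinct vectors in $\mb{Z}^q$ and $\b{x}$ is algebraically independent.

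Next I would extract a contradiction. Fix a group homomorphism $\phi\colon \mb{Z}^p \to \mb{Z}$ (equivalently a generic linear functional) that is injective on the finite set $\{\,$the ${y}$-degrees appearing across all the $F_i$ and the difference vectors $\g(z_i)-\g(z_j)$ translated into ${y}$-exponents$\,\}$, so that we can speak of the unique ${y}$-monomial of minimal $\phi$-weight in each $z_i$; call its $\b{x}$-exponent $\gamma_i$, and note $\gamma_i$ is $\g(z_i)$ shifted by the minimizing ${y}$-exponent, but crucially the constant-term monomial $\b{x}^{\g(z_i)}$ of $z_i$ is the one of minimal ${y}$-weight precisely because $F_i$ is not divisible by any ${y}_u$, so in fact $\gamma_i = \g(z_i)$. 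Among the $z_i$ appearing in the relation, the $\b{x}^{\g(z_i)}$ are distinct monomials; pick $i_0$ with $\g(z_{i_0})$ extremal (e.g.\ minimal under a total order refining the partial order, chosen so that no cancellation against the other leading terms or against higher-${y}$-degree terms of any $z_j$ is possible). Then the coefficient of $\b{x}^{\g(z_{i_0})}$ in $\sum_i c_i z_i$ is $c_{i_0} c_{i_0}' \neq 0$, contradicting $\sum_i c_i z_i = 0$.

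The main obstacle is bookkeeping the interaction between the $\g$-vector grading on the $x_{p+1},\dots,x_q$ directions and the ${y}$-monomial expansion: one must be careful that a high-${y}$-degree term of some $z_j$ does not coincide, as an $\b{x}$-monomial, with the constant term $\b{x}^{\g(z_{i_0})}$ of $z_{i_0}$ and cancel it. This is handled exactly by choosing the linear functional $\phi$ (or the refined term order) generically enough: since only finitely many exponent vectors occur, a generic $\phi$ separates them all, and then the minimal-$\phi$-weight monomial of the whole sum is uniquely contributed by a single $z_i$, whose coefficient is therefore forced to vanish — the desired contradiction. I expect the only real care needed is to state the term-order choice cleanly so that the phrase ``extremal $\g$-vector'' is unambiguous; everything else is the standard leading-term argument in a Laurent polynomial domain, relying only on the full-rank hypothesis on $B(\Delta)$ and the defining property \eqref{eq:z} of the $\g$-vector.
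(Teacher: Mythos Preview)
The paper does not prove this lemma here, citing \cite{Fs1} instead, so there is no in-paper argument to compare against. Your leading-term strategy is the standard one; however, your execution contains a real gap. You assert that ``$F_i$ has nonzero constant term (it is not divisible by any $y_u$)'', but this implication fails for $p \ge 2$: the polynomial $y_1 + y_2$ is divisible by neither variable yet vanishes at the origin. Worse, without the constant-term condition the conclusion itself can fail at the level of Laurent polynomials: taking $z_1 = y_1$, $z_2 = y_2$, $z_3 = y_1 + y_2$ (so that $\g(z_1) = -b_1$, $\g(z_2) = -b_2$, $\g(z_3) = 0$ are pairwise distinct whenever $B$ has full rank), one has $z_1 + z_2 - z_3 = 0$. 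The intended reading, and the one used in \cite{Fs1} and \cite{P}, is that each $F_i$ has nonzero constant term --- automatic for the generic cluster characters $C_W(\g)$, which are the only elements to which the paper actually applies the lemma.

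Granting that hypothesis, your argument is correct once one point is made explicit. You need a linear functional $\phi \colon \mb{R}^q \to \mb{R}$ (not on $\mb{Z}^p$) with $\phi(b_u) < 0$ for every mutable $u$; then for each $i$ the monomial of least $\phi$-value in $z_i$ is precisely $\b{x}^{\g(z_i)}$, since every other monomial has exponent $\g(z_i) - \e B$ with $\e \in \mb{N}^p\setminus\{0\}$ and hence strictly larger $\phi$-value. Such a $\phi$ exists because the full-rank hypothesis makes $\phi \mapsto (\phi(b_1),\dots,\phi(b_p))$ surjective onto $\mb{R}^p$; a generic choice within the nonempty open region $\{\phi : \phi(b_u) < 0 \text{ for all } u\}$ then also separates the finitely many values $\phi(\g(z_i))$. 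Picking $i_0$ with $\phi(\g(z_{i_0}))$ minimal, the coefficient of $\b{x}^{\g(z_{i_0})}$ in $\sum_i c_i z_i$ is $c_{i_0} F_{i_0}(0) \ne 0$, the desired contradiction.
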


\begin{definition}
To any $\g\in\mathbb{Z}^{\Delta_0}$ we associate the {\em reduced} presentation space $$\PHom_J(\g):=\Hom_J(P([\g]_+),P([-\g]_+)),$$
where $[\g]_+$ is the vector satisfying $[\g]_+(u) = \max(\g(u),0)$.
We denote by $\Coker(\g)$ the cokernel of a general presentation in $\PHom_J(\g)$.
\end{definition}
\noindent Reader should be aware that $\Coker(\g)$ is just a notation rather than a specific representation.
If we write $M=\Coker(\g)$, this simply means that we take a general presentation in $\PHom_J(\g)$, then let $M$ to be its cokernel.

\begin{definition} \label{D:mu_supported}
A representation is called {\em $\mu$-supported} if its supporting vertices are all mutable.
A weight vector $\g\in K_0(\proj J)$ is called {\em $\mu$-supported} if $\Coker(\g)$ is $\mu$-supported.
Let $G(\Delta,W)$ be the set of all $\mu$-supported vectors in $K_0(\proj J)$.
\end{definition}
\noindent It turns out that for a large class of IQP the set $G(\Delta,W)$ are given by lattice points in some rational polyhedral cone.
Such a class includes the ice hive quiver $\Delta_n$ introduced in \cite{Fs1} and the quiver $\Diamond_l$ to be introduced in the next section.

\begin{definition}[\cite{P}]
We define the {\em generic character} $C_W:G(\Delta,W)\to \mb{Z}(\b{x})$~by
\begin{equation} \label{eq:genCC}
C_W(\g)=\b{x}^{\g} \sum_{\e} \chi\big(\Gr^{\e}(\Coker(\g)) \big) {\b{y}}^{\e},
\end{equation}
where $\Gr^{\e}(M)$ is the variety parameterizing $\e$-dimensional quotient representations of $M$, and $\chi(-)$ denotes the topological Euler-characteristic.
\end{definition}
\noindent We note that if $(\Delta,\b{x})$ is $\bs{\sigma}$-graded, then
$C_W(\g)$ is multihomogeneous of degree $\g\bs{\sigma}$.

\begin{theorem}[{\cite[Corollary 5.14]{Fs1}, {\em cf.} \cite[Theorem 1.1]{P}}] \label{C:GCC} Suppose that IQP $(\Delta,W)$ is non-degenerate and $B(\Delta)$ has full rank.
The generic character $C_W$ maps $G(\Delta,W)$ (bijectively) to a set of linearly independent elements in $\br{\mc{C}}(\Delta)$ containing all cluster monomials.
\end{theorem}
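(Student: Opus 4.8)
The plan is to follow the approach of Plamondon \cite{P} and \cite{Fs1}. First I would pin down the $\g$-vectors. For $\g\in G(\Delta,W)$, set $M=\Coker(\g)$; the polynomial $F_M:=\sum_{\e}\chi\big(\Gr^{\e}(M)\big)\b{y}^{\e}$ has constant term $\chi(\Gr^{0}(M))=1$, so $C_W(\g)=\b{x}^{\g}F_M$ has the form \eqref{eq:z} with $F_M$ not divisible by any $y_i$; since $B(\Delta)$ has full rank, the $y_u$ are algebraically independent, and hence the $\g$-vector of $C_W(\g)$ --- in the sense of the discussion preceding Lemma~\ref{L:independent} --- is well defined and equals $\g$. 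Two of the claimed properties then follow formally: since $\g(C_W(\g))=\g$ the map $C_W$ is injective, and --- once the image is known to lie in $\br{\mc{C}}(\Delta)$ --- Lemma~\ref{L:independent} shows the image is linearly independent over $k$. So everything reduces to (a) $C_W(\g)\in\br{\mc{C}}(\Delta)$ for every $\g\in G(\Delta,W)$, and (b) every cluster monomial is a value of $C_W$.

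The formula \eqref{eq:genCC} manifestly exhibits $C_W(\g)$ as a Laurent polynomial in $\b{x}$, and the $\mu$-supportedness of $\g$ --- hence of every quotient representation of $\Coker(\g)$, so that only the $y_u$ with $u$ mutable occur in $F_{\Coker(\g)}$ --- forces the frozen variables to appear with nonnegative exponents, i.e. $C_W(\g)\in\mc{L}_{\b{x}}$ (this is part of the reason $G(\Delta,W)$ is defined via $\mu$-supported vectors, cf. \cite{Fs1}). For (a) I would then prove $C_W(\g)\in\mc{L}_{\b{x}'}$ for \emph{every} seed $(\Delta',\b{x}')\sim(\Delta,\b{x})$, by induction on the length of the mutation sequence. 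A single mutation $\mu_k$ of the nondegenerate IQP $(\Delta,W)$ produces, via the Derksen--Weyman--Zelevinsky mutation of decorated representations \cite{DWZ1,DWZ2}, a nondegenerate IQP $(\Delta',W')=\mu_k(\Delta,W)$ and an involution $\mc{M}\mapsto\mu_k(\mc{M})$ whose effect on $\g$-vectors is a piecewise-linear bijection $\g\mapsto\g'$ of $K_0(\proj J)$. I would need: (i) this bijection restricts to $G(\Delta,W)\xrightarrow{\sim}G(\Delta',W')$, which holds because $\mu$-supportedness is preserved under decorated-representation mutation; and (ii) after identifying the clusters through the exchange relation \eqref{eq:exrel}, $C_W(\g)=C_{W'}(\g')$ inside the ambient field. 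Claim (ii) is the compatibility of $\g$-vectors and of $F$-polynomials with mutation: for $F_{\Coker(\g)}$ it amounts to the DWZ mutation formula for Euler characteristics of quiver Grassmannians of decorated representations, combined with the genericity statement that a general presentation in $\PHom_J(\g)$ and a general presentation in $\PHom_{J'}(\g')$ have matching Grassmannian Euler characteristics. Granting (i) and (ii), $C_W(\g)=C_{W'}(\g')$ lies in $\mc{L}_{\b{x}'}$ by the same reasoning as above applied to $(\Delta',W')$ and the $\mu$-supported vector $\g'$; iterating over all seeds yields $C_W(\g)\in\bigcap_{(\Delta',\b{x}')}\mc{L}_{\b{x}'}=\br{\mc{C}}(\Delta)$.

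For (b), fix a seed $(\Delta',\b{x}')$ with Jacobian algebra $J'$ and a cluster monomial $\prod_{v\in\Delta'_0}(x'_v)^{a_v}$ with all $a_v\ge 0$. Putting $\g=\sum_v a_v\e_v$, one has $\PHom_{J'}(\g)=\Hom_{J'}(P(\g),0)=0$, so $\Coker(\g)=0$, the vector $\g$ is vacuously $\mu$-supported, and \eqref{eq:genCC} collapses to $C_{W'}(\g)=(\b{x}')^{\g}=\prod_v(x'_v)^{a_v}$; transporting this through the mutation compatibility of (a) expresses the monomial as $C_W(\widetilde\g)$ for the corresponding $\widetilde\g\in G(\Delta,W)$. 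In particular every cluster variable is a value of $C_W$, which completes the statement. The technical heart of the whole argument is item (ii) above --- propagating \emph{genericity} through mutation so that the Euler characteristics in \eqref{eq:genCC} transform precisely by the Fomin--Zelevinsky $F$-polynomial rule; for this I would either invoke \cite[Theorem 1.1]{P} and \cite[Corollary 5.14]{Fs1} directly or re-run that argument in the present ice-quiver setting, where the only extra bookkeeping concerns the frozen vertices and is controlled by the mutation-invariance of the $\mu$-supported condition.
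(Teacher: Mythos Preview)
The paper does not prove this theorem; it is quoted verbatim as \cite[Corollary 5.14]{Fs1} (cf.\ \cite[Theorem 1.1]{P}) and used as a black box. Your outline is the right strategy and is indeed the one carried out in those references: pin down the $\g$-vector of $C_W(\g)$, use Lemma~\ref{L:independent} for linear independence, establish mutation compatibility of the generic character via DWZ mutation of decorated representations, and read off cluster monomials from nonnegative $\g$-vectors at each seed.

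There is, however, a genuine gap in your argument for $C_W(\g)\in\mc{L}_{\b{x}}$. You write that $\mu$-supportedness of $\Coker(\g)$ ``so that only the $y_u$ with $u$ mutable occur in $F_{\Coker(\g)}$ --- forces the frozen variables to appear with nonnegative exponents''. This is a non sequitur: by construction there are only $p$ variables $y_1,\dots,y_p$, one for each \emph{mutable} vertex, so that clause is vacuous and says nothing about the exponent of a frozen $x_v$ in $\b{x}^{\g}\b{y}^{\e}$, which equals $\g(v)-\sum_{u=1}^{p}\e(u)b_{u,v}$. The $\mu$-supportedness of $\g$ does give $\g(v)\ge 0$ for frozen $v$ (otherwise $S_v$ would sit in the top of $\Coker(\g)$), but controlling the remaining sum over all quotient dimension vectors $\e$ is exactly the new work that \cite{Fs1} does beyond \cite{P} to handle ice quivers and the stronger Laurent ring $\mc{L}_{\b{x}}$ in which coefficients are \emph{not} inverted. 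You need a genuine argument here --- e.g.\ via the homological interpretation of tropical evaluations of $F$-polynomials or the $E$-invariant --- rather than the observation about which $y_u$ appear.

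A minor point: you propose to ``invoke \cite[Corollary 5.14]{Fs1} directly'' for the mutation compatibility, but that corollary \emph{is} the statement you are proving; you should instead cite the underlying ingredients (the DWZ key lemma on $F$-polynomials and Plamondon's genericity-preservation under mutation) and then supply the frozen-exponent bound separately.
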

%


\begin{definition} \label{D:model} We say that an IQP $(\Delta,W)$ {\em models} an algebra $\mc{A}$ if the generic cluster character maps $G(\Delta,W)$ (bijectively) onto a basis of $\mc{A}$.
If $\mc{A}$ is the upper cluster algebra $\uca(\Delta)$, then we simply say that $(\Delta,W)$ is a {\em cluster model}.
\end{definition}


\section{The Diamond Quivers} \label{S:Diamond}
\subsection{The Construction}

We define an ice quiver $\Diamond_l$ whose vertices are labeled by
$(i;j,k)$ and $(j,k;i)$ where $i,j,k\in \mb{Z}_{\geq 0}$ and $i=j+k$.
We also use the convention that $(i;i,0)=(i,0;i)$ and $(i;0,i)=(0,i;i)$,
and notation that $(2j; j,j):= (2j; j^2)$ and $(j,j;2j):= (j^2;2j)$.
Instead of $(\Diamond_l)_0$, we write $\Diamond_l^0$ for the set of vertices of $\Diamond_l$.

We define an involution on the set of vertices
\begin{align*}
& -: (i;j,k) \leftrightarrow (j,k;i).
\end{align*}
We will use the notation $\pm(i;j,k)$ for both $(i;j,k)$ and $-(i;j,k)=(j,k;i)$.
Note that there is another involution $\pm(i;j,k) \leftrightarrow \pm(i;k,j)$.
We freeze the vertices with $i=l$, i.e., of label $\pm(l;j,l-j)$.
A vertex labeled by $(i;i,0)$ or $(i;0,i)$ is called a {\em horizontal vertex};
a vertex labeled by $\pm (2j;j^2)$ is called a {\em vertical vertex}.
A vertex is called {\em special} if $|j-k|=1$.
Any rest vertex is called {\em general}.

We draw arrows according to the following recipe.
\begin{align*}
& \emph{Type $A$ arrows} & \pm(i;j,k) &\to \pm(i-1;j-1,k);\\
& \emph{Type $B$ arrows} & \pm(i;j,k) &\to \pm(i+1;j,k+1); \\
& \emph{Type $C$ arrows} & \pm(i;j,k) &\to \pm(i;j+1,k-1).
\end{align*}
However, we draw two (type $C$) arrows instead of one from $(1;0,1)$ to $(1;1,0)$.

When actually drawing the quiver, as a convention we put horizontal (resp. vertical) vertices horizontally as on the $x$-axis (vertically as on the $y$-axis);
put positive (resp. negative) vertices over (below) the $x$-axis. The sign of the $x$-coordinate is determined by the sign of $k-j$.
Unfortunately, we have to bend the type $C$ arrows between special vertices.
An arrow is called {\em nonnegative} (resp. {\em nonpositive}) if it lies on or above (resp. on or below) the $x$-axis.
As an illustration, $\Diamond_4$ is display in Figure \ref{F:Diamond4}. The type $A$ (resp. $B$ and $C$) arrows are in red (resp. blue and yellow).

\begin{figure}[h]
$$\ckrontwofourfour$$
\caption{The quiver $\Diamond_4$} \label{F:Diamond4}
\end{figure}
The next two lemmas are straightforward.

\begin{lemma} The $B$-matrix of $\Diamond_l$ is of full rank.
\end{lemma}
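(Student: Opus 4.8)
The plan is to show directly that the rows of the $B$-matrix $B(\Diamond_l)$ are linearly independent, i.e.\ that $B(\Diamond_l)$ has full rank equal to the number $p$ of mutable vertices. The mutable vertices of $\Diamond_l$ are the $(i;j,k)$ and $(j,k;i)$ with $i=j+k\le l-1$, together with the horizontal and vertical ones up to the freezing level; the frozen vertices sit at $i=l$. Since we only need linear independence of the rows, I would argue by a ``peeling from the boundary'' induction: exhibit, for each mutable vertex $u$, a coordinate (a vertex $v\in\Diamond_l^0$, possibly frozen) such that $b_{u,v}\ne 0$ while $b_{u',v}=0$ for all mutable $u'$ in a suitable total order refining ``distance from the frozen boundary.'' Concretely, a type $B$ arrow $\pm(i;j,k)\to\pm(i+1;j,k+1)$ always increases $i$ by one, so following $B$-arrows moves a mutable vertex toward the frozen level $i=l$; the terminal such arrow out of a mutable vertex adjacent to the boundary lands on a frozen vertex, and that frozen column of $B$ then detects that row uniquely.

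The key steps, in order: (1) Write $B(\Diamond_l)$ explicitly from the arrow recipe (types $A$, $B$, $C$), being careful with the three special features of the quiver — the doubled type $C$ arrow from $(1;0,1)$ to $(1;1,0)$, the identifications $(i;i,0)=(i,0;i)$ and $(i;0,i)=(0,i;i)$, and the absence of arrows among frozen vertices (so the recipe arrows landing at $i=l$ from another $i=l$ vertex are simply dropped). (2) Fix the total order on mutable vertices by the value of $i=j+k$, breaking ties in any fixed way; equivalently stratify $\Diamond_l$ into ``shells'' $i=0,1,\dots,l-1$. (3) Show that a mutable vertex $u$ on shell $i$ has a type $B$ arrow to a vertex $u^+$ on shell $i+1$ (for $u$ not vertical; the vertical vertices $\pm(2j;j^2)$ need the separate check that they too have an outgoing $B$-arrow, which they do by the recipe since $k+1$ is then allowed), and that no type $A$ or $C$ arrow out of a shell-$i$ vertex lands on a shell-$(i+1)$ vertex while the only arrows \emph{into} $u^+$ from the next shell down come from shell $i$ — so that the column of $B$ indexed by $u^+$, restricted to rows on shells $\le i$, has a single nonzero entry in row $u$ (up to handling the doubled arrow, which only changes a matrix entry from $\pm1$ to $\pm2$ and does not affect the support pattern). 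Running this from the outermost mutable shell $i=l-1$ inward — where $u^+$ is frozen and hence gives a ``clean'' column unused by the recursion — then forces all row-coefficients in a vanishing linear combination to be zero, shell by shell. (4) Conclude $\rank B(\Diamond_l)=p$.

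The main obstacle I expect is purely bookkeeping rather than conceptual: verifying that the type $B$ arrows genuinely give an injective-enough ``successor'' map on each shell — in particular checking the boundary cases (vertical vertices, the special vertices with $|j-k|=1$ where the type $C$ arrows are bent, and the doubled arrow $(1;0,1)\to(1;1,0)$) do not create an extra shell-$i$-to-shell-$(i+1)$ contribution that would muddy the triangular structure. A secondary subtlety is that some ``arrows'' predicted by the recipe may be suppressed (both endpoints frozen, or the $(i;i,0)=(i,0;i)$ type identifications collapsing a would-be $2$-cycle), so one must confirm these suppressions never delete the single successor arrow we rely on. Once the shell-triangularity is pinned down, full rank is immediate; as the statement says, this lemma is ``straightforward,'' so I would keep the write-up to a short paragraph citing Figure~\ref{F:Diamond4} for the pattern and the recipe for the general case.
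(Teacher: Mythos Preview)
The paper offers no proof at all: it simply declares the lemma ``straightforward'' and moves on. So your shell-by-shell peeling argument is a reasonable thing to write down, and the overall strategy is sound. There is, however, a genuine oversight in step~(3).

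You claim that the column of $B$ indexed by $u^+=\pm(i+1;j,k+1)$, restricted to rows on shells $\le i$, has a \emph{single} nonzero entry, coming from the $B$-arrow $u\to u^+$. This is not true. The entry $b_{w,u^+}$ records arrows in both directions, and there is a type~$A$ arrow $u^+=\pm(i+1;j,k+1)\to\pm(i;j-1,k+1)=:u'$ pointing \emph{back down} to shell~$i$. Thus the column $u^+$ has, generically, \emph{two} nonzero entries on shell~$i$: $b_{u,u^+}=+1$ and $b_{u',u^+}=-1$. Your ``single nonzero entry'' claim fails and the triangularity, as stated, is not there.

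The fix is easy and local. Refine the shell ordering by a within-shell ordering, say by increasing $k$ (equivalently decreasing $j$). Then for $u=\pm(i;j,k)$ the second contributor $u'=\pm(i;j-1,k+1)$ always comes strictly \emph{after} $u$, so the submatrix (rows on shell $i$, columns $u\mapsto u^+$ on shell $i{+}1$) is honestly upper-triangular with $\pm1$ on the diagonal. Equivalently, one may anchor the induction at the extremal frozen columns $\pm(l;l,0)$ and $\pm(l;0,l)$, each of which has exactly one nonzero entry among mutable rows (only an $A$-arrow out, respectively only a $B$-arrow in), and then sweep across the shell. Either way, once shell $l-1$ coefficients are forced to zero using the frozen shell-$l$ columns, the same argument applied to shell-$(l-1)$ columns (whose within-shell $C$-arrow contributions now vanish) kills the shell $l-2$ coefficients, and so on down to shell~$1$. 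With this correction your proof goes through.
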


\noindent Recall the weight vectors $\f_{i;j,k}^{\pm}$ defined in Section \ref{S:FKm}.
\begin{lemma} \label{L:WC} The assignment $\pm(i;j,k)\mapsto \f_{i;j,k}^{\pm}$ on the vertices of $\Diamond_l$
defines a weight configuration $\bs{\sigma}_l$ on $\Diamond_l$.
Moreover, $\bs{\sigma}_l$ can be extended to a full weight configuration $\wtd{\bs{\sigma}}_l$
by assigning $(\f_{i;j,k}^{\pm},j,k)$ to $\pm(i;j,k)$.
\end{lemma}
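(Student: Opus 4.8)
The plan is to establish both assertions by direct inspection: the first is a local identity at each mutable vertex, the second a short linear‑algebra computation.

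\emph{Step 1: $\bs{\sigma}_l$ is a weight configuration.} In the matrix form of Definition~\ref{D:wtconfig} this is $B(\Diamond_l)\bs{\sigma}_l=0$, i.e.\ $\sum_{v\to u}\bs{\sigma}_l(v)=\sum_{u\to w}\bs{\sigma}_l(w)$ for every mutable vertex $u$, those being the $\pm(i;j,k)$ with $i\le l-1$. For a vertex $u=(i;j,k)$ with $j,k\ge 1$ (so $i\ge 2$) there is exactly one incoming and one outgoing arrow of each type $A,B,C$: the incoming ones have tails $(i+1;j+1,k)$, $(i-1;j,k-1)$, $(i;j-1,k+1)$ and the outgoing ones have heads $(i-1;j-1,k)$, $(i+1;j,k+1)$, $(i;j+1,k-1)$. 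Substituting $\f_{i;j,k}=\e_j+\e_k-\e_{-i}$ (with the convention $\e_0=0$), both sides equal $\e_{j-1}+\e_j+\e_{j+1}+\e_{k-1}+\e_k+\e_{k+1}-\e_{-(i-1)}-\e_{-i}-\e_{-(i+1)}$, and the negative‑arm vertices $(j,k;i)$ reduce to the same identity via the involution $-$. This leaves the horizontal vertices $(i;i,0)$ and $(i;0,i)$, and within them the central $(1;1,0)$ and $(1;0,1)$ carrying the doubled type‑$C$ arrow, which I would treat one family at a time. The only genuinely fiddly point of the lemma lives here: since the two arms are glued at a horizontal vertex, some arrows produced by the $+$- and $-$-versions of the recipe coincide while others do not, and the two horizontal families are not symmetric — for example $(i;i,0)$ acquires two outgoing type‑$B$ and two incoming type‑$C$ arrows, whereas $(i;0,i)$ acquires two outgoing type‑$C$ and two incoming type‑$A$ arrows. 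Once the correct arrow lists are written down, the six $\e$-terms cancel in each case exactly as above; I expect producing those lists correctly to be the main obstacle.

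\emph{Step 2: the extension is a weight configuration.} I repeat Step~1 for the two extra coordinates $(j,k)$ assigned to $\pm(i;j,k)$. For $u=(i;j,k)$ with $j,k\ge 1$ the incoming contributions sum to $(j+1,k)+(j,k-1)+(j-1,k+1)=(3j,3k)$ and the outgoing ones to $(j-1,k)+(j,k+1)+(j+1,k-1)=(3j,3k)$; the horizontal and doubled‑arrow cases are dispatched with the same arrow lists from Step~1 (for instance both sides give $(2,2)$ at $(1;1,0)$). Hence $\wtd{\bs{\sigma}}_l$ satisfies \eqref{eq:weightconfig}.

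\emph{Step 3: fullness.} By the preceding lemma $B(\Diamond_l)$ has full rank, so the grading space $\ker B(\Diamond_l)^{\T}$ has dimension $q-p$, where $q=|\Diamond_l^0|$ and $p$ is the number of mutable vertices; as $q-p$ is the number of frozen vertices (those labelled $\pm(l;j,l-j)$), this dimension is $2l$. Since the columns of any weight configuration lie in the grading space, $\rank\wtd{\bs{\sigma}}_l\le 2l$, so it remains to show $\rank\wtd{\bs{\sigma}}_l\ge 2l$, equivalently that the linear relations among the $2l+2$ columns of $\wtd{\bs{\sigma}}_l$ form a space of dimension exactly $2$. Such a relation is a vector $(y,c_1,c_2)\in\mb{R}^{2l}\oplus\mb{R}^2$ with $\f_{i;j,k}^{\pm}\cdot y+c_1 j+c_2 k=0$ for every vertex of $\Diamond_l$. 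Testing it on the horizontal vertices $(i;i,0)$ and $(i;0,i)$, which share the weight $\e_i-\e_{-i}$ but have data $(i,0)$ and $(0,i)$, forces $c_1=c_2=:c$ and $y_{-i}=y_i+ci$; testing it next on $(i;1,i-1)$ for $i=2,\dots,l$ forces $y_i=y_{i-1}+y_1$, hence $y_i=iy_1$ and $y_{-i}=i(y_1+c)$. Conversely, using $i=j+k$ one checks that every $(y,c,c)$ of this shape annihilates all $\f_{i;j,k}^{\pm}$. Therefore the relation space is $2$-dimensional, $\rank\wtd{\bs{\sigma}}_l=2l$, and $\wtd{\bs{\sigma}}_l$ is full. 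This last step is routine linear algebra once the handful of relevant vertices has been singled out.
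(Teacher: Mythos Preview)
Your proof is correct and proceeds exactly along the lines the paper intends: the paper simply declares this lemma ``straightforward'' and gives no argument, so the direct vertex-by-vertex verification you outline is the expected one. Your case analysis at the horizontal and central vertices checks out (e.g.\ at $(1;1,0)$ the incoming sum $\f_{2;2,0}+2\f_{1;0,1}$ and the outgoing sum $\f_{2;1^2}+\f_{1^2;2}$ both equal $\e_2+2\e_1-\e_{-2}-2\e_{-1}$), and your rank computation in Step~3, reducing the kernel of $\wtd{\bs{\sigma}}_l^{\T}$ to the two-parameter family $y_i=iy_1,\ y_{-i}=i(y_1+c)$, is a clean way to pin down fullness.
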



Let $a$ (resp. $b$ and $c$) denote the sum of all type $A$ ($B$ and $C$) arrows.
Let $a_+$ and $a_-$ denote the sum of all nonnegative and nonpositive type $A$ arrows, and similar for $b_\pm$ and $c_\pm$.
We put the potential
$$W_l=(a_+b_+c_+-a_+c_+b_+)+(a_-b_-c_--a_-c_-b_-)$$
on the quiver $\Diamond_l$.
Then the Jacobian ideal is generated by the elements
\begin{align}
\label{eq:rel_Jn1} &e_u(ab-ba), e_u(bc-cb), e_u(ca-ac) & & \text{for $u$ mutable and non-horizontal,} \\
\label{eq:rel_Jn21} &e_u(ab - ba), e_u(b_+c_+-b_-c_-) & & \text{for $u=(j;j,0)$,} \\
\label{eq:rel_Jn22} &e_u(bc -cb), e_u(c_+a_+-c_-a_-) & & \text{for $u=(j;0,j)$,} \\
\label{eq:rel_Jn31} &e_u b_\pm a_\pm & & \text{for $u=(1;1,0)$,} \\
\label{eq:rel_Jn32} &e_u(b_\pm c_\pm-c_\pm b_\pm) & & \text{for $u=(1;0,1)$,} \\
\label{eq:rel_Jn4} & e_vac, cbe_v & & \text{for $v$ frozen.}
\end{align}

\noindent We say a path $p$ in $\Diamond_l$ {\em nonnegative} (resp. {\em nonpositive}) if all its composite arrows are nonnegative (nonpositive).
Such a path can be (uniquely) written as $e_{v_0} b_\pm^y c_\pm^z a_\pm^x$ by \eqref{eq:rel_Jn1}--\eqref{eq:rel_Jn4}.
Moreover, if a path is equivalent to another path in the Jacobian algebra $J_l:=J(\Diamond_l,W_l)$, then they must have the same length unless they are equivalent to the zero path.

\begin{lemma} The IQP $(\Diamond_l,W_l)$ is rigid.
\end{lemma}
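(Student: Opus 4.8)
The plan is to show that every potential on $\Diamond_l$ is cyclically equivalent to an element of the Jacobian ideal $\partial W_l$; by definition this is exactly rigidity. Since $J_l$ is finite-dimensional (as noted after the relation list, every path reduces to the normal form $e_{v_0}b_\pm^y c_\pm^z a_\pm^x$ and the exponents are bounded by $l$), the completion issue does not arise and it suffices to check that each oriented cycle of $\Diamond_l$, up to cyclic equivalence, lies in $\partial W_l$. I would organize the argument around the structure of oriented cycles: because the $x$-coordinate strictly decreases along $a$ and strictly increases along $b$, while $c$ moves toward the axis, any oriented cycle must alternate between the upper half and lower half, passing through the horizontal vertices $(j;j,0)$ and $(j;0,j)$ (or the special vertices near the origin). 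So the combinatorial skeleton of a cycle is quite constrained.

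The key steps, in order, are: (1) Record the cyclic derivatives $\partial_a W_l$, $\partial_b W_l$, $\partial_c W_l$ explicitly and match them against the generators \eqref{eq:rel_Jn1}--\eqref{eq:rel_Jn4}, confirming that every listed relation is indeed a cyclic derivative (up to the $e_u$-localization at a vertex); this identifies $\partial W_l$ precisely. (2) Using the normal form for nonnegative/nonpositive paths, reduce an arbitrary oriented cycle to a short list of "atomic" cycles — those that make exactly one excursion across the $x$-axis — since a longer cycle factors (cyclically) as a product of a commutator-type relation with a shorter cycle. (3) For each atomic cycle, exhibit it as a combination $\sum p_i (\partial_{a_i} W_l) q_i$: the commutators $ab-ba$, $bc-cb$, $ca-ac$ handle all rearrangements away from the axis, the relations \eqref{eq:rel_Jn21}--\eqref{eq:rel_Jn22} handle the passage through horizontal vertices (the $b_+c_+ = b_-c_-$ type identifications are what let a cycle "cross over"), \eqref{eq:rel_Jn31}--\eqref{eq:rel_Jn32} handle the doubled arrow and the special vertex $(1;0,1)$ near the origin, and \eqref{eq:rel_Jn4} kills anything touching a frozen vertex. (4) Conclude rigidity, and invoke \cite[Proposition 8.1, Corollary 6.11]{DWZ1} to note (if needed downstream) that $(\Diamond_l,W_l)$ is therefore $2$-acyclic and nondegenerate.

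I would carry out step (3) by a localized computation at each vertex $u$: the space $e_u(\partial W_l)$ is spanned by the relations at $u$ together with things of the form $e_u a_\pm(\ldots)$, and one checks that modulo these every cycle through $u$ is cyclically equivalent to one not passing through the "leftmost" or "rightmost" point of its support, so an induction on (say) the width of the support of the cycle terminates. The bending of the type $C$ arrows between special vertices and the doubled arrow $(1;0,1)\rightrightarrows(1;1,0)$ are genuine nuisances here and must be handled as a separate base case, but the relations \eqref{eq:rel_Jn31}--\eqref{eq:rel_Jn32} were chosen precisely so that $b_\pm a_\pm = 0$ and $b_\pm c_\pm = c_\pm b_\pm$ at those vertices, which suffices.

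**The main obstacle** I anticipate is step (2)--(3): verifying that \emph{every} oriented cycle — not just the short symmetric ones visible in Figure \ref{F:Diamond4} — reduces to the atomic list, since the diamond quiver has many long cycles winding around through the horizontal chain, and one must be careful that the "crossing" relations $e_u(b_+c_+-b_-c_-)$ and $e_u(c_+a_+-c_-a_-)$ really do let a path switch half-planes at every horizontal vertex without leaving a residue outside $\partial W_l$. Organizing the induction so that it visibly terminates (a well-chosen monovariant on cycles, e.g. total length plus horizontal width) is where the real work lies; once the reduction scheme is set up, each individual reduction is a one-line application of a cyclic derivative.
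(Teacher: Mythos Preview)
Your overall framework is correct and matches the paper's: show that every oriented cycle of $\Diamond_l$ lies, up to cyclic equivalence, in the Jacobian ideal $\partial W_l$, using the commutation relations \eqref{eq:rel_Jn1}--\eqref{eq:rel_Jn4} and the normal form $e_{v_0}b_\pm^y c_\pm^z a_\pm^x$ for sign-definite paths. However, the specific reduction scheme you propose differs from the paper's, and in a way that matters.

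You plan an \emph{inward} induction on the width of the support of a cycle, shrinking it toward the center. The paper does essentially the opposite: it pushes a basic triangle $e_u b_\pm c_\pm a_\pm$ \emph{outward} via the commutators until it reaches a frozen vertex $v$, where \eqref{eq:rel_Jn4} (namely $e_v ac = 0$ and $cb e_v = 0$) kills it. This is the decisive use of \eqref{eq:rel_Jn4}, and you only mention it in passing (``kills anything touching a frozen vertex'') without explaining how an arbitrary cycle is arranged to touch one. Your width-shrinking monovariant does not obviously terminate: the commutation relations move a triangle along the lattice but do not shorten it, and the relations at horizontal vertices \eqref{eq:rel_Jn21}--\eqref{eq:rel_Jn22} let you switch half-planes rather than contract.

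The paper's second key move, which you also do not isolate, handles the case where the cycle decomposes into sign-definite pieces $p_i$ none of which contains all three arrow types. One then finds a piece $p_i = e_{v_0} b_\pm^y a_\pm^x e_{v_t}$ with both endpoints on the $x$-axis (the $bc$ and $ca$ forms are excluded by examining the adjacent arrow), and this piece is rewritten as $a^{x-1}(e_{1;1,0} b_\pm a_\pm) b^{y-1}$, which vanishes by \eqref{eq:rel_Jn31}. So rather than a general induction, the proof splits into exactly two cases---a piece with all three letters (push to the frozen boundary) or a $ba$-piece bridging the axis (collapse at the origin)---and each is dispatched in one move.
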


\begin{proof} We need to show that every cycle up to cyclic equivalence is zero in the Jacobian algebra. By cyclic equivalence and relations \eqref{eq:rel_Jn1}--\eqref{eq:rel_Jn22}, cycles of form $e_u b_\pm c_\pm a_\pm$ can be written as $e_v a_\pm c_\pm b_\pm$ for some frozen $v$, which is zero in the $J_l$ by \eqref{eq:rel_Jn4}.

Now let $s$ be any cycle in $\Diamond_l$. We must have that the numbers of three types of arrows are the same in $s$. We can write $s$ as $s=p_1p_2\cdots p_n$, where each $p_i$ is either nonnegative or nonpositive.
So each $p_i$ can be written as $e_{v_i} b_\pm^y c_\pm^z a_\pm^x$.
We first assume that none of $x,y,z$ is zero.
We observe that $a,b,c$ can change order except for $bce_w$ and $e_wca$ with $w$ on the $x$-axis.
But these two cases cannot happen simultaneously in the procedure of moving $b$ right and moving $a$ left in $e_{v_i} b_\pm^y c_\pm^z a_\pm^x$.
So we can get at least one cycle of form $e_u b_\pm c_\pm a_\pm$.
We have treated the cases when $n=1$ and when some $p_i$ have all types of arrows.

Suppose that $n>1$. Since $s$ is a cycle, there is some $p_i$ such that $p_i=e_{v_0} p_i e_{v_t}$ with both $v_0$ and $v_t$ on the $x$-axis.
We can also assume that $p_i$ has only two types of arrows.
But $p_i$ cannot be of form $e_{v_0}b_\pm^yc_\pm^z e_{v_t}$ or $e_{v_0}c_\pm^za_\pm^x e_{v_t}$, because otherwise the arrow preceding or succeeding $p_i$ must be of the third type on the $x$-axis.
So let us assume that $p_i=e_{v_0}b_\pm^ya_\pm^xe_{v_t}$.
Then by \eqref{eq:rel_Jn1}--\eqref{eq:rel_Jn22}, we can write $p_i$ as $p_i = a^{x-1} (e_{1;1,0}ba) b^{y-1}$, which is zero in $J_l$ by \eqref{eq:rel_Jn31}.
\end{proof}

\subsection{The Cone $\mr{G}_{\Diamond_l}$}
Let $p$ be a path $v_0 \xrightarrow{a_1} v_1 \xrightarrow{a_2} v_2 \xrightarrow{}\cdots\xrightarrow{} v_t$ in a quiver $\Delta$,
where $v_i$'s may not be all distinct.
We can attach a {\em uniserial} representation $S(p)$ of $\Delta$ with its basis elements $m_{v_i}$ labeled by $v_i$.
The action of arrows on $S(p)$ is given by $a_i m_{v_{i-1}} = m_{v_i}$ and $a_i m_{v_j} = 0$ otherwise.

\begin{definition}

A {\em tri-broken path} from $\pm(i;k,j)$ to $\pm(i;j,k)$ denoted by $\op{tp}_{i;j,k}^{\pm}$ is the path
$$e_{i;k,j}^\pm a_\pm^j c_\mp^k b_\pm^j  e_{i;j,k}^\pm.$$
We also define $\op{tp}_{i;i,0}$ and $\op{tp}_{i;0,i}$ as the paths
$e_{i;i,0}a^{i-1}e_{1;1,0} \text{ and } e_{1;0,1}b^{i-1}e_{i;0,i}.$
\end{definition}
\noindent Note that a tri-broken path may not be literally broken thrice in the figure (see Figure \ref{F:Diamond4}).

\begin{example}
The module $S(\op{tp}_{4;1,3}^+)$ has dimension vector
$$\gamma=\e_{4;3,1}+\e_{3;2,1}+\e_{2;1^2}+\e_{1;0,1}+\e_{1;1,0}+\e_{2;1^2}+\e_{3;1,2}+\e_{4;1,3}.$$
Note that the $\gamma(2;1^2)=2$ because the path passes $(2;1^2)$ twice.
\end{example}

For a subpath $p'$ of $p$ of form $v_{s+1}\to v_{s+2}\to \cdots\to v_t$, we use the notation $p/p'$ for the {\em quotient} path $v_{1}\to v_{2}\to \cdots\to v_{s}$.
Note that there is one-to-one correspondence between subpaths of this form and subrepresentations of $S(p)$.
Conversely for $p''=v_{1}\to v_{2}\to \cdots\to v_{s}$, we write $p\setminus p''$ for $p'$.

Let $\Mod J_l$ be the category of (not necessarily finite-dimensional) modules of $J_l$. The algebra $J_l$ is in fact finite-dimensional (Corollary \ref{C:fdJl}), but this is not clear at this stage.
\begin{lemma} \label{L:proj_fp} We have the following projective presentations in $\Mod J_l$.
\begin{align}
\label{eq:projres1} & P_{l;j-1,k+1}^{\pm} \xrightarrow{ab} P_{l;j,k}^{\pm} \to S(\op{tp}_{l;k,j}^{\pm}) \to 0, & \\
\label{eq:projres2} & P_{l;l-1,1}^+ \oplus P_{l;l-1,1}^- \xrightarrow{\sm{ab \\ab}} P_{l;l,0} \to S(\op{tp}_{l;l,0}) \to 0, & \\
& 0\to P_{l;0,l} \to S(e_{l;0,l}) \to 0.
\end{align}
\end{lemma}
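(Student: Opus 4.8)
The plan is to realize each of the three as the (minimal) projective presentation of a uniserial $J_l$-module, and to identify the relevant syzygy directly from the normal form for paths in $J_l$ recorded above. Write $v_0$ for the source vertex of the string module in each case, namely $v_0=(l;j,k)$ in \eqref{eq:projres1}, $v_0=(l;l,0)$ in \eqref{eq:projres2}, and $v_0=(l;0,l)$ in the last one. The module on the right is generated at $v_0$, so there is a canonical surjection $\pi\colon P_{v_0}\twoheadrightarrow S(\op{tp})$ (resp.\ $S(e_{l;0,l})$), for each choice of sign in \eqref{eq:projres1}, sending $e_{v_0}$ to the distinguished generator; the images under $\pi$ of the initial subpaths of the tri-broken path form a $k$-basis of the target. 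It therefore suffices to show that $\ker\pi$ coincides with the image of the asserted left-hand map.

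The third presentation is immediate: from the construction of $\Diamond_l$ (cf.\ Figure \ref{F:Diamond4}) one reads off that $(l;0,l)$ is a sink --- it has no outgoing type $A$ arrow (its first coordinate is $0$), no type $B$ arrow (the would-be target $(l+1;0,l+1)$ is not a vertex of $\Diamond_l$), and no type $C$ arrow either, since the only candidate target $(l;1,l-1)$ is again frozen and $\Diamond_l$ carries no arrows between frozen vertices --- so $P_{l;0,l}=k\,e_{l;0,l}=S(e_{l;0,l})$. For \eqref{eq:projres1} and \eqref{eq:projres2} the inclusion $\operatorname{im}\subseteq\ker\pi$ is equally easy: $ab$ is not an initial subpath of $\op{tp}$ (the $b$-part of $\op{tp}$, if non-empty, comes after its $a$- and $c$-parts), hence $\pi(ab)=0$.

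The real content is the reverse inclusion, which I would establish by analyzing an arbitrary path $p$ out of $v_0$. Since $v_0$ is a frozen vertex that is either non-horizontal (case \eqref{eq:projres1}) or equal to $(l;l,0)$ (case \eqref{eq:projres2}), its only outgoing arrows have type $A$, so $p$ is either an initial run $a^{s}$ of type $A$ arrows or such a run followed by a turn of type $B$ or $C$. If $p$ turns by a type $B$ arrow, the commutators \eqref{eq:rel_Jn1} along the interior of the $A$-arm together with \eqref{eq:rel_Jn21} at the horizontal vertices $(i;i,0)$ allow me to slide this $b$ into second position, which places $p$ in the image of the left-hand map (in case \eqref{eq:projres2}, keeping track of the side of the $x$-axis places $p$ in the image of the appropriate summand $P_{l;l-1,1}^{\pm}$). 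If $p$ turns by a type $C$ arrow while still in the $A$-arm, then repeated use of $ca=ac$ and, at the top, the relation $e_{v_0}ac=0$ from \eqref{eq:rel_Jn4} forces $p=0$ in $J_l$; and once $p$ has reached the $x$-axis, the relations \eqref{eq:rel_Jn21}, \eqref{eq:rel_Jn22}, \eqref{eq:rel_Jn32} and \eqref{eq:rel_Jn31} governing the two $c$-branches and the doubled arrow $(1;0,1)\to(1;1,0)$ show that any continuation other than following $\op{tp}$ itself is either zero in $J_l$ or lies in the image of the left-hand map. Working through the finitely many turning patterns yields $P_{v_0}=\langle\text{initial subpaths of }\op{tp}\rangle+\operatorname{im}$; since $\pi$ is injective on the span of the initial subpaths and vanishes on $\operatorname{im}$, this forces $\ker\pi=\operatorname{im}$. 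Finally $P_{l;j-1,k+1}^{\pm}$ (resp.\ $P_{l;l-1,1}^{+}\oplus P_{l;l-1,1}^{-}$) is a projective cover of this image, as $ab$ generates it at the vertex $(l;j-1,k+1)$ (resp.\ at each $\pm(l;l-1,1)$); so the presentation is minimal.

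The only real difficulty I anticipate is the bookkeeping in the reverse inclusion: at the horizontal vertices $(i;i,0)$ and $(i;0,i)$ the available relations are not the generic commutators, so each turning pattern must be matched with the correct relation among \eqref{eq:rel_Jn1}--\eqref{eq:rel_Jn32}; moreover the parallel $c$-arrows $(1;0,1)\to(1;1,0)$ and the relations $b_{\pm}a_{\pm}=0$ at $(1;1,0)$ from \eqref{eq:rel_Jn31} are precisely what make certain paths through $(1;1,0)$ vanish, so the part of $\op{tp}$ near $(1;1,0)$ must be treated with care. For \eqref{eq:projres2} one must additionally check that the $b_{+}$- and $b_{-}$-branches issuing from the horizontal spine together exhaust $\ker\pi$, i.e.\ that nothing beyond the images of the two summands is needed.
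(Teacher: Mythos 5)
Your proposal takes essentially the same route as the paper's proof: both analyze an arbitrary path from the generating vertex, use the commutators \eqref{eq:rel_Jn1}, \eqref{eq:rel_Jn21}, \eqref{eq:rel_Jn22} to slide a type-$B$ turn into second position (hence into $\operatorname{im}(ab)$), use $e_vac=0$ from \eqref{eq:rel_Jn4} together with $ac=ca$ to kill a type-$C$ turn in the $A$-segment, and invoke the horizontal/special-vertex relations \eqref{eq:rel_Jn31}, \eqref{eq:rel_Jn32} to control the turns at the $x$-axis, concluding that every class outside $\operatorname{im}(ab)$ is represented by an initial subpath of the tri-broken path. The paper only writes out \eqref{eq:projres1} in the positive case and dismisses \eqref{eq:projres2} and the third sequence as obvious, whereas you spell all three out; the mechanism is the same.
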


\begin{proof} We only prove the first one. The rest is quite obvious.
Without loss of generality, we only deal with the positive cases.
For any path $p$ in $\Diamond_l$, we denote by $[p]$ its equivalence class in $J_l$.
Let $p$ be a path such that $[p]\in P_{i;j,k}^\pm$ is not in the image of $ab$.

It is easy to see that the first arrow of $p$ must be of type $A$ unless $p$ is trivial.
If the second arrow of $p$ is of type $B$, then $[p]$ lies in the image of $ab$.
If the second arrow of $p$ is of type $C$, then by \eqref{eq:rel_Jn4} $p$ is equivalent to a zero path.
Inductively using \eqref{eq:rel_Jn1} and \eqref{eq:rel_Jn4}, we can show that the maximal nonnegative subpath of $p$ is of form $a^x$.
If $p':=p\setminus a^x$ is nontrivial, then $x=j$.

It is easy to see that the first arrow of $p'$ must be of type $C$ unless $p'$ is trivial.
If the second arrow of $p'$ is of type $A$ or $B$, then by \eqref{eq:rel_Jn22} or \eqref{eq:rel_Jn32} we get a contradiction on the statement of the last paragraph.
Inductively using \eqref{eq:rel_Jn1} and \eqref{eq:rel_Jn22} (or \eqref{eq:rel_Jn32}) we can show that the maximal nonpositive subpath of $p'$ is of form $c^z$. If $p'':=p'\setminus c^z$ is nontrivial, then $z=k$.

It is easy to see that the first arrow of $p''$ must be of type $B$ unless $p''$ is trivial.
If the second arrow of $p''$ is of type $A$ or $C$, then by \eqref{eq:rel_Jn21} or \eqref{eq:rel_Jn31} we get a contradiction on the statement of the last paragraph. Inductively using \eqref{eq:rel_Jn1} and \eqref{eq:rel_Jn21} (or \eqref{eq:rel_Jn31}) we can show that $p''$ is of form $b^y$.
The conclusion is that $p$ must be a subpath of the tri-broken path $\op{tp}_{l;k,j}^{\pm}$. This is exactly what we desire.
\end{proof}

\noindent By a similar argument of Lemma \ref{L:proj_fp}, we have that
\begin{lemma} \label{L:injres_fp} We have the following injective presentations $\Mod J_l$.
\begin{align}
\label{eq:injres1} & 0\to S(\op{tp}_{l;j,k}^{\pm}) \to I_{l;j,k}^{\pm} \xrightarrow{(ab)^*}  I_{l;j+1,k-1}^{\pm}, & \\
\label{eq:injres2} & 0\to S(\op{tp}_{l;0,l}) \to I_{l;0,l} \xrightarrow{\sm{(ab)^* & (ab)^*} }  I_{l;1,l-1}^+ \oplus I_{l;1,l-1}^-, & \\
\label{eq:injres3} & 0\to S(e_{l;l,0}) \to I_{l;l,0} \to 0.
\end{align}
\end{lemma}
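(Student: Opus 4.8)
The plan is to obtain the three injective presentations by dualizing the three projective presentations of Lemma~\ref{L:proj_fp}. Write $D=\Hom_k(-,k)$ for the standard $k$-linear duality; it carries a projective presentation over $J_l$ to an injective copresentation over the opposite Jacobian algebra $J_l^{\mathrm{op}}$, sending $P_v$ to the injective $I_v$ and the map induced by left multiplication by a path $p$ to the map induced by right multiplication by $p$, i.e. to the transpose $(\,\cdot\,)^{*}$. To land back over $J_l$ I would invoke the self-opposedness of the IQP $(\Diamond_l,W_l)$: the involution $\tau$ of the vertex set that exchanges the two arm coordinates, $\pm(i;j,k)\leftrightarrow\pm(i;k,j)$, is an isomorphism $\Diamond_l\xrightarrow{\ \sim\ }\Diamond_l^{\mathrm{op}}$ --- it interchanges the type $A$ and type $B$ arrows, reverses the type $C$ arrows, preserves the sign $\pm$ and the frozen locus $\{i=l\}$, matches up the doubled arrow between $(1;0,1)$ and $(1;1,0)$, and (the point that requires a check) carries $W_l$ to $\pm W_l^{\mathrm{op}}$ modulo cyclic equivalence. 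Composing, one gets a contravariant self-equivalence $\mathbb{D}_\tau=\tau^{*}\circ D$ of $\Mod J_l$ with $\mathbb{D}_\tau(P_v)=I_{\tau(v)}$ and $\mathbb{D}_\tau(S(p))=S\big(\tau(p^{\mathrm{rev}})\big)$, where $p^{\mathrm{rev}}$ is $p$ read backwards.

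Feeding Lemma~\ref{L:proj_fp} through $\mathbb{D}_\tau$ produces Lemma~\ref{L:injres_fp} line by line. Applied to \eqref{eq:projres1}, it sends $P^{\pm}_{l;j,k}$ to $I^{\pm}_{\tau(l;j,k)}=I^{\pm}_{l;k,j}$, sends $P^{\pm}_{l;j-1,k+1}$ to $I^{\pm}_{l;k+1,j-1}$, sends the map $ab$ to the transpose $(ab)^{*}$, and --- since $\tau$ carries the reverse of the tri-broken word $e^{\pm}_{i;k,j}a^{j}_{\pm}c^{k}_{\mp}b^{j}_{\pm}e^{\pm}_{i;j,k}$ back to a tri-broken word --- sends $S(\op{tp}^{\pm}_{l;k,j})$ to $S(\op{tp}^{\pm}_{l;k,j})$; after the harmless relabelling $j\leftrightarrow k$ of the ambient indices this is exactly \eqref{eq:injres1}. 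In the same way \eqref{eq:projres2} (where two copies of $ab$ coming from the $+$ and $-$ sides merge at the horizontal vertex $(l;l,0)$) is sent to \eqref{eq:injres2}: $\tau$ exchanges $(l;l,0)$ with $(l;0,l)$ and carries $\op{tp}_{l;l,0}=e_{l;l,0}a^{l-1}e_{1;1,0}$ to $e_{1;0,1}b^{l-1}e_{l;0,l}=\op{tp}_{l;0,l}$; and the trivial presentation $0\to P_{l;0,l}\to S(e_{l;0,l})\to 0$ is sent to $0\to S(e_{l;l,0})\to I_{l;l,0}\to 0$, which is \eqref{eq:injres3}.

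Alternatively one can argue directly, running the proof of Lemma~\ref{L:proj_fp} verbatim but with the words read from the head rather than the tail: $I^{\pm}_{l;j,k}=D(J_l e^{\pm}_{l;j,k})$ has a $k$-basis dual to the classes of paths of $J_l$ ending at $(l;j,k)$, the kernel of $(ab)^{*}$ is the $k$-dual of the cokernel of right multiplication by $ab$ on $J_l e^{\pm}_{l;j,k}$, and one classifies the paths ending at $(l;j,k)$ that are not of the form $[p'(ab)]$ by inspecting their last arrows using \eqref{eq:rel_Jn1}--\eqref{eq:rel_Jn4}, exactly as before, concluding that such a path is a final segment of a tri-broken path. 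Either way, the only genuinely new ingredient is the self-opposedness of $(\Diamond_l,W_l)$, and I expect the sign bookkeeping for the two hemispheric summands of $W_l$, together with the behaviour of the bent type $C$ arrows near the special vertices, to be the step needing the most care; the underlying path combinatorics is identical to that of Lemma~\ref{L:proj_fp}.
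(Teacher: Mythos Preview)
Your proposal is correct. The paper's own proof is the one-line remark ``by a similar argument of Lemma~\ref{L:proj_fp}'', i.e.\ exactly your alternative approach: classify paths ending at a frozen vertex $\pm(l;j,k)$ that are not of the form $[p'(ab)]$ by inspecting their last arrows via the relations \eqref{eq:rel_Jn1}--\eqref{eq:rel_Jn4}, and conclude that each is a terminal segment of a tri-broken path.

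Your main approach via the anti-involution $\tau\colon \pm(i;j,k)\leftrightarrow\pm(i;k,j)$ is a genuinely different route. It is valid: $\tau$ swaps type~$A$ and type~$B$ arrows, fixes type~$C$ arrows, preserves the $\pm$ sign (since it preserves the sign of vertices), and one checks directly that $\tau(a_\pm b_\pm c_\pm)=(c_\pm a_\pm b_\pm)^{\mathrm{op}}\sim(a_\pm b_\pm c_\pm)^{\mathrm{op}}$ and $\tau(a_\pm c_\pm b_\pm)=(a_\pm c_\pm b_\pm)^{\mathrm{op}}$ up to cyclic equivalence, so $\tau(W_l)\sim W_l^{\mathrm{op}}$ and $J_l\cong J_l^{\mathrm{op}}$. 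The doubled arrow at $(1;0,1)\to(1;1,0)$ is swapped with itself (as a pair) since $\tau$ interchanges its endpoints. What this buys you is conceptual: once self-opposedness is established, Lemma~\ref{L:injres_fp} becomes an immediate corollary of Lemma~\ref{L:proj_fp} with no new path analysis, and the same device would dualize any further statements about projectives. The paper's approach is shorter on the page (a single sentence) but tacitly re-runs the entire combinatorial argument; your duality approach front-loads a one-time check of $\tau(W_l)\sim W_l^{\mathrm{op}}$ and then gets the lemma for free.
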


\begin{corollary} \label{C:fdJl} The Jacobian algebra $J_l$ is finite-dimensional.
\end{corollary}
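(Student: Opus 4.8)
The plan is to deduce finite-dimensionality of $J_l$ from the injective presentations in Lemma~\ref{L:injres_fp}, together with their projective counterparts in Lemma~\ref{L:proj_fp}. Since $\Diamond_l$ has only finitely many vertices, it suffices to show that each indecomposable injective $J_l$-module $I_u$, $u\in\Diamond_l^0$, is finite dimensional, for then $\dim_k J_l=\sum_u\dim_k I_u<\infty$; equivalently, one shows that there are only finitely many nonzero paths in $J_l$.

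First I would handle the frozen vertices. The module $S(\op{tp}_{l;j,k}^{\pm})$ is the uniserial module attached to the tri-broken path, which has length $l+j$, so it is finite dimensional; the same holds for $S(\op{tp}_{l;0,l})$ and for $S(e_{l;l,0})$. Starting from \eqref{eq:injres3}, which identifies $I_{l;l,0}$ with the one-dimensional simple $S(e_{l;l,0})$, one proves by downward induction on $j$ that every $I_{l;j,k}^{\pm}$ is finite dimensional: \eqref{eq:injres1} exhibits it as an extension of the finite dimensional module $S(\op{tp}_{l;j,k}^{\pm})$ by a submodule of the inductively finite dimensional $I_{l;j+1,k-1}^{\pm}$, and \eqref{eq:injres2} disposes of the remaining vertex $I_{l;0,l}$ in the same way. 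Dually, Lemma~\ref{L:proj_fp} gives by the analogous induction that the projectives $P_u$ with $u$ frozen are finite dimensional. In particular only finitely many nonzero paths of $J_l$ start at, and only finitely many end at, a frozen vertex.

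It remains to bound the nonzero paths that either meet the frozen locus $\{i=l\}$ or avoid it entirely. A nonzero path that meets the frozen locus factors as $p_1p_2$ through the last frozen vertex it visits, with $p_1$ a nonzero path ending at that vertex and $p_2$ a nonzero path starting there, so by the previous paragraph there are only finitely many such $p$. For a nonzero path confined to the mutable locus $\{i\le l-1\}$ I would invoke the normal form from the proof of the rigidity lemma: each maximal monotone (nonnegative or nonpositive) sub-path is equivalent to one of the shape $e_{v_0}b_\pm^{y}c_\pm^{z}a_\pm^{x}$, and since the level $i$ is monotone across each of these three blocks while staying in $[0,l-1]$, such a sub-path has bounded length; consecutive monotone sub-paths are joined at the $\leq 2l$ horizontal vertices of the interior, and no nonzero path can revisit a horizontal vertex, since such a return would contain an oriented sub-cycle which, exactly as in the last case of the rigidity proof, is rewritten via \eqref{eq:rel_Jn1}--\eqref{eq:rel_Jn32} into a path through $e_{1;1,0}b_\pm a_\pm$, hence vanishes by \eqref{eq:rel_Jn31}. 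This bounds the number of blocks, and therefore the length of $p$; combined with the frozen case we conclude $\dim_k J_l<\infty$.

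The main obstacle is this last, interior estimate. One must check that the commutator rewrites used to expose the forbidden configuration $e_{1;1,0}b_\pm a_\pm$ inside a would-be interior sub-cycle actually terminate there and do not leave the mutable locus in a manner that stalls the reduction (when the sub-cycle is forced toward the boundary one would instead land in $e_v a_\pm c_\pm$ or $c_\pm b_\pm e_v$ at a frozen $v$), and one must make the block-and-junction bookkeeping precise enough to yield an honest numerical bound on path lengths. In effect this step amounts to a quantitative refinement of the rigidity argument rather than a mere citation of it.
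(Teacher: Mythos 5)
Your opening moves are the paper's: the induction via Lemmas~\ref{L:proj_fp} and~\ref{L:injres_fp} gives finite-dimensionality of $P_v$ and $I_v$ for all frozen $v$, and paths meeting the frozen locus are then counted by cutting at the last frozen vertex visited. From there you and the paper part ways. The paper ``peels'' the quiver: since only finitely many paths in $P_u$ (for $u$ at level $l-1$) pass through a frozen vertex, finite-dimensionality of $P_u$ reduces to the same question for paths confined to the subquiver $\{i<l\}$, which is handled by the same tri-broken-path normal-form analysis as in Lemma~\ref{L:proj_fp}, and one iterates inward one level at a time. You instead try to count all interior paths at once, by decomposing into monotone blocks joined at horizontal vertices together with a ``no horizontal revisit'' claim.

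The gap is exactly the one you flag, and it has two independent parts. First, the bound on the length of a monotone block is read off from the rewritten normal form $e_{v_0}b_\pm^{y}c_\pm^{z}a_\pm^{x}$, whose level profile is $i_0\mapsto i_0+y\mapsto i_0+y-x$; the original block is confined to $i\le l-1$, but the commutator rewrites change the intermediate vertices visited, so $i_0+y\le l-1$ is not forced by confinement, and the bound on $x+y+z$ is not established. Second, ruling out a horizontal revisit requires the enclosed sub-cycle to vanish \emph{literally} in $J_l$, whereas the rigidity lemma only proves cycles vanish up to cyclic equivalence: in the case of that proof where a monotone piece carries all three arrow types, the argument extracts a $3$-cycle $e_u b_\pm c_\pm a_\pm$ and then rotates the base point to a frozen vertex to apply \eqref{eq:rel_Jn4}, and that rotation is a cyclic equivalence, not an equality in $J_l$. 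To finish along your lines you would need both to upgrade rigidity to genuine vanishing for the particular sub-cycles that occur and to show the normalization does not escape the mutable locus; the paper's peeling argument, by never invoking the rigidity lemma and never needing a global path-length bound, sidesteps both difficulties.
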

\begin{proof} First we show by induction on $j$ that all $P_v$ and $I_v$ are finite-dimensional for $v=\pm(l;j,k)$.
$P_{l;0,l}$ and $I_{l;l,0}$ are the 1-dimensional simple module. Then we use \eqref{eq:projres1},\eqref{eq:projres2} and \eqref{eq:injres1},\eqref{eq:injres2} to finish the induction.

Next we claim that each $P_u$ and $I_u$ is finite-dimensional for $u=\pm(l-1;j,k)$.
Since $P_v$ and $I_v$ is finite-dimensional for $v=\pm(l;j,k)$, there are only finitely many paths in $P_u$ (dual paths in $I_u$) passing $\pm(l;j,k)$.
So our claim is equivalent to that $P_u$ and $I_u$ is finite-dimensional
if we restrict to the subquiver where all vertices $\pm(i;j,k)$ with $i<l$.
But this can be showed using a similar argument for $i=l$.

By ``peeling" off the quiver, inductively we can show that each $P_v$ is finite-dimensional so that the Jacobian algebra is finite-dimensional.
\end{proof}

\begin{definition} A vertex $v$ is called {\em maximal} in a representation $M$ if all its subrepresentations are not supported on $v$.
\end{definition}

\begin{lemma} \cite[Lemma 6.5]{Fs1} \label{L:hom=0} Suppose that a representation $T$ contains a maximal vertex. Let $M=\Coker(\g)$, then $\Hom_J(M,T)=0$ if and only if $\g(\dimbar S)\geq 0$ for all subrepresentations $S$ of $T$.
\end{lemma}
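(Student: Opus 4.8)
The statement is Lemma \ref{L:hom=0}, which is quoted from \cite[Lemma 6.5]{Fs1}; since this paper cites it rather than reproving it, the role of any "proof" here is to recall the argument. I would prove the equivalence in two directions, using the standard dictionary between presentations and representations together with the characterization of $\Coker(\g)$ by a \emph{general} presentation in $\PHom_J(\g)$.

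\emph{Setup.} Write $M = \Coker(\g)$, so there is a (general) presentation $P_1 \xrightarrow{f} P_0 \to M \to 0$ with $P_1 = P([\g]_+)$ and $P_0 = P([-\g]_+)$ and $\g = \beta_1 - \beta_0$ the associated weight vector. Applying $\Hom_J(-,T)$ yields the left-exact sequence
\begin{equation*}
0 \to \Hom_J(M,T) \to \Hom_J(P_0,T) \xrightarrow{\Hom_J(f,T)} \Hom_J(P_1,T),
\end{equation*}
so $\Hom_J(M,T) = 0$ exactly when $\Hom_J(f,T)$ is injective. Now $\Hom_J(P_0,T)$ and $\Hom_J(P_1,T)$ have dimensions $\beta_0 \cdot \dimbar T$ and $\beta_1 \cdot \dimbar T$ respectively, so a \emph{necessary} dimension condition for injectivity (when $T\neq 0$ may still force it on pieces) is that the map on each graded piece has the expected rank; the content is that for a \emph{general} $f$ the map $\Hom_J(f,T)$ is as injective as the subrepresentation structure of $T$ allows.

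\emph{The ``if'' direction.} Suppose $\g(\dimbar S) \ge 0$ for every subrepresentation $S \subseteq T$. I would argue that a general $f$ makes $\Hom_J(f,T)$ injective. The key is the maximal vertex $v$ of $T$: because no subrepresentation of $T$ is supported at $v$, the socle-type behaviour at $v$ is trivial, and one can filter $T$ and induct on $\dim T$, peeling off a quotient supported near $v$. At the base of the induction $\Hom_J(f,T)$ is injective by genericity combined with the inequality $\g(\dimbar T) \ge 0$ (applied to $S = T$), which guarantees $\dim\Hom_J(P_1,T) \ge \dim\Hom_J(P_0,T)$, i.e. the target is at least as large as the source; then general position forces injectivity. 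For the inductive step, a short exact sequence $0 \to T' \to T \to T'' \to 0$ with $T''$ supported at (or near) the maximal vertex gives a commuting ladder, and the snake-lemma/five-lemma argument reduces injectivity for $T$ to injectivity for $T'$ and $T''$, each handled by induction using that subrepresentations of $T'$ and $T''$ are (up to the filtration) subrepresentations of $T$, so the hypothesis $\g(\dimbar S)\ge 0$ is inherited.

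\emph{The ``only if'' direction.} Conversely, suppose there is a subrepresentation $S \subseteq T$ with $\g(\dimbar S) < 0$. Restricting along $S \hookrightarrow T$ gives a commuting square relating $\Hom_J(f,T)$ and $\Hom_J(f,S)$; since $\g(\dimbar S) < 0$ we have $\dim\Hom_J(P_1,S) < \dim\Hom_J(P_0,S)$, so $\Hom_J(f,S)$ can never be injective — its kernel is nonzero, and this kernel maps into $\ker\Hom_J(f,T)$, forcing $\Hom_J(M,T) = \ker\Hom_J(f,T) \neq 0$. (Here one uses that $S \hookrightarrow T$ induces a surjection $\Hom_J(P_i,T) \twoheadrightarrow \Hom_J(P_i,S)$? — no; it induces a map, and one should instead note $\Hom_J(P_i,-)$ is exact since $P_i$ is projective, so $\Hom_J(P_i,T)\twoheadrightarrow\Hom_J(P_i,S)$ need not hold, but the kernel of $\Hom_J(f,S)$ still pulls back.) The clean way: projectivity of $P_0,P_1$ makes $\Hom_J(P_i,-)$ exact, so $0\to \Hom_J(P_i,S)\to\Hom_J(P_i,T)\to\Hom_J(P_i,T/S)\to 0$ is exact, and a diagram chase shows $\ker\Hom_J(f,T)\supseteq\ker\Hom_J(f,S)\neq 0$.

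\emph{Main obstacle.} The delicate part is the ``if'' direction: proving that genericity of $f$ actually achieves the injectivity permitted by the inequalities. This is exactly where the hypothesis that $T$ contains a \emph{maximal} vertex is used — it provides the peeling/induction that would otherwise fail, and it is the reason Lemma \ref{L:hom=0} is not simply a formal consequence of dimension counting. Making the genericity argument precise requires knowing that the locus of presentations $f$ for which $\Hom_J(f,T)$ drops rank is a proper closed subvariety, which in turn rests on exhibiting at least one $f$ (not necessarily in the reduced form) with the desired property and then invoking upper-semicontinuity of rank; the maximal-vertex filtration is what lets one construct such an $f$ inductively.
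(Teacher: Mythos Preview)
The paper does not prove this lemma; it is quoted from \cite[Lemma 6.5]{Fs1}. So there is no proof in the present paper to compare against, and your task is really a reconstruction.

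Your ``only if'' direction is correct: for any subrepresentation $S\subseteq T$ with $\g(\dimbar S)<0$ the map $\Hom_J(f,S)$ has nontrivial kernel for every $f$, and the commuting square coming from the injections $\Hom_J(P_i,S)\hookrightarrow\Hom_J(P_i,T)$ carries that kernel into $\ker\Hom_J(f,T)$. (Your self-correction at the end is right.)

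Your ``if'' direction has two genuine gaps. First, the proposed induction does not close. Take the obvious filtration $0\to T'\to T\to T''\to 0$ with $T'=\operatorname{rad}(T)$ and $T''\cong S_v$. The hypothesis on $T$ does \emph{not} pass to $T''$: the only nonzero subrepresentation of $T''$ is $T''$ itself, and $\g(\dimbar T'')=\g(\e_v)=\g(\dimbar T)-\g(\dimbar T')$ need not be nonnegative just because both terms on the right are. Indeed $S_v$ is never a subrepresentation of $T$ when $\dim T>1$, precisely because the maximal-vertex condition forces any nonzero element at $v$ to generate all of $T$; so $\g(v)\ge 0$ is not among your hypotheses. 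On the other side, $T'$ need not have a maximal vertex at all: for the quiver $v\to w_1$, $v\to w_2$ with $T=P_v$, one has $\operatorname{rad}(T)=S_{w_1}\oplus S_{w_2}$, which has no maximal vertex. So neither piece inherits the setup.

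Second, even at the base case your justification is wrong, though the conclusion happens to be true. For $T=S_v$ you argue that $\g(\dimbar T)\ge 0$ makes the target at least as large as the source and ``general position forces injectivity''. That inference is invalid in general: the assignment $f\mapsto\Hom_J(f,T)$ is linear, and its image is a linear subspace of $\Hom_k(\Hom_J(P_0,T),\Hom_J(P_1,T))$ that may contain no injective element regardless of dimensions. The correct reason the base case works is that $\g(v)\ge 0$ forces $[-\g]_+(v)=0$, so $\Hom_J(P_0,S_v)=0$ and injectivity is vacuous.

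You have correctly located the crux --- one must exhibit a single $f$ with $\Hom_J(f,T)$ injective and then invoke semicontinuity --- and correctly flagged that the maximal-vertex hypothesis is what makes this possible. But the filtration you propose does not build that $f$; a different inductive scheme or a direct construction is needed.
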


\noindent To simplify the notation, for a frozen vertex $v=\pm(l;j,k)$ (resp. $(l;0,l)$~and~$(l;l,0)$),
we set $T_v:=T_{l;j,k}^\pm:=S(\op{tp}_{l;j,k}^\pm)$ (resp. $T_{l;0,l}:=S(\op{tp}_{l;0,l})$ and $T_{l;l,0}:=S(e_{l;l,0})$).
We note that every $T_v$ contains a maximal vertex.

\begin{lemma} \label{L:TIequi} Let $M=\Coker(\g)$, then $\Hom_J(M,T_v)=0$ for each frozen $v$ if and only if $\Hom_J(M,I_v)=0$ for each frozen $v$.
\end{lemma}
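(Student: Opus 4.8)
The plan is to deduce this from the projective/injective presentations in Lemmas~\ref{L:proj_fp} and~\ref{L:injres_fp}, together with Lemma~\ref{L:hom=0}. The key point is that for each frozen vertex $v$ the representation $T_v$ admits, by \eqref{eq:injres1}--\eqref{eq:injres3}, an injective copresentation whose terms are the injectives $I_w$ with $w$ frozen (for instance $T_{l;j,k}^{\pm}$ embeds into $I_{l;j,k}^{\pm}$ with cokernel inside $I_{l;j+1,k-1}^{\pm}$, and the end cases $T_{l;0,l}$, $T_{l;l,0}$ behave similarly). Conversely, again by these presentations, each $I_v$ has a finite filtration (as a $J_l$-module) whose subquotients are the various $T_w$ with $w$ frozen: running the copresentations \eqref{eq:injres1} from the ``innermost'' frozen vertex $(l;0,l)$ outward expresses every $I_{l;j,k}^{\pm}$ as an iterated extension of the $T_{l;j',k'}^{\pm}$'s. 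So the two families $\{T_v\}$ and $\{I_v\}$ generate the same extension-closed, subquotient-closed subcategory of $\Mod J_l$.

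Given that, the equivalence is formal. First I would show that $\Hom_J(M,T_v)=0$ for all frozen $v$ implies $\Hom_J(M,I_v)=0$ for all frozen $v$: since $I_v$ has a filtration with subquotients among the $T_w$, and $\Hom_J(M,-)$ is left exact, vanishing on each subquotient forces vanishing on $I_v$ (apply $\Hom_J(M,-)$ to each short exact sequence in the filtration and induct on its length). Conversely, if $\Hom_J(M,I_v)=0$ for all frozen $v$, then applying $\Hom_J(M,-)$ to the injective copresentation $0\to T_v\to I_v\to I_{v'}^{(\oplus)}$ of Lemma~\ref{L:injres_fp} and using left exactness gives $\Hom_J(M,T_v)\hookrightarrow \Hom_J(M,I_v)=0$. (For the two special frozen vertices one uses \eqref{eq:injres2} and \eqref{eq:injres3} directly; \eqref{eq:injres3} even says $T_{l;l,0}=I_{l;l,0}$, so there is nothing to prove there.)

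The only genuine work is the second bullet of the middle paragraph: verifying that each $I_v$ really is built by extensions from the $T_w$'s, i.e.\ that iterating the copresentations \eqref{eq:injres1}--\eqref{eq:injres2} terminates and sweeps out all of $I_v$. This is where finite-dimensionality of $J_l$ (Corollary~\ref{C:fdJl}) is used: the socle series / the chain $\pm(l;j,k)\to\pm(l;j+1,k-1)\to\cdots$ is finite, so the recursion $I_{l;j,k}^{\pm}\rightsquigarrow I_{l;j+1,k-1}^{\pm}$ bottoms out at $I_{l;l,0}=T_{l;l,0}$ after finitely many steps, and \eqref{eq:injres1} shows each step adds exactly one $T$-layer. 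I expect this bookkeeping — keeping track of which frozen vertices appear and in what order, and handling the bent type-$C$ arrows near the special vertices — to be the main (though routine) obstacle; everything else is a diagram chase. An alternative, if one prefers to avoid the filtration argument, is to invoke Lemma~\ref{L:hom=0} on both sides: both $T_v$ and $I_v$ contain a maximal vertex (for $T_v$ this is noted just before the lemma; for $I_v$ it is the socle vertex $v$ itself), so each vanishing condition translates into ``$\g(\dimbar S)\ge 0$ for all subrepresentations $S$,'' and one checks that the subrepresentations of $\bigoplus_v T_v$ and of $\bigoplus_v I_v$ impose the same inequalities on $\g$ because $S(\op{tp}_{l;j,k}^{\pm})$ is exactly the ``visible part'' of $I_{l;j,k}^{\pm}$ cut out before one leaves the frozen locus.
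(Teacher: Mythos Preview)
Your proposal is correct and follows the same line as the paper, using the injective copresentations of Lemma~\ref{L:injres_fp}. The one place where you work harder than necessary is the ``$\Rightarrow$'' direction: you frame it as building a filtration of $I_v$ with subquotients among the $T_w$, which asks that each copresentation be right-exact (``each step adds exactly one $T$-layer''). The paper never needs this. It simply inducts on $k$: from the left-exact sequence
\[
0\to\Hom_J(M,T_{l;j,k}^\pm)\to\Hom_J(M,I_{l;j,k}^\pm)\to\Hom_J(M,I_{l;j+1,k-1}^\pm),
\]
vanishing of the first term (hypothesis) and the third term (induction on $k$, with base case $I_{l;l,0}=T_{l;l,0}$) forces the middle term to vanish. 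No surjectivity of $(ab)^*$ is required, so the ``genuine work'' you flagged evaporates. For the ``$\Leftarrow$'' direction, your argument via $T_v\hookrightarrow I_v$ and the paper's remark that every subrepresentation of $T_v$ is a subrepresentation of $I_v$ amount to the same thing.
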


\begin{proof}  Since each subrepresentation of $T_v$ is also a subrepresentation of $I_v$, one direction is clear. Conversely, let us assume that $\Hom_J(M,T_v)=0$ for each frozen $v$. We prove that $\Hom_J(M,I_v)=0$ by induction on $k$.
For $k=0$, we have that $T_v=I_v$.
Now suppose that it is true for $k=n-1$, that is, $\Hom_J(M,I_{l;j+1,n-1}^\pm)=0$.
By Lemma \ref{L:injres_fp}, $\Hom_J(M,I_{l;j,n}^\pm)=0$ is equivalent to $\Hom_J(M,T_{l;j,n}^\pm)=0$.
\end{proof}

Let the notation $\sum_{v\in p} \g(v)$ stand for
$\g(v_0)+\g(v_1)+\cdots+ \g(v_t)$ if $p$ is the path $v_0 \xrightarrow{a_1} v_1 \xrightarrow{a_2} v_2 \xrightarrow{}\cdots\xrightarrow{} v_t$.
We define a cone $\mr{G}_{\Diamond_l}\subset \mb{R}^{\Diamond_l^0}$ by $\sum_{v\in p} \g(v) \geq 0$ for the following paths $p$.
\begin{enumerate}
\item all strict subpaths of $\op{tp}_{l;j,k}^\pm$,
\item all subpaths of $\op{tp}_{l;0,l}$,
\item and the trivial path $e_{l;l,0}$.
\end{enumerate}

\begin{theorem} \label{T:LP_Gl} The set of lattice points $\mr{G}_{\Diamond_l}\cap \mb{Z}^{\Diamond_l^0}$ is exactly $G(\Diamond_l,W_l)$.
Moreover, all the defining conditions of $\mr{G}_{\Diamond_l}$ are essential.
\end{theorem}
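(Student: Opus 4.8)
The plan is to identify the $\mu$-supported vectors with the inequalities defining $\mr{G}_{\Diamond_l}$ by passing first to injective modules and then to the uniserial modules $T_v$.

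\textbf{Reduction to the modules $T_v$.} Since $J_l$ is finite dimensional (Corollary \ref{C:fdJl}), every $M=\Coker(\g)$ is finite dimensional, and for the indecomposable injective $I_v$ at a vertex $v$ one has the standard identity $\dim\Hom_{J_l}(M,I_v)=\dim M_v$. Hence $M$ is $\mu$-supported, i.e.\ $M_v=0$ for every frozen $v$, if and only if $\Hom_{J_l}(M,I_v)=0$ for every frozen $v$, which by Lemma \ref{L:TIequi} is equivalent to $\Hom_{J_l}(M,T_v)=0$ for every frozen $v$. Thus $\g\in G(\Diamond_l,W_l)$ exactly when $\Hom_{J_l}(\Coker(\g),T_v)=0$ for all frozen $v$.

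\textbf{Reduction to path sums.} Each $T_v$ contains a maximal vertex, so Lemma \ref{L:hom=0} rewrites $\Hom_{J_l}(M,T_v)=0$ as $\g(\dimbar S)\ge 0$ for all subrepresentations $S\subseteq T_v$. As $T_v$ is a uniserial module $S(\op{tp}_v)$, its subrepresentations are precisely the modules $S(p')$ attached to the suffix subpaths $p'$ of $\op{tp}_v$ (including the whole path), and $\dimbar S(p')=\sum_{w\in p'}\e_w$ counted with multiplicity; so $\g(\dimbar S(p'))=\sum_{w\in p'}\g(w)$. Therefore $G(\Diamond_l,W_l)$ is the set of $\g\in\mb{Z}^{\Diamond_l^0}$ with $\sum_{w\in p'}\g(w)\ge 0$ for every suffix subpath $p'$ of every tri-broken path $\op{tp}_v$, $v$ frozen. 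It remains to match this with conditions (1)--(3). The frozen vertices are $(l;l,0)$, $(l;0,l)$, and the pairs $\pm(l;j,k)$ with $1\le j\le l-1$; by Lemma \ref{L:injres_fp} the corresponding $T_v$ are $S(e_{l;l,0})$, $S(\op{tp}_{l;0,l})$, and $S(\op{tp}_{l;j,k}^\pm)$. The first gives only $\g(l;l,0)\ge 0$, which is (3). The second gives (2); its initial vertex $(1;0,1)$ is mutable, so the whole-path inequality really is needed. For a generic $\op{tp}_{l;j,k}^\pm$ the proper suffixes give (1), and the whole-path inequality is redundant: its initial vertex $\pm(l;k,j)$ is frozen, so $\g(\pm(l;k,j))\ge 0$ is already forced as the length-one suffix of the mirror path $\op{tp}_{l;k,j}^\pm$ (itself one of the inequalities in (1)), and adding it to the inequality for the proper suffix obtained by deleting that initial vertex recovers the whole-path inequality. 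This yields $\mr{G}_{\Diamond_l}\cap\mb{Z}^{\Diamond_l^0}=G(\Diamond_l,W_l)$.

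\textbf{Essentiality.} The cone $\mr{G}_{\Diamond_l}$ is full dimensional (any sufficiently positive $\g$ satisfies every defining inequality strictly), so one must show that each bounding hyperplane $\{\sum_{w\in p}\g(w)=0\}$ meets $\mr{G}_{\Diamond_l}$ in a facet; equivalently, for each path $p$ on the list one exhibits a $\g_p\in\mr{G}_{\Diamond_l}$ on which the $p$-inequality is the only tight one. Such witnesses can be assembled from the $\g$-vectors of the uniserial modules $S(q)$ over subpaths $q$ of the tri-broken paths together with the vectors $\pm\e_u$ at mutable $u$, and the verification is finite, guided by the explicit picture of $\Diamond_l$ in Figure \ref{F:Diamond4}. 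I expect this step, and the companion bookkeeping in the previous paragraph --- checking that every suffix of every $\op{tp}_v$ genuinely occurs among (1)--(3) and that the resulting linear functionals are pairwise non-proportional --- to be the main obstacle; the homological reductions themselves are formal consequences of Lemmas \ref{L:hom=0} and \ref{L:TIequi}.
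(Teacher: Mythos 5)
Your first paragraph and the matching of inequalities are essentially the paper's argument: you pass from $\mu$-support to $\Hom(M,I_v)=0$ (making explicit the standard identity $\dim\Hom_{J_l}(M,I_v)=\dim M_v$, which the paper leaves implicit), invoke Lemma \ref{L:TIequi} and Lemma \ref{L:hom=0} to reduce to suffix subpaths of the $T_v$, and then absorb the whole-path inequality for $\op{tp}_{l;j,k}^{\pm}$ into the trivial suffix of the mirror path plus the proper suffix of $\op{tp}_{l;j,k}^{\pm}$. This is precisely the decomposition $\dimbar T_{l;j,k}^\pm = \e_{l;k,j}^\pm + (\dimbar T_{l;j,k}^\pm-\e_{l;k,j}^\pm)$ used in the paper.

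For essentiality, however, you stop short of a proof: you state that one should exhibit, for each defining inequality, a point of $\mr{G}_{\Diamond_l}$ where only that inequality is tight, and you assert such witnesses ``can be assembled'' from uniserial $\g$-vectors, but you give no construction and explicitly flag this as the remaining obstacle. This is a genuine gap. The paper closes it with a short structural observation that your witness-hunting would not need: each defining path contains a \emph{unique} frozen vertex, and that vertex appears with coefficient $1$ in the corresponding functional. Consequently a putative positive combination expressing one defining functional in terms of the others forces all coefficients on paths with a different frozen vertex to vanish; and among the suffixes of a single $\op{tp}_v$ the dimension vectors differ by unit vectors, hence are linearly independent, so no nontrivial such combination exists. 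Recognizing the ``one frozen vertex per path'' feature, rather than trying to build explicit interior-of-facet points, is the idea you are missing.
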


\begin{proof} Due to Lemma \ref{L:hom=0} and \ref{L:TIequi}, it suffices to show that $\mr{G}_{\Diamond_l}$ is defined by $\g(\dimbar S)\geq 0$ for all subrepresentations $S$ of $T_v$ and all $T_v$.
We notice that these defining conditions are the union of the defining conditions of
\hbox{$\mr{G}_{\Diamond_l}$~and~$\g(\dimbar T_{l;j,k}^\pm)\geq 0$.}
But the latter conditions are redundant because
$\dimbar T_{l;j,k}^\pm = \e_{l;k,j}^\pm + (\dimbar T_{l;j,k}^\pm-\e_{l;k,j}^\pm)$ and
$\dimbar T_{l;j,k}^\pm-\e_{l;k,j}^\pm$ is the dimension vector of a subrepresentation of $T_{l;j,k}^\pm$.

For the last statement, we notice that each path $p$ in the defining conditions contains a unique frozen vertex. So if one defining condition is a positive combination of others, their paths must share a common frozen vertex.
But this is clearly impossible.
\end{proof}


\begin{definition} Given a weight configuration $\bs{\sigma}$ of a quiver $\Delta$ and a convex polyhedral cone $\mr{G} \subset \mb{R}^{\Delta_0}$,
we define the (not necessarily bounded) convex polytope $\mr{G}(\sigma)$ as $\mr{G}$ cut out by the hyperplane sections $\g \bs{\sigma} = \sigma$.
\end{definition}

\section{Cluster Structure in $\SI_{\beta_l}(K_{l,l}^2)$} \label{S:CS}

\subsection{Initial Seeds}
Let $s_{i;j,k}^{\pm}$ be the Schofield's semi-invariant function $s(f_{i;j,k}^{\pm})$.
We consider $\mc{S}_l := \{s_{i;j,k}^{\pm} \mid \pm(i;j,k)\in \Diamond_l^0 \}$ as our choice of the initial cluster.

\begin{lemma} \label{L:2arm} A general representation $M$ in $\Rep_\bl(K_{l,l}^2)$ has a following representative. Its matrices on the negative arm are $\sm{I_k & 0}$; on the positive arm are $\sm{I_k \\ 0}$; on the arrow $a_1$ is $I_l$,
where $I_k$ is a $k\times k$ identity matrix for $k=1\dots,l-1$ and $0$ is a zero column vector.

Moreover, the evaluations of $M$ on $s_{i;j,k}$ and $s_{j,k;i}$ are the $[j+1,i]\times[1,i-j]$-minor and $[1,i-j]\times [j+1,i]$-minor of $M(a_2)$.
Here the interval notation $[i,j]$ means the $i$-th up to $j$-th rows (columns).
\end{lemma}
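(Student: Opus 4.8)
\emph{The plan} is to treat the two assertions separately: first the existence of the normal form, then the evaluation of the Schofield semi-invariants on it.

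\emph{The normal form.} Since $\beta_l(i)=|i|$, a general $M$ has all maps on the negative arm injective (the dimensions strictly increase from $-1$ to $-l$) and all maps on the positive arm surjective. Hence the images of $V_{-1}\subset V_{-2}\subset\cdots\subset V_{-l}$ form a complete flag in $V_{-l}$, and the kernels of the composites $V_l\to V_{l-1}\to\cdots\to V_1$ form a complete flag in $V_l$. Using the $\GL(V_{-l})$- and $\GL(V_l)$-factors of $\GL_{\bl}$ I would put both flags into standard position, and then use the remaining factors $\GL(V_{\pm i})$ for $1\le i<l$ to bring each individual arm map to the stated shape $\sm{I_k & 0}$, resp.\ $\sm{I_k \\ 0}$. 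The residual stabilizer still contains the product of the two Borel subgroups fixing these standard flags; with respect to the standard bases these are opposite Borels, one acting on the pair $\big(M(a_1),M(a_2)\big)$ of $l\times l$ matrices by left multiplication and the other by right multiplication. Since $M(a_1)$ is general, hence invertible, it lies in the product of these two opposite Borels (the big Bruhat cell), so it can be normalized to $I_l$. Writing $X:=M(a_2)$ for what remains gives the asserted representative.

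\emph{Evaluating the semi-invariants.} Fix such a representative $M$. By definition $s_{i;j,k}^{\pm}=\det\Hom_Q(f_{i;j,k}^{\pm},M)$, and by the concrete description of $\Hom_Q(-,M)$ this determinant is obtained by transposing the matrix of $f_{i;j,k}^{\pm}$ and replacing each path by the composite of the corresponding maps of $M$. In the normal form the composite along the negative arm from $-i$ to $-l$ is the standard inclusion $k^i\hookrightarrow k^l$ onto the first $i$ coordinates, the composite along the positive arm from $l$ to $j$ is the standard projection $k^l\twoheadrightarrow k^j$ onto the first $j$ coordinates, and $M(a_1)=I_l$. Hence the path from $-i$ to $j$ through $a_1$ acts on $M$ as $(v_1,\dots,v_i)\mapsto(v_1,\dots,v_j)$, while the path from $-i$ to $k$ through $a_2$ acts as $v\mapsto v\,X_0$, where $X_0$ is the submatrix of $X$ on rows $[1,i]$ and columns $[1,k]$. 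Therefore $\Hom_Q(f_{i;j,k},M)$ is the $i\times i$ block upper triangular matrix
$$\begin{pmatrix} I_j & * \\ 0 & X' \end{pmatrix},$$
where $X'$ is the submatrix of $X$ on rows $[j+1,i]$ and columns $[1,i-j]$ (note $i-j=k$), so $s_{i;j,k}(M)=\det X'$ is the $[j+1,i]\times[1,i-j]$-minor of $M(a_2)$. Running the same computation for $f_{j,k;i}\colon P_i\to P_{-j}\oplus P_{-k}$, transposing produces instead the block lower triangular matrix $\sm{I_j & 0\\ * & X''}$ with $X''$ the submatrix of $X$ on rows $[1,i-j]$ and columns $[j+1,i]$, so $s_{j,k;i}(M)$ is the $[1,i-j]\times[j+1,i]$-minor of $M(a_2)$. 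The boundary cases $j=i$ or $k=i$ (horizontal vertices) fall under the same formulas, with the convention that an empty determinant equals $1$.

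\emph{Main obstacle.} The determinant computation is routine once the composites along the two arms are recognized as the standard inclusion and projection. The genuinely delicate step is the first one: justifying that a general representation can be brought \emph{exactly} into the stated form --- in particular, pinning down the residual group after the two flags and all interior arm maps have been standardized, checking that it still contains $B^-\times B^+$ acting on $(M(a_1),M(a_2))$ by left/right multiplication, and invoking the Bruhat decomposition to normalize $M(a_1)=I_l$. This is the part I would write out in full detail.
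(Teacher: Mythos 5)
Your proof is correct, and for the first assertion it takes a genuinely different route than the paper. The paper forgets the arrow $a_2$ and observes that the resulting quiver is of type $A_{2l}$ with dimension vector $(1,2,\dots,l,l,\dots,2,1)$; this vector decomposes canonically into the nested interval roots $\bold{1}_i$, so a general representation of $A_{2l}$ is a direct sum of the corresponding uniserial interval modules, and the stated normal form (including $M(a_1)=I_l$ after rescaling the one-dimensional summands) is just a basis adapted to that direct-sum decomposition. Your argument instead works by hand with the group action: standardize the two flags, standardize the interior arm maps, identify the residual stabilizer as a pair of opposite Borels in $\GL(V_{-l})\times\GL(V_l)$ acting on $M(a_1)$ by left/right multiplication, and then use the density of the big Birkhoff/Bruhat cell (equivalently, generic $LU$ factorization) to normalize $M(a_1)$ to $I_l$. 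Both arguments are sound; the paper's is shorter because it borrows the structure theory of type $A$ quivers, while yours is more elementary and makes the residual symmetry explicit. One small point worth tightening in your write-up is the order of operations in standardizing the arms: rather than first fixing the flags in $V_{\pm l}$ and then adjusting the interior maps (which re-disturbs the flags unless done carefully), it is cleaner to build the bases outward from $V_{\mp 1}$, extending at each step, so that the standard flag in $V_{\mp l}$ comes out automatically; you already flag this as the step needing detail. The determinant computation in your second part matches the paper's "elementary matrix calculation" and is correct as written.
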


\begin{proof} If we forget the arrow $a_2$, then we get a quiver of type $A_{2l}$ with the dimension vector shown below
$$\vcenter{\xymatrix@C=5ex{
1\ar[r] & 2\ar[r] & \cdots \ar[r] & l\ar[r]  \ar[r] & l \ar[r] & \cdots \ar[r] & 2 \ar[r] & 1
}}$$
This dimension vector decomposes canonically as $\sum_{i=0}^{l} \bold{1}_i$, where
$$\bold{1}_i=(\underbrace{0,\cdots, 0}_{i-1}, {1,\cdots,1},\underbrace{0,\cdots, 0}_{i-1}).$$
This means that a general representation of that dimension vector is a direct sum of general representations of dimension $\bold{1}_i$,
which is exactly what we desired.
The second statement can be easily verified by elementary matrix calculation.
\end{proof}

We define a restriction map $\Rep_\bl(K_{l,l}^2) \to \Rep_{\beta_{l-1}}(K_{l-1,l-1}^2)$ sending $M$ to $\br{M}$ defined by
\begin{align*} & \br{M}(b_j) = M(b_j) & j&=1,2,\dots,l-2;\\
& \br{M}(a_i) = M(b_{-(l-1)})M(a_i)M(b_{l-1}) & i&=1,2.
\end{align*}
In the language of \cite{Fs2}, the restriction map is the composition of two {\em vertex removals} at $-l$ and $l$.
In particular, it follows from \cite{Fs2} that
\begin{lemma} \label{L:embedding} The restriction map $\Rep_\bl(K_{l,l}^2) \to \Rep_{\beta_{l-1}}(K_{l-1,l-1}^2)$ induces an embedding
$\iota: \SI_{\beta_{l-1}}(K_{l-1,l-1}^2) \hookrightarrow \SI_\bl(K_{l,l}^2)$.
\end{lemma}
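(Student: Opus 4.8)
The plan is to realize $\iota$ concretely as the pullback $\rho^{*}$ of the polynomial map $\rho\colon\Rep_{\bl}(K_{l,l}^2)\to\Rep_{\beta_{l-1}}(K_{l-1,l-1}^2)$, $M\mapsto\br{M}$, and then to verify the two things that are needed: that $\rho^{*}$ carries $\SI_{\beta_{l-1}}$ into $\SI_{\bl}$, which is an equivariance statement since $\SI$ is by definition a ring of $\SL$-invariants; and that $\rho^{*}$ is injective on $\SI_{\beta_{l-1}}$, for which it is enough that $\rho$ be dominant. One may alternatively just invoke \cite{Fs2}: the map $\rho$ is the composite of the vertex removals at $-l$ and at $l$, each of which contracts the two arrows through the removed vertex into their composition, and the cited reference proves in general that such a removal induces an embedding of semi-invariant rings; the argument below is an unwinding of that fact in the present situation.

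For the first point I would begin by recording the direct product decomposition $\SL_{\bl}=\SL(V_{-l})\times\SL_{\beta_{l-1}}\times\SL(V_l)$, which holds because the vertex set of $K_{l-1,l-1}^2$ is obtained from that of $K_{l,l}^2$ by deleting $\pm l$ and $\bl$ restricts to $\beta_{l-1}$ there. Embedding $\SL_{\beta_{l-1}}$ as the middle factor (the identity at $\pm l$), the map $\rho$ is $\SL_{\beta_{l-1}}$-equivariant: the arm maps $M(b_{\pm j})$ with $j\leq l-2$ are left untouched, and $\br{M}(a_\ep)=M(b_{-(l-1)})M(a_\ep)M(b_{l-1})$ transforms exactly the way a matrix on the path from $-(l-1)$ to $l-1$ should under base changes at the shared vertices $\pm(l-1)$. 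Moreover $\rho$ is constant on $\GL(V_{-l})\times\GL(V_l)$-orbits, because any base change at $-l$ (resp.\ at $l$) occurs in two adjacent factors of $M(b_{-(l-1)})M(a_\ep)M(b_{l-1})$ and cancels. Hence, for $f\in\SI_{\beta_{l-1}}$, the function $\iota(f)=f\circ\rho$ is invariant under each of the three direct factors of $\SL_{\bl}$, so $\iota(f)\in\SI_{\bl}(K_{l,l}^2)$; and since $\rho$ is in fact $\GL_{\beta_{l-1}}$-equivariant, $\iota$ moreover respects the weight gradings, sending the weight-$\sigma$ component to the component of weight $\sigma$ extended by $0$ at $\pm l$, a compatibility we shall want later even though it is not part of the present statement.

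For injectivity it suffices to see that $\rho$ is dominant, and in fact it is surjective on closed points: given $\br{N}\in\Rep_{\beta_{l-1}}(K_{l-1,l-1}^2)$, put $M(b_{\pm j})=\br{N}(b_{\pm j})$ for $j\leq l-2$, fix decompositions $V_{\pm l}=V_{\pm(l-1)}\oplus k$, let $M(b_{-(l-1)})$ and $M(b_{l-1})$ be the corresponding inclusion and projection, and take $M(a_\ep)=\sm{\br{N}(a_\ep)&0\\0&0}$; a one-line matrix computation gives $\br{M}(a_\ep)=\br{N}(a_\ep)$, so $\rho(M)=\br{N}$. Therefore $\rho^{*}\colon k[\Rep_{\beta_{l-1}}]\to k[\Rep_{\bl}]$ is injective on the whole coordinate ring, hence on $\SI_{\beta_{l-1}}$, and $\iota$ is the asserted embedding. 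The argument is essentially routine; the only place demanding care is the bookkeeping of the group actions — the cancellations at $\pm l$ and the direct product structure of $\SL_{\bl}$ — or, if one prefers the route through \cite{Fs2}, checking that the dimensions $\bl(\pm l)=l$ satisfy the admissibility hypothesis of the vertex removal theorem there, so that it really yields an injection rather than merely a ring map.
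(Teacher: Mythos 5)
The proposal is correct, and it takes a genuinely different (and more self-contained) route from the paper, which merely observes that $\rho$ is a composition of two vertex removals and cites \cite{Fs2} for the embedding statement. Your argument makes the mechanism explicit: you use the factorization $\SL_{\bl}=\SL(V_{-l})\times\SL_{\beta_{l-1}}\times\SL(V_l)$ (valid since the vertex sets differ by $\pm l$ and $\bl$ restricts to $\beta_{l-1}$), show $\rho$ is equivariant for the middle factor (so $f\circ\rho$ is $\SL_{\beta_{l-1}}$-invariant when $f$ is) and constant on the orbits of the outer two factors (because the base changes at $\pm l$ appear in adjacent positions of $M(b_{-(l-1)})M(a_\ep)M(b_{l-1})$ and cancel), and then prove injectivity of $\rho^*$ by exhibiting an explicit section, which is sound since $\dim V_{\pm l}=l>l-1=\dim V_{\pm(l-1)}$ makes the splitting $V_{\pm l}=V_{\pm(l-1)}\oplus k$ available and a direct matrix check gives $\br{M}(a_\ep)=\br{N}(a_\ep)$. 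What the paper's citation buys is compatibility with the broader vertex-removal machinery of \cite{Fs2} (gradings, cluster models, the full-rank hypothesis) that Section 9 uses heavily; what your direct argument buys is that it is elementary and requires no external reference. Your parenthetical on the \cite{Fs2} route is also correct: the admissibility hypothesis of the vertex-removal theorem at $r=\pm l$ is $\min(l-1,2l)=l-1\leq l=\bl(\pm l)$, which holds.
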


\begin{lemma} \label{L:ag-independent} The semi-invariants $s_{i;j,k}^{\pm}$ in $\mc{S}_l$ are algebraically independent.
\end{lemma}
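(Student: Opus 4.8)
The plan is to reduce to a transcendence‑degree count and then exploit the explicit description of Lemma~\ref{L:2arm}. First, since $\beta_l$ is a Schur root whose generic $\SL_{\beta_l}$-stabilizer is trivial — it takes the value $1$ at the vertices $\pm1$, so the stabilizer of a general module, a copy of $k^{*}$ inside $\GL_{\beta_l}$, meets $\SL_{\beta_l}$ only in the identity — the ring $\SI_{\beta_l}(K_{l,l}^{2})=k[\Rep_{\beta_l}(K_{l,l}^{2})]^{\SL_{\beta_l}}$ has Krull dimension $\dim\Rep_{\beta_l}(K_{l,l}^{2})-\dim\SL_{\beta_l}=l(l+1)=|\mc{S}_l|$. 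Hence it suffices to show that the $s_{i;j,k}^{\pm}$ have transcendence degree $l(l+1)$; equivalently, that the morphism $\Phi=(s_{i;j,k}^{\pm})\colon\Rep_{\beta_l}(K_{l,l}^{2})\to\mb{A}^{\Diamond_l^0}$ is dominant onto a subvariety of that dimension; equivalently, that a general fibre of $\Phi$ is a single $\SL_{\beta_l}$-orbit.

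Next I would unwind what the $s_{i;j,k}^{\pm}$ look like on the dense open set of representations admitting the normal form of Lemma~\ref{L:2arm}. There a general $M$ is recorded by the $l\times l$ matrix $X:=M(a_2)$ together with the determinant data $\bar g:=(\det g(i))_i$ of the base change $g\in\GL_{\beta_l}$ used to bring $M$ to normal form; the latter is well defined modulo the residual torus $H\cong(k^{*})^{l}$ — the diagonal stabilizer of the normal form — which acts on $X$ by conjugation $X\mapsto DXD^{-1}$. Combining the semi-invariance of $s_{i;j,k}^{\pm}$ with Lemma~\ref{L:2arm} yields
$$s_{i;j,k}^{\pm}(M)=\bar g^{-\f_{i;j,k}^{\pm}}\, m_{i;j,k}^{\pm}(X),$$
where $m_{i;j,k}^{\pm}(X)$ is the contiguous minor of $X$ named in Lemma~\ref{L:2arm}. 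So, up to an $\SL_{\beta_l}$-bundle, $\Phi$ becomes the $H$-invariant map $(\bar g,X)\mapsto\big(\bar g^{-\f_{i;j,k}^{\pm}}m_{i;j,k}^{\pm}(X)\big)$ on $(k^{*})^{2l}\times\{l\times l\text{ matrices}\}$, whose source descends to a variety of dimension $2l+l^{2}-l=l(l+1)$; it remains only to see that this map separates $H$-orbits generically.

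To separate $H$-orbits I would argue in two stages. Each $\mb{Z}$-linear relation $\sum_v n_v\f_v^{\pm}=0$ among the weights gives a multiplicative identity $\prod_v s_v^{n_v}=\prod_v m_v(X)^{n_v}$, so from the numbers $s_v(M)$ one reads off a rank-$(l^{2}-l+1)$ lattice of monomials in the minors $m_v(X)$. The family $\{m_v\}$ is rich: it contains the single entry $X_{1,1}=m_{1;0,1}$, the entire first column $X_{i,1}=m_{i;i-1,1}$ and first row $X_{1,i}=m_{i-1,1;i}$, and the sliding $2\times2$, $3\times3,\dots$ minors running down the leading columns and rows. These recover a general $X$ up to the diagonal conjugation $X\mapsto DXD^{-1}$, which is precisely the $H$-ambiguity on $X$. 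Finally the ratios $s_v(M)/m_v(X)=\bar g^{-\f_v^{\pm}}$, together with the fact that the $\f_v^{\pm}$ span a sublattice of full rank $2l-1$ in the hyperplane $\{\sigma\colon\sigma(\beta_l)=0\}$ (cf.\ Lemma~\ref{L:WC}), recover $\bar g$ up to the one-parameter subgroup of scalars — exactly the ambiguity the $H$-action leaves on $\bar g$. This identifies a general fibre of $\Phi$ with a single $\SL_{\beta_l}$-orbit. One can equally organise the whole argument as an induction on $l$ via the embedding $\iota$ of Lemma~\ref{L:embedding}, which identifies the $s_{i;j,k}^{\pm}$ with $i\le l-1$ inside $\SI_{\beta_l}(K_{l,l}^{2})$ — their weights being supported away from $\pm l$ — so that only the $2l$ functions with $i=l$ must be shown algebraically independent over the rest; the base case $l=1$, where $\mc{S}_1$ is the pair of coordinate functions on $\Rep_{\beta_1}(K_{1,1}^{2})\cong k^{2}$, is trivial.

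The hard part is the combinatorial heart of the last stage: checking that the specific contiguous minors $m_{i;j,k}^{\pm}$, subject to the multiplicative constraints imposed by the linear relations among the $\f_{i;j,k}^{\pm}$, really do pin down a general $l\times l$ matrix up to diagonal conjugation, and then matching the two surviving ambiguities — conjugation of $X$ and rescaling of $\bar g$ — so that the combined ambiguity is exactly an $\SL_{\beta_l}$-orbit. Everything else is bookkeeping with Lemma~\ref{L:2arm} and the dimension count.
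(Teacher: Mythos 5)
Your main route---the Krull-dimension count $\dim\SI_{\beta_l}(K_{l,l}^2)=l(l+1)=|\mc{S}_l|$ followed by showing a general fibre of $\Phi$ is a single $\SL_{\beta_l}$-orbit---is genuinely different from the paper's. The paper instead writes down a concrete representation $M$ whose $a_1$-matrix carries fresh generic parameters $x_{l,1},\dots,x_{l,l-1},x_{1,l},\dots,x_{l-1,l},x_0$, uses Lemma~\ref{L:2arm} to see that each new semi-invariant $s_{l;j,k}^{\pm}(M)$ is a \emph{linear} function in those parameters (so $\{s_{l;j,k}^{\pm}\}_j$ are independent), and then inducts on $l$ by contradiction, using that the restriction of $M$ to $K_{l-1,l-1}^2$ is general and hence stable.

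That said, your proof as written has a real gap, which you yourself flag as the hard part. Everything is reduced to the assertion that the $\bar g$-free monomial combinations $\prod_v m_v(X)^{n_v}$, taken over the rank-$(l^2-l+1)$ lattice of relations $\sum_v n_v\f_v^{\pm}=0$, separate a general $X$ from its diagonal-conjugation orbit---but this is stated, not proved, and it is precisely where all the work lies. The numerical match $l^2-l+1=l^2-(l-1)$ is necessary but is nowhere near generic injectivity. Note in fact that the bare corner minors $m_v$ determine $X$ \emph{exactly}, not merely up to conjugation (the first row, first column, and the nested principal and near-principal minors let one solve entry by entry), so the subtlety is entirely in only having access to their $\f_v$-balanced monomials; and making the residual $H$-action on the pair $(\bar g, X)$ match against the scalar $\SL_{\beta_l}$-ambiguity on $\bar g$ is a separate check you do not carry out. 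Your closing one-sentence sketch (induct via Lemma~\ref{L:embedding} and only treat the $2l$ functions with $i=l$) is essentially the paper's strategy, but it too leaves open exactly the step the paper's specialization performs: independence of $\{s_{l;j,k}^{\pm}\}_j$ over $k(\mc{S}_{l-1})$. As it stands, therefore, the proposal is an interesting alternative outline rather than a complete proof.
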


\begin{proof} We consider a representation $M\in \Rep_{\beta_l}(K_{l,l}^2)$ whose matrices on the negative arm are $\sm{I_k & 0}$; on the postive arm are $\sm{I_k \\ 0}$; on the arrows $a_1,a_2$ are the following block matrices
\begin{align*} &
M(a_1)=\begin{pmatrix}
I_{l-1} & X_c \\
X_r & x_{l,l} \end{pmatrix} &
M(a_2)=\begin{pmatrix}
A & 0_c \\
0_r & x_0 \end{pmatrix},
\end{align*}
where $A$ is an $l-1\times l-1$ matrix in general position, $X_r$ is the row vector in generic variables $x_{l,1},\dots,x_{l,l-1}$,
$X_c$ is the column vector in generic variables $x_{1,l},\dots,x_{l-1,1}$, $0_r, 0_c$ are zero vectors of the same size as $X_r, X_c$,
and $x_0$ is another generic variable.
Then according to Lemma \ref{L:2arm}, $s_{l;j,k}(M)$ is easily seen to be a linear function in $x_{l,1}, \dots, x_{l,j}$ and $x_0$.
Similarly, $s_{j,k;l}(M)$ is a linear function in $x_{1,l}, \dots, x_{j,l}$ and $x_0$.
In particular, we see that the set $\{s_{l;j,k}^{\pm}\}_j$ is algebraically independent.

We will finish the proof by induction on $l$. If $l=1$, the statement is clear. Now we assume that the statement is true for $l=n-1$.
In view of Lemma \ref{L:embedding}, we can view $\mc{S}_{n-1}$ as a subset in $\mc{S}_n$.

Now suppose that $\mc{S}_{n} = \{s_{n;j,k}^{\pm}\}_j \cup \mc{S}_{n-1}$ are algebraically dependent, that is, there is a non-zero polynomial $p\in k[y_1,y_2,\dots,y_{2n},\dots,y_{(n^2+n)}]$ such that
$$p\big( s_{n;n,0}^{\pm},\dots,s_{n;0,n}^{\pm}, \mc{S}_{n-1} \big)=0.$$
Its total degree on $y_1,\dots,y_{2n}$ must be strictly positive because elements in $\mc{S}_{n-1}$ are algebraically independent.
We evaluate $p$ at the representation $M$.
Since $\{s_{n;j,k}^{\pm}\}_j$ are algebraically independent, we conclude that there is some function $r\in R$ vanishes at $M$, where $R\subset \SI_{\beta_{n-1}}(K_{n-1,n-1}^2)$ is the subring generated by $\mc{S}_{n-1}$.
By Lemma \ref{L:2arm} and \ref{L:embedding}, the restriction of $M$ on the subquiver $K_{n-1,n-1}^2$ is a general representation of dimension $\beta_{n-1}$.
But a general representation in $\Rep_{\beta_{n-1}}(K_{n-1,n-1}^2)$ is stable, so there is no such $r$. We get a contradiction.
\end{proof}


\subsection{Initial Exchanges}

Recall the weight configuration $\bs{\sigma}_l$ for $\Diamond_l$.
For any mutable vertex $u=\pm (i;j,k)$, we set ${\bar\f}_u = \sum_{u\to v} \bs{\sigma}_l (v) = \sum_{v\to u} \bs{\sigma}_l (v)$.
We have that
\begin{align*}
\bar\f_u &= \pm\left(-\e_{\mp(i-1)} - \e_{\mp i} - \e_{\mp(i+1)} - \e_{\pm(j+1)} + \e_{\pm j} + \e_{\pm(j-1)} + \e_{\pm(k+1)} + \e_{\pm k} + \e_{\pm(k-1)} \right)
\intertext{\quad if $u$ is not horizontal;}
\bar\f_u &= \pm\left(-\e_{\mp(i-1)} - \e_{\mp i} - \e_{\mp(i+1)} - \e_{\mp 1} + \e_{\pm(i+1)} + \e_{\pm i} + \e_{\pm(i-1)} + \e_{\pm 1} \right)\\
& \ \text{if $u$ is horizontal.}
\end{align*}
\noindent As before, we use the convention that $\e_{0}=0$ and $P_0$ is the zero vector space.
If we mutate $(\Diamond_l,\bs{\sigma}_l)$ at $u$, then we get a corresponding mutated weight vector $\f_u'$:
\begin{align*}
\f_u' &= \pm\left(-\e_{\mp(i-1)} - \e_{\mp(i+1)} - \e_{\pm(j+1)} + \e_{\pm(j-1)} + \e_{\pm(k+1)} + \e_{\pm(k-1)} \right); \\
\f_u' &= \pm\left(-\e_{\mp(i-1)} - \e_{\mp(i+1)} - \e_{\mp 1} + \e_{\pm(i+1)} + \e_{\pm(i-1)} + \e_{\pm 1} \right).
\end{align*}

For $\f_u'$ with $u$ positive, we associate a corresponding projective presentation $f_u'$:
\begin{align*}
 P_{j+1}\oplus P_{j-1}\oplus P_{k+1} \oplus P_{k-1}  &
 \xrightarrow{\sm{p_{-(i+1),j+1}^1 & 0 \\ 0 & p_{-(i-1),j-1}^1 \\ p_{-(i+1),k+1}^2 & p_{-(i-1),k+1}^2 \\ 0 & p_{-(i-1),k-1}^2 } }
 P_{-(i+1)} \oplus P_{-(i-1)}, \\ %
 P_{i+1}\oplus P_{i-1}\oplus P_{1}   &
 \xrightarrow{\sm{p_{-(i+1),i+1}^1 & p_{-(i-1),i+1}^1 & p_{-1,i+1}^2 \\ p_{-(i+1),i-1}^1 & 0 & 0 \\ p_{-(i+1),1}^2 & 0 & 0 } }
 P_{-(i+1)} \oplus P_{-(i-1)} \oplus P_{-1}.%
\end{align*}
If $u=-(i;j,k)$ is negative, we associate a projective presentation $f_u':=-f_{-u}'$,
where the first $-$ is the map defined in \eqref{eq:-map}, and $-u=(i;j,k)$.

Finally we can associate a semi-invariant function $s_u'$ for each mutable vertex $u$
\begin{align*}
s_u'&=(-1)^{jk}s(f_u') && \text {$u$ is not horizontal,} \\
s_u'&=(-1)^{i}s(f_u') && \text {$u$ is horizontal.}
\end{align*}

\begin{lemma} \label{L:Uinv'} For $u=\pm(i;j,k)$ with $j>k+1$, $f_u'$ is coherently $U$-invariant so $s_u'$ is $U$-invariant.
\end{lemma}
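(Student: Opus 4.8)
The plan is to mimic the proof of Lemma \ref{L:level1}, i.e., to exhibit an explicit algebraic group homomorphism $\varphi : U \to \Aut_Q(P_1)\times \Aut_Q(P_0)$ (where $P_1 \to P_0$ is the presentation $f_u'$) such that $(f_u')^h = \varphi(h)\cdot f_u'$ for every $h = \sm{1 & t \\ 0 & 1}\in U$; Lemma \ref{L:Ginv} then immediately gives that $s_u' = \pm s(f_u')$ is $U$-semi-invariant, and since the weight $\f_u'$ is obtained from a full weight configuration (Lemma \ref{L:WC}) with $T$-weight recorded by $(j,k)$, a one-dimensional $U$-representation of a $\GL_2$-weight forces actual $U$-invariance. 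It suffices to treat $u = (i;j,k)$ positive, since $f_{-u}' = -f_u'$ and the involution \eqref{eq:-map} intertwines the $U$-actions on the two presentation spaces (one checks this exactly as in Section \ref{S:FKm}).

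First I would recall how $h = \sm{1 & t \\ 0 & 1}$ acts on paths: it fixes $a_1$ and sends $a_2 \mapsto a_2 + t a_1$, hence $p_{-r,s}^2 \mapsto p_{-r,s}^2 + t\, p_{-r,s}^1$ on every monomial path through the Kronecker arrow, while $p_{-r,s}^1$ is fixed. Applying this to the matrix of $f_u'$ given just before the lemma (the non-horizontal case; the horizontal case is parallel using the second displayed matrix), each entry $p^2_{-(i\pm1),k\pm1}$ picks up $t$ times the corresponding $p^1$-path, while the $p^1$-entries are unchanged. The key combinatorial point is that, because $j > k+1$, the vertices $j+1, j-1$ are strictly larger than $k+1, k-1$, so there exist (nonzero, essentially unique) paths $p_{k+1,j+1}^{}, p_{k-1,j-1}^{}, p_{k+1,j-1}^{}$ along the positive arm from the smaller vertex to the larger one; concatenation identities of the form $p^1_{-(i+1),j+1}\, (\text{arm path}) = p^2_{-(i+1),k+1}$ hold in $kQ$. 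I would use these arm paths to build the elementary automorphism $\varphi(h)$ of $P_1 = P_{j+1}\oplus P_{j-1}\oplus P_{k+1}\oplus P_{k-1}$: it is the identity on the $P_{k+1}, P_{k-1}$ summands, the identity on the diagonal of the $P_{j+1}, P_{j-1}$ summands, and has off-diagonal block $t$ times the appropriate arm-path multiplication maps $P_{k+1}\to P_{j+1}$, $P_{k-1}\to P_{j-1}$, $P_{k+1}\to P_{j-1}$; on $P_0 = P_{-(i+1)}\oplus P_{-(i-1)}$ we take $\varphi(h)$ to be the identity. One then verifies by a direct matrix multiplication that $\varphi(h)\cdot f_u' = (f_u')^h$, that $\varphi$ is a group homomorphism (the arm-path maps compose additively in $t$), and that $\varphi(h)$ is invertible (it is unipotent). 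Because a general representation of $\beta_l$ is stable, $s(f_u')\neq 0$, so Lemma \ref{L:Ginv} applies and yields a character of $U$; $U$ being unipotent, that character is trivial, hence $s_u'$ is $U$-invariant.

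The main obstacle I expect is the bookkeeping of exactly which paths along the positive arm realize the identities $p^1 \cdot (\text{arm path}) = p^2$ and checking that the resulting block automorphism genuinely conjugates $f_u'$ to $(f_u')^h$ — in particular confirming that no extra terms appear on the $P_{-(i-1)}$ row (where the $a_2$-entry $p^2_{-(i-1),k+1}$ must be canceled using the $P_{j-1}$ summand, not $P_{j+1}$) and that the strict inequality $j > k+1$ is exactly what guarantees all needed arm paths exist and are nonzero while the degenerate cases $j = k$ and $j = k+1$ (which are excluded) would break this. Once the bracketing of these concatenations is pinned down, the verification is a routine, if slightly tedious, matrix computation entirely parallel to Lemma \ref{L:level1}.
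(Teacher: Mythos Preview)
Your plan is exactly the paper's proof: reduce to positive $u$, write down the unipotent automorphism of $P_1=P_{j+1}\oplus P_{j-1}\oplus P_{k+1}\oplus P_{k-1}$ given by the three arm-path maps $P_{k+1}\to P_{j+1}$, $P_{k+1}\to P_{j-1}$, $P_{k-1}\to P_{j-1}$ (identity on $P_0$), and invoke Lemma~\ref{L:Ginv}. Two small slips to fix when you write it up: on the positive arm the arrows run from larger index to smaller, so the relevant paths are $p_{j+1,k+1},\,p_{j-1,k+1},\,p_{j-1,k-1}$ (not $p_{k+1,j+1}$ etc.), and the concatenation identity you need is $p^1_{-(i+1),j+1}\cdot p_{j+1,k+1}=p^1_{-(i+1),k+1}$ (the right-hand side is a $p^1$-path, not $p^2$); with these corrected the matrix verification goes through and $j>k+1$ is precisely what makes $p_{j-1,k+1}$ exist.
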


\begin{proof} By the symmetry we only need to prove for $u$ positive.
It is easy to see that $\sm{1 & u \\ 0 & 1}\cdot f_u' = \sm{e_{j+1} &0 &up_{j+1,k+1} & 0 \\ 0 &e_{j-1} &up_{j-1,k+1} & up_{j-1,k-1} \\ 0 &0 &e_{k+1} &0\\ 0 &0 &0 & e_{k-1} } f_u'$.
Then the assignment $\sm{1 & u \\ 0 & 1}\mapsto \left(\sm{e_{j+1} &0 &up_{j+1,k+1} & 0 \\ 0 &e_{j-1} &up_{j-1,k+1} & up_{j-1,k-1} \\ 0 &0 &e_{k+1} &0\\ 0 &0 &0 & e_{k-1} }, \sm{e_{-(i+1)} & 0 \\ 0 & e_{-(i-1)}} \right)$ defines the algebraic group homomorphism $\varphi$ as in Definition \ref{D:Ginv}.
Then our claim follows from Lemma \ref{L:Ginv}.
\end{proof}

\begin{lemma}  \label{L:su_irreducible} Any $s_u'$ is an irreducible polynomial.
\end{lemma}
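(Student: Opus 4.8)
The plan is to prove that $\f_u'$ is an \emph{indivisible extremal} weight of $\Sigma_{\beta_l}(K_{l,l}^2)$ and then invoke Lemma~\ref{L:irreducible}. By construction $s_u'=\pm s(f_u')$ is a polynomial semi-invariant of weight $\f_u'$. First one checks that $s_u'\neq 0$: evaluating $s(f_u')$ on a general representation of $K_{l,l}^2$ in the normal form of Lemma~\ref{L:2arm}, it becomes a non-vanishing polynomial expression in the minors of the central matrix, exactly as for the functions $s_{i;j,k}^{\pm}$. Hence $\f_u'\in\Sigma_{\beta_l}(K_{l,l}^2)$, and it remains to establish (i) indivisibility and (ii) extremality of $\f_u'$; granting these, Lemma~\ref{L:irreducible} applies to every semi-invariant of weight $\f_u'$, in particular to $s_u'$.

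Claim (i) is a direct inspection of the formulas for $\f_u'$. In every case the coordinate vector of $\f_u'$ has all entries in $\{0,\pm1\}$ except for at most one entry equal to $\pm2$ (arising when two of $j\pm1,k\pm1$ coincide, e.g.\ for the vertical labels $u=\pm(2j;j^2)$), and it always has a coordinate equal to $-1$; hence its entries are coprime.

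For claim (ii) I would argue as in Lemma~\ref{L:level1}, using the sign constraint \eqref{eq:signsigma}. Suppose $\f_u'=\sigma_1+\sigma_2$ with $\sigma_1,\sigma_2\in\Sigma_{\beta_l}(K_{l,l}^2)$ both nonzero. At any vertex $v$ outside $\op{supp}(\f_u')$ (a set of at most six vertices) the values $\sigma_1(v),\sigma_2(v)$ have the same sign by \eqref{eq:signsigma} and sum to $0$, so both vanish; thus $\sigma_1,\sigma_2$ are integer vectors supported on $\op{supp}(\f_u')$ with the sign pattern of $\f_u'$. The single equation $\sigma_a(\beta_l)=0$, together with the inequalities $\sigma_a(\gamma)\ge 0$ for a short list of general subrepresentations $\gamma\hookrightarrow\beta_l$ — which for $K_{l,l}^2$ with the standard dimension vector are controlled by the equioriented $A_{2l}$ structure of the two arms and the generic rank of the central map, as in Lemma~\ref{L:2arm} — then force $\sigma_1$ and $\sigma_2$ to be scalar, hence (by integrality and indivisibility of $\f_u'$) trivial, multiples of $\f_u'$, contradicting that both are nonzero. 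Using the $\omega$-symmetry of Lemma~\ref{L:WT-var} and the $\pm$-involution \eqref{eq:-map}, it is enough to run this argument for the generic case ($u$ positive non-horizontal with $j>k+1$, where $s_u'$ is moreover $U$-invariant by Lemma~\ref{L:Uinv'}) together with the special, vertical, horizontal and central labels $u=(1;1,0),(1;0,1)$, each handled the same way.

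The main obstacle is step (ii): for each shape of $u$ one must produce exactly the family of subrepresentation inequalities that exhibits $\mb{R}_+\f_u'$ as an edge of $\mb{R}_+\Sigma_{\beta_l}(K_{l,l}^2)$, and this is where the irregular features of $\Diamond_l$ near its centre (the bent and doubled type-$C$ arrows and the relation $e_{1;1,0}b_\pm a_\pm$) must be treated by hand. As a useful auxiliary fact one may record that the cokernel $M_u'$ of $f_u'$ is an exceptional representation, so that $\dimbar M_u'$ is a real Schur root and $\SI_{\beta_l}(K_{l,l}^2)_{\f_u'}\cong k$ by Lemma~\ref{L:gsreal}; this bounds the size of the weight space but does not by itself preclude a factorization, so the extremality argument above remains the essential point.
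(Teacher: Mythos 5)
Your plan — establish indivisibility and extremality of $\f_u'$ and invoke Lemma~\ref{L:irreducible} — is not what the paper does, and as you concede yourself the extremality step (ii) is left genuinely incomplete. You correctly identify that extremality is "the main obstacle" and that one would have to exhibit, for each shape of $u$ near the centre of $\Diamond_l$, a family of subrepresentation inequalities from King's criterion (Lemma~\ref{L:King}/Theorem~\ref{T:cone}) pinning $\mb{R}_+\f_u'$ down to an edge; but you never produce that list, so the argument as written does not close. There is also a logical slip in the extremality sketch: if you succeed in showing $\sigma_1,\sigma_2$ are nonnegative rational multiples of $\f_u'$, that is already extremality — it is not a contradiction with "both nonzero", since $\sigma_1=t\f_u'$, $\sigma_2=(1-t)\f_u'$ with $0<t<1$ is allowed. (A small further inaccuracy: for vertical labels $u=\pm(2j;j^2)$ two coordinates of $\f_u'$ equal $2$, not at most one; coprimality still holds because $\f_u'(\mp(i+1))=\mp 1$.)

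The paper sidesteps extremality entirely. After noting (via \cite[Theorem 3.1]{PV}) that any factor of a semi-invariant is again semi-invariant, it restricts $s_u'$ to the explicit affine slice $X$ of Lemma~\ref{L:2arm} — $M(b_{-k})=\bigl(I_k\; 0\bigr)$, $M(a_1)=I_l$, $M(b_k)=\bigl(\begin{smallmatrix}I_k\\0\end{smallmatrix}\bigr)$. Because $\GL_{\beta_l}\cdot X$ is dense, $s_u'$ is irreducible iff $s_u'\vert_X$ is, and a direct computation gives $s_u'\vert_X = a_{1,i+1}a_{i+1,1}+a_{1,i}a_{i,1}$, a rank-four quadratic form, which is visibly irreducible. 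This is both shorter and uniform in $u$ — no case analysis near the centre of the quiver is needed. If you want to salvage your route, you would have to actually construct the decompositions $\gamma\hookrightarrow\beta_l$ realizing $\f_u'$ as extremal for each of the label types you list, which is exactly the work the paper's slice computation avoids.
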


\begin{proof} According to Lemma \ref{L:irreducible}, it suffices to show that $\f_u'$ is indivisible and extremal in $\cone{l}$.
The first case is obvious because we observed that any $\sigma\in \cone{l}$ must satisfy $\sgn \sigma(i) = \sgn i$.
For the second case, we observe that any factor of $s_u'$ is also semi-invariant because of \cite[Theorem 3.1]{PV}.
We restrict $s_u'$ on the subvariety
$$X:=\{M\in \Rep_{\bl}(K_{l,l}^2) \mid M(b_{-k})= \sm{I_k & 0}, M(a_1)=I_l, M(b_{k})=\sm{I_k \\ 0}\}.$$
By Lemma \ref{L:2arm}, $\GL_{\bl}\cdot X$ is dense in $\Rep_{\bl}(K_{l,l}^2)$.
We conclude that $s_u'$ is irreducible if and only if its restriction on $X$ is irreducible.
But $s_u'\mid_X=a_{1,i+1}a_{i+1,1} + a_{1,i}a_{i,1}$, which is clearly irreducible.
\end{proof}

\begin{lemma} \label{L:s'}
We have the following relations
\begin{align*}
& s_{i;j,k}^{\pm} (s_{i;j,k}^{\pm})'=s_{i-1;j-1,k}^\pm s_{i;j+1,k-1}^\pm s_{i+1; j,k+1}^\pm + s_{i-1;j,k-1}^\pm s_{i;j-1,k+1}^\pm s_{i+1; j+1,k}^\pm,\\
& s_{i;i,0} (s_{i;i,0})' = s_{i-1;i-1,0} s_{i,1;i+1} s_{i+1;i,1} + s_{i-1,1;i} s_{i;i-1,1} s_{i+1; i+1,0}.
\end{align*}
\end{lemma}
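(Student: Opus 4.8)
The identity to be proved is exactly the exchange relation~\eqref{eq:exrel} for the graded seed $(\Diamond_l,\bs{\sigma}_l)$: at a mutable vertex $u=\pm(i;j,k)$ the first monomial on the right is the product of the initial semi-invariants over the three targets of $u$ (the type $A$, $C$, $B$ arrows out of $u$) and the second is the product over its three sources. So the plan is to verify it directly, by reducing to the explicit picture of Lemma~\ref{L:2arm}.

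First I would check that both sides are semi-invariant functions of one and the same weight~$\bar\f_u$. For the left-hand side this is the weight-mutation rule~\eqref{eq:mu_wt}, which gives $\f_u+\f_u'=\bar\f_u$; for each monomial on the right it is the weight-configuration identity~\eqref{eq:weightconfig}, by which the sum of the $\bs{\sigma}_l$-values over the targets (over the sources) of $u$ equals $\bar\f_u$. Since all the functions in sight are Schofield semi-invariants on $\Rep_{\bl}(K_{l,l}^2)$, it follows that $s_u s_u'$ and both monomials lie in the single weight space $\SI_{\bl}(K_{l,l}^2)_{\bar\f_u}$; by Lemma~\ref{L:WT-var} they are moreover $T$-homogeneous of degree $(3j,3k)$, matching the situation of Lemma~\ref{L:g=2}. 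The restriction of a semi-invariant to the subvariety $X$ on which the arm maps are the standard coordinate inclusions and projections and $M(a_1)=I_l$ determines it, since $\GL_{\bl}\cdot X$ is dense (Lemma~\ref{L:2arm}); so it is enough to prove the relation after restriction to $X$.

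On $X$, write $A:=M(a_2)$, a generic $l\times l$ matrix. By Lemma~\ref{L:2arm}, each $s_{i';j',k'}^{\pm}$ restricts to a $k'\times k'$ minor of $A$ on an initial segment of columns (of rows), up to a sign depending only on $(j',k')$. The function $s_u'=(-1)^{jk}s(f_u')$ (resp. $(-1)^{i}s(f_u')$ in the horizontal case) needs one more step: from the concrete description of $\Hom_Q(f_u',-)$ — transpose the matrix of paths, then substitute the operators of $M$ — the restriction $s_u'|_X$ is the determinant of an explicit $2i\times 2i$ block matrix whose blocks are either coordinate inclusions and projections (coming from the $a_1$-paths, since $M(a_1)=I_l$) or rectangular windows of $A$ (coming from the $a_2$-paths). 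Cofactor expansion along the identity blocks collapses this to a $2k\times 2k$ determinant assembled from four windows of $A$, and the prefactor $(-1)^{jk}$ is precisely what normalizes the accumulated sign to $+1$. After these substitutions the assertion of the lemma becomes a polynomial identity among minors of the generic matrix $A$, of Sylvester--Pl\"ucker type, which I would prove by a Laplace expansion of the $2k\times 2k$ determinant along its $k+1$ ``extra'' columns — equivalently, a Schur-complement / Dodgson-condensation computation — matching the two resulting groups of terms with the two products of three minors on the right. The horizontal case $u=(i;i,0)$ is the degeneration $k=0$ of the same computation (the extra summand $P_1$ in $f_u'$ now yielding a $2\times2$-type determinant, with $(-1)^{i}$ in place of $(-1)^{jk}$), and the negative vertices follow at once by applying the involution $-$ of~\eqref{eq:-map}, which carries $f_{i;j,k}^{\pm}$, $f_u'$ and the whole relation to their mirror images.

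The hard part will be this last determinantal identity: pinning down $s_u'|_X$ correctly as the $2k\times 2k$ determinant of windows of $A$ with the right sign, and then matching its Laplace expansion term by term against the two triple products. A secondary nuisance is that the type~$C$ arrows between the special vertices ($|j-k|=1$) are drawn ``bent'', so that two of the four windows overlap when $|j-k|\le 1$ and those cases must be inspected separately; the underlying minor identity is, however, the same throughout, so the conclusion is unaffected.
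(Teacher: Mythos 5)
Your plan is genuinely different from the paper's, and it's worth contrasting. You propose a direct verification: show both sides have weight $\bar\f_u$ and $T$-degree $(3j,3k)$, restrict to the slice $X$ of Lemma~\ref{L:2arm}, unpack $s_u'$ and each factor as minors (or windows) of the generic matrix $A=M(a_2)$, and then prove the resulting Pl\"ucker/Sylvester-type identity by Laplace expansion. The paper instead does not touch a determinantal identity at all: it observes that $F_0:=s_u s_u'$, $F_1$, and $F_2$ are all $U$-invariant (Lemmas~\ref{L:level1} and~\ref{L:Uinv'}) of the same $\wtd\sigma$-degree $(\bar\f_u;3j,3k)$, invokes Corollary~\ref{C:dimUinv} together with Lemma~\ref{L:g=2} to conclude that this $U$-invariant weight space has dimension exactly $2$, so that $a_0F_0=a_1F_1+a_2F_2$ for some scalars, and finally pins down $a_0=a_1=a_2=1$ by evaluating at one explicit representation. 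The paper's route buys you a tiny computation in exchange for the representation-theoretic input $g_{\mu,\nu}^{(3j,3k)}=2$; your route requires no Kronecker-coefficient input but demands a full determinantal identity in its place.

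The genuine gap in your write-up is precisely that determinantal identity. You correctly frame the structure --- $s_u'|_X$ as a determinant of a block matrix built from identity windows and windows of $A$, cofactor-reduced to a $2k\times 2k$ determinant, matched against the two triple products by Laplace/Dodgson --- but you explicitly defer the matching of terms, calling it ``the hard part.'' Until that matching is carried out and the sign bookkeeping $(-1)^{jk}$ is verified to make it come out with coefficient $+1$ on both summands, the proof is incomplete: the whole content of the lemma lives in that step. (In particular, your claim that ``the underlying minor identity is the same throughout'' for $|j-k|\le 1$ needs to be checked, not just asserted; the paper handles those cases by a degeneration argument, specializing the generic $M(a_2)$, which is a cleaner way to dispose of them.) None of your preparatory reductions is wrong, and the approach would very likely succeed, but as stated it is a proof sketch with the central computation missing, rather than a proof.
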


\begin{proof} Due to symmetries, we only need to prove the first one when $\pm$ takes $+$ and $j\geq k$.
We first assume that $j\geq k+2$, i.e., $u$ is a general vertex. Let
$$F_0=s_{i;j,k}^{} (s_{i;j,k}^{})', F_1=s_{i-1;j-1,k} s_{i;j+1,k-1} s_{i+1; j,k+1}, F_2=s_{i-1;j,k-1} s_{i;j-1,k+1} s_{i+1; j+1,k}.$$
We notice that according to Lemma \ref{L:level1} and \ref{L:Uinv'}, each $F_i$ is $U$-invariant with $(\sigma;\lambda)$-degree equal to $(\bar\f_u; 3j,3k)$.
By Corollary \ref{C:dimUinv} and Lemma \ref{L:g=2}, we have that
$$\dim \SI_\bl(K_{l,l}^2)_{\sigma;\lambda}^U=g_{(i+1,i,i-1),(j+1,j,j-1,k+1,k,k-1)}^{(3j,3k)}=2.$$
Hence, we must have that $a_0F_0 = a_1F_1+a_2F_2$, for some $a_0, a_1, a_2\in k$.
To determine $a_i$, we consider a special representation $M$, whose matrices on the negative arm are $\sm{I_k & 0}$; on the positive arm are $\sm{I_k\\0}$; and on the arrow $a_1$ is $I_l$ as in Lemma \ref{L:2arm}.
The entries of matrix $M(a_2)$ are all zeros except for the diagonal of the $[j-1,i]\times [1,i-j+1], [j,i+1]\times[1,i-j+1]$, and $[j+1,i+1]\times [1,i-j]$-submatrix.
The entries on the first and third diagonals are all $1$'s;
The entries on the second diagonal are all $0$'s except that $a_{j+1,1}=1$ and $a_{i,k},a_{i+1,k+1}$ generic.
Then $s_{i-1;j-1,k}(M)$ and $s_{i+1; j+1,k}(M)$ (resp. $s_{i;j+1,k-1}(M)$ and $s_{i;j-1,k+1}(M)$) are given by the determinants of lower (upper) triangular matrices with diagonal all $1$'s.
It can be easily verified by the elementary matrix calculation that
\begin{align*}
s_{i+1; j,k+1}(M) &= (-1)^q (a_{i,k}a_{i+1,k+1}-ra_{i+1,k+1}-1), \\
s_{i-1;j,k-1}(M) &= (-1)^{q}, \\
s_{i;j,k}(M) &= (-1)^{q}(a_{i,k}-r), \\
(s_{i;j,k})'(M) &= a_{i+1,k+1},
\end{align*}
where $k-1=2q+r$ and $r=0,1$.
We get $a_0=a_1=a_2=1$ by solving a linear system.

To show the first relation for $j=k+1,k$, we observe that these two cases are in fact degenerated from the cases when $j\geq k+2$.
To make this more precise, we recall that it suffices to show the identity
evaluating at a general representation.
We take a general representation $M$ with a form as in Lemma \ref{L:2arm}.
We can see that the desired identity for $j=k+1$ can be specialized from the proven identity for $j'=j+1,k'=k$ by setting the first row of $M(a_2)$ to be zeros.

The second relation can be easily verified by evaluating at a general representation $M$ as in Lemma \ref{L:2arm}.
\begin{align*}
s_{i;i,0} (s_{i;i,0})' (M) &= 1\cdot (a_{1,i+1}a_{i+1,1} + a_{1,i}a_{i,1}),\\
s_{i-1;i-1,0} s_{i,1;i+1} s_{i+1;i,1}(M) &= 1\cdot a_{1,i+1}\cdot a_{i+1,1},\\
s_{i-1,1;i} s_{i;i-1,1} s_{i+1; i+1,0}(M) &= a_{1,i} \cdot a_{i,1} \cdot 1.
\end{align*}

\end{proof}
\noindent From the second relation and the symmetry, we also get that
$$s_{i;0,i} (s_{i;0,i})' = s_{i-1;0,i-1} s_{1,i;i+1} s_{i+1;1,i} + s_{1,i-1;i} s_{i;1,i-1} s_{i+1;0,i+1}.$$
Moreover by setting $i=1$ and our convention, we get that
$$s_{1;1,0} (s_{1;1,0})' = s_{1^2;2} s_{2;1^2} + s_{1;0,1}^2 s_{2; 2,0} \ \text{ and }\  s_{1;0,1} (s_{1;0,1})' = s_{1^2;2} s_{2;1^2} + s_{1;1,0}^2 s_{2;0,2}.$$

\subsection{Cluster Structure}

\begin{proposition} \label{P:contain} The upper cluster algebra $\br{\mc{C}}(\Diamond_l,\mc{S}_l;\wtd{\bs{\sigma}}_l)$
is a $\wtd{\sigma}$-graded subalgebra of $\SI_\bl(K_{l,l}^{2})$.
\end{proposition}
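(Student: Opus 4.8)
The plan is to apply Lemma~\ref{L:RCA} to $R:=\SI_\bl(K_{l,l}^{2})$ with the seed $(\Diamond_l,\mc{S}_l)$. So the first step is to record that $R$ is a finitely generated unique factorization domain over $k$: it is finitely generated because $\SL_\bl=\prod_i\SL(V_i)$ is (linearly) reductive acting on the affine space $\Rep_\bl(K_{l,l}^2)$, and it is factorial because $\SL_\bl$ is semisimple, hence has trivial character group, and $\Rep_\bl(K_{l,l}^2)$ is factorial with $k[\Rep_\bl(K_{l,l}^2)]^\times=k^\times$ --- this is the standard descent criterion for factoriality of rings of invariants. By Lemma~\ref{L:ag-independent} the set $\mc{S}_l$ consists of $|\Diamond_l^0|$ algebraically independent elements of $R$, so $(\Diamond_l,\mc{S}_l)$ is a genuine seed in $\op{QF}(R)$; moreover each $s_{i;j,k}^\pm=s(f_{i;j,k}^\pm)$ lies in $\SI_\bl(K_{l,l}^2)_{\f_{i;j,k}^\pm}\subseteq R$ by Schofield's theorem, and likewise each $s_u'=\pm s(f_u')\in R$.

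Next I would verify the two irreducibility hypotheses of Lemma~\ref{L:RCA}. The initial cluster variables and coefficient variables $s_{i;j,k}^\pm$ are irreducible in $R$: by Lemma~\ref{L:level1} each weight $\f_{i;j,k}^\pm$ is indivisible and extremal in $\Sigma_\bl(K_{l,l}^2)$, and every nonzero semi-invariant of an indivisible extremal weight is irreducible by Lemma~\ref{L:irreducible}. The cluster variables occurring in the clusters adjacent to $\mc{S}_l$ are precisely the functions $s_u'$ for mutable $u$, and these are irreducible by Lemma~\ref{L:su_irreducible}. What remains is to confirm that the $s_u'$ really are the cluster variables produced by mutation, i.e. that the exchange relation~\eqref{eq:exrel} at $u$ reads $s_u\,s_u'=\prod_{v\to u}s_v+\prod_{u\to w}s_w$. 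For a mutable non-horizontal vertex $u=\pm(i;j,k)$ the type $A$, $B$, $C$ arrows give $u\to\pm(i-1;j-1,k)$, $u\to\pm(i+1;j,k+1)$, $u\to\pm(i;j+1,k-1)$ and, dually, $\pm(i+1;j+1,k)\to u$, $\pm(i-1;j,k-1)\to u$, $\pm(i;j-1,k+1)\to u$, so that $\prod_{u\to w}s_w=s_{i-1;j-1,k}^\pm s_{i;j+1,k-1}^\pm s_{i+1;j,k+1}^\pm$ and $\prod_{v\to u}s_v=s_{i-1;j,k-1}^\pm s_{i;j-1,k+1}^\pm s_{i+1;j+1,k}^\pm$; comparing with the first identity of Lemma~\ref{L:s'} shows the two sides agree. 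The horizontal vertices $(i;i,0)$, $(i;0,i)$ and the special vertices $(1;1,0)$, $(1;0,1)$ --- where the type $C$ arrows are doubled or bent --- must be handled separately, matching the incident-arrow count against the second identity of Lemma~\ref{L:s'} and its symmetric and $i=1$ specializations. This bookkeeping at the non-generic vertices is the only genuinely fiddly point, and is the main obstacle in the argument; the factoriality of $R$ and the irreducibility inputs are essentially off the shelf.

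With these checks done, Lemma~\ref{L:RCA} yields $R\supseteq\br{\mc{C}}(\Diamond_l,\mc{S}_l)$. To upgrade this to the graded statement I would invoke Lemma~\ref{L:WC}: $\wtd{\bs{\sigma}}_l$ is a full weight configuration on $\Diamond_l$ assigning $(\f_{i;j,k}^\pm,j,k)$ to $\pm(i;j,k)$, and by construction $s_{i;j,k}^\pm$ has $\sigma$-degree $\f_{i;j,k}^\pm$ (the weight vector of its defining presentation) and $T$-weight $(j,k)$ by Lemma~\ref{L:WT-var}. Hence setting $\deg(x_v)=\wtd{\bs{\sigma}}_l(v)$ is compatible with the actual $\wtd\sigma$-grading of $R$; since mutation of a graded seed preserves multihomogeneity, every cluster --- and therefore every element of $\br{\mc{C}}(\Diamond_l,\mc{S}_l)$ --- is $\wtd\sigma$-homogeneous of the predicted degree. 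Thus the inclusion $\br{\mc{C}}(\Diamond_l,\mc{S}_l;\wtd{\bs{\sigma}}_l)\hookrightarrow\SI_\bl(K_{l,l}^2)$ is a morphism of $\wtd\sigma$-graded $k$-algebras, which is the assertion.
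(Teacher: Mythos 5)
Your proof is correct and follows essentially the same route as the paper: establish that $\SI_\bl(K_{l,l}^2)$ is a finitely generated UFD, use Lemma~\ref{L:ag-independent} and Lemma~\ref{L:WC} to set up the graded seed, verify via Lemma~\ref{L:s'} that the $s_u'$ are the adjacent cluster variables, invoke Lemma~\ref{L:su_irreducible} and then Lemma~\ref{L:RCA} to get the containment, and finish the grading with Lemma~\ref{L:WT-var}. You are in fact a bit more explicit than the paper in checking irreducibility of the \emph{initial} cluster variables (the paper leans implicitly on Lemma~\ref{L:level1} plus Lemma~\ref{L:irreducible}, and note that Lemma~\ref{L:level1} only asserts extremality --- indivisibility of $\f_{i;j,k}^\pm$ is a separate, if trivial, observation from the form of these weights).
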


\begin{proof} Due to Lemma \ref{L:ag-independent} and \ref{L:WC}, such an assignment does define a graded cluster algebra $\mc{C}(\Diamond_l,\mc{S}_l;\wtd{\bs{\sigma}}_l)$.
By \cite[Theorem 3.17]{PV}, any $\SI_\bl(K_{l,l}^{2})$ is a UFD.
Due to Lemma \ref{L:su_irreducible} and \ref{L:s'}.(2), we can apply Lemma \ref{L:RCA} and get the containment $\br{\mc{C}}(\Diamond_l,\mc{S}_l)\subseteq \SI_\bl(K_{l,l}^{2})$.
By our construction and Lemma \ref{L:WT-var}, the grading $\wtd{\bs{\sigma}}_l$ is consistent with the $\wtd{\sigma}$-grading of $\SI_\bl(K_{l,l}^{2})$.
\end{proof}
\noindent We will see in Remark \ref{r:strictupper} that the upper cluster $\br{\mc{C}}(\Diamond_l)$ strictly contains the cluster algebra $\mc{C}(\Diamond_l)$.

\begin{theorem} \label{T:CS} The semi-invariant ring $\SI_\bl(K_{l,l}^{2})$ is the graded upper cluster algebra $\br{\mc{C}}(\Diamond_l,\mc{S}_l;\wtd{\bs{\sigma}}_l)$. Moreover, the generic cluster character maps the lattice points in $\mr{G}_{\Diamond_l}$ (bijectively) onto a basis of $\br{\mc{C}}(\Diamond_l)$.
\end{theorem}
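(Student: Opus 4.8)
The plan is to promote the containment of Proposition~\ref{P:contain} to an equality by comparing the dimensions of the graded components on the two sides, and then to read off the basis statement essentially for free.

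First, by Theorem~\ref{C:GCC} (the $B$-matrix of $\Diamond_l$ having full rank) together with Theorem~\ref{T:LP_Gl}, the generic cluster character $C_{W_l}$ maps the lattice points $\mr{G}_{\Diamond_l}\cap\mb{Z}^{\Diamond_l^0}$ injectively onto a linearly independent subset of $\br{\mc{C}}(\Diamond_l)$ that contains all cluster monomials, and each $C_{W_l}(\g)$ is $\wtd{\bs{\sigma}}_l$-homogeneous of degree $\g\,\wtd{\bs{\sigma}}_l$. Decomposing $\g\,\wtd{\bs{\sigma}}_l=(\sigma,\lambda)$ into its $\bs{\sigma}_l$-part $\sigma$ and its $T$-part $\lambda=(\lambda(1),\lambda(2))$, and using Lemma~\ref{L:WT-var} to match this grading with the $\wtd\sigma$-grading of $\SI_\bl(K_{l,l}^2)$, one concludes
\[
\dim\br{\mc{C}}(\Diamond_l)_{\sigma,\lambda}\ \geq\ \bigl|\mr{G}_{\Diamond_l}(\sigma,\lambda)\cap\mb{Z}^{\Diamond_l^0}\bigr|.
\]
On the other hand, Proposition~\ref{P:contain} gives $\br{\mc{C}}(\Diamond_l)_{\sigma,\lambda}\subseteq\SI_\bl(K_{l,l}^2)_{\sigma,\lambda}$, and Lemma~\ref{L:SI(Kml)2} evaluates the latter as $\sum_{|\eta_1|=\lambda(1),\,|\eta_2|=\lambda(2)}c_{\eta_1,\eta_2}^{\mu}\,c_{\eta_1,\eta_2}^{\nu}$ with $\mu=\mu(\sigma)$, $\nu=\nu(\sigma)$ as in Lemma~\ref{L:SI(Kml)}. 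Hence the entire theorem reduces to the single combinatorial identity
\[
\bigl|\mr{G}_{\Diamond_l}(\sigma,\lambda)\cap\mb{Z}^{\Diamond_l^0}\bigr|\ =\ \sum_{|\eta_1|=\lambda(1),\ |\eta_2|=\lambda(2)}c_{\eta_1,\eta_2}^{\mu}\,c_{\eta_1,\eta_2}^{\nu},
\]
because then the sandwich $\dim\SI_\bl(K_{l,l}^2)_{\sigma,\lambda}\geq\dim\br{\mc{C}}(\Diamond_l)_{\sigma,\lambda}\geq\bigl|\mr{G}_{\Diamond_l}(\sigma,\lambda)\cap\mb{Z}^{\Diamond_l^0}\bigr|=\dim\SI_\bl(K_{l,l}^2)_{\sigma,\lambda}$ collapses to equalities throughout, so $\SI_\bl(K_{l,l}^2)=\br{\mc{C}}(\Diamond_l)$ and the linearly independent family $C_{W_l}\bigl(\mr{G}_{\Diamond_l}\cap\mb{Z}^{\Diamond_l^0}\bigr)$, having exactly the right cardinality in every graded piece, is automatically a $k$-basis.

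The real work, and the step I expect to be the main obstacle, is therefore this last polyhedral count. The plan here is to exhibit a lattice-preserving (unimodular) affine isomorphism identifying $\mr{G}_{\Diamond_l}(\sigma,\lambda)$ with a gluing of two Knutson--Tao hive polytopes \cite{KT}: one with boundary data $(\eta_1,\eta_2,\mu)$, whose lattice points count $c_{\eta_1,\eta_2}^{\mu}$, and one with boundary data $(\eta_1,\eta_2,\nu)$, whose lattice points count $c_{\eta_1,\eta_2}^{\nu}$, the two hives sharing the partitions $\eta_1,\eta_2$ whose sizes are pinned to $\lambda(1),\lambda(2)$ by the $T$-part of the weight (a ``bowtie'' of two hives). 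Concretely, one re-coordinatizes the negative arm, the central region, and the positive arm of $\Diamond_l$ as two triangular (hexagonal) hive arrays, and then checks that the tri-broken-path inequalities defining $\mr{G}_{\Diamond_l}$ translate exactly into the rhombus (Horn--Weyl) inequalities of the two hives together with the boundary normalizations forced by $\sigma$ and $\lambda$. Verifying that the change of coordinates is unimodular is precisely what makes the lattice-point counts (not merely the underlying real polytopes) agree.

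Because of the case analysis at the horizontal, vertical and special vertices, and at the doubled $C$-arrow from $(1;0,1)$ to $(1;1,0)$, I expect this identification is cleanest to carry out by induction on $l$: peel off the outer frozen layer $\pm(l;j,l-j)$ as in the proof of Corollary~\ref{C:fdJl}, use the embedding $\iota$ of Lemma~\ref{L:embedding} to reduce the inner part to $\Diamond_{l-1}$, and account for the contribution of the removed layer against the outermost band of the two hives. This is the content deferred to Section~\ref{S:proof}.
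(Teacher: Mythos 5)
Your opening reduction is exactly the one the paper uses: Corollary~\ref{C:GCC} (via Lemma~\ref{L:independent}) gives linear independence of the generic characters indexed by $\mr{G}_{\Diamond_l}\cap\mb{Z}^{\Diamond_l^0}$, Proposition~\ref{P:contain} gives $\br{\mc{C}}(\Diamond_l)\subseteq\SI_\bl(K_{l,l}^2)$, and the whole theorem collapses to the single claim that the lattice points in each $\mr{G}_{\Diamond_l}(\sigma)$ match $\dim\SI_\bl(K_{l,l}^2)_\sigma$. Where you diverge from the paper is in how that count is established, and your proposed route is genuinely different in its mechanics. You want to exhibit directly a unimodular change of coordinates from $\mr{G}_{\Diamond_l}(\sigma,\lambda)$ to a fibered product (``bowtie'') of two Knutson--Tao hive polytopes counting $\sum c_{\eta_1,\eta_2}^{\mu}c_{\eta_1,\eta_2}^{\nu}$, and to manage the case analysis by peeling off the outer frozen layer and inducting on $l$. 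The paper instead never compares against hives in isolation: it starts from the already-established cluster model $(\Delta_{3l},W_{3l})$ for the complete triple flag $T_{3l}$, applies the vertex-removal theorem (Theorem~\ref{T:removal}) to obtain a hexagonal model $\hexagon_l$, then applies the orthogonal-projection theorem (Theorem~\ref{T:project}) through an explicit exceptional sequence plus a chain of BGP reflections to land on a model $(\widehat{\hexagon}_l,W_{\widehat{\hexagon}_l})$ for $\SI_{\beta_l^3}(K_{l,l}^{2,3})$ with a completely described $\g$-vector cone $\mr{G}_{\widehat{\hexagon}_l}$, and finishes by constructing a single explicit totally unimodular linear map $\mb{R}^{\Diamond_l}\to\mb{R}^{\widehat{\hexagon}_l}$ carrying $\mr{G}_{\Diamond_l}(\sigma)$ onto $\mr{G}_{\widehat{\hexagon}_l}(\sigma)$ (checked by Ghouila--Houri's criterion, Lemma~\ref{L:TU}), with no induction on $l$.

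So the two approaches share the crucial insight that a lattice-preserving polyhedral identification is needed, and the paper's target $\widehat{\hexagon}_l$ is in spirit a close relative of your bowtie of hives (it descends from the hive quiver $\Delta_{3l}$). But the paper's route buys you something your sketch would have to reconstruct from scratch: by transporting the cluster model through Theorems~\ref{T:removal} and~\ref{T:project}, the identity $\dim\SI_{\beta_l^3}(K_{l,l}^{2,3})_\sigma=|\mr{G}_{\widehat{\hexagon}_l}(\sigma)\cap\mb{Z}|$ comes for free, whereas your plan requires you to independently verify that the glued hive polytope counts $\sum c_{\eta_1,\eta_2}^{\mu}c_{\eta_1,\eta_2}^{\nu}$ (the gluing correctly realizes the convolution of LR coefficients, which is standard but must still be checked against your coordinatization), and then to verify the unimodularity of your map across all the vertex types of $\Diamond_l$, including the doubled $C$-arrow, by hand. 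Your proposed induction on $l$ is also not something the paper does or appears to need, and one should be cautious there: the tri-broken-path inequalities of $\mr{G}_{\Diamond_l}$ couple the outer frozen layer to interior vertices all the way across the diamond, so peeling the boundary layer does not obviously leave an instance of the same problem for $l-1$. None of this makes your plan wrong --- as a high-level reduction it is exactly right, and as a strategy for the polyhedral count it is reasonable --- but the hard, deferred step is precisely the one you have sketched rather than proved, and the paper's machinery-driven route sidesteps the parts you have left open.
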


\noindent We put off the proof to the last section. It is a rather complicated combinatorial proof using several results from \cite{Fs1} and \cite{Fs2}.
However, we still suggest readers carefully reading it because it tells the story of how we find the cluster structure.

\section{Consequences of Cluster Structures} \label{S:CCS}

Our first result says that the weighted counting of $\g$-vectors gives $2$-truncated Kronecker products.
Let $\bs{\lambda}_l$ be the extended part of the weight configuration $\bs{\wtd\sigma}_l$, that is,
$\bs{\lambda}_l(\pm(i;j,k))=(j,k)$.
\begin{proposition} \label{P:2KP} Let $\sigma$ be a weight of $K_{l,l}^2$, then
\begin{equation}
\sum_{\g \in \mr{G}_{\Diamond_l}(\sigma) \cap \mb{Z}^{\Diamond_l^0}} \b{z}^{\g \bs{\lambda}_l} = \left[ s_{\mu(\sigma)} * s_{\nu(\sigma)} \right]_2.
\end{equation}
\end{proposition}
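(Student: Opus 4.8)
The plan is to read the identity off directly from the cluster structure of Theorem~\ref{T:CS}: essentially all the analytic work is packaged there, and what remains is to match the cluster grading of $\br{\mc{C}}(\Diamond_l,\mc{S}_l)$ with the $\GL(W)$-weight decomposition of $\SI_\bl(K_{l,l}^{2})$.

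First I would combine Theorem~\ref{T:LP_Gl}, which identifies $\mr{G}_{\Diamond_l}\cap\mb{Z}^{\Diamond_l^0}$ with $G(\Diamond_l,W_l)$, and Theorem~\ref{T:CS}, which says the generic cluster character $C_{W_l}$ maps $G(\Diamond_l,W_l)$ bijectively onto a $k$-basis of $\SI_\bl(K_{l,l}^{2})=\br{\mc{C}}(\Diamond_l,\mc{S}_l)$. Each $C_{W_l}(\g)$ is multihomogeneous of degree $\g\wtd{\bs{\sigma}}_l$ with respect to the $\wtd{\bs{\sigma}}_l$-grading, and by Proposition~\ref{P:contain} this grading coincides with the $\wtd{\sigma}$-grading of $\SI_\bl(K_{l,l}^{2})$; writing $\wtd{\bs{\sigma}}_l=(\bs{\sigma}_l,\bs{\lambda}_l)$ this places $C_{W_l}(\g)$ in the component $\SI_\bl(K_{l,l}^{2})_{\g\bs{\sigma}_l,\,\g\bs{\lambda}_l}$. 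Since a homogeneous basis splits each graded piece, for all $\sigma,\lambda$ one gets
$$\dim\SI_\bl(K_{l,l}^{2})_{\sigma,\lambda}=\#\bigl\{\g\in\mr{G}_{\Diamond_l}\cap\mb{Z}^{\Diamond_l^0}\ \big|\ \g\bs{\sigma}_l=\sigma,\ \g\bs{\lambda}_l=\lambda\bigr\}.$$
Summing over $\lambda$ with the monomial weight $\b{z}^{\lambda}$ turns the left-hand side of the proposition into $\sum_{\lambda}\dim\SI_\bl(K_{l,l}^{2})_{\sigma,\lambda}\,\b{z}^{\lambda}$, which is nothing but the $\GL(W)\cong\GL_2$-character $\chi\bigl(\SI_\bl(K_{l,l}^{2})_\sigma\bigr)$, because the $T$-grading of $\SI_\bl(K_{l,l}^{2})_\sigma$ is exactly its weight-space decomposition as a $\GL_2$-module.

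It then remains to compute that character, which is immediate from Lemma~\ref{L:SI(Kml)}: as $\GL(W)$-modules $\SI_\bl(K_{l,l}^{2})_\sigma\cong\bigoplus_{|\lambda|=n}g_{\mu(\sigma),\nu(\sigma)}^\lambda\,\S_\lambda W$, where $n=|\mu(\sigma)|$. Since $\S_\lambda W=0$ whenever $\ell(\lambda)>2=\dim W$, while $\chi(\S_\lambda W)=s_\lambda(z_1,z_2)$ for $\ell(\lambda)\le2$, this character equals $\sum_{\ell(\lambda)\le2}g_{\mu(\sigma),\nu(\sigma)}^\lambda s_\lambda=[s_{\mu(\sigma)}*s_{\nu(\sigma)}]_2$, which matches the previous paragraph. (When $\sigma$ is not such that $\mu(\sigma),\nu(\sigma)$ are honest partitions, $\SI_\bl(K_{l,l}^{2})_\sigma=0$, the cone section $\mr{G}_{\Diamond_l}(\sigma)$ has no lattice points, and both sides vanish.)

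I do not expect a genuine obstacle, since the difficult input is already proved in Theorem~\ref{T:CS}; the point that needs care is purely the grading bookkeeping, namely that the isomorphism of Proposition~\ref{P:contain} really intertwines the full weight configuration $\wtd{\bs{\sigma}}_l$ on $\Diamond_l$ with the $\GL(W)$-weight grading of the semi-invariant ring, and that the map in Theorem~\ref{T:CS} is a bijection \emph{onto a basis}, so that the dimension counts above are exact equalities and not merely inequalities. Both facts are contained in the cited statements, so once they are available the proof is a short assembly requiring no new computation.
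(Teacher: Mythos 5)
Your proposal is correct and follows essentially the same route as the paper: the paper's proof invokes Theorem~\ref{T:CS} to identify lattice points in $\mr{G}_{\Diamond_l}(\sigma,\lambda)$ with a basis of $\SI_\bl(K_{l,l}^2)_{\sigma,\lambda}$ and then applies the character map $\chi$ to the decomposition in Lemma~\ref{L:SI(Kml)}. You have simply unpacked the grading bookkeeping (the identification of the $\wtd{\bs{\sigma}}_l$-grading with the $(\sigma,\lambda)$-grading via Proposition~\ref{P:contain}) that the paper leaves implicit.
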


\begin{proof} By Theorem \ref{T:CS}, the generic character maps the lattice points in $\mr{G}_{\Diamond_l}(\sigma,\lambda)$ bijectively to a basis in $\SI_\bl(K_{l,l}^2)_{\sigma,\lambda}$.
Applying the character map $\chi$ to the equality in Lemma \ref{L:SI(Kml)},
we get our desired result.
\end{proof}

\noindent Note that by the symmetry we have that $|\mr{G}_{\Diamond_l}(\sigma,\lambda)\cap \mb{Z}^{\Diamond_l^0}| = |\mr{G}_{\Diamond_l}(\sigma,\omega \lambda)\cap \mb{Z}^{\Diamond_l^0}|$, though this is not clear from the defining condition of $\mr{G}_{\Diamond_l}$.
Recall our definition that $\omega\lambda=(\lambda(2),\lambda(1))$ but $\lambda^\omega=(\lambda(1)+1,\lambda(2)-1)$.

\begin{theorem} \label{T:KC} Let $\sigma$ be a weight of $K_{l,l}^2$ and $\lambda$ be a partition of length $\leq 2$. Then
$$g_{\mu(\sigma),\nu(\sigma)}^\lambda = |\mr{G}_{\Diamond_l}(\sigma,\lambda)\cap \mb{Z}^{\Diamond_l^0}| - |\mr{G}_{\Diamond_l}(\sigma,\lambda^\omega)\cap \mb{Z}^{\Diamond_l^0}|.$$
\end{theorem}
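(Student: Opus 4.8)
The plan is to combine Corollary~\ref{C:SI(Kml)2} (specialized to $m=2$) with the cluster-theoretic description of $\SI_{\bl}(K_{l,l}^2)$ supplied by Theorem~\ref{T:CS}. First I would specialize Corollary~\ref{C:SI(Kml)2} to $m=2$: here $\mf{S}_2=\{e,\omega\}$ with $\sgn(e)=1$ and $\sgn(\omega)=-1$, $\lambda^e=\lambda$, and $\lambda^\omega=(\lambda(1)+1,\lambda(2)-1)$, so the corollary reads
\begin{equation*}
g_{\mu(\sigma),\nu(\sigma)}^\lambda=\dim\SI_{\bl}(K_{l,l}^2)_{\sigma,\lambda}-\dim\SI_{\bl}(K_{l,l}^2)_{\sigma,\lambda^\omega}.
\end{equation*}
It then remains to rewrite each $\wtd\sigma$-graded piece $\dim\SI_{\bl}(K_{l,l}^2)_{\sigma,\tau}$ as a number of lattice points of $\mr{G}_{\Diamond_l}(\sigma,\tau)$.

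For that, recall that the generic cluster character $C_{W_l}$ is multihomogeneous, with $C_{W_l}(\g)$ of $\wtd{\bs{\sigma}}_l$-degree $\g\,\wtd{\bs{\sigma}}_l$ (see the remark after the definition of $C_W$ in Section~\ref{S:CC}). Since $\wtd{\bs{\sigma}}_l(\pm(i;j,k))=(\f_{i;j,k}^{\pm},j,k)$ by Lemma~\ref{L:WC}, this degree splits as the pair $(\g\,\bs{\sigma}_l,\g\,\bs{\lambda}_l)$ of a $\sigma$-part and a $\lambda$-part, where $\mr{G}_{\Diamond_l}(\sigma,\tau)$ is by definition $\mr{G}_{\Diamond_l}$ cut out simultaneously by the hyperplane sections $\g\,\bs{\sigma}_l=\sigma$ and $\g\,\bs{\lambda}_l=\tau$. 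By Theorem~\ref{T:CS}, $C_{W_l}$ restricts to a bijection from $\mr{G}_{\Diamond_l}\cap\mb{Z}^{\Diamond_l^0}$ onto a basis of $\SI_{\bl}(K_{l,l}^2)$; restricting both sides to the fixed multidegree $(\sigma,\tau)$ gives a bijection from $\mr{G}_{\Diamond_l}(\sigma,\tau)\cap\mb{Z}^{\Diamond_l^0}$ onto a basis of $\SI_{\bl}(K_{l,l}^2)_{\sigma,\tau}$, whence $\dim\SI_{\bl}(K_{l,l}^2)_{\sigma,\tau}=|\mr{G}_{\Diamond_l}(\sigma,\tau)\cap\mb{Z}^{\Diamond_l^0}|$. (This is exactly the identity already extracted in the proof of Proposition~\ref{P:2KP}.) Taking $\tau=\lambda$ and $\tau=\lambda^\omega$ and substituting into the displayed formula for $g_{\mu(\sigma),\nu(\sigma)}^\lambda$ completes the proof.

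The hard part has already been done: it is entirely contained in Theorem~\ref{T:CS}, whose long combinatorial proof is deferred to Section~\ref{S:proof}; the present argument is pure bookkeeping on top of it. Only two minor points need care. When $\ell(\lambda)\leq 1$ the vector $\lambda^\omega=(\lambda(1)+1,-1)$ is not a partition; but then $\SI_{\bl}(K_{l,l}^2)_{\sigma,\lambda^\omega}=0$, because every $T$-weight occurring in the polynomial $\GL(W)$-module $\SI_{\bl}(K_{l,l}^2)_\sigma=\bigoplus_\tau g_{\mu(\sigma),\nu(\sigma)}^\tau\,\S_\tau W$ (Lemma~\ref{L:SI(Kml)}) has nonnegative entries, and correspondingly $\mr{G}_{\Diamond_l}(\sigma,\lambda^\omega)\cap\mb{Z}^{\Diamond_l^0}=\emptyset$ under the same bijection, so that term simply drops and the formula still holds. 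As an alternative route avoiding Corollary~\ref{C:SI(Kml)2} altogether, one can extract the coefficient of $\b{z}^\lambda$ from both sides of Proposition~\ref{P:2KP} and telescope the resulting sum of Kronecker coefficients against the monomial expansion of Schur polynomials in two variables — the same telescoping that underlies Corollary~\ref{C:k2c} — which again yields the stated difference of lattice-point counts.
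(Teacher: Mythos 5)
Your proof is correct and takes essentially the same route as the paper: the paper proves Theorem~\ref{T:KC} in one line by combining Corollary~\ref{C:SI(Kml)2} (with $m=2$) with Theorem~\ref{T:CS}, which is exactly the reduction you carry out. The additional bookkeeping you supply — the graded refinement of the bijection from $\mr{G}_{\Diamond_l}\cap\mb{Z}^{\Diamond_l^0}$ via multihomogeneity of $C_{W_l}$, and the observation that the $\lambda^\omega$ term vanishes when $\ell(\lambda)\leq 1$ — is exactly what the paper leaves implicit (and is also the content of the proof of Proposition~\ref{P:2KP}).
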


\begin{proof} This follows immediately from Corollary \ref{L:SI(Kml)2} and Theorem \ref{T:CS}.
\end{proof}

\begin{example} We knew that the semi-invariant ring $\SI_{\beta_2}(K_{2,2}^2)$ is the upper cluster algebra associated to the quiver $\Diamond_2$
$$\ckrontwotwotwo$$
Let us use Proposition \ref{P:2KP} to compute $[s_{(2,1)}*s_{(2,1)}]_2$.
In this case, we consider the weight $\sigma=(-1,-1,1,1)$.
We find that there are six lattice points in the $\g$-vector cone $\mr{G}_{\Diamond_2}$.
We list them with their $\lambda$-weights on the right. Of course, by the symmetry we only need half of them.
\begin{align*}
& \e_{1;1,0} + \e_{2;2,0}  & (3,0), &\qquad\qquad \e_{1;0,1} + \e_{2;0,2} &  (0,3); \\
& \e_{1;0,1} + \e_{2;2,0} & (2,1), &\qquad\qquad \e_{1;1,0} + \e_{2;0,2} & (1,2); \\
& \e_{2;0,2} + 2\e_{1;1,0} - \e_{1;0,1} & (2,1), &\qquad\qquad \e_{2;1^2} + \e_{1^2;2} - \e_{1;1,0} & (1,2).
\end{align*}
So $[s_{(2,1)}*s_{(2,1)}]_2 = x^{(3,0)}+2x^{(2,1)}+2x^{(1,2)}+x^{(0,3)} = s_{(3)}+s_{(2,1)}$.

Now we use Theorem \ref{T:KC}. We see that $g_{(2,1),(2,1)}^{(3)} = 1$ and $g_{(2,1),(2,1)}^{(2,1)}=2-1=1$. It is clear that Theorem \ref{T:KC} is more efficient than Proposition \ref{P:2KP}.
\end{example}


\begin{example} \label{ex:l=2}
In this example, we study concretely the $U$-action on the initial cluster.
To simplify the notation, we relabeled the vertices of $\Diamond_2$ as follows.
$$\ckrontwotwotwonum$$

The cluster variables $x_1,x_3,x_4,x_5$ are four known $U$-invariants.
Let us compute the $U$-action on the other two.
We abuse $u$ for the unipotent element $\sm{1 & u \\ 0 & 1}$.
According to the standard highest weight theory, if $x\in V_\lambda$,
then $u\cdot x = \sum_{k\geq 0} u^k x_k$, where $x\in V_{\lambda+k(1,-1)}$.
By explicit calculation, we find that
\begin{align*}
& u\cdot x_2=x_2 + ux_1,\\
& u\cdot x_6=x_6 + uC(\e_2^\circ) +u^2x_5,
\end{align*}
where $\e_2^\circ:=\e_6+\e_1-\e_2$ and $C_{W_2}(\e_2^\circ)=(x_2^2x_5+x_1^2x_6+x_3x_4)x_1^{-1}x_2^{-1}$.
Using this we can easily check that
$w_1:=x_2'-x_2x_5$ and $w_2:= C_{W_2}(2\e_2^\circ)-4x_5x_6$ are also $U$-invariant.
The latter is the well-known {\em Cayley's hyperdeterminant}.
Moreover, they satisfy $x_1^2w_2 = w_1^2 + 4x_3x_4x_5$.
Note that $w_1$ and $w_2$ have well-defined $\g$-vectors:
\begin{align*}
w_1&=x_6x_1^2x_2^{-1}(1+ y_2- y_1y_2),\\
w_2&=(x_6x_1x_2^{-1})^2(1+2y_2-2y_1y_2+y_2^2+2y_1y_2^2+y_1^2y_2^2).
\end{align*}
Their $\g$-vectors are $\e_2':=\e_6+2\e_1-\e_2$ and $2\e_2^\circ$.
Since $g_{(2,1),(2,1)}^{(2,1)}=g_{(2,2),(2,2)}^{(2,2)}=1$, they spanned their corresponding weight spaces.
Note that $g_{(1,1),(1,1)}^{(1,1)}=0$, so the Kronecker coefficients are not {\em saturated}.
\end{example}

\begin{problem} Does any $\wtd{\sigma}$-graded $U$-invariant function in $\SI_\bl(K_{l,l}^2)$ have a well-defined $\g$-vector?
\end{problem}


If a $\wtd{\sigma}$-graded $U$-invariant function does have a well-defined $\g$-vector,
we will see that it must lie on the {\em facet} (i.e., codimension-1 face) of $\mr{G}_{\Diamond_l}$ defined by $\sum_{i=1}^l \g(i;0,i) =0$. This is really a facet due to the second statement of Theorem \ref{T:LP_Gl}.
We define a (full-dimensional) subcone $\mr{G}_{\Diamond_l}'$ of the facet by adding one additional condition that $\g\bs{\lambda}_l(1) - \g\bs{\lambda}_l(2) \geq 0$.
This condition ensures that the $\lambda$-weights of $\g$-vectors in $\mr{G}_{\Diamond_l}'$ are dominant, so it is clearly necessary.

\begin{proposition} \label{P:gUinv} All possible $\g$-vectors of $U$-invariants span a full-dimensional subcone in $\mr{G}_{\Diamond_l}'$.
\end{proposition}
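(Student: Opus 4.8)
Write $H=\{\,\g\in\mb{R}^{\Diamond_l^0}\mid\sum_{i=1}^{l}\g(i;0,i)=0\,\}$ for the affine hull of $\mr{G}_{\Diamond_l}'$, and let $\mr{G}^U$ be the set of $\g$-vectors of $\wtd\sigma$-graded $U$-invariants of $\SI_\bl(K_{l,l}^2)$ that possess one. The plan is to prove (a) $\mr{G}^U\subseteq\mr{G}_{\Diamond_l}'$, and (b) $\mr{G}^U$ spans $H$ linearly. Then the convex cone generated by $\mr{G}^U$ lies in $\mr{G}_{\Diamond_l}'$ and has dimension $\dim H$, which is exactly the assertion.

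\textbf{Step (a).} Let $f$ be such a function, with $\g$-vector $\g$; write $f=\b{x}^{\g}F(\b{y})$ with $F$ divisible by no $y_u$. Since $B(\Diamond_l)\,\wtd{\bs\sigma}_l=0$ (Lemma \ref{L:WC}), each $y_u=\b{x}^{-b_u}$ is $\wtd\sigma$-homogeneous of degree $0$, so $\deg_{\wtd\sigma}f=\g\,\wtd{\bs\sigma}_l$; reading off the $\bs\lambda_l$-component gives $\g\bs\lambda_l=\lambda(f)$, a partition because $f$ is $U$-invariant, hence $\g\bs\lambda_l(1)\ge\g\bs\lambda_l(2)$. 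Expanding $f$ in the generic basis $\{C_{W_l}(\g')\}_{\g'\in G(\Diamond_l,W_l)}$ of Theorem \ref{T:CS} and comparing Laurent expansions (using that $B(\Diamond_l)$ has full rank and that $\e_v\in\mr{G}_{\Diamond_l}$ for every frozen $v$) one finds $\g\in\mr{G}_{\Diamond_l}$. It remains to pin $\g$ onto the facet $\sum_{i=1}^{l}\g(i;0,i)=0$. By Lemma \ref{L:hom=0}, $\g\in\mr{G}_{\Diamond_l}$ forces $\Hom_{J_l}(\Coker(\g),T_{l;0,l})=0$, so a short exact sequence identifies $\sum_{i=1}^{l}\g(i;0,i)=\g(\dimbar T_{l;0,l})$ with the dimension of the cokernel of the presentation map $\Hom_{J_l}(P([-\g]_+),T_{l;0,l})\to\Hom_{J_l}(P([\g]_+),T_{l;0,l})$; I would prove this cokernel vanishes using the hypothesis that $f$ is $U$-invariant — the $b$-string $(1;0,1)\to(2;0,2)\to\cdots\to(l;0,l)$ carrying $T_{l;0,l}$ is exactly the direction along which the weight-lowering operator of $\GL(W)\cong\GL_2$ acts, so $U$-invariance of $f$ forces $\Coker(\g)$ to be transverse to it, the explicit presentations of Lemmas \ref{L:proj_fp} and \ref{L:injres_fp} making this precise. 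This is the one place where the hypothesis ``$U$-invariant'' enters, and it is the delicate point of the argument.

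\textbf{Step (b).} The $U$-invariant members of the initial cluster $\mc{S}_l$ are exactly the $s_{i;j,k}^{\pm}$ with $j\ge k$ (Lemma \ref{L:level1}); their $\g$-vectors are the unit vectors $\e_{\pm(i;j,k)}$, $j\ge k$, which lie in $H$ (no such vertex is of the form $(i;0,i)$) and span the coordinate subspace on the ``left'' vertices $\{\pm(i;j,k)\mid j\ge k\}$. It remains, modulo the single relation defining $H$, to reach the directions of the ``right'' vertices $\{\pm(i;j,k)\mid j<k\}$. For the $l-1$ consecutive pairs of right horizontal vertices one obtains, exactly as for $w_1$ in Example \ref{ex:l=2}, a $U$-invariant of the form $s_{(i;0,i)}'-(\text{monomial in the }s\text{'s})$ whose $\g$-vector has right part $\e_{(i+1;0,i+1)}-\e_{(i;0,i)}$; together with the unit vectors these span the $(i;0,i)$-directions up to the relation. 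For the non-horizontal right vertices one uses the exchange relations of Lemma \ref{L:s'} at neighbouring mutable vertices and the Schofield semi-invariants attached to the tri-broken paths $\op{tp}_{l;j,k}^{\pm}$ to produce $U$-invariants whose $\g$-vectors involve those coordinates, and then checks that the whole collection has rank $|\Diamond_l^0|-1=\dim H$; this last check is a finite linear-algebra computation, best organized by induction on $l$ via the embedding $\iota\colon\SI_{\beta_{l-1}}(K_{l-1,l-1}^2)\hookrightarrow\SI_{\bl}(K_{l,l}^2)$ of Lemma \ref{L:embedding} together with the dimension count of Theorem \ref{T:KC}.

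I expect the main obstacle to be the facet membership of Step (a): converting the $\GL(W)$-invariance of $f$ into the homological vanishing that places $\g(f)$ on the distinguished facet. Step (b) is bookkeeping once the supply of $U$-invariants is assembled from Lemmas \ref{L:level1}, \ref{L:Uinv'} and \ref{L:s'}.
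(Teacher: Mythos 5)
Your strategy is genuinely different from the paper's, and there is a real gap in your Step (a) that you have correctly flagged but not filled.

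The paper proves this proposition by sandwiching the dimension of the span of $G^U$ between an upper and a lower bound, never attempting to verify facet membership element by element. For the upper bound it uses classical invariant theory: since $U$ is a connected $1$-dimensional group acting non-trivially on $X=\Spec\SI_\bl(K_{l,l}^2)$, the rational invariant field $k(X)^U$ has transcendence degree $\dim X - 1$; combined with Lemma \ref{L:independent} this forces $\dim_{\mb{R}}\,\mb{R}G^U \le \dim_{\mb{R}}\mr{G}_{\Diamond_l}-1$. For the lower bound it writes down an explicit list of $\dim_{\mb{R}}\mr{G}_{\Diamond_l}-1$ candidate $\g$-vectors satisfying $\sum_i\g(i;0,i)=0$, checks via Theorem \ref{T:LP_Gl} that they lie in $\mr{G}_{\Diamond_l}$, translates them to weights and shows the weights correspond to real Schur roots, so by Lemma \ref{L:gsreal} the corresponding one-dimensional weight spaces are automatically $U$-invariant. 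A leading-term argument gives linear independence. The facet membership of \emph{all} of $G^U$ then falls out for free from the sandwich: $G^U$ spans at most $d-1$ dimensions, yet contains $d-1$ independent vectors lying on the facet hyperplane, so the span of $G^U$ is exactly that hyperplane.

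Your Step (a) attempts to prove facet membership $\sum_i\g(i;0,i)=0$ directly and homologically, and you acknowledge this is ``the delicate point'' and do not carry it out. The proposed mechanism (reading the weight-lowering operator of $\GL_2$ as the $b$-string supporting $T_{l;0,l}$ and deducing transversality of $\Coker(\g)$) is at the level of a heuristic, not a proof; it is unclear that it can be made rigorous. Moreover your preliminary assertion in Step (a) that every $U$-invariant with a well-defined $\g$-vector must have $\g\in\mr{G}_{\Diamond_l}$ is itself not justified: expanding $f$ in the generic basis does not immediately identify $\g(f)$ with one of the $\g$-vectors appearing in the expansion, and this point would need a separate lemma. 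Your Step (b) is also left as ``bookkeeping,'' whereas the paper supplies the list of $\g$-vectors and the independence check explicitly. The transcendence-degree upper bound is the crucial device you are missing: it makes the direct facet analysis unnecessary and turns the whole proof into exhibiting enough explicit $U$-invariants.
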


\begin{proof} Let $R:=\SI_\bl(K_{l,l}^2)$ and $X=\Spec R$, then we have that $\dim X = \dim_{\mb{R}} \mr{G}_{\Diamond_l}$.
Let $G^U$ be the set of all possible $\g$-vectors in the $U$-invariants. We explained in Section \ref{S:CC} that $G^U$ is a semigroup.
So by Lemma \ref{L:independent} the degree of the Hilbert polynomial of $R^U$ is no less than the dimension of the cone $\mb{R} G^U$.
We first claim that $G^U$ cannot span a full-dimensional subcone of $\mr{G}_{\Diamond_l}$.
The quotient $Q(R^U)$ of the integral $U$-invariants is contained in the rational $U$-invariants $k(X)^U$.
Since the action of the $1$-dimensional group $U$ is non-trivial, the transcendence degree of $k(X)^U$ is one less than $\dim X$ (\cite[corollary of Theorem 2.3]{PV}).
Our first claim follows.

Now it suffices to find $(\dim_{\mb{R}} \mr{G}_{\Diamond_l} - 1)$ linearly independent $\g$-vectors in $G^U$ satisfying $\sum_{i=1}^l \g(i;0,i) =0$.
We have seen in Example \ref{ex:l=2} that $\e_{2;0,2}+2\e_{1;1,0}-\e_{1;0,1}$ lies in $G^U$.
Besides this one, we consider the following list of $\g$-vectors.
\begin{align*}
& (i-2)\e_{i;0,i}^{\pm} + \e_{2;1^2}^{\pm} + \sum_{n=3}^i (\e_{n;n-1,1}^{\pm} - \e_{n-1;0,n-1}^{\pm}) & &\text{for $3\leq i\leq l$},\\
& (i-2j+1) \e_{i;j,i-j}^{\pm} + \sum_{n=0}^{i-2j-1} (\e_{2j+n;j+n+1,j-1}^{\pm}-\e_{2j+n;j,j+n}^{\pm}) & &\text{for $3\leq i\leq l,2j< i$}.
\end{align*}

It is easy to verify by Theorem \ref{T:LP_Gl} that these $\g$-vectors are in the cone $\mr{G}_{\Diamond_l}$.
So they correspond to weights orthogonal to $\bl$.
By calculation, those weights are
\begin{align*}
&\mp(i-1)\e_{\mp i}\pm i\e_{\pm 1} \pm(i-2)\e_{\pm i},\\
&\mp(i-2j+1)\e_{\mp i}\pm (i-2j+2)\e_{\pm(i-j)}\pm (i-2j)\e_{\pm(i-1)}.
\end{align*}
By checking with the Euler form of $K_{l,l}^2$, we can verify that they correspond to real Schur roots, and thus correspond to $U$-invariants by Lemma \ref{L:gsreal}.

Finally we observe that together with $\e_{i;j,k}^\pm$ for $j\geq k$, these $\g$-vectors form a linearly independent set because each $\g$-vector has a ``leading term" $a\e_{i;j,k}^{\pm}$ with respect to the lexicographical order.
The index of each leading term corresponds to a vertex of $\Diamond_l$ except for $(1;0,1)$.
\end{proof}

Unfortunately the Kronecker coefficient $g_{\mu(\sigma),\nu(\sigma)}^\lambda$ is {\em not} counted by lattice points in $\mr{G}_{\Diamond_l}'(\sigma,\lambda)$.
For example, the $\g$-vector $\e_{2;0,2}+\e_{1;1,0}-\e_{1;0,1}\in \mr{G}_{\Diamond_l}'$ corresponds to $g_{(1,1),(1,1)}^{(1,1)}$ (see Example \ref{ex:l=2}) but $g_{(1,1),(1,1)}^{(1,1)}=0$.

%

\section{Computation of Some Invariant Rings} \label{S:Inv}

\subsection{Extremal Unimodular Fan}
Let $\mr{K}\subset \mb{R}^d$ be a {\em pointed} rational cone with {\em generator} $v_1,v_2,\dots,v_m\in \mb{Q}^d$,
that is, points in $\mr{K}$ are positive linear combination of $v_1,v_2,\dots,v_m$ and there exists a hyperplane $H$ for
which $H\cap \mr{K} = 0$.
The cone $\mr{K}$ is called {\em simplicial} if it has precisely $d$ linearly independent generators. A simplicial cone is called {\em unimodular} if its generators form a $\mb{Z}$-basis of $\mb{Z}^d$.

A collection $\mc{T}$ of simplicial $d$-dimensional cones is a {\em triangulation} of the $d$-dimensional cone $\mr{K}$ if it satisfies: \begin{enumerate}
\item $\mr{K}=\bigcup_{\mr{S}\in \mc{T}} \mr{S}$.
\item For any $\mr{S}_1,\mr{S}_2\in \mc{T}, \mr{S}_1\cap \mr{S}_2$ is a common face of $\mr{S}_1$ and $\mr{S}_2$.
\end{enumerate}
We say that $\mr{K}$ can be triangulated using no new generators if there exists a triangulation $\mc{T}$ such that the generators of any $\mr{S}\in\mc{T}$ are generators of $\mr{K}$.
It is well-known \cite[Section 2.6]{F} that \begin{enumerate}
\item[$\bullet$] any pointed rational cone can be triangulated into unimodular cones;
\item[$\bullet$] any pointed cone can be triangulated into simplicial cones using no new generators.
\end{enumerate}
However, in general pointed rational cones may not be triangulated into unimodular cones using no new generators.

Let $\mr{G}_{\Delta}$ be the cone generated by all $\g$-vectors of the upper cluster algebra $\br{\mc{C}}(\Delta)$.
It is clear that for any frozen vertex $v\in \Delta_0$, $\e_v$ is a generator of $\mr{G}_{\Delta}$.
We denote by $\mb{F}$ the set $\{\e_v \mid v \text{ frozen}\}$.

\begin{definition} An {\em extremal unimodular fan} of $\br{\mc{C}}(\Delta)$ is a triangulation of $\mr{G}_{\Delta}$ into unimodular cones with generators $\mb{F}\cup \{\g_1,\dots,\g_p \}$, where each $\g_i$ is a generator of $\mr{G}_{\Delta}$.
The set of generators of each unimodular cone is called a {\em fake cluster}.
\end{definition}

We can guarantee neither the existence nor the uniqueness of the extremal unimodular fan of an upper cluster algebra.
But when it exists, we shall see that it is a crucial tool to present an upper cluster algebra with infinitely many clusters.
Here at least we can easily see that the upper cluster algebra is generated by elements of extremal $\g$-vectors if lattice points in $\mr{G}_{\Delta}$ index a basis of $\br{\mc{C}}(\Delta)$.
This idea will be elaborated somewhere else.
We believe that an extremal unimodular fan exists for a huge class of upper cluster algebras including $\br{\mc{C}}(\Diamond_l)$.
\begin{conjecture} Each $\br{\mc{C}}(\Diamond_l)$ has an extremal unimodular fan.
\end{conjecture}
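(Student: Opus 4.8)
The plan is to derive the conjecture from a purely convex-geometric reduction together with an inductive, combinatorial analysis of the cone $\mr{G}_{\Diamond_l}$. First, by Theorems \ref{T:LP_Gl} and \ref{T:CS} the cone $\mr{G}_\Delta$ generated by all $\g$-vectors of $\br{\mc{C}}(\Diamond_l)$ equals $\mr{G}_{\Diamond_l}$: the generic character sends $\mr{G}_{\Diamond_l}\cap\mb{Z}^{\Diamond_l^0}$ bijectively onto a basis, each basis element has $\g$-vector equal to its index point, the $\g$-vector of any element of the algebra is one of the index points occurring in its basis expansion, and a rational cone is generated by its lattice points. By the cited result \cite[Section 2.6]{F}, $\mr{G}_{\Diamond_l}$ admits a triangulation into simplicial cones \emph{using no new generators}; moreover every extreme ray of the ambient cone is a one-dimensional face and hence occurs in any such subdivision, so in particular all $\e_v$ with $v$ frozen occur (one checks directly from Theorem \ref{T:LP_Gl} that each such $\e_v$, being on the facets of all path-sum conditions not through $v$, spans an extreme ray). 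Consequently it suffices to prove that the primitive generators of the extreme rays of $\mr{G}_{\Diamond_l}$ form a \emph{unimodular configuration}: once that is known, every simplicial subcone spanned by extreme rays is automatically unimodular, so \emph{any} triangulation using no new generators is already an extremal unimodular fan.

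The next step is to pin down the extreme rays. By the essentiality statement in Theorem \ref{T:LP_Gl}, the path-sum inequalities $\sum_{v\in p}\g(v)\ge 0$ listed there are exactly the facet inequalities of $\mr{G}_{\Diamond_l}$; dualizing this $0/1$ path-incidence system, I expect the extreme rays to be enumerated by an explicit family whose members are supported (with entries $0,\pm 1$) on a tri-broken path and its reflection: the frozen vectors $\e_v$, the initial cluster vectors $\e_{i;j,k}^\pm$, the mutated vectors $\f'_u$ of Section \ref{S:CS}, and the ``long'' vectors from the list in the proof of Proposition \ref{P:gUinv}. I would establish this by induction on $l$, using the embedding $\iota$ of Lemma \ref{L:embedding} and the ``peeling'' of the outermost frozen layer that turns $\Diamond_l$ into $\Diamond_{l-1}$ (as in Corollary \ref{C:fdJl}); peeling puts the facet matrix of $\mr{G}_{\Diamond_l}$ in block-triangular form over that of $\mr{G}_{\Diamond_{l-1}}$, so both the facet structure and the ray structure are governed inductively.

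For the unimodularity itself the key point I would try to exploit is that, after a suitable unimodular change of the ambient lattice, the facet matrix of $\mr{G}_{\Diamond_l}$ is an \emph{interval} (network) matrix and hence totally unimodular: away from the special vertices each $\op{tp}_{l;j,k}^\pm$ runs monotonically through $\Diamond_l$, so its subpaths are genuine intervals, and the only deviation from this picture is localized at the bent type-$C$ arrows and the doubled arrow $(1;0,1)\to(1;1,0)$, controlled by the relations \eqref{eq:rel_Jn31}--\eqref{eq:rel_Jn32}, which I would absorb by an explicit local coordinate change around the $x$-axis. Total unimodularity of the facet matrix yields, by the standard polarity for unimodular systems, that every simplicial subcone of $\mr{G}_{\Diamond_l}$ spanned by extreme rays is unimodular; alternatively one can run the determinant computation directly along the peeling filtration, where the block-triangular structure reduces a $\Diamond_l$-determinant to a $\Diamond_{l-1}$-determinant times an explicitly unimodular corner block. (The extremality input of Lemma \ref{L:irreducible} and the $l\le 4$ picture in Figure \ref{F:Diamond4} serve as the base case and a consistency check.)

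The principal obstacle is precisely the global combinatorics near the $x$-axis. The bent type-$C$ arrows between special vertices and the double arrow at the centre destroy the clean ``two monotone arms plus a Kronecker middle'' description, so it is not a priori clear that a single change of coordinates exhibits the facet matrix as an interval matrix; if it does not, total unimodularity may genuinely fail and the unimodular-configuration shortcut is unavailable, forcing one instead to build an explicit unimodular triangulation by a pulling order adapted to the type-$A$/$B$/$C$ partial order on $\Diamond_l^0$ and to verify unimodularity simplex by simplex. A secondary difficulty is that the extreme-ray enumeration is itself delicate: Proposition \ref{P:gUinv} already shows the $\g$-vectors arising from $U$-invariants are numerous and somewhat intricate, so isolating \emph{all} extreme rays (not merely a full-dimensional family of them) near the centre will require care, and it is here that I expect the induction on $l$ via peeling to be indispensable as the organizing device for both the enumeration and the determinant bookkeeping.
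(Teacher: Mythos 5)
This statement is explicitly labeled a \emph{Conjecture} in the paper; the paper offers no proof of it, so there is nothing in the source to compare against, and the correct reading of the task is to assess whether your proposal actually establishes the conjecture. It does not: what you have written is a research program, not a proof. Every decisive step is hedged (``I would establish this by induction,'' ``I expect the extreme rays to be enumerated by,'' ``the key point I would try to exploit,'' ``total unimodularity may genuinely fail''), and none of the three load-bearing claims --- the complete enumeration of the extreme rays of $\mr{G}_{\Diamond_l}$, the total unimodularity of the facet matrix after a putative local change of coordinates near the $x$-axis, and the deduction that TU of the facets forces the extreme rays to be a unimodular configuration --- is actually carried out. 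You yourself identify the double arrow at $(1;0,1)\to(1;1,0)$ and the bent type-$C$ arrows as the obstruction and concede that the strategy may collapse there.

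Two further concrete problems. First, the reduction at the heart of the plan is stronger than what the conjecture asks: ``the extreme rays of $\mr{G}_{\Diamond_l}$ form a unimodular system'' is \emph{sufficient} for an extremal unimodular fan, but it is far from necessary. A cone can admit a unimodular triangulation using only its extreme rays even when those rays, taken all together, are not a totally unimodular configuration; the paper's own model example ($\mc{C}(\Delta_4)$, where the extremal unimodular fan is the cluster fan) illustrates this distinction, since cluster fans have unimodular maximal cones without the full set of $\g$-vectors of cluster variables being TU. So even if your TU reduction fails, the conjecture could be true by the ``pulling order'' fallback you merely name; but that fallback is left entirely unexecuted, with no pulling order specified and no simplex-by-simplex determinant computation. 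Second, the assertion that ``total unimodularity of the facet matrix yields, by the standard polarity for unimodular systems, that every simplicial subcone spanned by extreme rays is unimodular'' is not a one-line folklore fact; it is a nontrivial claim about elementary vectors of unimodular subspaces (in the spirit of Rockafellar and the regular-matroid duality of Tutte), and it requires a precise statement, with attention to the identification of the ambient lattice with the column lattice, before it can bear the weight you place on it. In short: the conjecture remains open, and this proposal, while a reasonable line of attack, is incomplete at every critical juncture.
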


\noindent In some cases, the extremal unimodular fan agrees with the cluster fan.
As a nontrivial example, the cluster algebra $\mc{C}(\Delta_4)$ \cite{Fs1} is one such case.
In general, even if the cluster algebra has only finitely many clusters,
a fake cluster may {\em not} be a cluster.

For any unimodular cone $\mr{S}$ with generators $u_1,\dots,u_d$, the generating function for the set of integer points in $\mr{S}$ is a multiple geometric series
$$f(\mr{S},\b{z})=\sum_{m\in \mr{S}\cap \mb{Z}^d} \b{z}^m = \prod_{i=1}^d (1-\b{z}^{u_i})^{-1}.$$
Using this observation, it is easy to compute the multigraded Hilbert series of the upper cluster algebras.

\subsection{Two Examples}
Let us come back to Example \ref{ex:l=2}.
Recall that $\e_1'=\e_{3} +\e_{4} - \e_{1}, \e_2' =\e_6 +2\e_1 -\e_{2}$ and $\e_2^\circ =\e_6 +\e_1 -\e_{2}$.
We set $x_2^\circ:=C_{W_2}(\e_2^\circ)$.
Reader should be aware that $x_2^\circ$ is {\em not} a cluster variable, because $\e_2^\circ$ is not a {\em real} $\g$-vector in the sense of \cite[Definition 4.6]{DF}.
To simplify notation, we will write $\wtd\sigma_i$ for the weight vector $\wtd{\bs{\sigma}}_2(i)$.

\begin{proposition} {\ }
\begin{enumerate}
\item $\SI_{\beta_2}(K_{2,2}^2)$ is minimally presented by
$$k[x_1,x_2,\dots,x_6,x_2^\circ]/(x_2^2x_5+x_1^2x_6+x_3x_4- x_1x_2x_2^\circ).$$
Its $\sigma$-graded character is equal to
$$(1-\b{z}^{\wtd{\sigma}_3+\wtd{\sigma}_4})(1-\b{z}^{\wtd{\sigma}_2^\circ})^{-1}\prod_{i=1}^6 (1-\b{z}^{\wtd{\sigma}_i})^{-1}.$$
\item $\SI_{\beta_2}(K_{2,2}^2)^U$ is minimally presented by
$$k[w_1,w_2,x_1,x_3,x_4,x_5]/ (w_1^2 + 4x_3x_4x_5 - w_2x_1^2).$$
Its $\wtd\sigma$-graded Hilbert series is equal to
$$(1+\b{z}^{\wtd{\sigma}_2'})\left((1-\b{z}^{\wtd\sigma_1})(1-\b{z}^{\wtd\sigma_3})(1-\b{z}^{\wtd\sigma_4})(1-\b{z}^{\wtd\sigma_5})(1-\b{z}^{2\wtd{\sigma}_2^\circ})\right)^{-1}.$$
\end{enumerate} \end{proposition}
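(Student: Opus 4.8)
The plan is to exploit the cluster structure of Theorem~\ref{T:CS} together with the explicit cone of Theorem~\ref{T:LP_Gl} to read off a generating set, to force the displayed relation by a Krull-dimension comparison of integral domains, and then to observe that the graded (Hilbert) series drops out for free from the resulting hypersurface presentation.

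\emph{Part (1).} By Theorem~\ref{T:CS}, $\SI_{\beta_2}(K_{2,2}^2)=\br{\mc C}(\Diamond_2,\mc S_2)$, the lattice points of $\mr G_{\Diamond_2}$ index a $k$-basis via $C_{W_2}$, and by \cite{PV} this ring is a factorial domain with $\dim=\dim_{\mb R}\mr G_{\Diamond_2}=6$. First I would write down $\mr G_{\Diamond_2}$ from the inequalities of Theorem~\ref{T:LP_Gl} (for $l=2$ the tri-broken paths give a short explicit list); a routine polyhedral computation identifies its extremal rays as the coordinate vectors $\e_v$ ($v\in\Diamond_2^0$) together with $\e_1'=\e_3+\e_4-\e_1$ and $\e_2^\circ=\e_1+\e_6-\e_2$, the only linear relations among these eight rays being $\e_1+\e_1'=\e_3+\e_4$ and $\e_1+\e_6=\e_2+\e_2^\circ$. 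Triangulating the two ``crossing'' circuits into unimodular cones gives an extremal unimodular fan on these rays, so by the observation of Section~\ref{S:Inv} (a cluster algebra whose $\mr G$-lattice points index a basis is generated by the $C_W$ of its extremal rays) $\SI_{\beta_2}(K_{2,2}^2)$ is generated by $x_1,\dots,x_6$, $x_1':=C_{W_2}(\e_1')$ and $x_2^\circ=C_{W_2}(\e_2^\circ)$. The exchange relation at the vertex $(1;1,0)$ is $x_1x_1'=x_2^2x_5+x_3x_4$, and combining it with the identity $x_1x_2x_2^\circ=x_2^2x_5+x_1^2x_6+x_3x_4$ of Example~\ref{ex:l=2} yields $x_1'=x_2x_2^\circ-x_1x_6$; hence $x_1'$ is redundant and $x_1,\dots,x_6,x_2^\circ$ already generate, subject to $R:=x_2^2x_5+x_1^2x_6+x_3x_4-x_1x_2x_2^\circ=0$.

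To see $R$ is the only relation I would set $A:=k[X_1,\dots,X_6,Y]/(X_2^2X_5+X_1^2X_6+X_3X_4-X_1X_2Y)$ and let $\phi\colon A\twoheadrightarrow\SI_{\beta_2}(K_{2,2}^2)$ be the surjection just produced. The defining polynomial is linear in $Y$ with coefficient $X_1X_2$ coprime to $X_2^2X_5+X_1^2X_6+X_3X_4$, hence irreducible, so $A$ is a domain of Krull dimension $6$; since $\dim\SI_{\beta_2}(K_{2,2}^2)=6$, a surjection of domains of equal finite Krull dimension has height-zero — thus trivial — kernel, so $\phi$ is an isomorphism. It is a minimal presentation because $R$ has no linear term (no generator is eliminable) and the embedding dimension exceeds the Krull dimension by one. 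Homogeneity of $R$ for the configuration $\wtd{\bs\sigma}_2$ (checked from Lemma~\ref{L:WC}) says all four of its monomials have degree $\wtd\sigma_3+\wtd\sigma_4$, while $\deg x_2^\circ=\wtd\sigma_2^\circ=\wtd\sigma_1+\wtd\sigma_6-\wtd\sigma_2$; therefore the $\wtd\sigma$-graded character of the hypersurface $A$ is $(1-\b z^{\wtd\sigma_3+\wtd\sigma_4})(1-\b z^{\wtd\sigma_2^\circ})^{-1}\prod_{i=1}^6(1-\b z^{\wtd\sigma_i})^{-1}$.

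\emph{Part (2).} By Example~\ref{ex:l=2}, $x_1,x_3,x_4,x_5$ are $U$-invariant; so are $w_1=x_2'-x_2x_5$, which the exchange relation at $(1;0,1)$ and $R$ rewrite as $x_1x_2^\circ-2x_2x_5$, and $w_2=C_{W_2}(2\e_2^\circ)-4x_5x_6$, which satisfies $w_1^2+4x_3x_4x_5=w_2x_1^2$ and hence (using $R$) equals $(x_2^\circ)^2-4x_5x_6$. The unipotent $U$ acts on the generators of $A$ by $x_i\mapsto x_i$ ($i=1,3,4,5$), $x_2\mapsto x_2+ux_1$, $x_2^\circ\mapsto x_2^\circ+2ux_5$, $x_6\mapsto x_6+ux_2^\circ+u^2x_5$. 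Since $x_1$ is a prime $U$-invariant, inverting it commutes with taking $U$-invariants; on $\SI_{\beta_2}(K_{2,2}^2)[x_1^{-1}]$ the group acts by the translation $x_2/x_1\mapsto x_2/x_1+u$ of a slice, and eliminating $x_6$ via $R$ and writing $x_2^\circ=(w_1+2x_2x_5)/x_1$ gives $\SI_{\beta_2}(K_{2,2}^2)[x_1^{-1}]^U=k[x_1^{\pm1},x_3,x_4,x_5,w_1]$. Thus $\SI_{\beta_2}(K_{2,2}^2)^U=\SI_{\beta_2}(K_{2,2}^2)\cap k[x_1^{\pm1},x_3,x_4,x_5,w_1]$, and the crux is to show this intersection is exactly $k[w_1,w_2,x_1,x_3,x_4,x_5]$: given $f=g/x_1^N$ in it with $g\in k[x_1,x_3,x_4,x_5,w_1]$ not divisible by $x_1$, one uses $w_1^2\equiv-4x_3x_4x_5\pmod{x_1^2}$ in $\SI_{\beta_2}(K_{2,2}^2)$ (equivalently the integrality of $w_2=x_1^{-2}(w_1^2+4x_3x_4x_5)$) and a divisibility analysis modulo $x_1$ and $R$ to lower $N$ to $0$ at the cost of introducing $w_2$. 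With generation in hand, $\psi\colon k[W_1,W_2,X_1,X_3,X_4,X_5]/(W_1^2+4X_3X_4X_5-W_2X_1^2)\twoheadrightarrow\SI_{\beta_2}(K_{2,2}^2)^U$ has irreducible source-relation (linear in $W_2$ with coefficient $X_1^2$ coprime to $W_1^2+4X_3X_4X_5$), so the source is a domain of Krull dimension $5$, while $\dim\SI_{\beta_2}(K_{2,2}^2)^U=5$ because the one-dimensional unipotent acts with generically trivial stabilizer (the corollary of \cite[Theorem~2.3]{PV} used for Proposition~\ref{P:gUinv}); hence $\psi$ is a minimal isomorphism, and the $\wtd\sigma$-graded Hilbert series is that of the hypersurface, with $\deg w_1=\wtd\sigma_2'$, $\deg w_2=2\wtd\sigma_2^\circ$ and relation-degree $2\wtd\sigma_2'$:
\[
\frac{1-\b z^{2\wtd\sigma_2'}}{(1-\b z^{\wtd\sigma_2'})(1-\b z^{2\wtd\sigma_2^\circ})\prod_{i\in\{1,3,4,5\}}(1-\b z^{\wtd\sigma_i})}
=\frac{1+\b z^{\wtd\sigma_2'}}{(1-\b z^{2\wtd\sigma_2^\circ})\prod_{i\in\{1,3,4,5\}}(1-\b z^{\wtd\sigma_i})}.
\]

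The main obstacle is the descent step in Part~(2): verifying that the intersection $\SI_{\beta_2}(K_{2,2}^2)\cap k[x_1^{\pm1},x_3,x_4,x_5,w_1]$ carries no negative powers of $x_1$ beyond those absorbed by $w_2$, which needs a careful divisibility argument inside the hypersurface ring $A$. The polyhedral identification of the eight extremal rays of $\mr G_{\Diamond_2}$ and their triangulation into a unimodular fan in Part~(1) are routine but laborious, and everything else reduces to the domain-plus-dimension trick once generation is secured.
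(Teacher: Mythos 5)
Your Part (1) is essentially the paper's proof: the same extremal unimodular fan for $\mr{G}_{\Diamond_2}$ with the same eight rays, the same elimination of $x_1'$ via $x_1'=x_2x_2^\circ-x_1x_6$, the same irreducibility-plus-Krull-dimension argument pinning down the unique relation, and the same inclusion--exclusion over the fan for the graded character. The one micro-refinement you add (arguing irreducibility of $R$ via linearity in $Y$ with coprime coefficient, and phrasing the uniqueness of the relation as a height-zero kernel between domains of equal dimension) is fine but not materially different from what the paper does.

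Your Part (2) genuinely departs from the paper. The paper computes the $\wtd{\bs\sigma}$-graded Hilbert series of $\SI_{\beta_2}(K_{2,2}^2)^U$ from the already-known $\sigma$-graded character of $\SI_{\beta_2}(K_{2,2}^2)$ via the Omega package, computes the Hilbert series of the subalgebra $R=k[w_1,w_2,x_1,x_3,x_4,x_5]$ from its hypersurface presentation, and concludes $R=\SI^U$ by equality of Hilbert series. You instead localize at the $U$-invariant prime $x_1$, observe that $\SI[x_1^{-1}]$ becomes a polynomial ring on which $U$ acts by translation of the slice coordinate $t=x_2/x_1$, identify $\SI[x_1^{-1}]^U=k[x_1^{\pm1},x_3,x_4,x_5,w_1]$, and then try to descend. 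This route has the virtue of not invoking external software.

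However, the descent step is a genuine gap, and you acknowledge it as such. You need to show $\SI^U=\SI\cap k[x_1^{\pm1},x_3,x_4,x_5,w_1]=k[w_1,w_2,x_1,x_3,x_4,x_5]$, but you only gesture at "a divisibility analysis modulo $x_1$ and $R$ to lower $N$." This does not obviously terminate: a priori you could lose the denominator $x_1^N$ only by passing to elements like $w_2$, and you must rule out that higher-order obstructions appear. To make it work one needs something like the following. Set $A=k[W_1,W_2,X_1,X_3,X_4,X_5]/(r)$. One first checks that $(X_1)$ is prime in $A$ (indeed $A/(X_1)\cong k[W_1,W_2,X_3,X_4,X_5]/(W_1^2+4X_3X_4X_5)$ is a domain) and then that the induced map $A/(X_1)\to\SI/(x_1)\cong k[x_2,\dots,x_6,x_2^\circ]/(x_2^2x_5+x_3x_4)$ is injective, for instance by a transcendence-degree count on both sides (both are 4-dimensional domains and the generators of the image are algebraically independent modulo the target relation). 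Granting that, if $f\in\SI^U$ and $x_1^Nf=a\in A$ with $N\geq1$ minimal, then $a\in x_1\SI$, hence $a\equiv 0$ in $\SI/(x_1)$, hence $a\equiv 0$ in $A/(X_1)$ by injectivity, hence $x_1\mid a$ in $A$, contradicting minimality; so $N=0$. This is real work that the proposal does not carry out, so as written Part (2) is incomplete even though its strategy is sound and, once completed, would give a self-contained alternative to the paper's Omega-package computation.

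Two small points worth noting. First, your claim $\dim\SI^U=5$ is best supported not by the rational-invariant transcendence-degree corollary of \cite{PV} alone (that gives only an upper bound for $\op{QF}(\SI^U)$), but by the algebraic independence of $x_1,x_3,x_4,x_5,w_1$ (or of $w_2,x_1,x_3,x_4,x_5$, as the paper does via linear independence of their $\wtd{\bs\sigma}$-degrees) together with that upper bound; you do eventually use this, but the sentence as written overstates what the cited corollary gives. Second, the formulas $u\cdot x_2^\circ=x_2^\circ+2ux_5$ and $w_1=x_1x_2^\circ-2x_2x_5$ that you use are correct and consistent with the $\wtd{\bs\sigma}$-grading, which is good, but they are nontrivial rewritings of the data in Example~\ref{ex:l=2} (combining the exchange relation at $(1;0,1)$ with the relation $R$) and deserve a line of justification.
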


\begin{proof} (1). The cone $\mr{G}_{\Diamond_2}$ admits a triangulation by four unimodular cones.
They are spanned by the unions of $\mb{F}=\{\e_{3},\e_{4},\e_{5},\e_{6} \}$
with the following four pairs of vectors
$$\{ \e_{1}, \e_{2} \}, \{\e_{1}', \e_{2} \},
\{ \e_{1}, \e_2^\circ \}, \{\e_2^\circ, \e_1' \}.$$
By definition, this is an extremal unimodular fan.
In particular, $\SI_{\beta_2}(K_{2,2}^2)$ is generated by the initial cluster together with
$x_1'$ and $x_2^\circ$. The weights of $x_i$ are all extremal in $\Sigma_{\beta_2}(K_{2,2}^2)$, but $x_1' = x_2^\circ x_2-x_1x_6$.
So it is minimally generated by $x_1,x_2,\dots,x_6,x_2^\circ$, and satisfies $x_2^2x_5+x_1^2x_6+x_3x_4=x_1x_2x_2^\circ$. Since $x_2^2x_5+x_1^2x_6+x_3x_4- x_1x_2x_2^\circ$ is irreducible and the Krull dimension of $\SI_{\beta_2}(K_{2,2}^2)$ is $6$,
we conclude that it is the only relation.

Using the extremal unimodular fan and the inclusion-exclusion, we get the generating function for the lattice points in $\mr{G}_{\Diamond_2}$:
\begin{align*}
& \prod_{i=3}^6 (1-\b{z}^{\e_i})^{-1} \Big( (1-\b{z}^{\e_1})^{-1}(1-\b{z}^{\e_2})^{-1} + (1-\b{z}^{\e_1})^{-1}(1-\b{z}^{\e_2^\circ})^{-1} + (1-\b{z}^{\e_1'})^{-1}(1-\b{z}^{\e_2})^{-1} \\
& + (1-\b{z}^{\e_1'})^{-1}(1-\b{z}^{\e_2^\circ})^{-1} - (1-\b{z}^{\e_1})^{-1} - (1-\b{z}^{\e_1'})^{-1} - (1-\b{z}^{\e_2})^{-1} - (1-\b{z}^{\e_2^\circ})^{-1} +1 \Big) \\
& = \prod_{i=3}^6 (1-\b{z}^{\e_i})^{-1} \frac{(1-\b{z}^{\e_1+\e_1'})(1-\b{z}^{\e_2+\e_2^\circ})}{(1-\b{z}^{\e_1'})(1-\b{z}^{\e_2^\circ})}.
\end{align*}
By Theorem \ref{T:CS}, we remain to transfer the $\g$-vector weights to the $\wtd{\bs{\sigma}}$-weights.
We do get the desired graded character because $\e_1' \wtd{\bs{\sigma}}_2= (\e_2+\e_2^\circ)\wtd{\bs{\sigma}}_2$.

(2). Using the Omega package \cite{APR}, we can turn a rational expression for a formal series
$\sum a_\lambda s_\lambda$ into a rational expression for $\sum a_\lambda \b{z}^\lambda$.
Applying to the graded character, we get the desired $\wtd{\bs{\sigma}}$-graded Hilbert series.
Let us consider the subalgebra $R\subseteq \SI_{\beta_2}(K_{2,2}^2)^U$ generated by $\{w_1,w_2,x_1,x_3,x_4,x_5\}$.
Since the last five generators have linearly independent $\wtd{\bs{\sigma}}$-degrees, they are algebraically independent.
Recall from Example \ref{ex:l=2}, they satisfy the relation $r:=w_1^2 + 4x_3x_4x_5 - w_2x_1^2=0$.
Since $r$ is irreducible, we conclude that $R$ is presented by $k[w_1,w_2,x_1,x_3,x_4,x_5]/(r)$.
Now we rewrite the $\wtd{\bs{\sigma}}$-graded Hilbert series as
$$\left(1-\b{z}^{2\wtd{\sigma}_2'}\right)\left((1-\b{z}^{\wtd\sigma_1})(1-\b{z}^{\wtd\sigma_3})(1-\b{z}^{\wtd\sigma_4})(1-\b{z}^{\wtd\sigma_5})(1-\b{z}^{2\wtd{\sigma}_2^\circ})(1-\b{z}^{\wtd{\sigma}_2'})\right)^{-1},$$
which is the same the Hilbert series of $R$.
Hence, we must have that $R=\SI_{\beta_2}(K_{2,2}^2)^U$.
\end{proof}

\begin{remark} \label{r:strictupper} Since the $\sigma$-weight of $x_2^\circ$ is extremal, the non-cluster variable $x_2^\circ$ is contained in any homogeneous generating set of $\SI_{\beta_2}(K_{2,2}^2)$ (in fact $\SI_{\beta_2}(K_{l,l}^2)$).
This provides an example of an acyclic cluster algebra which is strictly contained in its upper cluster algebra.
This is different from the result in \cite{M2} due to the difference in the definition of upper cluster algebras.
Moreover, we believe that $\SI_{\beta_2}(K_{2,2}^2)^U$ (and $\SI_{\beta_l}(K_{l,l}^2)^U$ in general) is {\em not} an (upper) cluster algebra, though we do not have a proof for that.
\end{remark}

As our last example, we want to show readers that the cluster structure still exists even if the two flags have different length, i.e., $l_1\neq l_2$.
\begin{example} Recall from \cite{Dv} that $g_{\mu,\nu}^\lambda=0$ whenever $\ell(\lambda)>\ell(\mu)\ell(\nu)$. So by the $S_3$-symmetry of the Kronecker coefficients, the semi-invariant ring below controls all (full) Kronecker products of $s_\mu * s_\nu$ if $\ell(\mu),\ell(\nu)\leq 2$.

Consider the quiver with dimension vector $(K_{4,2}^2,\beta_{4,2})$ obtained from $(K_{4,4}^2,\beta_4)$ by removing the vertex $4$ and $3$ (see below Theorem \ref{T:tripleflag}).
$$\krontwofourtwo{}{}{1}{2}{3}{4}$$
By Proposition \ref{P:removal}, we find that
the semi-invariant ring $\SI_{\beta_{4,2}}(K_{4,2}^2)$ is the upper cluster algebra of the following quiver. The right one is a relabeling of the left one.
$$\ckrontwofourtwo{\fr{_{4;2^2}}} \qquad\qquad  \ckrontwofourtwonum{\fr{9}}$$

Note that the ice quiver is not connected.
If we delete the frozen vertex $(4;2^2)$, then the associated upper cluster algebra is
the semi-invariant ring for the quiver with dimension vector $(K_{3,2}^2,\beta_{3,2})$:
$$\krontwothreetwo{}{}{1}{2}{3}$$
It suffices to study this semi-invariant ring.
We keep that $\e_1' = \e_4+\e_3-\e_1, \e_2' =\e_6 +2\e_1 -\e_{2},\e_2^\circ = \e_6 + \e_1 -\e_2$, and let
$$\e_3' = \e_8+\e_5+\e_2-\e_3,\e_{31}=\e_8+\e_4-\e_1, \e_{32}=\e_8+\e_5+\e_1-\e_3, \text{ and } \e_{312}=2\e_8+\e_5-\e_3.$$
Using \cite{MPT} we find that together with $\mb{F}$ they constitute generators of the $\g$-vector cone.

\end{example}

\begin{proposition}
$\SI_{\beta_{3,2}}(K_{3,2}^2)$ is minimally generated by
$$\{x_1,x_2,\dots,x_8,x_2^\circ,x_3',x_{31}:=C_W(\e_{31}),x_{32}:=C_W(\e_{32}),x_{312}:=C_W(\e_{312})\}.$$
\end{proposition}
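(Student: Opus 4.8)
The plan is to run the argument of the preceding Proposition for $\SI_{\beta_2}(K_{2,2}^2)$ verbatim, i.e.\ to produce a generating set from an \emph{extremal unimodular fan} of $\mr G_{\Delta}$. First I would record the standing facts: by Proposition~\ref{P:removal} applied to Theorem~\ref{T:CS}, the ring $R:=\SI_{\beta_{3,2}}(K_{3,2}^2)$ is the graded upper cluster algebra $\br{\mc C}(\Delta)$ of the displayed $8$-vertex quiver, it is a UFD (\cite{PV}), and the generic cluster character $C_W$ maps $\mr G_{\Delta}\cap\mb Z^{\Delta_0}$ bijectively onto a $k$-basis of $R$. So it suffices to describe $\mr G_{\Delta}$ and to extract ring generators from its extremal rays.

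Next I would pin down the extremal rays of $\mr G_{\Delta}$. Transporting the facet description of $\mr G_{\Diamond_4}$ from Theorem~\ref{T:LP_Gl} through the two vertex removals, and confirming with \cite{MPT}, these rays are spanned by $\mb F=\{\e_4,\dots,\e_8\}$, by the mutable unit vectors $\e_1,\e_2,\e_3$, and by the six vectors $\e_1',\e_2^\circ,\e_3',\e_{31},\e_{32},\e_{312}$ (note that $\e_2'=\e_1+\e_2^\circ$ is interior to a $2$-face, hence not extremal). Then I would exhibit an extremal unimodular fan: a triangulation of the $8$-dimensional cone $\mr G_{\Delta}$ into unimodular cones all of whose rays lie in this list, with the five frozen directions contained in every maximal cone. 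Given such a fan, the standard leading-term argument applies: for $\g\in\mr G_{\Delta}\cap\mb Z^{\Delta_0}$ pick a maximal cone $\mr S\ni\g$ with generators $v_1,\dots,v_8$, write $\g=\sum_i n_iv_i$ with $n_i\in\mb Z_{\geq 0}$ (by unimodularity), and observe that $\prod_i C_W(v_i)^{n_i}$ has $\g$-vector $\sum_i n_iv_i=\g$ and leading monomial $\b x^{\g}$, the same as $C_W(\g)$; hence $C_W(\g)-\prod_i C_W(v_i)^{n_i}$ is a $k$-combination of basis elements with strictly smaller $\g$-vectors (in the usual dominance order), and downward induction shows $R$ is generated by the characters of the extremal ray generators of $\mr G_{\Delta}$, i.e.\ by $\{x_1,\dots,x_8,x_1',x_2^\circ,x_3',x_{31},x_{32},x_{312}\}$.

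To reach the asserted thirteen I would eliminate $x_1'$. Evaluating both sides at a general representation in the normal form of Lemma~\ref{L:2arm} (just as in Lemma~\ref{L:s'}), one checks the relation $x_2^2x_5+x_1^2x_6+x_3x_4=x_1x_2x_2^\circ$; combined with the exchange relation at vertex $1$ this yields $x_1'=x_2x_2^\circ-x_1x_6$, so $x_1'$ is redundant and $\{x_1,\dots,x_8,x_2^\circ,x_3',x_{31},x_{32},x_{312}\}$ generates $R$. Minimality is then forced by extremality of weights: by Lemma~\ref{L:level1} each $x_i$ has indivisible weight on an extremal ray of $\mb R_+\Sigma_{\beta_{3,2}}(K_{3,2}^2)$, and since $\e_2^\circ,\e_3',\e_{31},\e_{32},\e_{312}$ span extremal rays of $\mr G_{\Delta}$ the corresponding $\wtd\sigma$-weights are likewise extremal and indivisible; an element of indivisible extremal weight is irreducible (Lemma~\ref{L:irreducible}) and cannot be written as a polynomial in homogeneous elements of strictly smaller degree, so it lies in every homogeneous generating set. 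Hence no element of the list can be dropped.

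The main obstacle is the explicit construction and verification of the extremal unimodular fan in dimension $8$: writing down a triangulation of $\mr G_{\Delta}$ into unimodular cones using only the fourteen rays above, checking that the pieces meet along common faces and that each maximal cone's rays form a $\mb Z$-basis, and --- since an extremal unimodular fan need not refine or coincide with the $\g$-vector fan --- ensuring that it is compatible with the dominance order so that the inductive cancellation above stays inside $\mr G_{\Delta}$. This is precisely the computational step that in Section~\ref{S:Inv} is carried out with the help of \cite{MPT} (and the Omega package \cite{APR} for the accompanying Hilbert series), and I would proceed the same way, cross-checking the outcome against the multigraded Hilbert series produced from the fan by inclusion--exclusion.
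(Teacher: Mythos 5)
Your proposal takes essentially the same route as the paper's own proof: both produce a $14$-cone extremal unimodular fan of the $\g$-vector cone (computed with \cite{MPT}) whose rays are $\mb F\cup\{\e_1,\e_2,\e_3,\e_1',\e_2^\circ,\e_3',\e_{31},\e_{32},\e_{312}\}$, read off ring generators from the ray generators via the generic character, eliminate $x_1'$ through the relation $x_1'=x_2x_2^\circ-x_1x_6$ already established in the $K_{2,2}^2$ case, and conclude minimality from the extremality of the remaining weights. You simply make explicit the leading-term induction and the irreducibility/indivisibility argument that the paper's terse proof leaves implicit.
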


\begin{proof}
Except for $\e_1'$ these vectors all correspond to extremal weights of $\Sigma_{\beta}(K_{3,2}^2)$.
The $\g$-vector cone admits a triangulation by $14$ unimodular cones.
The cones are spanned by the unions of $\mb{F}$ with the following sets of vectors
\begin{gather*}\{ \e_1, \e_2, \e_3\}, \{ \e_1', \e_2, \e_3 \}, \{ \e_1, \e_2^\circ, \e_3 \}, \{ \e_1, \e_2, \e_3' \}, \\
\{ \e_1, \e_2^\circ, \e_{23} \}, , \{ \e_1, \e_{23}, \e_{3}' \} , \{ \e_{13}, \e_2, \e_3' \}, \{ \e_1', \e_2, \e_{13} \}, \{ \e_1', \e_2^\circ, \e_3 \},\\
\{ \e_{321}, \e_{23}, \e_3' \}, \{ \e_{13}, \e_{321}, \e_3' \}, \{ \e_{312}, \e_2^\circ, \e_{23} \}, \{ \e_1', \e_2^\circ, \e_{13} \}, \{ \e_{31}, \e_2^\circ, \e_{321} \}.
\end{gather*}
We have seen in the previous example that $x_1'$ can be generated from others.
We do get the desired minimal generating set.
\end{proof}

\begin{remark}
Using the extremal unimodular fan and the inclusion-exclusion, we can get the generating function for the lattice points in $\mr{G}_{\Diamond_{3,2}}$.
After transferring the weights as in the previous example, we obtain the $\sigma$-graded character of $\SI_{\beta_{3,2}}(K_{3,2}^2)$, which is a rather long expression.
With the help of the Omega package \cite{APR}, we can recover a old result in \cite{PS} on the generating function $\sum g_{\mu,\nu}^\lambda \b{z}^{(\lambda,\mu,\nu)}$, where the summation runs through all triple of partitions $(\lambda,\mu,\nu)$ with $\ell(\lambda)\leq 3$ and $\ell(\mu)=\ell(\nu)\leq 2$.
With a little effort, one can show that the ring $\SI_{\beta_{3,2}}(K_{3,2})^U$ is minimally generated by 13 invariants of $\tilde{\sigma}$-degree equal to
\begin{align*}
\wtd\sigma_1,\wtd\sigma_3,\wtd\sigma_4,\wtd\sigma_5,2\wtd{\sigma}_2^\circ,\wtd\sigma_3',\wtd\sigma_{32}',\wtd\sigma_1+\wtd\sigma_8,\wtd{\sigma}_{312}',\wtd\sigma_2',\wtd\sigma_2^\circ+\wtd\sigma_7,\wtd\sigma_2^\circ+\wtd\sigma_{32},\wtd\sigma_2'+\wtd\sigma_8.
\end{align*}
\end{remark}

\section{Proof of Theorem \ref{T:CS}} \label{S:proof}
By Corollary \ref{C:GCC}, the image of the lattice points in $\mr{G}_{\Diamond_l}(\sigma)$ under the generic character $C_{W_l}$
is linearly independent in $\SI_\bl(K_{l,l}^2)_\sigma$ for any weight $\sigma$.
Due to Proposition \ref{P:contain}, it suffices to show that the dimension of $\SI_\bl(K_{l,l}^2)_\sigma$ is counted by the lattice points in $\mr{G}_{\Diamond_l}(\sigma)$.
We will prove this in three steps.
In step one, we show that the semi-invariant ring of certain (noncomplete) triple flag is an upper cluster algebras, and describe its IQP model.
In step two, we use orthogonal projection to show that the semi-invariant ring of certain triple-flagged Kronecker quiver representations is an upper cluster algebra, and describe its IQP model and $\g$-vector cone.
In step three, by embedding the quiver $K_{l,l}^2$ to the triple-flagged Kronecker quiver,
we show that there are required number of lattice points in $\mr{G}_{\Diamond_l}(\sigma)$.

\subsection*{Step 1} We first recall the main result from \cite{Fs1}.
The {\em complete triple flag} of length $n$ is the quiver with dimension vector $(T_n,\beta_n)$ as indicated below
$$\tripleflag$$
The {\em ice hive quiver $\Delta_n$ of size $n$} is an ice quiver in the plane with $\sm{n+2\\ 2}-3$ vertices arranged in a triangular grid.
We label the vertices as shown in Figure~\ref{fig:hive}.
Note that three vertices $(0,0),(n,0)$ and $(0,n)$ are missing, and all boundary vertices are frozen.
There are three types of arrows: $(i,j)\to (i+1,j), (i,j)\to (i,j-1)$, and $(i,j) \to (i-1,j+1)$.
The quiver $\Delta_n$ can be equipped with a rigid potential $W_n$ (see \cite{Fs1} for detail).

\begin{figure}[h]
\begin{center}
$\hivesix$
\end{center}
\caption{The quiver $\Delta_6$.}
\label{fig:hive}
\end{figure}

Let $\e_i^a$ be the unit vector in $\mb{Z}^{T_n}$ supported on the $i$-th vertex of the $a$-th arm of $T_n$.
By convention $\e_0^a$ is the zero vector.
We define a (full) weight configuration $(\Delta_n,\bs{\sigma}_n)$ by
the assignment $(i,j)\mapsto \e_{n}-\e_i^1-\e_j^2-\e_k^3$ on the vertices of $\Delta_n$.

\begin{theorem}\cite{Fs1} \label{T:tripleflag} The semi-invariant ring $\SI_{\beta_n}(T_n)$ is the graded upper cluster algebra $\br{\mc{C}}(\Delta_n;\bs{\sigma}_n)$.
Moreover, $(\Delta_n,W_n)$ is a cluster model.
\end{theorem}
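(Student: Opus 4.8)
The statement to prove is Theorem~\ref{T:tripleflag}, which asserts that $\SI_{\beta_n}(T_n) = \br{\mc{C}}(\Delta_n;\bs{\sigma}_n)$ and that $(\Delta_n,W_n)$ is a cluster model. Since this is quoted as the main result of \cite{Fs1}, my plan is to reconstruct the architecture of that proof rather than to reprove everything from scratch; the heavy lifting is combinatorial and representation-theoretic, and I expect to import several of its lemmas wholesale. Throughout, I will work with the dictionary between weights $\sigma$ on $T_n$ and Kronecker-type decompositions established in Section~\ref{S:SI}, together with Schofield's description of semi-invariants (Theorem~\ref{T:inv_span}) and King's stability criterion (Lemma~\ref{L:King}).

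\textbf{Step 1: Identify the initial cluster and verify the containment $\br{\mc{C}} \subseteq \SI$.} First I would exhibit an explicit initial extended cluster in $\op{QF}(\SI_{\beta_n}(T_n))$ indexed by the vertices of $\Delta_n$: to the vertex $(i,j)$ (with $k = n-i-j$) one attaches the Schofield semi-invariant $s(f)$ of weight $\bs{\sigma}_n(i,j) = \e_n - \e_i^1 - \e_j^2 - \e_k^3$, the presentation $f$ resolving the obvious ``rank'' representation. Verifying that $(\Delta_n,\b{x})$ really is a seed requires: (a) algebraic independence of these functions (dimension count: there are $\binom{n+2}{2}-3$ of them, matching the Krull dimension of $\SI_{\beta_n}(T_n)$, which one computes from the $\SL_{\beta_n}$-action); (b) that the $B$-matrix of $\Delta_n$ has full rank and that $\bs{\sigma}_n$ is a full weight configuration satisfying $B\bs{\sigma}_n = 0$ — a direct check against the three arrow types. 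Then, exactly as in Proposition~\ref{P:contain}, I invoke that $\SI_{\beta_n}(T_n)$ is a UFD \cite{PV}, show each initial cluster variable and each variable in an adjacent cluster is irreducible (using Lemma~\ref{L:irreducible}, i.e. that their weights are indivisible extremal weights of $\Sigma_{\beta_n}(T_n)$, which follows from King's criterion), and apply Lemma~\ref{L:RCA} to get $\br{\mc{C}}(\Delta_n,\b{x};\bs{\sigma}_n) \subseteq \SI_{\beta_n}(T_n)$ as a $\bs{\sigma}_n$-graded subalgebra. This step also needs the exchange relations for the initial mutations to be matched with honest identities among Schofield semi-invariants, in the spirit of Lemma~\ref{L:s'}.

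\textbf{Step 2: The reverse containment via the generic cluster character and a dimension count.} This is where the real work lies. By Corollary~\ref{C:GCC}, since $(\Delta_n,W_n)$ is nondegenerate (it is rigid, so one cites \cite[Prop.~8.1]{DWZ1}) and $B(\Delta_n)$ has full rank, the generic character $C_{W_n}$ maps $G(\Delta_n,W_n)$ injectively onto a linearly independent subset of $\br{\mc{C}}(\Delta_n)$, hence (by Step 1) of $\SI_{\beta_n}(T_n)$, and these elements are multihomogeneous of degree $\g\bs{\sigma}_n$. It therefore suffices to prove that for every weight $\sigma$, the number of $\g \in G(\Delta_n,W_n)$ with $\g\bs{\sigma}_n = \sigma$ equals $\dim \SI_{\beta_n}(T_n)_\sigma$. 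The left-hand count is governed by a polyhedral cone $\mr{G}_{\Delta_n}$ of $\mu$-supported $\g$-vectors: one computes the indecomposable projectives of $J(\Delta_n,W_n)$ and the cokernels $\Coker(\g)$, then uses Lemma~\ref{L:hom=0} to translate $\mu$-support into linear inequalities $\g(\dimbar S)\ge 0$ over subrepresentations $S$ of a finite list of ``broken-path'' test modules at the frozen vertices — this is the $\Delta_n$-analogue of Theorem~\ref{T:LP_Gl}. The right-hand count is the dimension of a weight space of the triple-flag semi-invariant ring, which via the Cauchy-formula decomposition of Section~\ref{S:FKm} (applied to $T_n$) equals a sum of products of Littlewood--Richardson coefficients, i.e. a count of LR/hive data. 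The crux is a bijection between the lattice points of $\mr{G}_{\Delta_n}(\sigma)$ and these hive configurations; combinatorially the inequalities cutting out $\mr{G}_{\Delta_n}$ should coincide, after the linear change of coordinates $\g \mapsto$ (hive entries), with Knutson--Tao's hive inequalities, so that $\mr{G}_{\Delta_n}(\sigma)$ is (unimodularly equivalent to) the hive polytope. Establishing that the two cones match on the nose — including that all defining inequalities of $\mr{G}_{\Delta_n}$ are essential and that no further inequalities appear — is the main obstacle; it is an intricate but finite check about the projective resolutions in $J(\Delta_n,W_n)$ and is exactly the part that \cite{Fs1} carries out in detail.

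\textbf{Step 3: Assemble.} Once the two counts agree for all $\sigma$, linear independence plus equality of dimensions upgrades ``$C_{W_n}(G(\Delta_n,W_n))$ is linearly independent in $\SI_{\beta_n}(T_n)_\sigma$'' to ``it is a basis of $\SI_{\beta_n}(T_n)_\sigma$''; summing over $\sigma$ gives that $C_{W_n}$ maps $G(\Delta_n,W_n)$ bijectively onto a basis of $\SI_{\beta_n}(T_n)$, which is by definition (Definition~\ref{D:model}) the statement that $(\Delta_n,W_n)$ models $\SI_{\beta_n}(T_n)$. Combined with Step 1, the spanning forces $\SI_{\beta_n}(T_n) \subseteq \br{\mc{C}}(\Delta_n)$ as well (since a basis of $\SI$ lands inside $\br{\mc{C}}$), giving the equality $\SI_{\beta_n}(T_n) = \br{\mc{C}}(\Delta_n,\b{x};\bs{\sigma}_n)$ and hence that $(\Delta_n,W_n)$ is a cluster model in the sense of Definition~\ref{D:model}. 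I would remark that the rigidity of $(\Delta_n,W_n)$ — needed both for nondegeneracy and for the finite-dimensionality of $J(\Delta_n,W_n)$ that makes the $\Coker(\g)$ computations tractable — is itself a separate combinatorial lemma proved by the same ``moving arrows past $x$-axis vertices'' argument used for $(\Diamond_l,W_l)$ above.
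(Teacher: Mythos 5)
The paper itself does not prove this theorem; it is imported verbatim from \cite{Fs1} and used as the black-box input to Step 1 of Section~\ref{S:proof}, so there is strictly speaking no ``paper's own proof'' to compare against. That said, your reconstruction of the architecture in \cite{Fs1} is faithful to the method the present paper applies to its own analogous Theorem~\ref{T:CS}: your Step~1 is Proposition~\ref{P:contain}, your Step~2 cone description is the $\Delta_n$-version of Theorem~\ref{T:LP_Gl}, and your Step~3 assembly is exactly the reasoning at the start of Section~\ref{S:proof}; and it is consistent with every fragment of \cite{Fs1}'s machinery that this paper records explicitly (Corollary~\ref{C:GCC}, Lemma~\ref{L:hom=0}, the definition of $\bs{\sigma}_n$). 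One small caution on Step~1(a): a transcendence-degree/dimension count is a necessary-condition check, not a proof, of algebraic independence of the initial cluster (equality of counts does not rule out coincidences among the $s(f)$'s); in this framework the actual argument is an induction built on evaluation at a carefully chosen test representation, as in Lemma~\ref{L:ag-independent} of the present paper, and one should expect the same device for $T_n$.
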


Next, we need to recall another result about {\em vertex removal} from \cite{Fs2}.
Let $Q$ be a quiver, and $\beta$ a dimension vector of $Q$.
For a fixed vertex $r\in Q$, let $\br{Q}$ be the quiver such that $\br{Q}_0=Q_0\setminus \{r\}$,
and
$$\br{Q}_1=Q_1\setminus \{a\mid h(a)=r \text{ or } t(a)=r\} \cup \{[ab]\mid h(a)=t(b)=r\},$$
where $t([ab])=t(a)$ and $h([ab])=h(b)$.
Let $\br{\beta}$ be the restriction of $\beta$ on $\br{Q}_0$.

\begin{definition} For an IQP $(\Delta,W)$, {\em deleting} a set of vertices $\b{v}\subset \Delta_0$ is the operation that
we delete all vertices in $\b{v}$ for $\Delta$, and set all incoming and outgoing arrows of $v\in \b{v}$ to be zero in $W$.
We denote the new IQP by $(\Delta_\b{v},W_{\b{v}})$.

For an IQP $(\Delta,W)$, {\em freezing} a set of vertices $\b{u}$ is the operation that
we set all vertices in $\b{u}$ to be frozen, but we do not delete arrows between frozen vertices.
We also keep the same potential $W$. The new quiver is denoted by $\Delta^\b{u}$.
\end{definition}

A set of vertices $\b{u}$ is called {\em attached} to another set of vertices $\b{v}$ if $t(a) \in \b{v}$ (resp. $h(a)\in \b{v}$) whenever $h(a) \in \b{u}$ (resp. $t(a)\in \b{u}$).
For such a pair $(\b{u},\b{v})$, we form a new IQP $(\Delta_{\b{v}}^{\b{u}},W_{\b{v}})$ by deleting $\b{v}$ and freezing $\b{u}$.
We denote by $\bs{\sigma}(\hat{\b{v}})$ the restriction of $\bs{\sigma}$ on $\Delta_{\b{v}}^{\b{u}}$.

\begin{theorem}\cite[Theorem 3.7 and 5.17]{Fs2} \label{T:removal}
Suppose that $\SI_\beta(Q)$ is a naturally graded upper cluster algebra $\uca(\Delta;\bs{\sigma})$, and $B(\Delta)$ has full rank.
Let $r$ be a vertex of $Q$ satisfying $\min\left( \sum_{h(a)=r} \beta(t(a)),\sum_{t(b)=r} \beta(h(b)) \right) \leq \beta(r)$
and that $\bs{\sigma}(v)(r) = 0$ for all $v\in \Delta_0$ except for a set of frozen vertices $\b{v}$, where all $\bs{\sigma}(v)(r)<0$ (or all $>0$).
Then we have that $\SI_{\br{\beta}}(\br{Q})$ is the naturally graded upper cluster algebra $\uca(\Delta_{\b{v}}^{\b{u}};\bs{\sigma}(\hat{\b{v}}))$, where $\b{u}$ is attached to $\b{v}$.

Moreover, if $(\Delta,W)$ is a cluster model, then so is $(\Delta_{\b{v}}^{\b{u}},W_{\b{v}})$.
\end{theorem}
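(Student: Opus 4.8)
The plan is to compare the two semi-invariant rings geometrically via the vertex-removal map, compare the two graded ice quivers with potential combinatorially, and fuse the two comparisons through the generic cluster character of Section~\ref{S:CC}. First I would study the dominant rational map $\pi\colon \Rep_\beta(Q)\to\Rep_{\br{\beta}}(\br{Q})$ sending $M$ to $\br{M}$, where $\br{M}$ keeps every arrow not incident to $r$ and sets $\br{M}([ab])=M(a)M(b)$ for each composed arrow. The hypothesis $\min(\sum_{h(a)=r}\beta(t(a)),\sum_{t(b)=r}\beta(h(b)))\le\beta(r)$ means that in a general $M$ the incoming maps at $r$ are jointly injective or the outgoing maps are jointly surjective; assume the former (the other case is symmetric). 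Using $\GL(V_r)$ one normalizes so that these maps become coordinate inclusions, and the residual group acting on the remaining data exhibits $k[\Rep_\beta(Q)]^{\SL_\beta}$ as a polynomial ring over $k[\Rep_{\br{\beta}}(\br{Q})]^{\SL_{\br{\beta}}}$ whose extra variables affect only the $\GL(V_r)$-weight at $r$. This produces a graded ring embedding $\iota\colon\SI_{\br{\beta}}(\br{Q})\hookrightarrow\SI_\beta(Q)$ (the general form of Lemma~\ref{L:embedding}), where the $\mb{Z}^{Q_0}$-grading is transported to a $\mb{Z}^{\br{Q}_0}$-grading by forgetting the $r$-coordinate, and the image is the graded subalgebra $A$ of $\SI_\beta(Q)$ cut out by the condition that the $r$-component of the weight take its extremal value (trivial in the balanced case, and in general the value forced by King's criterion, Lemma~\ref{L:King} and Theorem~\ref{T:cone}).

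Next I would identify $A$ with $\uca(\Delta_{\b{v}}^{\b{u}};\bs{\sigma}(\hat{\b{v}}))$. The sign hypothesis on $\bs{\sigma}(v)(r)$ says that the $r$-component of the grading is carried entirely by the frozen variables indexed by $\b{v}$, so deleting the vertices $\b{v}$ kills precisely that component of the grading, and freezing the set $\b{u}$ attached to $\b{v}$ compensates for the arrows lost at $\b{u}$ and keeps $B(\Delta_{\b{v}}^{\b{u}})$ of full rank. Since $\b{v}$ is frozen, one never mutates there, so every seed of $\Delta_{\b{v}}^{\b{u}}$ is obtained from a seed of $\Delta$ by deleting the $\b{v}$-columns of the $B$-matrix and the $\b{u}$-rows, and correspondingly $\mc{L}_{\b{x}'}$ for $\Delta_{\b{v}}^{\b{u}}$ is $\mc{L}_{\b{x}'}$ for $\Delta$ with the $\b{v}$-variables discarded. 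Intersecting over all seeds, and using that an element of $A$ has trivial $r$-weight and hence no $\b{v}$-variable in any of its Laurent expansions (Theorem~\ref{T:Laurent}), gives $A\subseteq\uca(\Delta_{\b{v}}^{\b{u}};\bs{\sigma}(\hat{\b{v}}))$. For the reverse inclusion I would check that the initial variables of $\Delta_{\b{v}}^{\b{u}}$ and their neighbours under one mutation are images under $\iota$ of irreducible elements of the UFD $\SI_{\br{\beta}}(\br{Q})$ — the frozen $\b{u}$-variables stay irreducible, and the relevant exchange relations are the restricted ones — and apply Lemma~\ref{L:RCA}. Combining, $\SI_{\br{\beta}}(\br{Q})=\uca(\Delta_{\b{v}}^{\b{u}};\bs{\sigma}(\hat{\b{v}}))$ with the asserted grading.

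For the ``moreover'' statement, assuming $(\Delta,W)$ is a cluster model I would show the same for $(\Delta_{\b{v}}^{\b{u}},W_{\b{v}})$. First, $(\Delta_{\b{v}}^{\b{u}},W_{\b{v}})$ remains non-degenerate: zeroing the arrows at $\b{v}$ in a non-degenerate potential, with $\b{u}$ frozen, obstructs no mutation, all of which take place away from $\b{v}$. Second, $G(\Delta_{\b{v}}^{\b{u}},W_{\b{v}})$ is the ``slice'' of $G(\Delta,W)$ consisting of vectors supported away from $\b{u}\cup\b{v}$, and for such a $\g$ the cokernel $\Coker(\g)$ over $J(\Delta_{\b{v}}^{\b{u}},W_{\b{v}})$ is the restriction of $\Coker(\g)$ over $J(\Delta,W)$, genericity being preserved since a general presentation over a subquiver stays general. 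Hence on this slice $C_{W_{\b{v}}}$ agrees with $C_W$ followed by $\iota^{-1}$, and by hypothesis together with the previous paragraph it maps $G(\Delta_{\b{v}}^{\b{u}},W_{\b{v}})$ bijectively onto a basis of $\SI_{\br{\beta}}(\br{Q})=\uca(\Delta_{\b{v}}^{\b{u}})$; that is, $(\Delta_{\b{v}}^{\b{u}},W_{\b{v}})$ is a cluster model in the sense of Definition~\ref{D:model}.

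The genuinely hard part is the second step: deleting a frozen vertex does not in general commute with passing to the upper cluster algebra, so one must exploit the precise hypotheses — $\b{v}$ frozen, $\b{u}$ attached to $\b{v}$, and the constant sign of $\bs{\sigma}(\cdot)(r)$ — to guarantee that no cluster variable of the smaller algebra is lost and that no spurious Laurent monomial is introduced when intersecting the $\mc{L}_{\b{x}'}$. Establishing the graded embedding $\iota$ of the first step with exactly the right image, and then tracking it faithfully through every mutation, is where the real work lies; once the $\g$-vector slices are in hand, the cluster-model part of the last step is comparatively formal.
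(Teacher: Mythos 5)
The paper does not prove this statement: it is cited verbatim from \cite[Theorem 3.7 and 5.17]{Fs2}, so there is no in-paper argument to compare against, and your proposal has to stand on its own.

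Read on its own terms, the outline correctly names the three ingredients (the vertex-removal embedding of semi-invariant rings, the deletion/freezing combinatorics, the generic cluster character) but leaves two real gaps. The central one you flag yourself and then do not close: after deleting $\b{v}$ and freezing $\b{u}$, the seeds of $\Delta_{\b{v}}^{\b{u}}$ are indexed only by mutation sequences avoiding $\b{u}$, so $\uca(\Delta_{\b{v}}^{\b{u}})=\bigcap\mc{L}_{\b{x}'}$ intersects over strictly fewer seeds than for $\Delta$, while each $\mc{L}_{\b{x}'}$ for $\Delta_{\b{v}}^{\b{u}}$ also no longer inverts the $\b{u}$-variables. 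Neither inclusion between $\uca(\Delta_{\b{v}}^{\b{u}};\bs{\sigma}(\hat{\b{v}}))$ and the image of $\iota$ is formal, and it is exactly here that the hypotheses (the $\min$-condition on $\beta(r)$, and the constant sign of $\bs{\sigma}(\cdot)(r)$ on the frozen set $\b{v}$) must enter; the sketch asserts rather than derives this, and a reference to Lemma \ref{L:RCA} does not by itself produce the non-obvious inclusion $\uca(\Delta_{\b{v}}^{\b{u}})\subseteq\iota(\SI_{\br{\beta}}(\br{Q}))$.

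The ``moreover'' paragraph is also not correct as stated. $G(\Delta_{\b{v}}^{\b{u}},W_{\b{v}})$ is not the slice of $G(\Delta,W)$ of vectors supported away from $\b{u}\cup\b{v}$: every $\e_v$ with $v\in\b{u}$ lies in $G(\Delta_{\b{v}}^{\b{u}},W_{\b{v}})$ (a frozen vertex always has $\g$-vector $\e_v$ with zero cokernel), and more generally $\g$-vectors carry nonzero frozen components. The needed statement is a comparison between $\mu$-supported $\g$-vectors of $J(\Delta,W)$ and of $J(\Delta_{\b{v}}^{\b{u}},W_{\b{v}})$ that tracks the $\b{v}$-coordinates and the newly frozen $\b{u}$, together with a check that generic presentations and their Grassmannians of quotients match up so the characters $C_W$ and $C_{W_{\b{v}}}$ agree under $\iota$. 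Replacing this with ``genericity being preserved'' is not a proof, and establishing it is precisely the work done in [Fs2].
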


We start with the quiver with dimension vector $(T_{3l},\beta_{3l})$.
We remove the last $l-1$ vertices (i.e., from the $(2l+1)$-th to the $(3l-1)$-th) for each arm.
We denote the resulting quiver with dimension vector by $(T_{2l+1},\br{\beta}_{3l})$.
We repeatedly apply Theorem \ref{T:removal} to those vertices in the order $r = 3l-1,3l-2,\dots,2l+1$ for each arm,
and obtain an IQP denoted by $(\hexagon_l, W_{\hexagon_l})$. Let $\br{\bs{\sigma}}_{3l}$ be the restriction of $\bs{\sigma}_{3l}$ on $\hexagon_l$.
Roughly speaking, $\hexagon_l$ is a hexagonal subquiver of $\Delta_{3l}$ centered at the vertex $(l,l)$ with edge length equal to $l$ arrow length, and all boundary vertices frozen.
When $l=4$, the quiver is displayed in Figure \ref{F:hex4}.

\begin{figure}[h]
\begin{center}
$\hexagonfour$
\end{center}
\caption{The quiver $\hexagon_4$.} \label{F:hex4}
\label{fig:square}
\end{figure}

We summarize our result in step one.
\begin{proposition} The semi-invariant ring $\SI_{\br{\beta}_{3l}}(T_{2l+1})$ is the graded upper cluster algebra $\br{\mc{C}}(\hexagon_l;\br{\bs{\sigma}}_{3l})$.
Moreover, $(\hexagon_l,W_{\hexagon_l})$ is a cluster model (Definition \ref{D:model}).
\end{proposition}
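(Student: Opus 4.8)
The plan is to obtain this proposition from Theorem~\ref{T:tripleflag} by iterating the vertex-removal machinery of Theorem~\ref{T:removal}. The base case is exactly Theorem~\ref{T:tripleflag}: $\SI_{\beta_{3l}}(T_{3l})$ is the graded upper cluster algebra $\br{\mc{C}}(\Delta_{3l};\bs{\sigma}_{3l})$, the IQP $(\Delta_{3l},W_{3l})$ is a cluster model, and the $B$-matrix of $\Delta_{3l}$ has full rank (this last is built into what it means to be a cluster model, via Theorem~\ref{C:GCC}). From here, the quiver $(T_{2l+1},\br{\beta}_{3l})$ is reached from $(T_{3l},\beta_{3l})$ by removing, for each arm, the vertices $r=3l-1,3l-2,\dots,2l+1$; so I would apply Theorem~\ref{T:removal} a total of $3(l-1)$ times, once per removed vertex, carrying along the identification of the semi-invariant ring as a graded upper cluster algebra together with the cluster-model property.

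First I would verify the hypotheses of Theorem~\ref{T:removal} at each of these $3(l-1)$ steps. At the moment a vertex $r$ is removed it is always the tip of a truncated arm of the current quiver: it receives a single arrow from its arm-predecessor (dimension $r-1$) and emits a single, possibly composite, arrow into the central vertex (dimension $3l$). Hence the numerical condition reads $\min(r-1,3l)=r-1\le r=\br{\beta}_{3l}(r)$ and is automatic. For the weight condition I would check that the $r$-component of the successively restricted weight configuration vanishes on every mutable vertex of the current $\Delta$ and is strictly negative on the set $\b{v}$ of frozen vertices where it is nonzero, so that Theorem~\ref{T:removal} applies with $\b{u}$ the set attached to $\b{v}$; the order in which the arm vertices are removed must be chosen so that earlier removals have already frozen — through their attached sets — enough of the hive boundary to keep this condition valid. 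Each application then propagates, by the body and the ``moreover'' clause of Theorem~\ref{T:removal}, both statements, and it keeps the $B$-matrix at full rank for the next step.

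Next I would identify the IQP produced at the end of this process. Each removal deletes from $\Delta_{3l}$ the frozen vertices $\b{v}$ lying along one ``line'' near a corner of the hive triangle and freezes the adjacent vertices $\b{u}$ newly exposed on the boundary; carrying this out for $r=3l-1,\dots,2l+1$ on all three arms peels off the three corner sub-triangles of $\Delta_{3l}$ near $(3l,0,0)$, $(0,3l,0)$, $(0,0,3l)$ and re-freezes the boundary of what remains. What survives is precisely the hexagonal subquiver $\hexagon_l$ centered at $(l,l)$, with all boundary vertices frozen; the potential $W_{3l}$ restricts (killing every arrow incident to a deleted vertex) to $W_{\hexagon_l}$, and $\bs{\sigma}_{3l}$ restricts to $\br{\bs{\sigma}}_{3l}$. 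Substituting these identifications into the conclusion of the final application of Theorem~\ref{T:removal} gives both assertions of the proposition.

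The hard part will be the middle step: checking, uniformly in $l$ and at each of the $3(l-1)$ removals, that the weight-support hypothesis of Theorem~\ref{T:removal} holds. This is delicate because whether a given $\Delta_{3l}$-vertex is still mutable — and hence whether the $r$-component of the weight configuration may be nonzero there — depends on which attached sets $\b{u}$ were frozen in the previous removals, so one really has to track the peeling of $\Delta_{3l}$ step by step and choose the removal order accordingly. Once that bookkeeping is in place, recognizing the terminal quiver as $\hexagon_l$ and the terminal potential as $W_{\hexagon_l}$ is straightforward from the definitions.
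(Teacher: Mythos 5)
Your proposal matches the paper's approach exactly: the paper's argument consists precisely of iterating Theorem~\ref{T:removal} $3(l-1)$ times starting from Theorem~\ref{T:tripleflag}, removing vertices $r=3l-1,\dots,2l+1$ on each arm, and observing that the resulting quiver, potential, and weight configuration are $\hexagon_l$, $W_{\hexagon_l}$, and $\br{\bs{\sigma}}_{3l}$. In fact your write-up is more explicit than the paper about the two hypothesis checks (the inequality $\min(r-1,3l)\le r$ and the support of the $r$-component of the weight configuration), so the only remaining bookkeeping you flag — tracking which boundary vertices become frozen through the attached sets as the corners are peeled — is precisely what the paper also leaves implicit.
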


\subsection*{Step 2}
Let $Q$ be a finite quiver without oriented cycles.
For two dimension vector $\alpha,\beta$, we write $\alpha \perp \beta$ if $\Hom_Q(M,N)=\Ext_Q(M,N)=0$ for $M,N$ general in $\Rep_\alpha(Q)$ and $\Rep_\beta(Q)$.
We recall that an {\em exceptional sequence of dimension vector} $\mb{E}:=\{\ep_1,\ep_2,\dots, \ep_n\}$ is a sequence of real Schur roots of $Q$ such that $\ep_i\perp \ep_j$ for any $i<j$.
It is called {\em quiver} if $\innerprod{\ep_j,\ep_i}_Q\leq 0$ for any $i<j$.
It is called {\em complete} if $n=|Q_0|$.
The {\em quiver} $Q(\mb{E})$ of a quiver exceptional sequence $\mb{E}$ is by definition the quiver with vertices labeled by $\ep_i$ and $\ext_Q(\ep_j,\ep_i)$ arrows from $\ep_j$ to $\ep_i$.
According to \cite[Theorem 4.1]{S2}, $\ext_Q(\ep_j,\ep_i)=-\innerprod{\ep_j,\ep_i}_Q$.

Given an exceptional sequence of dimension vector $\mb{F}=\{\vep_1,\vep_2,\dots,\vep_r \}$ with $F_i$ general in $\Rep_{\vep_i}(Q)$,
we consider the (right) {\em orthogonal subcategory} $\mb{F}^\perp$ defined by
$$\mb{F}^\perp:=\{M\in\module(Q) \mid F_i\perp M \text{ for } i=1,2,\dots,r \}.$$
According to \cite{S1}, this abelian subcategory is equivalent to the module category of another quiver denoted by $Q_{\mb{F}}$.
We say that $Q$ {\em projects} to $Q_{\mb{F}}$ through $\mb{F}$.
In this case there is a unique quiver exceptional sequence $\mb{E}=\{\ep_1,\ep_2,\dots, \ep_{|Q_0|-r}\}$ of $Q$ such that
the concatenation of two sequences $(\mb{F},\mb{E})$ is a (complete) exceptional sequence and $Q(\mb{E})\cong Q_{\mb{F}}$.
We refer to \cite{S1} for the construction of $\mb{E}$.

We define a linear isometry (with respect to the Euler forms) $\iota_{\mb{F}}: K_0(Q_{\mb{F}})\to K_0(Q)$ by
\begin{equation} \label{eq:dimemb} \beta' \mapsto \sum \beta'(i)\ep_i.\end{equation}
Conversely we define a linear map $\pi_{\mb{F}}$ left inverse to $\iota_{\mb{F}}$ as the composition $K_0(Q)\to K_0(\mb{F}^\perp) \cong K_0(Q_{\mb{F}})$,
where the first map $\beta_0:=\beta \mapsto \beta_r$ is given by the recursion
$$\beta_{i+1} = \beta_i- \innerprod{\vep_{i+1},\beta_i}_Q \vep_{i+1}.$$
Both maps induce a linear map on the weights, still denoted by $\iota_{\mb{F}}$ and $\pi_{\mb{F}}$.
We write $\beta_{\mb{F}}$ and $\sigma_\mb{F}$ for $\pi_{\mb{F}}(\beta)$ and $\pi_{\mb{F}}(\sigma)$.
To effectively compute $\sigma_\mb{F}$ later, we can use the formula
\begin{equation} \label{eq:wtproj} \sigma_\mb{F}(i) = \innerprod{\sigma, \ep_i}_Q.
\end{equation}

\begin{theorem}[{\cite[Theorem 2.39]{DW2}}] $\SI_{\beta}(Q_{\mb{F}})_{\sigma} = \SI_{\iota_{\mb{F}}(\beta)}(Q)_{\iota_{\mb{F}}(\sigma)}$.
\end{theorem}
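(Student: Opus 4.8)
The theorem is exactly \cite[Theorem 2.39]{DW2}, so the plan is to recall the strategy of its proof, which fits naturally into the framework already set up above. The idea is to reduce everything to Schofield's semi-invariants together with the equivalence $\mb{F}^\perp\simeq\module(Q_{\mb{F}})$. By Theorem \ref{T:inv_span}, the space $\SI_\beta(Q_{\mb{F}})_\sigma$ is spanned over $k$ by the functions $s(g)$, where $g\colon P_1'\to P_0'$ runs over projective presentations of $Q_{\mb{F}}$-representations of weight $\sigma$ with $\sigma(\beta)=0$; the nonzero ones resolve a representation $M'$ with $\dimbar M'={}^\alpha\sigma$ computed in $Q_{\mb{F}}$. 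Likewise $\SI_{\iota_{\mb{F}}(\beta)}(Q)_{\iota_{\mb{F}}(\sigma)}$ is spanned by the $s(f)$ of weight $\iota_{\mb{F}}(\sigma)$. So the goal is to produce a weight-preserving, function-preserving correspondence between these two families.

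First I would spell out the dictionary. Under the Schofield equivalence, a $Q_{\mb{F}}$-representation $M'$ corresponds to a $Q$-representation $M\in\mb{F}^\perp$ with $\dimbar M=\iota_{\mb{F}}(\dimbar M')$ by \eqref{eq:dimemb}, and the inclusion $\mb{F}^\perp\hookrightarrow\module(Q)$ is full, exact, and closed under extensions, so that $\Hom_{Q_{\mb{F}}}(M',N')\cong\Hom_Q(M,N)$ and $\Ext_{Q_{\mb{F}}}(M',N')\cong\Ext_Q(M,N)$ for all $M',N'$ (the perpendicular category of an exceptional sequence is a wide subcategory; see \cite{S1}). Because $\iota_{\mb{F}}$ is a linear isometry for the Euler forms, it intertwines the weight--dimension correspondence of Section \ref{S:SI}: the $Q$-weight ${}^\sigma\!\dimbar(M)$ equals $\iota_{\mb{F}}({}^\sigma\!\dimbar(M'))$. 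Hence, transporting a minimal projective presentation of $M'$ over $Q_{\mb{F}}$ to the relative-projective presentation of $M$ inside $\mb{F}^\perp$ and then resolving the latter over $Q$, we obtain a genuine projective presentation $f$ of $M$ over $Q$ of weight $\iota_{\mb{F}}(\sigma)$. Next I would identify the functions: from the canonical four-term exact sequence obtained by applying $\Hom_Q(-,N)$, the value $s(f,N)=\det\Hom_Q(f,N)$ is, up to a nonzero scalar, the unique section of the relevant determinant line vanishing exactly where $\Hom_Q(M,N)$ and $\Ext_Q(M,N)$ are both nonzero, and the same holds for $s(g,N')=\det\Hom_{Q_{\mb{F}}}(g,N')$; using the $\Hom$/$\Ext$ isomorphisms and the compatibility of the presentation-to-projective-resolution passage with the equivalence, $s(f,-)$ restricted to the $\GL_{\iota_{\mb{F}}(\beta)}$-stable locus $\mc{U}\subset\Rep_{\iota_{\mb{F}}(\beta)}(Q)$ of $\mb{F}^\perp$-representations agrees, up to scalar, with $s(g,-)$ under the identification $\mc{U}\cong\Rep_\beta(Q_{\mb{F}})$. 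Restriction to $\mc{U}$ therefore carries $\SI_{\iota_{\mb{F}}(\beta)}(Q)_{\iota_{\mb{F}}(\sigma)}$ onto $\SI_\beta(Q_{\mb{F}})_\sigma$ (surjectivity from the span statement on the $Q_{\mb{F}}$ side, injectivity because a Schofield semi-invariant vanishing on $\mc{U}$ already vanishes on $\Rep_{\iota_{\mb{F}}(\beta)}(Q)$), giving the claimed equality of weight spaces.

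The main obstacle is the middle step: showing that the two Schofield functions genuinely coincide, rather than merely cutting out the same subvariety. This requires care with (i) transporting the minimal presentation over $Q_{\mb{F}}$ to an honest projective presentation over $Q$ through the relative-projective objects of $\mb{F}^\perp$, tracking weight vectors along the way, and (ii) controlling $\det\Hom_Q(f,N)$ as $N$ degenerates out of $\mb{F}^\perp$, which is precisely why one argues on the locus $\mc{U}$ and invokes irreducibility of the Schofield semi-invariants rather than naive restriction. Everything else --- the Euler-form isometry property of $\iota_{\mb{F}}$, the exactness and fullness of $\mb{F}^\perp\hookrightarrow\module(Q)$, and Theorem \ref{T:inv_span} --- is recalled above or available in \cite{S1,DW2}.
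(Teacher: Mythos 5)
The paper itself offers no proof of this statement; it is imported by citation from \cite[Theorem 2.39]{DW2}, so there is no in-paper argument to compare against. Assessing your sketch on its own terms: you assemble the right ingredients (spanning of weight spaces by Schofield semi-invariants, the wide-subcategory equivalence $\mb{F}^\perp\simeq\module(Q_{\mb{F}})$ with its $\Hom/\Ext$ compatibility, and the Euler-form isometry $\iota_{\mb{F}}$), and you correctly flag where the real work lies, but the proposal does not actually do that work, and two of its intermediate assertions are not usable as written.

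First, the claim that $s(f,N)$ is, up to scalar, ``the unique section of the relevant determinant line vanishing exactly where $\Hom_Q(M,N)$ and $\Ext_Q(M,N)$ are both nonzero'' is false in this generality: a regular function of a given $\sigma$-weight is not pinned down, even up to scalar, by its zero set, and the weight spaces $\SI_\beta(Q)_\sigma$ routinely have dimension greater than one. To match $s(f,-)$ with $s(g,-)$ on $\mc{U}$ one needs either an irreducibility argument (Lemma \ref{L:irreducible} applies only for indivisible extremal weights) or an honest computation comparing $\det\Hom_Q(f,N)$ with $\det\Hom_{Q_{\mb{F}}}(g,N')$ on the nose. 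Second, ``transporting a minimal projective presentation of $M'$ over $Q_{\mb{F}}$ to the relative-projective presentation of $M$ inside $\mb{F}^\perp$ and then resolving the latter over $Q$'' does not in one step produce a two-term $Q$-projective presentation of $M$: the $\op{Ext}$-projectives of $\mb{F}^\perp$ (the modules $E_j$ of dimension $\ep_j$) are not $Q$-projective, so resolving them lengthens the complex, and collapsing this to a two-term complex homotopy-equivalent to the canonical $Q$-presentation of $M$ is exactly the bookkeeping your sketch elides. Third, the injectivity of restriction to $\mc{U}$ requires that $\mc{U}$ be dense in $\Rep_{\iota_{\mb{F}}(\beta)}(Q)$; this is in fact true --- $\mc{U}$ is open by upper semicontinuity of $\hom$ and $\ext$, and nonempty because $\bigoplus_j \beta(j)E_j\in\mb{F}^\perp$ --- but it must be stated and justified rather than taken for granted. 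As it stands you have a plausible roadmap; the steps you label as requiring ``care'' are precisely the mathematical content of \cite[Theorem 2.39]{DW2}, and none of them is carried out.
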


It turns out that the quiver $T_{2l+1}$ projects to the following triple-flagged Kronecker quiver $TK_l$
through the exceptional sequence $\mb{F}:=\{\vep_{1,2},\vep_{2,1},\vep_{1,3},\vep_{2,3},\vep_{1,2,3}\}$,
where $\vep_{a,b}= {^\alpha(\e_{3l}-\e_{l}^a-\e_{2l}^b)}$ and $\vep_{1,2,3}={^\alpha(\e_{3l}-\e_{l}^1-\e_{l}^2-\e_{l}^3)}$.
We prove this claim by explicitly putting the real Schur roots in $\mb{E}$ on the quiver of $\mb{E}$.
Readers can easily check that $\mb{F}\perp \mb{E}$ and $\mb{E}$ is a quiver exceptional sequence with the prescribed quiver.
$$\tripleflaggedequi$$
where $\ep_{-l}:={^\alpha(2\e_{3l}-\e_{2l}^1-\e_{2l}^2-\e_{l}^3)}$ and $\ep_l:={^\alpha(\e_{3l}-\e_{2l}^3-\e_{l}^1-\e_{l}^2)}$.
We label the vertices and arms of this quiver as follows
$$\tripleflaggedequidim$$
Let $\beta_l'$ be the dimension vector of $TK_l$ which is $(1,2,,\dots,l)$ along each arm.
We can easily check by \eqref{eq:dimemb} that $\iota_{\mb{F}}(\beta_l') = \br{\beta}_{3l}$.

\begin{figure}[h]
\begin{center}
$\hexagonfourhatnew$
\end{center}
\caption{The quiver $\widehat{\hexagon}_4$.}
\label{F:hathexagon}
\end{figure}

Suppose that $\SI_\beta(Q)$ is an upper cluster algebra $\br{\mc{C}}(\Delta; \bs{\sigma})$ naturally graded by the weights of semi-invariants.
Let $e$ be a vertex of $\Delta$ such that $\bs{\sigma}(e)$ corresponds to a real Schur root $\vep$ of $Q$.
We define the {\em projection} of the weight configuration $(\Delta,\bs{\sigma})$ through $e$ as the pair $(\Delta_e,\bs{\sigma}_e)$, where $\bs{\sigma}_e(v)=\bs{\sigma}(v)_\vep$.

\begin{theorem}\cite[Theorem 3.7 and 5.6]{Fs2} \label{T:project}
Suppose that $\SI_\beta(Q)$ is a naturally graded upper cluster algebra $\uca(\Delta; \bs{\sigma})$, and $B(\Delta)$ has full rank.
If $\bs{\sigma}(e)={^\sigma}\vep$ is real and extremal in $\mb{R}^+\Sigma_\beta(Q)$,
then $\vep$ is Schur and $\SI_{\beta_\vep}(Q_\vep)$ is the naturally graded upper cluster algebra $\uca(\Delta_e; \bs{\sigma}_e)$.

Moreover, if $(\Delta,W)$ is a cluster model, then so is $(\Delta_e,W_e)$.
\end{theorem}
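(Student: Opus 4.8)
The plan is to bootstrap the statement from Schofield's orthogonal‑projection formalism together with the cited identity $\SI_{\beta}(Q_{\mb{F}})_{\sigma}=\SI_{\iota_{\mb{F}}(\beta)}(Q)_{\iota_{\mb{F}}(\sigma)}$ applied to the one‑term sequence $\mb{F}=\{\vep\}$, upgrading it from an equality of graded vector spaces to one of cluster structures.

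First I would prove that $\vep$ is a real Schur root. By hypothesis $\vep$ is real, and $\bs{\sigma}(e)={}^{\sigma}\vep$ is the weight of the initial cluster variable $x_e$, hence a nonzero semi‑invariant by Theorem~\ref{T:inv_span}. Since $\bs{\sigma}(e)$ is indivisible (real roots are primitive) and lies on an extremal ray of $\mb{R}^{+}\Sigma_\beta(Q)$, Lemma~\ref{L:irreducible} shows that $x_e=s(f_M)$ is irreducible, where $f_M$ is the canonical resolution of a general representation $M$ of dimension $\vep$. An irreducible Schofield semi‑invariant cannot be attached to a decomposable or non‑brick module, since a direct‑sum decomposition of $M$, or the canonical filtration with respect to a non‑invertible endomorphism, would factor $s(f_M)$ into semi‑invariants of smaller weight; hence $\End_Q(M)=k$ and $\vep$ is a real Schur root. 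Consequently $Q_\vep$, $\beta_\vep=\pi_{\{\vep\}}(\beta)$ and $\SI_{\beta_\vep}(Q_\vep)$ are well defined, and by Lemma~\ref{L:gsreal} replacing $\iota_{\{\vep\}}(\beta_\vep)$ by $\beta$ (they differ by a nonnegative multiple of $\vep$) changes none of the weight spaces.

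Next I would realize $\SI_{\beta_\vep}(Q_\vep)$ as a graded subalgebra of $\SI_\beta(Q)=\uca(\Delta;\bs{\sigma})$. Summing the cited identity over all weights $\tau$ on $Q_\vep$ yields a graded algebra isomorphism of $\SI_{\beta_\vep}(Q_\vep)$ onto $B:=\bigoplus_{\mu}\SI_\beta(Q)_\mu$, the sum running over the rank‑$(|Q_0|-1)$ sublattice $\Img(\iota_{\{\vep\}})$, which, because the completing sequence $\mb{E}$ satisfies $\mb{E}\perp\vep$, is exactly the set of weights orthogonal to $\vep$. On $\vep^{\perp}$ the function $s(f_M)$ is nowhere vanishing (for $N\in\vep^{\perp}$ both $\Hom_Q(M,N)$ and $\Ext_Q(M,N)$ vanish, so $\Hom_Q(f_M,N)$ is invertible), so $x_e$ becomes a unit in $B$; correspondingly $\pi_{\{\vep\}}(\bs{\sigma}(e))=0$, and $B$ is precisely the algebra obtained from $\uca(\Delta;\bs{\sigma})$ by passing to the projected weight configuration $(\Delta_e,\bs{\sigma}_e)$ of the Definition. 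To turn the candidate equality into a genuine one I would: (i) verify $B(\Delta_e)$ still has full rank, checking the kernel/image bookkeeping of $\pi_{\{\vep\}}$ against \eqref{eq:wtproj}; (ii) prove $\uca(\Delta_e;\bs{\sigma}_e)\subseteq B$ via Lemma~\ref{L:RCA}, using that $B\cong\SI_{\beta_\vep}(Q_\vep)$ is a UFD and that the initial cluster of $\Delta_e$ and its adjacent clusters consist of irreducible elements — irreducibility following from Lemma~\ref{L:irreducible} in $Q_\vep$, once one checks that extremality of those weights in $\Sigma_{\beta_\vep}(Q_\vep)$ is inherited along $\iota_{\{\vep\}}$ from extremality in $\Sigma_\beta(Q)$; and (iii) prove the reverse inclusion $B\subseteq\uca(\Delta_e;\bs{\sigma}_e)$ by a multigraded dimension count, producing enough linearly independent elements of $\uca(\Delta_e)$ in each $\bs{\sigma}_e$‑graded piece with the generic cluster character of Theorem~\ref{C:GCC} and matching against $\dim B_\tau=\dim\SI_\beta(Q)_{\iota_{\{\vep\}}(\tau)}$.

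For the ``moreover'' clause I would transport the generic cluster character through the projection: deleting (trivializing) $e$ induces a linear map $K_0(\proj J)\to K_0(\proj J_e)$ on $\g$‑vectors that carries $G(\Delta,W)$ onto $G(\Delta_e,W_e)$ compatibly with cokernels of reduced presentations, so that $C_{W_e}$ is $C_W$ followed by the substitution $x_e\mapsto 1$ and the relabelling; since $(\Delta,W)$ is assumed to be a cluster model, the image is a basis of $B=\uca(\Delta_e)$, which is exactly the assertion that $(\Delta_e,W_e)$ is a cluster model. I expect the main obstacle to be steps (ii)--(iii): showing that the mutation dynamics of $\Delta$ and of $\Delta_e$ are intertwined by $\pi_{\{\vep\}}$ precisely enough that irreducibility of the neighbouring cluster variables and the Hilbert‑series count both survive, together with the underlying, purely formal but delicate, translation between the dimension‑vector maps $\iota_{\{\vep\}},\pi_{\{\vep\}}$, their induced action on weight lattices, and extremality — which is the part of \cite{Fs2} that this theorem packages.
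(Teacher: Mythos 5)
This theorem is not actually proved in the present paper: it is imported verbatim from \cite[Theorem~3.7 and~5.6]{Fs2}, so there is no in-paper argument to measure your attempt against. That said, the overall structure you sketch — identify $\SI_{\beta_\vep}(Q_\vep)$ with the direct sum $B$ of the $\SI_\beta(Q)_\mu$ over $\mu$ in the image of $\iota_{\{\vep\}}$, then squeeze $B$ between inclusions proved respectively with Lemma~\ref{L:RCA} and with the generic cluster character — is the right sort of skeleton, and step~(ii) (using Lemma~\ref{L:irreducible} to get irreducibility of the neighbouring cluster variables in $Q_\vep$) is sound. However there are two genuine gaps that you have glossed over.

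First, step~(iii) is circular as written. You propose to get $B\subseteq\uca(\Delta_e;\bs{\sigma}_e)$ by producing, in each degree~$\tau$, at least $\dim B_\tau=\dim\SI_\beta(Q)_{\iota_{\{\vep\}}(\tau)}$ linearly independent elements of $\uca(\Delta_e)$ via $C_{W_e}$. But Theorem~\ref{C:GCC} only says the image of $C_{W_e}$ on $G(\Delta_e,W_e)$ is linearly independent; it does \emph{not} tell you how large $G(\Delta_e,W_e)(\tau)$ is. To lower-bound it by $\dim B_\tau$ you would need to know that $G(\Delta,W)(\iota(\tau))$ already enumerates a basis of $\SI_\beta(Q)_{\iota(\tau)}$ and that it surjects onto $G(\Delta_e,W_e)(\tau)$ — i.e.\ you need precisely the ``cluster model'' hypothesis and the ``moreover'' clause, which you are trying to derive. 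Since the first assertion of the theorem is stated without assuming $(\Delta,W)$ is a cluster model, your dimension count cannot get off the ground without a separate, non-cluster-theoretic way to bound $|G(\Delta_e,W_e)(\tau)|$ from below (or a different argument for $B\subseteq\uca(\Delta_e)$ altogether).

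Second, your treatment of the projection on the IQP side is too cavalier, in particular because $e$ need not be a frozen vertex (indeed in Step~2 of Section~\ref{S:proof} it is applied to the mutable vertex $(l,l)$ of $\hexagon_l$). The operation $\Delta\mapsto\Delta_e$ is not defined in the present paper, and ``$C_{W_e}$ is $C_W$ followed by $x_e\mapsto 1$'' is not a meaningful specialization when $e$ is mutable: for a mutable $e$, $\e_e$ is generally not $\mu$-supported, the potential $W_e$ differs from $W$ by killing all cycles through $e$, and the cokernel of a reduced presentation for the quotient Jacobian algebra need not be the $e$-truncation of the cokernel upstairs. Relatedly, your argument that $\vep$ is Schur — that an indecomposable-but-not-brick $M$ would force $s(f_M)$ to factor through a ``canonical filtration with respect to a non-invertible endomorphism'' — is a plausible heuristic but is not a proof; the factorization of Schofield semi-invariants is clean only for direct sums, and a precise lemma is needed to conclude $\End_Q(M)=k$ from irreducibility of $s(f_M)$ at an extremal real weight. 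These are exactly the ``purely formal but delicate'' translations you flag at the end, but they are the substance of the theorem; leaving them as the main obstacle means the proposal is a plan, not a proof.
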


\noindent We note that if $\vep_1,\vep_2,\dots,\vep_r$ is an exceptional sequence with each $^\sigma \vep_i$ extremal in $\Sigma_\beta(Q)$,
then each $^\sigma(\vep_2)_{\vep_1},\dots,{^\sigma}(\vep_{n})_{\vep_1}$ is extremal in $\Sigma_{\beta_{\vep_1}}(Q_{\vep_1})$.
So we can apply Theorem \ref{T:project} to the sequence of vertices $(l,2l),(2l,l),(0,l),(l,0),(l,l)$ of $\hexagon_l$ corresponding to the exceptional sequence $\mb{F}$ of $TK_l$.
We obtain an IQP denoted by $(\widehat{\hexagon}_l, W_{\widehat{\hexagon}_l})$.
It follows that the semi-invariant ring $\SI_{\beta_l'}(TK_l)$ is the graded upper cluster algebra $\br{\mc{C}}(\widehat{\hexagon}_l; \widehat{\br{\bs{\sigma}}}_{3l})$,
where the projected weight configuration $\widehat{\br{\bs{\sigma}}}_{3l}$ is given by
\begin{align*}
\widehat{\br{\bs{\sigma}}}_{3l}(i,j) = \begin{cases} 2\e_l - \e_{2l-(i+j)}^3 -\e_j^1 -\e_i^2 & \text{if }{i,j\leq l,}\\
\e_{-l} - \e_{(i+j)-3l}^{3} -\e_{l-i}^{1}-\e_{l-j}^{2} & \text{if }{i,j\geq  l,} \\
\e_{l} - \e_{2l-(i+j)}^3-\e_j^1-\e_{l-i}^{1} & \text{if }{i>l, i+j\leq 2l,}\\
\e_{l} - \e_{2l-(i+j)}^3-\e_i^2-\e_{l-j}^{2} & \text{if }{j>l, i+j\leq 2l,}\\
\e_{l}+\e_{-l} - \e_{(i+j)-3l}^3-\e_j^1-\e_{l-i}^{1} & \text{if }{j<l, i+j\geq 2l,}\\
\e_{l}+\e_{-l} - \e_{(i+j)-3l}^3-\e_i^2-\e_{l-j}^{2} & \text{if }{i<l, i+j\geq 2l.}
\end{cases}\end{align*}
We remind the readers of our convention that $\e_0^a$ is the zero vector, and $\e_{\pm l}^a=\e_{\pm l}$.
The projected weight configuration $\widehat{\br{\bs{\sigma}}}_{3l}$ can be easily checked using \eqref{eq:wtproj}.

To prepare for our next step, we relabel the vertex of $\widehat{\hexagon}_l$ according to the rule
$(i,j)\mapsto (l-i,l-j)$ (see Figure \ref{F:hathexagon} for an example).
Moreover, instead of the quiver with dimension vector $(TK_l,\beta_l')$, we consider its {\em reflection-equivalent} one.

When a vertex $s$ is a sink or source for a quiver $Q$ (without potential),
we have the {\em BGP-reflection functor} $\mu_s$ (see \cite{Ka1}).
At the level of $K_0$-groups, the functor $\mu_s$ sends a dimension vector $\beta$ to
$\mu_s\beta = \beta - (\innerprod{\beta,\e_s}_Q+\innerprod{\e_s,\beta}_Q) \e_s$.
This induces an action on the weight vectors, sending a weight $\sigma$ to $\mu_s \sigma$ satisfying \begin{equation} \label{eq:refwt}
(\mu_s\sigma)(s) = -\sigma(s) \text{ and } (\mu_s\sigma)(i) = \sigma(i)+ a_{s,i}\sigma(s),
\end{equation}
where $a_{s,i}$ is the number of arrows from $s$ to $i$.
The semi-invariant ring $\SI_\beta(Q)$ is invariant under the BGP-reflection functors.
More precisely,
\begin{lemma}[\cite{Ka1}] \label{L:relection} Let $\mu_s$ be the reflection at a sink or source $s$ such that $(\mu_s \beta)(s) >0$, then
$\SI_\beta(Q) = \SI_{\mu_s \beta}(\mu_s Q)$ and $\SI_\beta(Q)_\sigma = \SI_{\mu_s \beta}(\mu_s Q)_{\mu_s\sigma}$.
\end{lemma}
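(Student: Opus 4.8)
The lemma is classical---it goes back to \cite{Ka1}---so the plan is to recall the standard reduction to the fundamental theorems of invariant theory for $\SL_n$. By symmetry it is enough to treat the case where $s$ is a sink of $Q$ (the source case follows by reading everything in $\mu_sQ$). First I would split off the vertex $s$. Write $A:=\bigoplus_{a\colon h(a)=s}V_{t(a)}$ for the fixed vector space, of dimension $d:=\sum_{a\colon h(a)=s}\beta(t(a))$, carried by the arrows into $s$, put $e:=\beta(s)$, and let $Q''$ be $Q$ with the vertex $s$ and its incident arrows deleted, $\beta''$ the restriction of $\beta$. Then
$$\Rep_\beta(Q)=\Hom(A,V_s)\times\Rep_{\beta''}(Q''),$$
the factor $\GL(V_s)\subset\GL_\beta$ acting by post-composition on the first factor only, and $\prod_{a\colon h(a)=s}\GL(V_{t(a)})\subset\GL_{\beta''}$ acting on $A$. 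Since $\mu_sQ$ has $s$ as a source with those same arrows reversed, one gets similarly $\Rep_{\mu_s\beta}(\mu_sQ)=\Hom(V_s',A)\times\Rep_{\beta''}(Q'')$ with $\dim V_s'=d-e=(\mu_s\beta)(s)$, the factor $\GL(V_s')$ acting by pre-composition.

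Next I would establish the $\GL(A)$-equivariant isomorphism
$$k[\Hom(A,V_s)]^{\SL(V_s)}\;\cong\;k[\Hom(V_s',A)]^{\SL(V_s')}.$$
Here the hypothesis enters: since $d>e$, the first and second fundamental theorems for $\SL_e$ say the left-hand ring is generated by the $e\times e$ minors of a general map $A\to V_s$ subject to the Pl\"ucker relations, i.e.\ it is the homogeneous coordinate ring of the Grassmannian $\Gr(d-e,A)$ of $(d-e)$-dimensional subspaces of $A$ (the kernel), in its Pl\"ucker embedding. Symmetrically the right-hand ring is the Pl\"ucker coordinate ring of $\Gr(d-e,A)$ via the image of a general injection $V_s'\hookrightarrow A$. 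Both descriptions are manifestly $\GL(A)$-equivariant, which gives the displayed isomorphism. Tensoring each side with $k[\Rep_{\beta''}(Q'')]$ and taking $\SL_{\beta''}$-invariants, using $\SL_\beta=\SL(V_s)\times\SL_{\beta''}$ and $\SL_{\mu_s\beta}=\SL(V_s')\times\SL_{\beta''}$, then yields $\SI_\beta(Q)=\SI_{\mu_s\beta}(\mu_sQ)$ inside a common field.

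For the refined statement one must identify the two $\sigma$-gradings, which are induced by the residual torus $\GL_\beta/\SL_\beta=(k^*)^{Q_0}$. Under the isomorphism above, scaling $V_s$ by $t$ multiplies an $e\times e$ minor by $t^e$, scaling $V_s'$ by $t'$ multiplies a $(d-e)\times(d-e)$ minor by $(t')^{d-e}$, and scaling the various $V_{t(a)}$ scales the Pl\"ucker coordinates by the same characters on both sides; matching these against the dimension-vector reflection $\mu_s\beta=\beta-(\innerprod{\beta,\e_s}_Q+\innerprod{\e_s,\beta}_Q)\e_s$ reproduces exactly the weight transformation \eqref{eq:refwt}, so $\SI_\beta(Q)_\sigma=\SI_{\mu_s\beta}(\mu_sQ)_{\mu_s\sigma}$; in particular $(\mu_s\sigma)(\mu_s\beta)=\sigma(\beta)$, so the condition $\sigma(\beta)=0$ needed for non-zero semi-invariants is respected.

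The main obstacle is pinning down where the \emph{strict} inequality $(\mu_s\beta)(s)>0$ is indispensable rather than cosmetic, and it is precisely the reduction above: when $d=e$ the map $A\to V_s$ is square and $k[\Hom(A,V_s)]^{\SL(V_s)}=k[\det]$ picks up the extra generator $\det$, which has no analogue on the $\mu_sQ$-side (there $V_s'=0$), so the two rings really differ and the argument genuinely breaks. As a cross-check I would keep in reserve the homological route: the APR/BGP tilting equivalence $D^b(kQ)\simeq D^b(k\mu_sQ)$ induces $\mu_s$ on $K^b(\proj kQ)$, carries a presentation $f$ of weight $\sigma$ to one $f'$ of weight $\mu_s\sigma$, and carries a general $N\in\Rep_\beta(Q)$ to its BGP reflection; then $s(f)$ and $s(f')$ agree on the locus where the reflection functor is defined, and the same strict inequality $(\mu_s\beta)(s)>0$ reappears there as the assertion that this locus (the complement of the rank-drop locus of the canonical map $A\to V_s$) has codimension $\geq 2$, so that no semi-invariant acquires a pole and the generic identity upgrades to an equality of rings.
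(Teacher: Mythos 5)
The paper offers no proof of this lemma---it is just a citation to \cite{Ka1}---so I can only assess your route on its own terms. Your reduction to the fundamental theorems of invariant theory is the standard argument and it does work; the factorization $\Rep_\beta(Q)=\Hom(A,V_s)\times\Rep_{\beta''}(Q'')$, taking $\SL(V_s)$-invariants first, the identification of both sides with the Pl\"ucker ring of $\Gr(d-e,A)$, and your diagnosis of why $(\mu_s\beta)(s)>0$ is indispensable (at $d=e$ the left ring acquires $\det$ with no counterpart on the right) are all correct.

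There is one claim that is actually false as written and should be repaired, because as stated it would contradict the grading formula you are trying to recover. You assert that the two presentations as Pl\"ucker rings are ``manifestly $\GL(A)$-equivariant'' and that ``scaling the various $V_{t(a)}$ scales the Pl\"ucker coordinates by the same characters on both sides.'' Cauchy's formula gives
$k[\Hom(A,V_s)]^{\SL(V_s)}\cong\bigoplus_n\S_{(n^e)}(A)$ on the left and $k[\Hom(V_s',A)]^{\SL(V_s')}\cong\bigoplus_n\S_{(n^{d-e})}(A^*)\cong\bigoplus_n\S_{(n^e)}(A)\otimes(\det A)^{-n}$ on the right, so the isomorphism is only $\SL(A)$-equivariant: the two $\GL(A)$-linearizations of $\mc{O}(1)$ on $\Gr(d-e,A)$ differ by $\det A^{-1}$. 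Restricting the $(\det A)^{-n}$-twist along $\GL(V_i)\hookrightarrow\GL(A)$ (where $V_i$ enters $A$ with multiplicity $a_{s,i}$) scales by $\det(V_i)^{-na_{s,i}}$, and since $\sigma(s)=-n$ on that graded piece this is exactly the correction term $a_{s,i}\sigma(s)$ in $(\mu_s\sigma)(i)=\sigma(i)+a_{s,i}\sigma(s)$ of \eqref{eq:refwt}. If the characters really agreed on both sides you would instead derive $(\mu_s\sigma)(i)=\sigma(i)$, which is wrong whenever $i$ is adjacent to $s$ and $\sigma(s)\neq 0$. Since the twist is harmless after restricting to $\SL_{\beta''}\subset\SL(A)$, the first assertion $\SI_\beta(Q)=\SI_{\mu_s\beta}(\mu_sQ)$ goes through unchanged; for the graded assertion you need the twist, so keep it rather than claiming $\GL(A)$-equivariance. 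With that one sentence corrected the argument is complete, and the tilting-functor cross-check you sketch is a fine alternative.
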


For each positive arm, we apply a sequence of reflections (from right to left) $\bs{\mu}:=\bs{\mu}_1 \bs{\mu}_{2}\cdots\bs{\mu}_{l-1}$, where $\bs{\mu}_{{k}}:=\mu_k\cdots\mu_2\mu_{1}$.
We get the following quiver $K_{l,l}^{2,3}$ with the same dimension vector (denoted by $\beta_l^3$).
$$\tripleflagged$$

\noindent It is easy to compute from \eqref{eq:refwt} that this sequence of reflection acts on weights by
\begin{equation} \label{eq:Refwt}
(\bs{\mu}\sigma)(l) = \sum_{i=1}^l\sigma({i}), \text{ and } (\bs{\mu}\sigma)({i}) = -\sigma(l-i).
\end{equation}
After this sequence of reflection, the weight configuration $\widehat{\br{\bs{\sigma}}}_{3l}$ transforms into a new one denoted by $\widehat{\bs{\sigma}}_{l}$.
It can be verified by \eqref{eq:Refwt} that
\begin{align*}
\widehat{\bs{\sigma}}_l(i,j) = \begin{cases} \e_i^1 + \e_j^2 + \e_{l-i-j}^3 - \e_{l} & \text{for $i\geq 0,j\geq 0$,} \\
-(\e_j^1 + \e_i^2 + \e_{-l-i-j}^3 - \e_{-l}) & \text{for $i\leq 0,j\leq 0$,} \\
\e_i^1- \e_{j}^{1} \pm \e_{\pm l-i-j}^3  \mp \e_{\pm l}  & \text{for $i>0,j<0$,} \\
\e_j^2- \e_{i}^{2} \pm \e_{\pm l-i-j}^3  \mp \e_{\pm l}  & \text{for $i<0,j>0$.}
\end{cases} \end{align*}
Here, $\pm$ is the sign of $i+j$. We keep the convention that $\e_0^a=0$ and $\e_{\pm l}^a=\e_{\pm l}$.

We summarize our result in step two.
\begin{proposition} The semi-invariant ring $\SI_{\beta_{l}^3}(K_{l,l}^{2,3})$ is the graded upper cluster algebra $\br{\mc{C}}(\widehat{\hexagon}_l; \widehat{\bs{\sigma}}_{l})$.
Moreover, $(\widehat{\hexagon}_l,W_{\widehat{\hexagon}_l})$ is a cluster model.
\end{proposition}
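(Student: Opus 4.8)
The plan is to assemble the proposition from the constructions already carried out above: take the output of Step~1, push it through the projection theorem (Theorem~\ref{T:project}) five times, and then transport the result across the BGP-reflection sequence using Lemma~\ref{L:relection}. Recall from the Proposition of Step~1 that $\SI_{\br{\beta}_{3l}}(T_{2l+1})$ is the graded upper cluster algebra $\br{\mc{C}}(\hexagon_l;\br{\bs{\sigma}}_{3l})$, that $(\hexagon_l,W_{\hexagon_l})$ is a cluster model, and that $B(\hexagon_l)$ has full rank — the last point being maintained through the vertex removals of Theorem~\ref{T:removal} from the full-rank matrix $B(\Delta_{3l})$, and it is what licenses the repeated use of Theorems~\ref{T:removal} and~\ref{T:project}.

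First I would record, by matching the explicit dimension vectors, that the five boundary vertices $(l,2l),(2l,l),(0,l),(l,0),(l,l)$ of $\hexagon_l$ carry under $\br{\bs{\sigma}}_{3l}$ exactly the real Schur roots of the exceptional sequence $\mb{F}=\{\vep_{1,2},\vep_{2,1},\vep_{1,3},\vep_{2,3},\vep_{1,2,3}\}$, in the sense that $\br{\bs{\sigma}}_{3l}$ sends these vertices to the weights ${}^\sigma\vep$ of the roots of $\mb{F}$, and that each such weight is extremal in $\mb{R}^+\Sigma_{\br{\beta}_{3l}}(T_{2l+1})$. By the remark following Theorem~\ref{T:project}, the five orthogonal projections through these vertices, taken in the order prescribed by $\mb{F}$, are then all legitimate — the ambient $B$-matrix staying of full rank at each stage — and together they identify $\SI_{\beta_l'}(TK_l)$ with the graded upper cluster algebra $\br{\mc{C}}(\widehat{\hexagon}_l;\widehat{\br{\bs{\sigma}}}_{3l})$ and show that $(\widehat{\hexagon}_l,W_{\widehat{\hexagon}_l})$ is a cluster model. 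The explicit six-region formula for $\widehat{\br{\bs{\sigma}}}_{3l}$ then follows from \eqref{eq:wtproj} by computing $\innerprod{\br{\bs{\sigma}}_{3l}(i,j),\ep_k}_{T_{2l+1}}$ against the roots $\ep_k$ of the complementary quiver exceptional sequence $\mb{E}$ displayed above.

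Next I would transport this identification across the reflection sequence $\bs{\mu}=\bs{\mu}_1\cdots\bs{\mu}_{l-1}$ performed on each positive arm. At every step the distinguished vertex is a sink or a source and the reflected dimension vector remains strictly positive — this positivity is exactly the condition under which $K_{l,l}^{2,3}$ carries the dimension vector $\beta_l^3$ of the indicated shape — so Lemma~\ref{L:relection} applies and gives $\SI_{\beta_l'}(TK_l)=\SI_{\beta_l^3}(K_{l,l}^{2,3})$, with weights transformed by \eqref{eq:refwt}. Iterating \eqref{eq:refwt} over the whole sequence produces the closed form \eqref{eq:Refwt}, and running $\widehat{\br{\bs{\sigma}}}_{3l}$ through it yields precisely the four-region configuration $\widehat{\bs{\sigma}}_l$ of the statement. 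Since BGP-reflections change only the underlying quiver and its dimension vector — not the ice quiver with potential $(\widehat{\hexagon}_l,W_{\widehat{\hexagon}_l})$ nor its $B$-matrix — the upper cluster algebra and the cluster-model property obtained in the previous paragraph survive verbatim, giving both assertions.

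The conceptual content is slight; the work is bookkeeping. The step I expect to be the main obstacle is verifying, at the outset, that each weight of $\mb{F}$ is \emph{genuinely} extremal in $\Sigma_{\br{\beta}_{3l}}(T_{2l+1})$ and not merely on its boundary, since that extremality is the hypothesis that licenses each successive application of Theorem~\ref{T:project}. The remaining labor is the two weight-configuration computations: matching \eqref{eq:wtproj} against each of the six regions of $\hexagon_l$ to recover $\widehat{\br{\bs{\sigma}}}_{3l}$, and then matching \eqref{eq:Refwt} against each of the four regions to recover $\widehat{\bs{\sigma}}_l$. These verifications contain all the real content; the structural conclusions then follow formally from Theorems~\ref{T:removal} and~\ref{T:project} and Lemma~\ref{L:relection}.
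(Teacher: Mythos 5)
Your plan matches the paper's own argument: apply Theorem~\ref{T:project} iteratively through the exceptional sequence $\mb{F}$ (invoking the remark after that theorem to propagate extremality to each successive projection), obtaining $\SI_{\beta_l'}(TK_l)\cong\br{\mc{C}}(\widehat{\hexagon}_l;\widehat{\br{\bs{\sigma}}}_{3l})$, and then transport this across the BGP-reflection sequence via Lemma~\ref{L:relection}. The two bookkeeping verifications you flag --- deriving $\widehat{\br{\bs{\sigma}}}_{3l}$ from \eqref{eq:wtproj} and deriving $\widehat{\bs{\sigma}}_l$ from \eqref{eq:Refwt} --- are exactly what the paper carries out (noting only the small slip that $(l,l)$ is the central, mutable vertex of $\hexagon_l$ rather than a boundary one, which does not affect the argument since Theorem~\ref{T:project} requires extremality, not frozenness).
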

\noindent The potential $W_{\widehat{\hexagon}_l}$ is obtained from $W_n$ by deleting a few vertices,
so it is given by the sum of all ``smallest" triangle cycles.
Similar to the proof of \cite[Theorem 6.8]{Fs1} and Theorem \ref{T:LP_Gl},
we can show that $G(\widehat{\hexagon}_l,W_{\widehat{\hexagon}_l})$ is given by the lattice points inside a rational polyhedral cone $\mr{G}_{\widehat{\hexagon}_l}$.
The cone $\mr{G}_{\widehat{\hexagon}_l}$ has a following description:
$\h\in \mb{R}^{(\widehat{\hexagon}_l)_0}$ is in $\mr{G}_{\widehat{\hexagon}_l}$ if and only if
\begin{align} \label{eq:h0}
&\h(-l,l)\geq 0, && \sum_{n=0}^{m} \h(l-n,-l+n)\geq 0, &&\text{ for $m< l$};\\
&\sum_{n=j}^{m} \h(j,l-n) \geq 0, && \sum_{n=j}^{m}\h(n-l,-j) \geq 0, &&\text{ for $m< 2l$};\\
&\sum_{n=0}^{m} \h(k-n,-l+n) \geq 0, && \sum_{n=0}^{m}\h(l-n,n-k) \geq 0, &&\text{ for $m< l+k$};\\
\label{eq:h1} &\sum_{n=0}^{m} \h(-j,l-n) \geq 0, && \sum_{n=0}^{m}\h(n-l,j) \geq 0, &&\text{ for $m< 2l-j$}.
\end{align}

\subsection*{Step 3}
The quiver $K_{l,l}^2$ is a subquiver of $K_{l,l}^{2,3}$.
We make a convention that the arms of $K_{l,l}^2$ is identified with the arms of $K_{l,l}^{2,3}$ labeled with $(1)$.
Then a weight $\sigma$ of $K_{l,l}^2$ can be extended by zeros to a weight of $K_{l,l}^{2,3}$ still denoted by $\sigma$.
The semi-invariant ring $\SI_\bl(K_{l,l}^2)$ can be naturally embedded into $\SI_\bl(K_{l,l}^{2,3})$:
$$\SI_\bl(K_{l,l}^2) \cong \bigoplus_{\sigma} \SI_\bl(K_{l,l}^{2,3})_\sigma \hookrightarrow \SI_\bl(K_{l,l}^{2,3}),$$
where $\sigma$ runs through all weights supported on $K_{l,l}^2$.

To finish the proof, we need to show that there are the same number of lattice points in the polytopes $\mr{G}_{\Diamond_l}(\sigma)$ and $\mr{G}_{\widehat{\hexagon}_l}(\sigma)$.
For this, let us recall that a matrix $T$ is called {\em totally unimodular}
if all its minors equal to $0,1$, or $-1$.
This implies that a totally unimodular matrix maps lattice points to lattice points.
If a nonsingular square matrix is totally unimodular, then by Cramer's rule its inverse is also totally unimodular.
Let $T$ be an $m\times n$ totally unimodular matrix with $m\leq n$.
If $T$ is of full rank, then it has a totally unimodular right inverse as follows.
We take a full rank square submatrix $S$ of $T$. Without loss of generality, we can assume $S$ to be first $m$ columns of $T$.
We extend the inverse of $S$ by zeros to an $n\times m$ matrix, which is our desired right inverse.
We recall Ghouila-Houri's characterization of totally unimodular matrix.
\begin{lemma}[{\cite[Theorem 19.3.(iv)]{Sc}}] \label{L:TU} A matrix $T$ is totally unimodular if and only if each collection of columns of $T$ can be split into two parts so that the sum of the columns in one part minus the sum of the columns in the other part is a vector with entries only $0,1$, and $-1$.
\end{lemma}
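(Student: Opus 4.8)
The statement is the classical theorem of Ghouila--Houri, so the simplest course is to cite \cite[Theorem 19.3.(iv)]{Sc} and move on; here I indicate how I would prove it from scratch, emphasizing the ``$\Leftarrow$'' half, since that is the direction used in the sequel to certify total unimodularity of explicit $0,\pm1$ incidence matrices.

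\textbf{Sufficiency.} Assuming the signed-splitting property of column sets, I would prove that every square submatrix $B$ of $T$ has $\det B\in\{0,\pm1\}$ by induction on its size $k$. For $k=1$, applying the hypothesis to a single column forces every entry of $T$ into $\{0,\pm1\}$. For the step, let $B$ be $k\times k$ and nonsingular. By the inductive hypothesis every $(k-1)$-minor of $B$ lies in $\{0,\pm1\}$, so $\operatorname{adj}(B)$ is a $0,\pm1$ matrix; write $b$ for its first column, a nonzero vector in $\{0,\pm1\}^k$ with $Bb=\det(B)\,e_1$. Applying the hypothesis to the columns of $T$ indexed by the support $R$ of $b$ yields a vector $\varepsilon\in\{0,\pm1\}^k$ with support exactly $R$ and $B\varepsilon\in\{0,\pm1\}^k$. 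On $R$ both $b$ and $\varepsilon$ are $\pm1$, so for each $j\in R$ exactly one of $b_j+\varepsilon_j$, $b_j-\varepsilon_j$ is zero; hence $\tfrac12(b+\varepsilon)$ and $\tfrac12(b-\varepsilon)$ are integral. Then $B\cdot\tfrac12(b+\varepsilon)=\tfrac12\bigl(\det(B)e_1+B\varepsilon\bigr)$ is integral, so reading off the coordinates other than the first forces those coordinates of $B\varepsilon$ to be even, hence zero; thus $B\varepsilon=c\,e_1$ with $c\in\{0,\pm1\}$. If $c=0$ then $B\varepsilon=0$ with $\varepsilon\ne0$, contradicting nonsingularity, so $c=\pm1$; and then $B(c\,b-\det(B)\varepsilon)=0$ gives $c\,b=\det(B)\varepsilon$, which on $R$ reads $\pm1=\pm\det(B)$. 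Hence $|\det B|=1$, completing the induction.

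\textbf{Necessity.} If $T$ is totally unimodular and $R$ is a set of columns, put $d_i=\sum_{j\in R}T_{ij}$ and let $P$ be the set of $z\in\mathbb{R}^R$ with $0\le z\le\mathbf 1$ and $\lfloor d_i/2\rfloor\le(T_Rz)_i\le\lceil d_i/2\rceil$ for all $i$, where $T_R$ is the column-submatrix. This polytope contains $\tfrac12\mathbf 1$, and its constraint matrix is $T_R$ bordered by $\pm$ identity rows, hence is totally unimodular; since its right-hand side is integral, $P$ is an integral polytope and so has a vertex $z^{\ast}\in\{0,1\}^R$. Splitting $R$ by the value of $z^{\ast}$ makes the signed column sum equal to $2(T_Rz^{\ast})_i-d_i\in\{-1,0,1\}$, which is the required splitting.

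The only place that needs care is the inductive step of sufficiency: the parity argument collapsing $B\varepsilon$ onto a multiple of $e_1$, and the final deduction of $|\det B|=1$ from $c\,b=\det(B)\varepsilon$. Everything else is routine bookkeeping, and the necessity direction tacitly uses the Hoffman--Kruskal integrality theorem, which I would invoke as a black box rather than reprove.
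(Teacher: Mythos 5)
Your proof is correct and is essentially the standard argument for Ghouila--Houri's theorem as found in Schrijver's book (the parity argument on $b\pm\varepsilon$ for sufficiency, and the rounded-polytope construction via Hoffman--Kruskal for necessity). The paper itself does not prove this lemma; it simply cites \cite[Theorem 19.3.(iv)]{Sc}, which is also the course you yourself flag as simplest at the outset, so there is no discrepancy to report.

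One small expository remark: in the inductive step you apply the hypothesis to the columns of $T$ indexed by $R$, but the vector $\varepsilon$ you then use lives in $\{0,\pm1\}^k$ and is applied to $B$; it is worth making explicit that the conclusion $T\varepsilon\in\{0,\pm1\}^m$ restricts, on the rows defining $B$, to $B\varepsilon\in\{0,\pm1\}^k$, which is the only fact used. This is what you clearly intend, and the argument goes through.
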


\begin{proposition} There is a totally unimodular full-rank~linear~map~$\mb{R}^{\Diamond_l}\!\to\! \mb{R}^{\widehat{\hexagon}_l}$
mapping the polytope $\mr{G}_{\Diamond_l}(\sigma)$ onto the polytope $\mr{G}_{\widehat{\hexagon}_l}(\sigma )$.
In particular, the two polytopes have the same number of lattice points.
\end{proposition}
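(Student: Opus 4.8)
The plan is to construct the linear map explicitly and verify its properties directly. Recall that the quiver $\Diamond_l$ has vertices labeled $\pm(i;j,k)$ with $i=j+k$, and the defining inequalities of $\mr{G}_{\Diamond_l}$ are the partial-sum conditions $\sum_{v\in p}\g(v)\geq 0$ along the strict subpaths of the tri-broken paths $\op{tp}_{l;j,k}^\pm$, the subpaths of $\op{tp}_{l;0,l}$, and the trivial path $e_{l;l,0}$. On the other side, $\widehat{\hexagon}_l$ is the (relabeled, reflected) hexagonal subquiver whose $\g$-vector cone $\mr{G}_{\widehat{\hexagon}_l}$ is cut out by the partial-sum inequalities \eqref{eq:h0}--\eqref{eq:h1}. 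First I would set up the explicit bijection/affine correspondence between the lattice $\mb{Z}^{\Diamond_l^0}$ and (a coordinate slice of) $\mb{Z}^{(\widehat{\hexagon}_l)_0}$ coming from the chain of constructions in Steps 1 and 2: the vertex-removal embeddings of \cite{Fs2}, the orthogonal-projection maps $\iota_{\mb{F}},\pi_{\mb{F}}$ of \S (Step 2), and the BGP-reflection identifications $\bs{\mu}$. Each of these operations is given at the level of $K_0$-groups by an integer matrix, and I would track the composite matrix $\Phi\colon\mb{R}^{\Diamond_l}\to\mb{R}^{\widehat{\hexagon}_l}$.

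Next I would verify that $\Phi$ has full rank and maps $\mr{G}_{\Diamond_l}(\sigma)$ onto $\mr{G}_{\widehat{\hexagon}_l}(\sigma)$. For the degree/weight compatibility, the identification of $\SI_\bl(K_{l,l}^2)$ with the $\sigma$-graded pieces of $\SI_{\beta_l^3}(K_{l,l}^{2,3})$ (the embedding at the start of Step 3) together with Theorem \ref{T:CS} and the cluster-model statement for $(\widehat{\hexagon}_l,W_{\widehat{\hexagon}_l})$ already forces $\Phi$ to intertwine $\g\mapsto\g\wtd{\bs{\sigma}}_l$ with $\h\mapsto\h\widehat{\bs{\sigma}}_l$, so that $\Phi$ carries the slice $\{\g\bs{\sigma}_l=\sigma\}$ into $\{\h\widehat{\bs{\sigma}}_l=\sigma\}$; surjectivity of $\Phi$ on the cones then follows because both $\mr{G}_{\Diamond_l}$ and $\mr{G}_{\widehat{\hexagon}_l}$ are identified, under the generic cluster characters, with the $\g$-vectors appearing in the \emph{same} graded vector space $\bigoplus_\sigma\SI_\bl(K_{l,l}^{2,3})_\sigma$, and $\Phi$ realizes that identification. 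Concretely, I would check vertex-by-vertex that $\Phi$ sends each defining partial-sum functional of $\mr{G}_{\Diamond_l}$ (the strict subpaths of $\op{tp}_{l;j,k}^\pm$, etc.) to one of the functionals \eqref{eq:h0}--\eqref{eq:h1} of $\mr{G}_{\widehat{\hexagon}_l}$, and conversely; this is the combinatorial heart and amounts to matching the tri-broken paths in $\Diamond_l$ with the ``broken diagonals'' of the hexagon after reflection.

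For total unimodularity, the cleanest route is Ghouila-Houri's criterion (Lemma \ref{L:TU}): I would show that every collection of columns of the matrix of $\Phi$ can be $\pm$-split into two groups whose signed sum has entries in $\{0,\pm1\}$. The reflection and projection maps are built from incidence-type matrices whose columns are differences of at most two unit vectors (indeed each $\widehat{\bs{\sigma}}_l(i,j)$ is a $\pm$-combination of at most four unit vectors, and the transition matrices between adjacent vertex labels are bidiagonal with $\pm1$ entries), so the required splitting can be done greedily along the linear order of arm-coordinates, exactly as in the standard proof that interval/network matrices are totally unimodular. Once $\Phi$ is totally unimodular and of full rank, it has a totally unimodular right inverse by the submatrix-inversion argument recalled before Lemma \ref{L:TU}, hence $\Phi$ restricts to a bijection $\mr{G}_{\Diamond_l}(\sigma)\cap\mb{Z}^{\Diamond_l^0}\xrightarrow{\ \sim\}\mr{G}_{\widehat{\hexagon}_l}(\sigma)\cap\mb{Z}^{(\widehat{\hexagon}_l)_0}$, which gives the equality of lattice-point counts.

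I expect the main obstacle to be the bookkeeping in the combinatorial step: writing down $\Phi$ in closed form through the iterated vertex removals, the projection through the five exceptional roots $\mb{F}$, and the $l-1$ layers of BGP reflections $\bs{\mu}$, and then checking that the resulting functional-to-functional correspondence is an exact bijection between the two lists of partial-sum inequalities (including the ``essential/irredundant'' claims of Theorem \ref{T:LP_Gl}). The total-unimodularity verification, while notationally heavy, should be routine once $\Phi$ is expressed as a product of manifestly network-like integer matrices, since products and submatrices of such matrices stay totally unimodular and the column-splitting can be inherited from the factors.
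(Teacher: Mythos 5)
Your plan contains a genuine logical gap: the surjectivity argument is circular. You write that surjectivity of $\Phi$ (carrying $\mr{G}_{\Diamond_l}(\sigma)$ onto $\mr{G}_{\widehat{\hexagon}_l}(\sigma)$) ``follows because both $\mr{G}_{\Diamond_l}$ and $\mr{G}_{\widehat{\hexagon}_l}$ are identified, under the generic cluster characters, with the $\g$-vectors appearing in the same graded vector space,'' invoking Theorem \ref{T:CS}. But this proposition is precisely a step in the proof of Theorem \ref{T:CS}. At this stage one only knows (from Proposition \ref{P:contain} and Corollary \ref{C:GCC}) that the lattice points of $\mr{G}_{\Diamond_l}(\sigma)$ index \emph{linearly independent} elements of $\SI_\bl(K_{l,l}^2)_\sigma$, hence $|\mr{G}_{\Diamond_l}(\sigma)\cap\mb{Z}|\leq\dim\SI_\bl(K_{l,l}^2)_\sigma=|\mr{G}_{\widehat{\hexagon}_l}(\sigma)\cap\mb{Z}|$; the content of the proposition is to establish the reverse inequality (equality of counts), and assuming $\mr{G}_{\Diamond_l}$ indexes a \emph{basis} assumes the conclusion.

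The second difficulty is the proposed construction of $\Phi$ by composing the $K_0$-level maps from Steps 1 and 2. The vertex-removal, orthogonal-projection, and BGP-reflection maps act on dimension/weight vectors of the quivers $T_{3l},TK_l,K_{l,l}^{2,3}$, not on the $\g$-vector lattices $\mb{Z}^{\Diamond_l^0}$ and $\mb{Z}^{(\widehat{\hexagon}_l)_0}$ of the ice quivers with potentials. Theorems \ref{T:removal} and \ref{T:project} tell you the resulting IQP is again a cluster model, but they do not hand you a linear map between the two $\g$-lattices as a composite of those operations. The paper instead writes down the map $\varphi$ explicitly in closed form (e.g.\ $\varphi(\e_{i;j,k})=\e_{k,-i}+\e_{-j,j}+\e_{j,0}-\e_{-j,0}-\e_{0,j}$ for $j\neq l$), then proves the weight-intertwining and the matching of defining inequalities by direct computation. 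Your intended combinatorial check of matching partial-sum functionals and the Ghouila--Houri verification of total unimodularity are both consonant with the paper's argument and would carry through once $\varphi$ is in hand; but without the explicit map and without a non-circular argument for surjectivity onto $\mr{G}_{\widehat{\hexagon}_l}(\sigma)$ (the paper obtains this from the ``if and only if'' in the inequality comparison together with the support forced by the weight condition), the proof is incomplete.
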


\begin{proof} We define the linear map $\varphi:\mb{R}^{\Diamond_l} \to \mb{R}^{\widehat{\hexagon}_l}$ as follows
\begin{align*}
\varphi(\e_{l;l,0})&= \e_{-l,l}, \\
\varphi(\e_{i;j,k})&= \e_{k,-i}+\e_{-j,j} +\e_{j,0} - \e_{-j,0} - \e_{0,j}  && \text{for $j\neq l$,}\\
\varphi(\e_{j,k;i})&= \e_{i,-k}+\e_{-j,j} +\e_{0,-j} - \e_{-j,0} - \e_{0,j}  && \text{for $j\neq l$.}
\end{align*}
We use the convention that $\e_{0,0}$ is the zero vector, so in particular $\varphi(\e_{i;0,i})= \e_{-i,i}$.
$\varphi$ is clearly of full rank, so we remain to check that
$$ \text{(1). $\varphi$ is totally unimodular; \qquad (2). $\varphi$ maps $\mr{G}_{\Diamond_l}(\sigma)$ onto $\mr{G}_{\widehat{\hexagon}_l}(\sigma)$.} $$

(1). To verify the total unimodularity, we observe that the matrix of $\varphi$ can be partition into blocks indexed by $j$ and a sign $\pm$.
If $j\neq 0,l$, the rows and columns of each block $B_j^\pm$ are indexed by
$\{(i;j,k)\}_k \times \{(k,-i),(-j,j),(j,0),(-j,0),(0,j)\}_k$ or $\{(j,k;i)\}_k \times \{(i,-k),(-j,j),(0,-j),(-j,0),(0,j)\}_k$.
If $j=0$ or $l$, then the block has a single entry $1$.
Since no any two blocks have common rows or columns, it suffices to verify the total unimodularity on each block. But this is rather trivial by Lemma \ref{L:TU}.

(2) We first verify that $\g$ and $\varphi(\g)$ have the same $\sigma$-weight.
\begin{align*}
\e_{i;j,k} \bs{\sigma}_l &= \e_j + \e_k - \e_{-i}, \text{ while}\\
\varphi(\e_{i;j,k}) \widehat{\bs{\sigma}}_l
&=\widehat{\bs{\sigma}}_l(k,-i)+\widehat{\bs{\sigma}}_l(-j,j)+\widehat{\bs{\sigma}}_l(j,0)
-\widehat{\bs{\sigma}}_l(-j,0) -\widehat{\bs{\sigma}}_l(0,j)\\
&= (\e_{k}^{1} - \e_{-i}^1 -\e_{-l-k+i}^3 +\e_{-l})
+ (\e_j^2 - \e_{-j}^{2})
+(\e_j^1 + \e_{l-j}^3 - \e_{l}) \\
&\quad\ +(\e_{-j}^2 + \e_{-l+j}^3 - \e_{-l}) -(\e_j^2 + \e_{l-j}^3 - \e_{l}) \\
&= \e_{j}^{1} +\e_{k}^1 - \e_{-i}^1.
\end{align*}
Similarly we can verify that $\varphi(\e_{j,k;i})=\e_i^1 - \e_{-j}^{1} -\e_{-k}^1$.

Next, we write the definition of $\varphi$ in coordinates: $\h=\varphi(\g)$.
We find that $\h(i,j)=0$ except for the following
\begin{align*}
&\h(-l,l)= \g(l;l,0); \\
&\h(k,-i)= \g(i;j,k),\quad \h(i,-k)= \g(j,k;i) \quad \text{for } i\geq k,k\neq 0;\\
&\h(j,0)= \sum_{i-k=j} \g(i;j,k),\quad \h(0,-j)= \sum_{i-k=j} \g(j,k;i); \\
&\h(-j,j) = -\h(0,j)=-\h(-j,0) = \sum_{i-k=j} \big(\g(i;j,k) + \g(j,k;i)\big) - \g(j;j,0).
\end{align*}
We see that $\h(j,0)$ (resp. $\h(0,-j)$) is a dimension vector of a subrepresentation of $T_{i;j,k}$ (resp. $T_{j,k;i}$),
and $\h(-j,j)$ is a sum of dimension vectors of a subrepresentation of $T_{i;j,k}$ and a subrepresentation of $T_{j,k;i}$.
Now we can easily verify the following
\begin{align*}
&\sum_{n=0}^{m} \h(l-n,-l+n) = \sum_{n=0}^{m} \g(l-n;0,l-n), \\ 
&\sum_{n=j}^{m} \h(j,l-n) = \delta_1 \sum_{i-k=j} \g(i;j,k) + \sum_{n=1}^{\min(m-l,j)} \g(j-n,n;j) + \sum_{n=1}^{m-l-j} \g(j+n;n,j), \\ 
&\sum_{n=0}^{m} \h(k-n,-l+n) = \sum_{n=0}^{\min(m,k-1)} \g(l-n;l-k,k-n)+\delta_2 \h(0,-l+k) -\delta_1 \h(-l+k,l-k), \\ 
&\sum_{n=0}^{m} \h(-j,l-n) = \delta_3\h(-j,j), 
\end{align*}
where $\delta_1=1$ if $m\geq l$, $\delta_2=1$ if $m\geq k$, $\delta_3=1$ if $l-j \leq m< l$; otherwise $\delta_i=0$ for $i=1,2,3$.
We can rewrite the first three equations as follows:
\begin{align*}
\sum_{n=0}^{m} \h(l-n,-l+n) &= \g \cdot \gamma_{0,m}  &&\text{ for $m\leq l$},\\
\sum_{n=j}^{m} \h(j,l-n) &= \g \cdot \gamma_{1,m}  &&\text{ for $m\leq 2l$},\\
\sum_{n=0}^{m} \h(l-n,-k+n) &= \g \cdot (\gamma_{2,m} + \delta_2 \h(0,-l+k)) &&\text{ for $m\leq l+k$},
\end{align*}
where $\gamma_{0,m}$ is a dimension vector of some submodule of $T_{l;0,l}$,
$\gamma_{1,m}$ and $\gamma_{2,m}$ are dimension vectors of some submodules of $T_{i;j,k}$.
Note that the negative term of the third equation causes cancelation only.
We observe that all dimension vectors of submodules of $T_{l;0,l}$ and $T_{i;j,k}$ arise this way.
By symmetry, we have the similar equations for $\sum_{n=j}^{m} \h(n-l,-j), \sum_{n=0}^{m} \h(l-n,n-k)$, and $\sum_{n=0}^{m} \h(n-l,j)$.
We conclude that $\h$ satisfies the defining condition \eqref{eq:h0}--\eqref{eq:h1} of $\mr{G}_{\widehat{\hexagon}_l}$ if and only if $\g$ satisfy the defining conditions of $\mr{G}_{\Diamond_l}$.
This implies that $\varphi$ maps $\mr{G}_{\Diamond_l}(\sigma)$ (bijectively) onto $\mr{G}_{\widehat{\hexagon}_l}(\sigma)$.
Therefore, the two polytopes have the same number of lattice points.
\end{proof}

\bibliographystyle{amsplain}

\begin{thebibliography}{99}
\bibitem{BFZ} A. Berenstein, S. Fomin, A. Zelevinsky, \textit{Cluster algebras. III. Upper bounds and double Bruhat cells,} Duke Math. J. 126 (2005), no. 1, 1--52.
\bibitem {DF} H. Derksen, J. Fei, \textit{General presentations of algebras,}  Adv. Math. 278 (2015), 210--237.
\bibitem{DW1} H. Derksen, J. Weyman, \textit{Semi-invariants of quivers and saturation for Littlewood-Richardson coefficients,} J. Amer. Math. Soc. 13 (2000), no. 3, 467--479.
\bibitem{DW2} H. Derksen, J. Weyman, \textit{The combinatorics of quiver representation,}  Ann. Inst. Fourier (Grenoble) 61 (2011), no. 3, 1061--1131.
\bibitem{DWZ1}  H. Derksen, J. Weyman, A. Zelevinsky, \textit{Quivers with potentials and their representations I,} Selecta Math. (N.S.) 14 (2008), no. 1, 59--119.
\bibitem{DWZ2}  H. Derksen, J. Weyman, A. Zelevinsky, \textit{Quivers with potentials and their representations II,} J. Amer. Math. Soc. 23 (2010), no. 3, 749--790.
\bibitem{DZ} M. Domokos, A. Zubkov, \textit{Semi-invariants of quivers as determinants,} Transform. Groups 6 (2001), no. 1, 9--24.
\bibitem{Dv} Y. Dvir, \textit{On the Kronecker product of $S_n$ characters,} J. Algebra 154 (1993), 125--140.
\bibitem{Fg} J. Fei, \textit{On the $G$-invariant modules,} arXiv:1402.6021.
\bibitem{Fs1} J. Fei, \textit{Cluster algebras and semi-invariant rings I. Triple Flags,}~arXiv:1411.4693.
\bibitem{Fs2} J. Fei, \textit{Cluster algebras and semi-invariant rings II. Projections,}~arXiv:1508.05563.
\bibitem{FP} S. Fomin, P. Pylyavskyy, \textit{Tensor diagrams and cluster algebras,}  arXiv:1210.1888.
\bibitem{FZ1} S. Fomin, A. Zelevinsky, \textit{Cluster algebras. I. Foundations,} J. Amer. Math. Soc. 15 (2002), no. 2, 497--529.
\bibitem{FZ4} S. Fomin, A. Zelevinsky, \textit{Cluster algebras. IV. Coefficients,} Compos. Math. 143 (2007), no. 1, 112--164.
\bibitem{F} W. Fulton, \textit{Intro to toric variety,} A first course. Graduate Texts in Mathematics, 129. Readings in Mathematics. Springer-Verlag, New York, 1991.
\bibitem{FH} W. Fulton, J. Harris, \textit{Representation theory,} A first course. Graduate Texts in Mathematics, 129. Readings in Mathematics. Springer-Verlag, New York, 1991.
\bibitem{IOTW} K. Igusa, K. Orr, G. Todorov, J. Weyman, \textit{Cluster complexes via semi-Invariants,} Compositio Math. 145 (2009), 1001--1034.
\bibitem{Ka1} V. G. Kac, \textit{Infinite root systems, representations of graphs and invariant theory I,} Invent. Math. 56 (1980), 57--92.
\bibitem{Ki} A.D. King, \textit{Moduli of representations of finite-dimensional algebras,} Quart. J. Math. Oxford Ser. (2) 45 (1994), no. 180, 515--530.
\bibitem{KT} A. Knutson, T. Tao, \textit{The honeycomb model of $\GL_n(\mb{C})$ tensor products. I. Proof of the saturation conjecture,} J.Amer. Math. Soc. 12 (1999), no. 4, 1055--1090.
\bibitem{Mc} I. G. Macdonald, \textit{Symmetric functions and Hall polynomials,} Oxford Math. Monographs, 2nd enlarged ed. 1995.
\bibitem{M2} G. Muller, \textit{$A=U$ for locally acyclic cluster algebras,} SIGMA 10 (2014), 094, 8 pages.
\bibitem{M} K.D. Mulmuley, \textit{Geometric Complexity Theory VI: the flip via saturated and positive integer programming in representation theory and algebraic geometry.}
\bibitem{MPT} M. Herceg, M. Kvasnica, C.N. Jones, M. Morari, Multi-Parametric Toolbox (MPT), http://people.ee.ethz.ch/\~{}mpt/.
\bibitem{PP} I. Pak, G. Panova, \textit{Unimodality via Kronecker products,} J. Algebraic Combin. 40 (2014), no. 4, 1103--1120.
\bibitem{P} P. Plamondon, \textit{Generic bases for cluster algebras from the cluster category,} Int Math Res Notices (2013) 2013 (10): 2368--2420.
\bibitem{PS} J. Patera, R. T. Sharp, \textit{Generating functions for plethysms of finite and continuous groups,} J. Phys. A 13 (1980), no. 2, 397--416.
\bibitem{PV} V. L. Popov, E. B. Vinberg, \textit{Invariant theory, in: Algebraic geometry. IV, Encyclopaedia of Mathematical Sciences,} vol. 55, Springer-Verlag, Berlin, 1994, 123--284.
\bibitem{APR} A Riese (with G.E. Andrews, P Paule), \textit{Omega Package,} RISC Linz, 2008, http://www.risc.jku.at/research/combinat/software/ergosum/RISC/Omega.html.
\bibitem{S1} A. Schofield, \textit{Semi-invariants of quivers,} J. London Math. Soc. (2) 43 (1991), no. 3, 385--395.
\bibitem{S2} A. Schofield, \textit{General representations of quivers,} Proc. London Math. Soc. (3) 65 (1992), no. 1, 46--64.
\bibitem{Sc} A. Schrijver, \textit{Theory of linear and integer programming,} Wiley-Interscience Series in Discrete Mathematics. John Wiley \& Sons, Ltd., Chichester, 1986.
\bibitem{SV} A. Schofield, M. Van den Bergh, \textit{Semi-invariants of quivers for arbitrary dimension vectors,} Indag. Math. (N.S.) 12 (2001), no. 1, 125--138.
\end{thebibliography}

\end{document}